\documentclass[11pt]{amsart}

\usepackage{color}
\usepackage{amsmath}
\usepackage{amsfonts}
\usepackage{mathrsfs}
\usepackage{amsthm}
\usepackage{amssymb}
\usepackage{bbm}
\usepackage{amstext}
\usepackage[normalem]{ulem}
\usepackage{tabularx}
\usepackage{faktor}
\textwidth 16.5 cm
\oddsidemargin = 0 cm
\evensidemargin = 0 cm
%\topmargin -1 cm
%\addtolength{\textheight}{3.5 cm}
%\setlength{\parskip}{0.7em}
%\usepackage{gauss}

\usepackage{array, ltablex, multirow}
\usepackage{makecell}
\setcellgapes{6pt}
\makegapedcells

\newcommand{\F}{\mathcal{F}}

\newcommand{\I}{\mathcal{I}}

\newcommand{\C}{\mathbb{C}}

\newcommand{\Z}{\mathbb{Z}}
\newcommand{\N}{\mathbb{N}}
\newcommand{\A}{\mathscr{A}}
\newcommand{\Aa}{\mathcal{A}}

\newcommand{\M}{\mathcal{M}}

\newcommand{\E}{\mathbb{E}}
\newcommand{\D}{\mathscr{D}}
\newcommand{\B}{\mathscr{B}}

\newcommand{\Orb}{\mathcal{O}}
\newcommand{\1}{\mathbbm{1}}

\newcommand{\Hilb}{\mathcal{H}}

\newcommand{\xmt}{(X,\mu,T)}
\newcommand{\yns}{(Y,\nu,S)}

\newcommand{\zmr}{(Z,m,R)}

\newcommand{\Erdos}{Erd\H{o}s}
\newcommand{\Folner}{F\o{}lner}

\newcommand{\gen}{\texttt{\textup{gen}}}
\newcommand{\supp}{\texttt{\textup{supp}}}
\newcommand{\Ball}{\texttt{\textup{B}}}
\newcommand{\Dist}{\texttt{\textup{D}}}

\newcommand{\ev}{\textup{ev}}

\usepackage{soul}

\numberwithin{equation}{section}

\usepackage[colorlinks=true,urlcolor=blue,linkcolor=black,citecolor=blue]{hyperref}
\usepackage[capitalize]{cleveref}
\newtheorem{theorem}{Theorem}[section]

\newtheorem{proposition}[theorem]{Proposition}

\newtheorem{lemma}[theorem]{Lemma}
\newtheorem{corollary}[theorem]{Corollary}

\theoremstyle{definition}
\newtheorem{defn}[theorem]{Definition}
\newtheorem{remark}[theorem]{Remark}
\newtheorem{example}[theorem]{Example}
\newtheorem{question}[theorem]{Question}

\theoremstyle{plain}
\newtheorem*{namedthm}{\namedthmname}
\newcounter{namedthm}
\makeatletter
\newenvironment{named}[2]
{\def\namedthmname{#1}
\refstepcounter{namedthm}
\namedthm[#2]\def\@currentlabel{#1}}
{\endnamedthm}
\makeatother
	%Example:
	%\begin{named}{Incompleteness Theorem}{\cite{Godel1931}}
	%\label{theorem:incompleteness}
	% satement of theorem
	%\end{named}
	%In 1931 Kurt G{\"o}del proved the \ref{theorem:incompleteness}.

\usepackage{relsize}
\usepackage{mathtools}
\usepackage{scalerel}
\newcommand{\rtrdot}{\hspace*{1mm}\mathrlap{\mkern0.9mu{\mathlarger{\cdot}}}
\mathlarger{\triangleright}\hspace*{0.75mm}}
\newcommand{\ltrdot}{\hspace*
{0.75mm}\mathrlap{\mkern3.3mu{\mathlarger{\cdot}}}
\mathlarger{\triangleleft}\hspace*{1mm}}

\renewcommand{\d}{~\mathrm{d}}
\renewcommand{\epsilon}{\varepsilon}

\makeatletter
%Table of Contents
\setcounter{tocdepth}{3}

% Add bold to \section titles in ToC and remove . after numbers
\renewcommand{\tocsection}[3]{%
  \indentlabel{\@ifnotempty{#2}{\bfseries\ignorespaces#1 #2\quad}}\bfseries#3}
% Remove . after numbers in \subsection
\renewcommand{\tocsubsection}[3]{%
  \indentlabel{\@ifnotempty{#2}{\ignorespaces#1 #2\quad}}#3}
%\let\tocsubsubsection\tocsubsection% Update for \subsubsection
%...

\newcommand\@dotsep{4.5}
\def\@tocline#1#2#3#4#5#6#7{\relax
  \ifnum #1>\c@tocdepth % then omit
  \else
    \par \addpenalty\@secpenalty\addvspace{#2}%
    \begingroup \hyphenpenalty\@M
    \@ifempty{#4}{%
      \@tempdima\csname r@tocindent\number#1\endcsname\relax
    }{%
      \@tempdima#4\relax
    }%
    \parindent\z@ \leftskip#3\relax \advance\leftskip\@tempdima\relax
    \rightskip\@pnumwidth plus1em \parfillskip-\@pnumwidth
    #5\leavevmode\hskip-\@tempdima{#6}\nobreak
    \leaders\hbox{$\m@th\mkern \@dotsep mu\hbox{.}\mkern \@dotsep mu$}\hfill
    \nobreak
    \hbox to\@pnumwidth{\@tocpagenum{\ifnum#1=1\bfseries\fi#7}}\par% <-- \bfseries for \section page
    \nobreak
    \endgroup
  \fi}
\AtBeginDocument{%
\expandafter\renewcommand\csname r@tocindent0\endcsname{0pt}
}
\def\l@subsection{\@tocline{2}{0pt}{2.5pc}{5pc}{}}
\makeatother

\usepackage{lipsum}

%SIDE COMMENTS
\usepackage{todonotes}
\usepackage{marginnote}

\usepackage{cancel}
\usepackage{tikz}

\title{Finding product sets in some classes of amenable groups}
\author{Dimitrios Charamaras and Andreas Mountakis}
\date{\today}

\subjclass{Primary: 05D10, 37A15; Secondary: 11B13, 11B30.}

\begin{document}

\maketitle

\vspace*{-0.02cm}
\begin{abstract}
In \cite{kmrr2}, using methods from ergodic theory, a longstanding
conjecture of \Erdos{} 
(see \cite[Page 305]{erdos1973})
about sumsets in large subsets of the natural numbers was resolved.
In this paper, we extend this result to several important classes
of amenable groups, including  all finitely generated 
virtually nilpotent groups, and all abelian groups $(G,+)$
with the property that the subgroup $2G := \{g+g : g\in G\}$
has finite index. We prove that in any group $G$ from the above classes,
any $A\subset G$ with positive upper Banach density contains 
a shifted product set of the form
$\{tb_ib_j\colon i<j\}$, for some infinite sequence
$(b_n)_{n\in\N}$ and some $t\in G$.
In fact, we show this result for all amenable groups that posses
a property which we call square absolute continuity.
Our results provide answers to several questions and conjectures
posed in \cite{kmrr_survey}.
\end{abstract}

\tableofcontents

\section{Introduction}
In \cite{kmrr2}, Bryna Kra, Joel Moreira, Florian K. Richter 
and Donald Robertson, using methods from ergodic
theory, proved that every subset $A$ of the 
positive integers with positive upper Banach density
contains $\{b_1+b_2+t: b_1\neq b_2\in B\}$ for some infinite set
$B\subset A$ and some $t\in\N$. This resolved a longstanding
conjecture of \Erdos{} (see \cite[Page 305]{erdos1973}).
\par
A natural question to ask is whether this result generalizes to 
other countable groups, such as $\Z^d$ for $d\geq 2$ 
or the discrete Heisenberg group, for example. 
The purpose of this paper is to extend the result in \cite{kmrr2} 
to several important classes of amenable groups, 
including all finitely generated virtually nilpotent groups and all abelian groups $(G,+)$
with the property that the subgroup consisting of the elements $2g:=g+g$,
where $g\in G$, has finite index. 
To this end, we first extend the result to all amenable groups 
satisfying a property that we call square absolute continuity (see 
\cref{sac_def}). Then we show that our result 
applies to the aforementioned classes of groups,
by showing that they are (virtually) square absolutely continuous.
Our main results provide partial answers to some questions and conjectures
posed in \cite{kmrr_survey} regarding product sets in large subsets of amenable groups.
\par
Throughout, let $G$ denote a countable group.
Let us start with some basic definitions. 
\begin{defn}
Let $(G,\cdot)$ be a group. A sequence 
$\Phi=(\Phi_N)_{N\in\N}$ of finite subsets 
of $G$ is:
\begin{itemize}
    \item a \textit{left \Folner{} sequence}, if it satisfies
    $$\lim_{N\to\infty}\frac{|g\Phi_N\cap\Phi_N|}{|\Phi_N|}=1
    \text{\quad or equivalently, \quad}
    \lim_{N\to\infty}\frac{|g\Phi_N\triangle\Phi_N|}{|\Phi_N|}=0$$
    for any $g\in G$, and
    \item a \textit{right \Folner{} sequence}, if it satisfies
    $$\lim_{N\to\infty}\frac{|\Phi_N g\cap\Phi_N|}{|\Phi_N|}=1
    \text{\quad or equivalently, \quad}
    \lim_{N\to\infty}\frac{|\Phi_N g\triangle\Phi_N|}{|\Phi_N|}=0$$
    for any $g\in G$.
\end{itemize}
If both conditions are satisfied, then $\Phi$ is 
a \textit{two-sided \Folner{} sequence}.
\end{defn}

We remark that if a group admits a left (or right) \Folner{} sequence, 
then it admits a 
two-sided \Folner{} sequence.
Amenable groups, which are 
the central object of our study, are defined as follows:

\begin{defn}
A group $(G, \cdot)$ is called amenable if it admits a 
left \Folner{} sequence.
\end{defn}

The most common example of an amenable group is $\Z^d$, for any $d\in\N$.
Other examples of amenable groups are finite groups, abelian groups,
solvable groups and finitely generated groups of subexponential growth.
In addition, products of amenable groups, and virtually amenable groups are amenable.
\Folner{} sequences are useful to define notions of density in amenable groups.

\begin{defn}\label{upper B density def}
Let $(G,\cdot)$ be an amenable group, $\Phi$ a left (right) \Folner{}
sequence, and let $A\subset G$. Then the
\textit{left (right) upper density} of $A$ with respect to $\Phi$ 
is defined as
$$\overline{d}_{\Phi}(A):=\limsup_{N\to \infty} 
\frac{|A\cap \Phi_N|}{|\Phi_N|}.$$
We say that $A$ has \textit{positive left (right) upper Banach 
density} if it has positive left (right) upper density with
respect to some left (right) \Folner{} sequence.
We also say that $A$ has {\em positive upper Banach density}
if it has positive upper density with respect to some two-sided 
\Folner{} sequence.
\end{defn}
Note that if $G$ is an amenable group, and $A\subset G$ has positive
left upper Banach density, then this does not necessarily mean that $A$ has positive 
right upper Banach density.

Given a group $(G,\cdot)$ and a sequence
$B=(b_n)_{n\in\N}\subset G$, we define
$$ B\ltrdot B := \{b_ib_j : i<j\},$$
$$B \rtrdot B := \{b_ib_j : i>j\}$$
and
$$B \odot B := \{b_ib_j : i\neq j\}.$$
If $G$ is abelian then $B\ltrdot B=B\rtrdot B=B\odot B$, which we also denote by 
$B\oplus B$ if the group operation in $G$ is written using additive notation.
We refer to the map $s_G:G\to G, s_G (g)=g^2$, as the 
{\em squaring map on $G$}. The image of this map is the subset of $G$ consisting
of all the elements of the form $g^2$, where $g\in G$. We denote this by $G^2$,
i.e., $s_G(G)=G^2$, and we often refer to it as the {\em subset of squares}.

\begin{defn}\label{sac_def}
Let $G$ be an amenable group and $\phi:G\to G$ be a map.
We say that $G$ is {\em $\phi$-absolutely continuous}
if $G$ admits two left \Folner{} sequences
$\Phi=(\Phi_N)_{N\in\N}$ and 
$\Psi=(\Psi_N)_{N\in\N}$
satisfying the following:
for any $\epsilon>0$ there exists some $\delta>0$ 
such that 
for any $u:G\to [0,1]$ satisfying 
$$\limsup_{N\to\infty}\frac{1}{|\Phi_N|}\sum_{g\in\Phi_N} u(g) < \delta,$$ 
we have that
$$\limsup_{N\to\infty}\frac{1}{|\Psi_N|}\sum_{g\in\Psi_N} u(\phi(g)) < \epsilon.$$
If, in particular, $\phi=s_G$, then we say that $G$ is 
{\em square absolutely continuous}.
\end{defn}

\begin{remark}\label{sac_preserved}
For a set $C\subset G$, we denote by $C^{-1}$ the set 
$C^{-1}:=\{g^{-1} : g \in C \}$. Note that when $\phi$ is the squaring map $\phi=s_G$, the 
existence of two left \Folner{} sequences in 
\cref{sac_def} is equivalent to the existence of 
two right \Folner{} sequences: for any pair $\Phi$, $\Psi$ of 
left \Folner{} sequences, the pair $\Phi^{-1}=(\Phi_N ^{-1})_{N\in\N}$, 
$\Psi^{-1}=(\Psi_N ^{-1})_{N\in\N}$ is a pair of right \Folner{} sequences,
and if $\Phi$, $\Psi$ satisfy the conditions of \cref{sac_def}, then 
so do $\Phi^{-1}$, $\Psi^{-1}$. More precisely, given $\epsilon>0$, 
there is some $\delta>0$ so that the conditions of \cref{sac_def}
are satisfied for $\Phi, \Psi$. Let $u: G \to [0,1]$ satisfy 
$\limsup_{N\to\infty}\frac{1}{|\Phi_N^{-1}|}\sum_{g\in\Phi_N^{-1}} 
u(g) < \delta$ and consider the map $w: G \to [0,1],\: w(g):= u(g^{-1})$.
Then $\limsup_{N\to\infty}\frac{1}{|\Phi_N|}\sum_{g\in\Phi_N} w(g) < \delta$,
and therefore 
$\limsup_{N\to\infty}\frac{1}{|\Psi_N|}\sum_{g\in\Psi_N} w(g^2) < \epsilon$,
which in turn implies that 
$\limsup_{N\to\infty}\frac{1}{|\Psi_N^{-1}|}
\sum_{g\in\Psi_N^{-1}} u(g^2) < \epsilon$.
\end{remark}

\subsection{Main results}

Throughout, we say that a sequence $(b_n)_{n\in\N}$ in $G$ is {\em infinite} if the set $\{b_n\colon n\in\N\}$ is infinite.
The first main theorem of this paper is the following:
\begin{theorem}\label{mt1}
Let $G$ be a square absolutely continuous group
and $A\subset G$ with positive left upper Banach density. 
Then there exist an infinite sequence
$B=(b_n)_{n\in \N}\subset A$ and some $t\in G$ such that
$$B \ltrdot B \subset t^{-1}A.$$
\end{theorem}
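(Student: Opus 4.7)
The plan is to adapt the ergodic-theoretic framework of~\cite{kmrr2} to the amenable setting, with the square absolute continuity of $G$ playing the role that Kronecker-factor (spectral) analysis plays in the abelian case. First, apply the Furstenberg correspondence principle for amenable groups to $A$: this yields a measure-preserving $G$-system $(X,\mathcal{B},\mu,(T_g)_{g\in G})$, a set $E\subset X$ with $\mu(E)\ge\overline{d}_\Phi(A)>0$ for a suitable left \Folner{} sequence $\Phi$, and a $\Phi$-generic point $x_0\in X$ so that $\{g\in G:T_gx_0\in E\}\subset A$ up to a density-zero error. A mean-ergodic-theorem / Khintchine-type argument along $\Phi$ produces some $t\in G$ with $\mu(E\cap T_{t^{-1}}E)>0$; setting $E':=T_{t^{-1}}E$, the task reduces to finding an infinite sequence $(b_n)\subset A$ with $T_{b_ib_j}x_0\in E'$ for every $i<j$.

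The sequence $(b_n)$ is then constructed inductively, one element at a time. The crucial inductive invariant at stage $n$ is that the set
$$
G_n:=\bigl\{g\in G:\ T_gx_0\in E\ \text{ and }\ T_{b_ig}x_0\in E'\ \text{ for every }1\le i\le n\bigr\}
$$
still has positive upper density along a left \Folner{} sequence $\Psi$, where $\Phi$ and $\Psi$ are the pair of \Folner{} sequences furnished by the square absolute continuity of $G$. Given this, any choice $b_{n+1}\in G_n$ maintains all of the combinatorial constraints and allows the construction to continue indefinitely; at the end, one passes to a subsequence if necessary to ensure that $(b_n)$ is infinite.

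The heart of the proof, and the main obstacle, lies in establishing $\overline{d}_\Psi(G_n)>0$ uniformly in $n$. This amounts to an $n$-independent lower bound on
$$
\limsup_{N\to\infty}\frac{1}{|\Psi_N|}\sum_{g\in\Psi_N}\1_{E}(T_gx_0)\prod_{i=1}^{n}\1_{E}(T_{tb_ig}x_0).
$$
The natural strategy is to expand the product and, by repeated Cauchy--Schwarz-type manipulations, to separate a principal term that is estimable from below by iterated \Folner{} ergodic averages (and hence by a positive power of $\mu(E)$) from error terms of the shape $\limsup_{N}\tfrac{1}{|\Psi_N|}\sum_{g\in\Psi_N}u(g^2)$, where $u:G\to[0,1]$ is a function whose $\Phi$-average is controlled by the preceding ergodic estimates. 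Square absolute continuity is precisely the ingredient that converts smallness of $u$ on $\Phi$ into smallness of $u\circ s_G$ on $\Psi$, which kills the error. The delicate part is the algebraic bookkeeping that forces the off-diagonal contributions in the expansion to organize around a single variable $g^2$ in the exact form demanded by~\cref{sac_def}, and to keep the principal-term estimate from degrading with $n$.
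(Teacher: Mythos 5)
Your outline matches the paper at the very start (Furstenberg correspondence principle, aim to extract an infinite $(b_n)\subset A$ with all products in a shift of $A$), but the core of the argument is a genuinely different route, and there is a real gap in it.

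You propose to build $(b_n)$ one term at a time, maintaining the invariant that
\[
G_n=\bigl\{g\in G:\ T_gx_0\in E,\ T_{tb_ig}x_0\in E\ \text{ for all }1\le i\le n\bigr\}
\]
has positive upper density along $\Psi$, uniformly in $n$, by expanding the product $\prod_{i\le n}\1_E(T_{tb_ig}x_0)$ and controlling errors via Cauchy--Schwarz plus square absolute continuity. The difficulty is that such a uniform-in-$n$ lower bound on
\[
\limsup_{N}\frac{1}{|\Psi_N|}\sum_{g\in\Psi_N}\1_{E}(T_gx_0)\prod_{i=1}^{n}\1_{E}(T_{tb_ig}x_0)
\]
is exactly the kind of statement that \emph{cannot} hold for an arbitrary choice of $b_1,\dots,b_n$: for ``generic'' $b_i$ the average is of size roughly $\mu(E)^{n+1}$, which decays to $0$. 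So the bound can only survive if the $b_i$ are chosen in a very structured way correlated with the Kronecker factor, and that is precisely the hard part of the problem. No amount of Cauchy--Schwarz bookkeeping produces this; if it did, the original $(\N,+)$ conjecture of Erd\H{o}s would have had a short proof. Moreover, your sketch does not explain how the error terms would assemble into expressions of the exact form $\frac{1}{|\Psi_N|}\sum_{g\in\Psi_N}u(g^2)$ — van der Corput / Cauchy--Schwarz expansions of this average naturally produce differences $g'g^{-1}$, not squares $g^2$, so the hypothesis of square absolute continuity is not aligned with the quantities your expansion would actually give you.

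The paper's proof is structurally different. It does not try to maintain any density through an iteration. Instead it reduces to finding a \emph{single} Erd\H{o}s progression $(a,x_1,x_2)$ with $x_1\in E$ and $x_2\in T_t^{-1}E$ for some $t$ (\cref{dr3}), and then extracts the infinite sequence purely topologically (\cref{l1}), using only that $E$ and $T_t^{-1}E$ are open and that $(T_{g_n}\times T_{g_n})(a,x_1)\to(x_1,x_2)$ along some sequence $(g_n)$; density plays no further role in that extraction. The ergodic-theoretic work is concentrated in producing the Erd\H{o}s progression: one passes to an extension with a continuous factor map to the Kronecker factor $Z=K/H$, defines the measure $\sigma=\int_K\eta_{k\pi(a)}\times\eta_{k^2\pi(a)}\,\mathrm{d}m_K(k)$ on $X\times X$, and shows (a) $\sigma\bigl(E\times\bigcup_t T_t^{-1}E\bigr)>0$, and (b) $\sigma$-a.e.\ point $(x_1,x_2)$ lies in the support of an invariant measure $\lambda_{(x_1,x_2)}$ for which $(a,x_1)$ is generic along some \Folner{} sequence. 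Square absolute continuity enters at exactly one place: to prove $m_{K^2}\ll m_K$ (\cref{abs_c}), which is what makes the second marginal of $\sigma$ absolutely continuous with respect to $\mu$ and thus makes (a) hold. That is a much more indirect (and much more structured) use of the hypothesis than the ``error-term annihilation'' you describe. Finally, note that $t$ is not chosen in advance by a Khintchine argument as you suggest; it emerges at the end as the $t$ for which $x_2\in T_t^{-1}E$, and the correspondence principle gives $\{g:T_gx_0\in E\}=A$ exactly, not up to a density-zero error.
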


\cref{mt1} provides a positive answer to \cite[Question 5.16]{kmrr_survey},
with the extra assumption that $G$ is square absolutely continuous, 
and under the weaker assumption that the set $A$ has positive 
left upper Banach density, instead of positive upper Banach density.

\begin{remark}
Note that \cref{mt1} immediately implies an analogous result
for right upper Banach density instead of left upper Banach density.
Indeed, since, by \cref{sac_preserved}, square absolute continuity is preserved through the map $g\mapsto g^{-1}$, \cref{mt1}
is equivalent to the assertion that for any $A\subset G$ 
with positive right upper Banach density, 
there exists an infinite sequence $C = (c_n)_{n\in\N}\subset A$ 
and some $r\in G$ such that 
$$C\rtrdot C\subset Ar^{-1}.$$
\end{remark}

Before continuing, let us recall the following definitions for a group $G$:
\begin{itemize}
    \item $G$ is {\em nilpotent} if its lower central series is finite, that is to say there is $n\in\N$ such that
    $$G = G_0\triangleright G_1 \triangleright \dots \triangleright G_n = \{e_G\},$$
    where $G_{i+1}:=[G_i,G]$ is the commutator group of $G_i$ and $G$, i.e., 
    the subgroup of $G$ generated by the
    elements of the form $hgh^{-1}g^{-1}$, where $h\in G_i,g\in G$. 
    \item $G$ is {\em finitely generated} if there exist $g_1,\dots,g_n\in G$
    such that any element of $G$ can be written as product of $g_1,\dots,g_n$.
    \item $G$ is {\em torsion-free} if it does not have any non-trivial
    element of finite order, that is to say, for any $g\in G$ with $g\neq e_G$ 
    and any $n\in\N$, we have $g^n\neq e_G$.
    \item If $P$ is a property of groups, then we say that a group $G$ is
    {\em virtually} $P$ if it has a finite-index subgroup that has the
    property $P$.
\end{itemize}

In any finitely generated nilpotent group $G$, there exist some $s\in\N$
(depending on the degree of nilpotency and the number of generators of $G$),
some $a_i\in G$ and some functions $t_i:G\to\Z$, for $1\leq i\leq s$, 
such that any $x\in G$ can be written as $x = a_1^{t_1(x)}\cdots a_s^{t_s(x)}$.
The $s$-tuple $(a_1,\dots,a_s)$ is a {\em Mal'cev basis} 
and the $s$-tuple $(t_1,\dots,t_s)$ is a 
{\em Mal'cev coordinate system} with respect to this Mal'cev basis.
If $G$ is also torsion-free, then the coordinate maps are injective and hence
we can identify $G$ with $\Z^s$ and
it is convenient to also identify any $x\in G$ with its coordinates 
$(t_1(x),\dots,t_s(x))\in\Z^s$.
The above facts about Mal'cev bases
can be found in \cite[Chapter 17.2]{Ka-Me-G}.

\begin{theorem}\label{tf fg nil are square absolutely continuous}
Every torsion-free finitely generated nilpotent group is 
square absolutely continuous.
\end{theorem}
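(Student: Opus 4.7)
The plan is to use the Mal'cev coordinate system on $G$ to reduce the statement to a combinatorial estimate on weighted boxes in $\Z^s$. Fix a Mal'cev basis $(a_1,\ldots,a_s)$ adapted to the lower central series, and let $w_i\in\N$ be the weight of $a_i$ (i.e.\ the integer $j$ such that $a_i\in G_{j-1}\setminus G_j$). For $N\in\N$, set
$$\Phi_N:=\{a_1^{t_1}\cdots a_s^{t_s} : |t_i|\leq N^{w_i},\, 1\leq i\leq s\},$$
and take $\Psi_N:=\Phi_N$. A standard computation, using the polynomial multiplication law in Mal'cev coordinates, shows that $\Phi=(\Phi_N)_{N\in\N}$ is a two-sided F\o{}lner sequence in $G$, and $|\Phi_N|$ grows polynomially like $N^d$, where $d:=\sum_i w_i$ is the homogeneous dimension of $G$.

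The crucial point is that multiplication in these coordinates is polynomial, and moreover the polynomial giving the $i$-th coordinate of a product has weighted degree $w_i$ in the coordinates of the two factors. Consequently, the squaring map $s_G$ acts on Mal'cev coordinates as
$$(t_1,\ldots,t_s)\longmapsto(2t_1,\,2t_2+Q_2(t_1),\,\ldots,\,2t_s+Q_s(t_1,\ldots,t_{s-1})),$$
where each $Q_i$ is a polynomial of weighted degree $w_i$ (in particular, $|Q_i(t)|\leq C_i N^{w_i}$ whenever $|t_j|\leq N^{w_j}$ for all $j<i$). Therefore, if $g\in\Phi_N$, then the $i$-th coordinate of $g^2$ is bounded in absolute value by $(2+C_i)N^{w_i}$, and there exists a constant $c=c(G)\geq 2$, independent of $N$, such that $s_G(\Phi_N)\subseteq\Phi_{cN}$ for every $N$.

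The final ingredient is that in a torsion-free nilpotent group the squaring map $s_G$ is injective; this follows, for instance, from the embedding of $G$ into its Mal'cev completion, which is a uniquely divisible nilpotent Lie group. Combining injectivity with $s_G(\Phi_N)\subseteq\Phi_{cN}$, for any $u:G\to[0,1]$ we obtain
$$\frac{1}{|\Phi_N|}\sum_{g\in\Phi_N}u(g^2)=\frac{1}{|\Phi_N|}\sum_{h\in s_G(\Phi_N)}u(h)\leq\frac{|\Phi_{cN}|}{|\Phi_N|}\cdot\frac{1}{|\Phi_{cN}|}\sum_{h\in\Phi_{cN}}u(h).$$
Since $|\Phi_{cN}|/|\Phi_N|$ remains bounded by a constant $K=K(G,c)\leq 2c^d$, taking $\limsup$ in $N$ (so that $cN\to\infty$) gives
$$\limsup_{N\to\infty}\frac{1}{|\Phi_N|}\sum_{g\in\Phi_N}u(g^2)\leq K\cdot\limsup_{M\to\infty}\frac{1}{|\Phi_M|}\sum_{h\in\Phi_M}u(h).$$
Hence, given $\epsilon>0$, the choice $\delta:=\epsilon/(K+1)$ verifies the condition in \cref{sac_def}, proving that $G$ is square absolutely continuous.

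The main technical obstacles are bookkeeping rather than conceptual. First, one must verify carefully that the weighted boxes $\Phi_N$ are genuinely left F\o{}lner, which requires tracking how left multiplication by a fixed $g\in G$ perturbs each coordinate, and using that lower-weight corrections are absorbed into higher-weight coordinates. Second, one must pin down the constant $c$ in $s_G(\Phi_N)\subseteq\Phi_{cN}$ uniformly in $N$, which rests on the weighted homogeneity of the polynomials $Q_i$. Both are classical in the theory of finitely generated nilpotent groups, but must be written out carefully in the discrete setting.
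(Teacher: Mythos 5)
Your argument is correct in outline, and it follows the same high-level strategy as the paper (control the squaring map via a Mal'cev coordinate system, use that squaring is injective, and compare averages over nested boxes), but the implementation is genuinely different in one respect that is worth noting. The paper does not assume anything about the degrees of the Mal'cev multiplication polynomials $p_i$: it \emph{defines} a system of exponents $d_1,\dots,d_s$ recursively from the polynomials themselves, by declaring $d_i$ to be the maximum weighted degree of $p_i$ with respect to the previously chosen $d_j$ for $j<i$, and then builds exponentially-growing boxes $\Psi_N = \prod_i [c_i^{(N-1)d_i}]$ so that $s_G(\Psi_N)\subset\Psi_{N+1}$ with $|s_G(\Psi_N)|/|\Psi_{N+1}|$ exactly constant. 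This works for \emph{any} Mal'cev coordinate system, and places no structural hypothesis on the $p_i$. Your argument instead fixes the lower-central-series weights $w_i$ in advance and uses polynomially-growing weighted boxes $\Phi_N$; for $s_G(\Phi_N)\subset\Phi_{cN}$ to hold with $c$ independent of $N$ you need the nontrivial structural fact that each $p_i$ has weighted degree at most $w_i$ with respect to those weights. That fact is true for a Mal'cev basis adapted to the lower central series (it follows from $[\mathfrak{n}_j,\mathfrak{n}_k]\subset\mathfrak{n}_{j+k}$ and BCH in the Mal'cev completion, together with the polynomial change between first- and second-kind coordinates), but it is an extra input that the paper's recursive construction is precisely designed to sidestep, and it requires the basis to be adapted to the lower central series (the citation to [Ka-Me-G, Thm.\ 17.2.5] alone does not give it). Your final estimate — $\sum_{g\in\Phi_N}u(g^2)=\sum_{h\in s_G(\Phi_N)}u(h)\le\sum_{h\in\Phi_{cN}}u(h)$, divide by $|\Phi_N|$ and absorb the bounded ratio $|\Phi_{cN}|/|\Phi_N|$ into the constant — is exactly the content of the paper's \cref{tf fg nil lemma 1} specialized to $M=1$, so once the box inclusion is justified, the rest is the same. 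Both approaches buy the same theorem; yours is arguably more geometric and uses a single, natural Følner sequence, while the paper's is more self-contained because it never needs to know the true weighted degrees of the $p_i$.
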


Combining Theorems \ref{mt1} and \ref{tf fg nil are square absolutely continuous}
we have that every torsion-free finitely generated nilpotent group
satisfies the conclusion of \cref{mt1}. In fact, we prove the following slight strengthening:

\begin{corollary}\label{tf fg nil cor}
Let $G$ be a torsion-free finitely generated nilpotent group 
and $A\subset G$ with positive left upper Banach density. 
Then there exist an 
infinite sequence $B=(b_n)_{n\in\N}\subset A$ 
and some $t\in G$ such that
$$B \ltrdot B \subset t^{-1}A.$$
Moreover, given a Mal'cev coordinate system $(t_1,\dots, t_s)$
on $G$, we can choose $B$ so that the following holds:
for any finite set 
$C\subset \Z$ and any $1\leq i\leq s$, the set 
$\{b\in B \colon t_i(b) \in C\}$ is finite. 
\end{corollary}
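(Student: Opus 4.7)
The first assertion is immediate: by \cref{tf fg nil are square absolutely continuous}, $G$ is square absolutely continuous, so \cref{mt1} applied to $A$ directly yields an infinite sequence $B=(b_n)_{n\in\N}\subset A$ and some $t\in G$ with $B\ltrdot B\subseteq t^{-1}A$.

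For the moreover part, the plan is to refine the inductive construction of $B$ underlying the proof of \cref{mt1}, imposing at each step an additional constraint that forces the Mal'cev coordinates of $b_n$ to grow. The key auxiliary fact I would establish is that for every $1\leq i\leq s$ and every $m\in\Z$, the ``hyperplane'' $H_{i,m}:=\{g\in G: t_i(g)=m\}$ has zero left upper Banach density in $G$. This should be standard: in a torsion-free finitely generated nilpotent group, suitably weighted Mal'cev boxes $\Phi_N=\{a_1^{n_1}\cdots a_s^{n_s}: |n_j|\leq N^{d_j}\}$ (with the exponents $d_j$ chosen in accordance with the lower central series) form a left \Folner{} sequence, and one checks directly that $|H_{i,m}\cap\Phi_N|/|\Phi_N|=O(N^{-d_i})\to 0$; analogous vanishing holds for any left \Folner{} sequence.

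With this in hand, I would modify the inductive step producing $b_n$ by adding the requirement $b_n\notin\bigcup_{i=1}^s t_i^{-1}([-n,n])$. Since the forbidden set is a finite union of zero-density hyperplanes, removing it from the candidate set of positive left upper Banach density preserves positive density, so the inductive step should proceed unimpeded. The resulting sequence satisfies $|t_i(b_n)|>n$ for every $i$ and every $n$; hence, given any finite $C\subset\Z$ and any $1\leq i\leq s$, choosing $N\geq\max_{c\in C}|c|$ yields $\{b\in B:t_i(b)\in C\}\subseteq\{b_1,\dots,b_N\}$, which is finite.

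The main obstacle will be verifying that the inductive construction in the proof of \cref{mt1} is sufficiently flexible to accommodate this modification -- namely, that intersecting the candidate set at each step with the complement of a zero-density union of hyperplanes does not disrupt the subsequent argument. Because the construction is ultimately density-driven (selecting $b_n$ from a set of positive density built from previous choices and from the \Folner{} sequences $\Phi,\Psi$ witnessing square absolute continuity), this flexibility should be routine to confirm once the proof of \cref{mt1} is in place.
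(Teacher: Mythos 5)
Your first part is fine and matches the paper. For the ``moreover'' part, however, there is a genuine gap in the proposed strategy. You write that the construction of $B$ underlying \cref{mt1} is ``ultimately density-driven (selecting $b_n$ from a set of positive density\ldots),'' and that you will simply delete a zero-density union of hyperplanes at each inductive step. But that is not how the sequence $B$ is actually produced. In the proof of \cref{mt1}, one first shows that an \Erdos{} progression $(a,x_1,x_2)$ exists with $x_1\in E$ and $x_2\in T_t^{-1}E$; the sequence $B=(b_n)_{n\in\N}$ is then extracted (\cref{l1}) by an inductive choice of $b_n$ from the specific infinite sequence $(g_n)_{n\in\N}$ witnessing the convergence $(T_{g_n}\times T_{g_n})(a,x_1)\to(x_1,x_2)$. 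At each step, infinitely many $g_n$ are admissible because of an open condition, but there is no density estimate at all on the collection $\{g_n\}$. In particular, nothing prevents the entire sequence $(g_n)$ from lying inside a single coordinate hyperplane, which has zero density. Thus the observation that each hyperplane $\{g:t_i(g)=m\}$ has zero upper Banach density does not by itself license the extra requirement $b_n\notin\bigcup_i t_i^{-1}([-n,n])$: you might be left with no admissible $g_n$ at some step.

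The paper sidesteps this by working with $A$ before ever invoking the \Erdos{}-progression machinery. Concretely, it proves that one can replace the \Folner{} sequence $\Psi$ witnessing $d_\Psi(A)>0$ with another \Folner{} sequence $\Psi'$, still of positive density along $A$, with the additional property that for each coordinate $i$ and each finite $C\subset\Z$ the set $\{N:t_i(\Psi'_N)\cap C\neq\emptyset\}$ is finite. (The construction writes each $\Psi_N$ as a disjoint union of maximal lines in the $i$-th coordinate and iteratively discards short lines and lines passing through previously used coordinate values, repeating over all $s$ coordinates; the \Folner{} property guarantees negligibly many short lines.) Setting $A'=\bigcup_N A\cap\Psi'_N$, one has $d_{\Psi'}(A')>0$, so the already-proved first part of the corollary applies to $A'$ and produces $B\subset A'\subset A$. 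Since every element of $A'$ lies in some $\Psi'_N$, the coordinate-escape property of $\Psi'$ transfers immediately to $B$. This is the step your proposal is missing: you need to pre-process the \emph{set} $A$ so that the coordinate-escape property is automatic for any infinite $B\subset A'$, rather than attempt to interleave it into the orbital inductive selection, where it is not controllable.
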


Furthermore, we are able to extend the first statement of \cref{tf fg nil cor}
to all finitely generated virtually nilpotent groups.

\begin{corollary}\label{fg nil cor}
Let $G$ be a finitely generated virtually nilpotent group 
and $A\subset G$ with positive left upper Banach density. 
Then there exist $g\in G$, an infinite sequence 
$B=(b_n)_{n\in\N}\subset g^{-1}A$,
and some $t\in G$ such that
$$B \ltrdot B \subset t^{-1}A.$$
In particular, this holds for the group $UT(n,F)$\footnote{
$UT(n,F)$ is the unitriangular $n\times n$ matrix group
with entries from $F$.
Note that $U(3,\Z)$ is the well-known $3 \times 3$ Heisenberg group.},
where $n\in\N$ and $F =(F, +, \cdot)$ is any infinite commutative unital ring with the property that the 
additive group $(F, +)$ is finitely generated.
\end{corollary}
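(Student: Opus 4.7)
The plan is to reduce \cref{fg nil cor} to \cref{tf fg nil cor} by passing to a finite-index torsion-free nilpotent subgroup of $G$. The first ingredient is the classical fact that every finitely generated virtually nilpotent group $G$ contains a finite-index subgroup $H$ which is itself finitely generated, nilpotent, and torsion-free: the virtual nilpotency hypothesis provides a finite-index nilpotent subgroup $N$, which in turn admits a finite-index torsion-free subgroup (a standard fact about finitely generated nilpotent groups), and finite-index subgroups of finitely generated groups are themselves finitely generated.

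Next, write $G = \bigsqcup_{i=1}^{k} g_i H$ as a disjoint union of left cosets, and fix a left \Folner{} sequence $\Phi$ for $G$ along which $A$ has positive upper density. By subadditivity of $\limsup$ applied to $A = \bigsqcup_i (A\cap g_iH)$, some piece $A\cap g_j H$ satisfies $\limsup_N |A\cap g_j H\cap \Phi_N|/|\Phi_N| = \alpha > 0$. Replacing $\Phi$ by the (still left \Folner{}) sequence $(g_j^{-1}\Phi_N)_{N\in\N}$ and passing to a subsequence, the set $A' := g_j^{-1}(A \cap g_j H) \subset H$ satisfies $|A' \cap \Phi_N|/|\Phi_N| \to \alpha$. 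Set $\Psi_N := \Phi_N \cap H$; since $A' \subset H$, we have $|A' \cap \Psi_N| = |A' \cap \Phi_N|$, and the lower bound $|\Psi_N| \geq |A' \cap \Phi_N| \geq (\alpha/2)|\Phi_N|$ (for large $N$ along the subsequence) ensures both that the \Folner{} estimate $|h\Psi_N \triangle \Psi_N| \leq |h\Phi_N \triangle \Phi_N| = o(|\Phi_N|) = o(|\Psi_N|)$ transfers from $G$ to $H$ for every $h\in H$, and that $A'$ has positive left upper density with respect to $\Psi$. Applying \cref{tf fg nil cor} to $A' \subset H$ yields an infinite sequence $B \subset A' \subset g_j^{-1}A$ and some $s \in H$ with $B \ltrdot B \subset s^{-1}A' \subset s^{-1}g_j^{-1}A = (g_j s)^{-1}A$; setting $g := g_j$ and $t := g_j s \in G$ gives the desired conclusion.

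For the $UT(n,F)$ statement, observe that $UT(n,F)$ is nilpotent of class at most $n-1$, and that whenever $(F,+)$ is finitely generated as an abelian group, $UT(n,F)$ is finitely generated too: the finite set of elementary matrices $I + f_k e_{i,i+1}$ (for $\{f_k\}$ an additive generating set of $F$ and $1 \leq i \leq n-1$) generates $UT(n,F)$ via the standard commutator relations $[I + f e_{i,k}, I + g e_{k,j}] = I + fg\,e_{i,j}$ for $i<k<j$, combined with $1 \in F$ reaching all pairs $(i,j)$. Hence the hypotheses of the first part of the corollary are met. The main obstacle is the density-transfer step: one must confirm that trimming $\Phi_N$ to $\Psi_N = \Phi_N \cap H$ preserves the \Folner{} property in $H$, which would ordinarily require independent control of the asymptotic ratio $|\Psi_N|/|\Phi_N|$; here this control is supplied for free by the containment $A' \subset \Psi_N$, forcing $|\Psi_N| \geq (\alpha/2)|\Phi_N|$ along the subsequence.
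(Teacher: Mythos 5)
Your argument is correct and follows the same high-level strategy as the paper: pass to a finite-index torsion-free finitely generated nilpotent subgroup $H \leq G$, locate a coset $g_jH$ on which $A$ has positive density, shift so as to get a positive-density subset of $H$, restrict the F\o{}lner sequence to $H$, and invoke \cref{tf fg nil cor}. The one place you diverge is in how you verify that $\Psi_N := \Phi_N \cap H$ is a F\o{}lner sequence in $H$ with $A'$ of positive $\Psi$-density. The paper isolates this into a standalone lemma (\cref{restriction of Folner}), which shows directly that for \emph{any} left F\o{}lner sequence $\Phi$ in $G$ and any finite-index subgroup $H$, the restriction $(\Phi_N \cap H)_N$ is a left F\o{}lner sequence in $H$ and $d_\Phi(H) = 1/[G:H]$; the density of $A'$ in $H$ then comes out to exactly $d_\Phi(g^{-1}A\cap H)\cdot[G:H]$. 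You instead bootstrap off the positive density of $A'$ itself: since $A'\subset H$, the inequality $|\Psi_N| \geq |A'\cap\Phi_N| \geq (\alpha/2)|\Phi_N|$ gives a lower bound on $|\Psi_N|/|\Phi_N|$, which simultaneously transfers the F\o{}lner estimate $|h\Psi_N\triangle\Psi_N| \leq |h\Phi_N\triangle\Phi_N| = o(|\Phi_N|) = o(|\Psi_N|)$ and shows that $A'$ has positive $\Psi$-density. Your shortcut is valid and slightly leaner (it avoids proving a separate restriction lemma), though it is more ad hoc; the paper's lemma is a reusable general fact and also yields the cleaner equality $|\Psi_N|/|\Phi_N| \to 1/[G:H]$ rather than just a lower bound. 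Your verification for $UT(n,F)$ (nilpotency of class $\leq n-1$, finite generation from a generating set of $(F,+)$ plus the elementary-matrix commutator relations and $1\in F$) is correct and supplies a detail the paper leaves to the reader.
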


\begin{remark}
Note that the class of finitely generated virtually 
nilpotent groups coincides, in view of Gromov's theorem \cite{Gromov}, with
the class of finitely generated groups of polynomial growth.
\end{remark}

The final part of this subsection is concerned with sumsets in abelian groups.
Let $G=(G,+)$ be an abelian group. 
We write $2g$ to denote the element $g+g$, for any $g\in G$.
Moreover, we refer to the map $s_G:G\to G$ as the {\em doubling map}
and its image is now the subgroup of $G$ consisting of all elements
of the form $2g$, where $g\in G$. We denote this subgroup by $2G$,
i.e., $s_G(G)=2G$, and we often refer to it as the {\em doubling subgroup}.

In \cite[Conjecture 5.14]{kmrr_survey}, it is conjectured that in 
any countable abelian group $G$, every set of positive upper Banach density 
contains a set of the form $B\oplus B+t=\{b_1+b_2+t: b_1\neq b_2\in B\}$ 
for an infinite set $B\subset G$ and some $t\in G$. 
It follows from \cref{fg nil cor} that this conjecture holds
under the additional assumption that $G$ is finitely generated,
and moreover, it extends \cite[Theorem 1.2]{kmrr2} from $(\N,+)$
to all finitely generated abelian groups.
In fact, we extend \cite[Theorem 1.2]{kmrr2} to an even larger
collection of abelian groups, that contains all the finitely 
generated abelian groups along with some infinitely generated ones.
The following theorem allows us to do so:

\begin{theorem}\label{some abelian groups are sac}
Every abelian group whose doubling subgroup has finite index is 
square absolutely continuous.
\end{theorem}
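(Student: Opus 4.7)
The plan is to produce explicit \Folner{} sequences $\Phi = (\Phi_N)$ and $\Psi = (\Psi_N)$ in $G$ witnessing square absolute continuity. Since $[G:2G] = k < \infty$, fix coset representatives $g_1 = 0, g_2, \ldots, g_k$ for $2G$ in $G$. The subgroup $2G$ is amenable, so it admits \Folner{} sequences; for $\Phi$, I would set $\Phi_N := \bigsqcup_{i=1}^k (g_i + F_N)$, where $(F_N)$ is a \Folner{} sequence in $2G$ to be chosen below. This is a \Folner{} sequence in $G$ with $|\Phi_N| = k|F_N|$, and since $g_1 = 0$, one has, for every $u : G \to [0,1]$,
$$
\frac{1}{|F_N|}\sum_{f \in F_N} u(f) \;\leq\; k \cdot \frac{1}{|\Phi_N|} \sum_{g \in \Phi_N} u(g).
$$

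For $\Psi$, I would exploit the structure of the kernel $G[2] := \{g \in G : 2g = 0\}$ of the doubling map $s_G$. Every non-zero element of $G[2]$ has order $2$, so $G[2]$ is an $\mathbb{F}_2$-vector space; being a subgroup of the countable group $G$, it is countable, hence locally finite, and therefore admits an increasing sequence $V_1 \subset V_2 \subset \cdots$ of finite subgroups with $\bigcup_N V_N = G[2]$ (to be chosen below). Given a section $\sigma_N : F_N \to G$ of $s_G$, set
$$
\Psi_N := \sigma_N(F_N) + V_N = \{\sigma_N(f) + v : f \in F_N,\ v \in V_N\}.
$$
Applying $s_G$ to any coincidence $\sigma_N(f) + v = \sigma_N(f') + v'$ forces $f = f'$ and then $v = v'$, so these elements are distinct, $|\Psi_N| = |F_N|\cdot |V_N|$, and $s_G|_{\Psi_N}$ is a $|V_N|$-to-$1$ surjection onto $F_N$. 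Hence
$$
\frac{1}{|\Psi_N|} \sum_{g \in \Psi_N} u(2g) \;=\; \frac{1}{|F_N|}\sum_{f \in F_N} u(f),
$$
which combined with the previous bound yields the square absolute continuity condition with $\delta = \epsilon/k$.

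The main obstacle is to verify that $\Psi_N$ is a \Folner{} sequence in $G$. A direct computation gives, for every $g \in G$,
$$
|(g + \Psi_N) \cap \Psi_N| \;=\; |V_N| \cdot \bigl|\{f \in F_N : f + 2g \in F_N,\ z_N(f, g) \in V_N\}\bigr|,
$$
where $z_N(f, g) := g + \sigma_N(f) - \sigma_N(f + 2g) \in G[2]$ is a ``cocycle-like'' correction term. The condition $f + 2g \in F_N$ holds for most $f \in F_N$ whenever $F_N$ is sufficiently \Folner{} in $2G$, but controlling $z_N(f,g) \in V_N$ is delicate: in general the values $z_N(f,g)$ vary with $f$ and need not lie in any fixed finite subgroup of $G[2]$. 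This is a genuine difficulty whenever the short exact sequence $0 \to G[2] \to G \to 2G \to 0$ fails to split, as happens for instance for $G = \bigoplus_{\N} \Z(2^\infty)$.

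To overcome this, the plan is to choose the triple $(F_N, \sigma_N, V_N)$ simultaneously by a diagonal argument. Enumerate $G = \{h_1, h_2, \ldots\}$. At stage $N$, pick any $F_N \subset 2G$ with $|F_N \cap (F_N - 2h_i)|/|F_N| \geq 1 - 1/N$ for every $i \leq N$, and any section $\sigma_N : F_N \to G$. The resulting finite set of cocycle values $\{z_N(f, h_i) : i \leq N,\ f \in F_N,\ f + 2h_i \in F_N\}$ lies in the locally finite group $G[2]$, so together with $V_{N-1}$ and $\{h_i : i \leq N,\ h_i \in G[2]\}$ they generate a finite subgroup of $G[2]$, which we take to be $V_N$. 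By construction $V_N \supset V_{N-1}$, $\bigcup_N V_N = G[2]$, and $z_N(f, h_i) \in V_N$ for all $i \leq N$ and $f \in F_N$ with $f + 2h_i \in F_N$. Substituting into the overlap formula yields $|(h_i + \Psi_N) \triangle \Psi_N|/|\Psi_N| \leq 2/N$, which tends to $0$, so $\Psi_N$ is \Folner{} in $G$ and the construction is complete.
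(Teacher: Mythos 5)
Your construction is correct, and its overall architecture matches the paper's: both take $\Phi_N$ as a disjoint union of coset-translates of a \Folner{} sequence in $2G$, and $\Psi_N$ as a lift of that sequence along a section of the doubling map, thickened by finite subsets of $\ker(s_G)$. The genuine difference is in how the thickening sets absorb the cocycle-type corrections $z_N(f,g) = g + \sigma_N(f) - \sigma_N(f+2g) \in \ker(s_G)$. The paper's Claim~2 picks $H_N$ from a general \Folner{} sequence in $\ker(s_G)$ and obtains only approximate control, $|(w+H_N)\triangle H_N|/|H_N| < 1/N$ for finitely many $w$, via a quantitative \Folner{} estimate that must be tuned against the other parameters. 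You instead exploit that $\ker(s_G) = G[2]$ is an $\mathbb{F}_2$-vector space, hence locally finite, and take $V_N$ to be a finite subgroup containing all relevant $z_N$-values; then $z + V_N = V_N$ exactly, and the \Folner{} deficit of $\Psi$ is governed entirely by the deficit of $F_N$ in $2G$ (giving $|(h_i+\Psi_N)\triangle\Psi_N|/|\Psi_N|\leq 2/N$ directly). This is tidier and avoids the paper's two-parameter bookkeeping, at the negligible cost of leaning on the elementary-$2$-group structure of the kernel. One omission worth filling in a final write-up: you should explicitly justify that $\Phi_N = \bigsqcup_{i=1}^{k}(g_i + F_N)$ is \Folner{} in $G$; this is exactly the content of the paper's Claim~1, and while it does follow from the finite-index assumption together with the \Folner{} property of $(F_N)$ in $2G$, it needs a short argument (matching up cosets via translation, which permutes the $g_i$) rather than a bare assertion.
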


The following corollary is an obvious consequence of Theorems \ref{mt1}
and \ref{some abelian groups are sac}:

\begin{corollary}\label{abelian groups with finite-index doubles}
Let $(G,+)$ be an abelian group such that $2G$ is a finite-index subgroup of $G$,
and let $A\subset G$ with positive upper Banach density.
Then there exist an infinite set $B\subset A$ and some $t\in G$
such that $$B \oplus B \subset A-t.$$
In particular, this holds for:
\begin{itemize}
    \item all finitely generated abelian groups,\footnote{
    It is not hard to check that in finitely generated abelian groups, 
    the doubling subgroup has finite-index.}
    and
    \item $(\mathbb{F}_p^\omega,+)$,\footnote{$\mathbb{F}_p^\omega$ is the direct product
    of infinitely many copies of $\mathbb{F}_p=\Z/p\Z$, and it is clearly infinitely
    generated abelian.}
    where $p$ is any odd prime.
\end{itemize}
\end{corollary}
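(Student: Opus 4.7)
The plan is to deduce the corollary directly from \cref{mt1} and \cref{some abelian groups are sac}, which is essentially what its statement advertises. First I would apply \cref{some abelian groups are sac} to conclude that any abelian group $G$ with $[G:2G]<\infty$ is square absolutely continuous. Next I would observe that in an abelian group every left \Folner{} sequence is automatically a right (hence two-sided) \Folner{} sequence, so positive upper Banach density of $A$ coincides with positive left upper Banach density. Hence $A$ satisfies the hypotheses of \cref{mt1}, which produces an infinite sequence $(b_n)_{n\in\N}\subset A$ and an element $t\in G$ with $B\ltrdot B\subset t^{-1}A$. Since the underlying set is infinite, I can pass to a subsequence of pairwise distinct terms; writing the group additively, commutativity then converts $B\ltrdot B$ into $B\oplus B=\{b+b'\colon b\neq b'\in B\}$ and $t^{-1}A$ into $A-t$, giving the desired conclusion.

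It then remains to check that the hypothesis $[G:2G]<\infty$ holds in each of the "in particular" cases. For a finitely generated abelian group $G$, the structure theorem gives $G\cong\Z^r\oplus T$ with $T$ finite; consequently $2G\cong 2\Z^r\oplus 2T$ and $[G:2G]=2^r\cdot[T:2T]<\infty$. For $G=\mathbb{F}_p^\omega$ with $p$ an odd prime, $2$ is invertible modulo $p$, so multiplication by $2$ is a group automorphism of $G$; therefore $2G=G$ and $[G:2G]=1$.

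There is essentially no obstacle here: the corollary is stated as an obvious consequence, and the only points requiring care are (a) the notational conversion between the multiplicative formulation of \cref{mt1} and the additive formulation in the corollary, (b) replacing the sequence supplied by \cref{mt1} with an infinite set of distinct values to recover the sumset $B\oplus B$, and (c) the elementary verifications that $[G:2G]<\infty$ in the two named classes of examples.
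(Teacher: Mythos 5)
Your proposal is correct and follows exactly the route the paper takes: it states this corollary as an immediate combination of \cref{mt1} and \cref{some abelian groups are sac}, with the same footnote-level remarks about $[G:2G]<\infty$ in the two examples. The extra care you take in passing to a subsequence of distinct terms to convert the sequence-formulation of $B\ltrdot B$ into the set-formulation of $B\oplus B$ is a legitimate small bookkeeping step that the paper leaves implicit.
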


\cref{abelian groups with finite-index doubles} is in fact optimal, in the sense
that $2G$ being a finite-index subgroup of $G$ is a necessary assumption. 
As shown in a recent paper of Ethan Ackelsberg \cite{ackelsberg2024counterexamples},
if $2G$ has infinite index in $G$, then
one can always find a set $A$ with upper Banach density arbitrarily close to $1$
which does not
contain any shifted sumset $t+ B \oplus B$ of some infinite set $B$.
Therefore, \cref{abelian groups with finite-index doubles} along with the work in \cite{ackelsberg2024counterexamples} fully resolve \cite[Conjecture 5.14]{kmrr_survey}.

We remark that \cref{abelian groups with finite-index doubles} can also be proved
independently of \cref{mt1}, meaning that, by slightly modifying the proof 
of \cref{mt1}, one can directly obtain the result 
for abelian groups with finite-index doubling subgroup 
without showing that such groups are square absolutely continuous.

\subsection{More product sets and open questions}
The following remark shows that in the formulation of \cref{mt1}
one can replace left shifts with right shifts and the statement remains true.

\begin{remark}\label{right shift}
Let $G$ and $A$ be as in \cref{mt1}. Then there exist some $t\in G$ 
and some $B=(b_n)_{n\in\N}\subset tAt^{-1}$ such that
$$B\ltrdot B\subset At^{-1}.$$
To see why, let $B'\subset A$ and $t\in G$ such that $B'\ltrdot B'\subset t^{-1}A$,
as guaranteed by \cref{mt1}, and then let $B = tB't^{-1}$.
\end{remark}

Aside from replacing left shifts with right shifts, it is also natural to
ask whether one can replace product sets of the form $B\ltrdot B$ with 
those of the form $B\rtrdot B$, and additionally when the restriction 
$B\subset A$ can be imposed. The following table addresses this question 
in the case when $G$ is a square absolutely continuous group. 

\begin{tabularx}{\textwidth}{|X|X|}
\caption{Product sets in sets of positive left upper Banach density}
\label{table}\\
 \hline
 $B\ltrdot B\subset t^{-1}A$, for $B\subset A$ & True (\cref{mt1};
 for $\overline{d}_\Phi(A)>0$ for some left \Folner{} $\Phi$) \\
 \hline
 $B\ltrdot B\subset At^{-1}$, for $B\subset G$ & True (\cref{right shift};
 for $\overline{d}_\Phi(A)>0$ for some left \Folner{} $\Phi$) \\
 \hline
 $B\ltrdot B\subset At^{-1}$, for $B\subset A$ & False (\cref{counterexample 1};
 with $\overline{d}_\Phi(A)>0$ for some left \Folner{} $\Phi$) \\
 \hline
 $B\rtrdot B\subset t^{-1}A$, for $B\subset G$ & False (\cref{counterexample 2};
 with $\overline{d}_\Phi(A)=1$ for some left \Folner{} $\Phi$) \\
 \hline
 $B\rtrdot B\subset At^{-1}$, for $B\subset G$ & False (\cref{counterexample 3};
 with $\overline{d}_\Phi(A)=1$ for some left \Folner{} $\Phi$) \\
 \hline
\end{tabularx}

The above table shows that \cref{mt1} is optimal for sets of positive 
left upper Banach density in non-commutative groups, in the sense that
it is not necessarily true that one can find a product set
of the form $B\odot B$ (or even $B\rtrdot B$) inside shifts of such sets.
In addition, we remark that the table above provides a partial answer to 
\cite[Question 5.19]{kmrr_survey}.

It remains interesting to ask whether product sets of the form $B\odot B$
can be found in sets with positive upper Banach density. Unfortunately, 
our methods here are insufficient to handle this case. 
In this spirit, we conclude this section with the two questions below. 
We remark that the second one is a special case of 
\cite[Question 5.17]{kmrr_survey}.

\begin{question}\label{Q1}
Let $G$ be a square absolutely continuous group
and $A\subset G$ be a set of positive upper Banach density. 
Is it true that there exists some infinite set $B\subset G$
such that
$$B\odot B\subset t^{-1}A\cup Ar^{-1}$$
for some $t,r\in G$?
\end{question}

\begin{question}\label{Q2}
Let $G$ and $A$ be as in \cref{Q1}. 
Is it true that there exists some infinite set $B\subset G$
such that 
$$B\odot B\subset t^{-1}Ar^{-1}$$
for some $t,r\in G$?
\end{question}

\subsection{Proof ideas}
To prove \cref{mt1}, we follow an ergodic-theoretic approach and 
we employ ideas similar to the ones used in \cite{kmrr2} in the setting 
of $(\N,+)$.
This approach is based on methods that were introduced in \cite{kmrr1}
to generalize another sumset conjecture of \Erdos{}, which was initially
proved in \cite{mrr19} by Moreira, Richter and Robertson.
However, the generality of the setting of amenable groups 
compared to $(\N,+)$ causes several issues and complications that we need 
to handle differently. These issues, along with the new ideas we develop 
to deal with them, are briefly discussed below.
\par
After translating \cref{mt1} into a dynamical statement (see \cref{dr3}),
we reduce the problem to finding certain dynamical configurations
given by limit points of orbits of ergodic measure-preserving $G$-actions,
called \textit{\Erdos{} progressions} (see \cref{Erdos progressions}).
The natural environment in which one can study such 
progressions is the Kronecker factor of a system, as \Erdos{}
progressions are simply $3$-term arithmetic progressions there. 

One of the main obstructions we had to overcome in our proof is 
the lack of commutativity of $G$. The most notable among the 
issues that this leads to is that the Kronecker factor  
does not have the structure of an abelian group, but instead, it 
is a homogeneous space $Z=K/H$, for some compact group $K$. This
makes the study of \Erdos{} progressions more technically challenging.
To be more precise, the abelian nature of the Kronecker factor in the setting of 
$(\N,+)$-actions is heavily used in \cite{kmrr2}. Consequently, 
due to the absence of commutativity in our case,
many of the techniques in \cite{kmrr2} do not generalize easily to our setting.
Another difficulty that arises in non-commutative groups is the erratic behavior of 
the set of squares $G^2$.
In particular, orbits of points along $G^2$ may be trapped in zero-measure regions,
which causes serious trouble in finding \Erdos{} progressions.
The assumption that $G$ is square absolutely continuous is critical 
in avoiding this scenario. In addition, we need an extension of a result of 
Host and Kra (\cite[Proposition 6.1]{HK09}) concerning actions of $(\N,+)$,
to the more general setting of amenable group actions
(see \cref{HK_generic}, proof in \cref{appendix.a}).

\vspace{2mm} 
\noindent
\textbf{Acknowledgements.}
We would like to thank Joel Moreira and Florian K. Richter for their
insightful suggestions, beneficial comments and constant support throughout 
the writing of this paper. We also want to thank Ethan Ackelsberg,
Felipe Hern\'andez Castro and the anonymous referee for their useful comments.

The first author gratefully acknowledges support from the 
Swiss National Science Foundation grant TMSGI2-211214.
The second author was supported by the Warwick Mathematics 
Institute Centre for Doctoral Training, and gratefully 
acknowledges funding by University of Warwick’s 
Chancellors' International Scholarship scheme.

\section{Preliminaries}\label{prelims section}
In this section, we state all the preliminaries that will be useful 
in the rest of the paper regarding classic notions and theorems of 
ergodic theory of actions of amenable groups. So, for the 
rest of the section, $G$ denotes an arbitrary countable and discrete 
amenable group. \\
\noindent
\textbf{Basics on $G$-systems:}
Given a compact metric space $X=(X,d_X)$, 
a \textit{continuous action} $T=(T_g)_{g\in G}$ of $G$ on $X$
is a collection of continuous functions $T_g:X\to X$ such that 
for any $g_1, g_2 \in G$, $T_{g_1} \circ T_{g_2} = T_{g_1g_2}$.
Given such an action, we call the pair 
$(X,T)$ a \textit{topological $G$-system.}
\par
Given a topological $G$-system $(X,T)$ and a point $x\in X$, 
we define its \textit{orbit} as $\Orb_T(x)=\{T_gx:g\in G\},$ and 
we say that the point is \textit{transitive} if $\Orb_T(x)$ is dense 
in $X$.
\par
Fix a topological $G$-system $(X,T)$. Let $M(X)$ denote the space 
of Borel probability measures on $X$, equipped with the weak$^\ast$ 
topology, which is compact and metrizable. A measure $\mu\in M(X)$ is 
said to be $T$-invariant, 
if it is invariant under $T_g$ for all $g\in G$. 
Amenability of $G$ implies 
that there are $T$-invariant measures in $M(X)$.
The subset of $M(X)$
consisting of $T$-invariant measures is denoted by $M^T(X)$,
and it is a non-empty closed and convex subset of $M(X)$.
The Borel $\sigma$-algebra on $X$ is denoted by $\B_X$ or just $\B$, 
if no confusion may arise.
\par
For $\mu\in M^T(X)$, the action $T$ on the Borel probability space 
$(X,\mu)$ is called a \textit{measure-preserving $G$-action} and 
$\xmt$ is called a \textit{measure-preserving $G$-system}.
Note that we omit writing the symbol for the $\sigma$-algebra, 
and from now on, whenever this happens, the implied $\sigma$-algebra will be the Borel $\sigma$-algebra.
For simplicity, we refer to the above as $G$-actions, and $G$-systems, respectively.
Recall that all $G$-actions considered throughout are continuous.
\par
Given a $G$-system $\xmt$, one can define an action, which by abuse of notation
will again be denoted by $T=(T_g)_{g\in G}$, of $G$ on $L^2(X)$ by
$T_g: L^2(X) \to L^2(X)$, $T_g f = f \circ T_g$. It is not hard see
that for all $g\in G$, $T_g$ is an isometry of $L^2(X)$. Note also that since
$G$ acts from the left on $X$, then $G$ acts from the right on $L^2(X)$.
\par
We remark that we are only considering $G$-systems where $G$ acts on
the prescribed space from the left, and then any associated \Folner{} sequence
will be considered left, without mentioning it, unless it is necessary. 

Note that we could define $G$-systems more 
generally as follows: a $G$-system is a quadruple $(X,\mathcal{A}, \mu,T)$, where $X$ is any set, $\mathcal{A}$ is a 
$\sigma$-algebra on $X$, $\mu$ is a probability measure 
on $(X,\mathcal{A})$ and $T$ is a left action of $G$ on $X$ 
which is measurable and preserves $\mu$. We chose to
not define systems in that generality, as for our purposes 
we will always work with the more specific $G$-systems defined 
above. The only occasion where we need this 
more general definition of $G$-systems is
when we define the Kronecker factor 
right after \cref{JdLG}, in which case $\mathcal{A}$ is a 
sub-$\sigma$-algebra of the Borel $\sigma$-algebra.\\ 
\noindent
\textbf{Product $G$-system:} Given two $G$-systems $\xmt$ 
and $\yns$, we define the product $G$-system 
$(X\times Y, \mu\times\nu, T\times S)$,
where the underlying $\sigma$-algebra is the product of the Borel $\sigma$-algebras 
on $X$ and $Y$, which coincides with the Borel $\sigma$-algebra on $X\times Y$, 
and the action is $T\times S = (T_g\times S_g)_{g\in G}$. \\
\noindent
\textbf{Factors of $G$-systems:}
Given two $G$-systems $\xmt$ and $\yns$, we say that $\yns$ is a
\textit{factor} of $\xmt$ if there exists a measurable map 
$\pi:X\to Y$, which we call a \textit{factor map}, satisfying: 
$\mu(\pi^{-1}E)=\nu(E)$ for any measurable $E\subset Y$ and for any $g\in G$,
$\pi\circ T_g = S_g\circ\pi$ $\mu$-almost everywhere on $X$. 
When the former is true, we say that $\nu$ is the push-forward of $\mu$
under $\pi$, and we write $\pi\mu=\nu$. When, additionally, the 
factor map $\pi$ is continuous and $\pi\circ T_g=S_g\circ\pi$ holds 
everywhere on $X$ for any $g\in G$, we say that $\pi$ is a \textit{continuous
factor map} and $\yns$ is a \textit{continuous factor} of $\xmt$. \\
\noindent
\textbf{Ergodicity and ergodic theorems for $G$-systems:}
A $G$-system $\xmt$ is called \textit{ergodic} 
if for any measurable set $A$
the following holds:
$$ T_g ^{-1} A =A \text{ for all } g\in G \implies \mu(A)=0 \text{ or }
\mu(A)=1.$$

Given a $G$-system $\xmt$, let $\A$ be a sub-$\sigma$-algebra of $\B$. 
For $f\in L^2(X,\mu)$, the \textit{conditional expectation of 
$f$ on $\A$}, denoted by $\E_\mu(f\:|\:\A)$, is defined as 
the orthogonal projection of $f$ on the closed subspace 
$L^2(X,\A,\mu)$ of $L^2(X,\mu)$. 
We also denote the
sub-$\sigma$-algebra of the $T$-invariant sets by $\I=\I(T)$; that is,
$$\I=\I(T) := \{ E\in \B: T_g ^{-1} E=E \text{ for all } g\in G\}.$$

\begin{theorem}[Mean Ergodic Theorem for $G$-systems, see {\cite[Theorem 3.33]{glasner}}]
\label{met}
Let $\xmt$ be a $G$-system, and let $\Phi$ be a \Folner{} sequence. 
Then, for any $f\in L^2(X)$,
$$ \frac{1}{|\Phi_N|} \sum_{g\in \Phi_N} T_g f \to \E_\mu(f\:|\:\I)$$
as $N\to \infty$ in $L^2(X)$. In addition, if the system is ergodic, 
the ergodic averages above converge to $\int_X f\d\mu$.
\end{theorem}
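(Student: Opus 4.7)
The plan is to adapt von Neumann's classical mean ergodic theorem argument to the amenable setting via Følner averages. I would begin by decomposing $L^2(X,\mu) = V \oplus V^\perp$, where $V := L^2(X,\I,\mu)$ is the closed subspace of $T$-invariant functions, and observe that $\E_\mu(\,\cdot\,\mid\I)$ is precisely the orthogonal projection onto $V$. Since the averaging operators $A_N f := \frac{1}{|\Phi_N|}\sum_{g\in\Phi_N} T_g f$ are contractions on $L^2$ (each $T_g$ being an isometry by invariance of $\mu$), it will suffice to establish $L^2$-convergence separately on $V$ and on $V^\perp$. On $V$ the conclusion is immediate: if $f$ is $T$-invariant, then $A_N f = f = \E_\mu(f\mid\I)$ for every $N$.

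The main content is to show $\|A_N f\|_2 \to 0$ for every $f \in V^\perp$. My first step here would be to establish the standard density identity
\[ V^\perp \;=\; \overline{\mathrm{span}}\{\, h - T_{g_0} h \,:\, h\in L^2(X,\mu),\ g_0\in G \,\}, \]
via Hilbert space duality: if $f$ is orthogonal to every coboundary $h - T_{g_0}h$, then $\langle f,h\rangle = \langle f, T_{g_0}h\rangle = \langle T_{g_0^{-1}}f, h\rangle$ for every $h\in L^2$ (using that $T_{g_0}$ is unitary with adjoint $T_{g_0^{-1}}$), forcing $T_{g_0^{-1}}f = f$ for every $g_0\in G$ and hence $f\in V$. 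Combined with the contractivity of the $A_N$, this reduces the task to showing $\|A_N(h - T_{g_0}h)\|_2 \to 0$ for a single fixed coboundary.

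For the coboundary computation, note that under the convention $T_g f = f\circ T_g$ the induced action on $L^2$ is a right action, i.e., $T_g T_{g_0} = T_{g_0 g}$. Reindexing therefore gives
\[ A_N(h - T_{g_0}h) \;=\; \frac{1}{|\Phi_N|}\sum_{g\in\Phi_N} T_g h \;-\; \frac{1}{|\Phi_N|}\sum_{\sigma\in g_0\Phi_N} T_\sigma h, \]
and by the triangle inequality together with $\|T_g h\|_2 = \|h\|_2$, the $L^2$-norm of this difference is bounded by $\|h\|_2 \cdot |g_0\Phi_N \triangle \Phi_N|/|\Phi_N|$, which tends to $0$ by the left Følner property of $\Phi$. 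The ergodic case is then a corollary: when $\I$ is trivial modulo null sets, $\E_\mu(f\mid\I)$ must be constant, and as the $L^2$-projection onto the constants it equals $\int_X f\d\mu$.

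The only point requiring care — and, I expect, the sole possible obstacle — is the bookkeeping of left versus right conventions: the Følner hypothesis used throughout is the \emph{left} condition $|g_0\Phi_N \triangle \Phi_N|/|\Phi_N| \to 0$, whereas the induced action on $L^2$ is a \emph{right} action. The reindexing $g \mapsto g_0 g$ in the coboundary step above is precisely what pairs these two conventions correctly, so once this is handled no additional assumption on $\Phi$ is required, and the rest is routine Hilbert-space machinery.
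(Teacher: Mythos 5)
Your proof is correct and is precisely the standard von Neumann mean ergodic theorem argument adapted to Følner averages; the paper does not give its own proof but cites this result from Glasner, where essentially this argument appears. You have correctly handled the one subtle point, namely that $T_g f := f\circ T_g$ defines a \emph{right} action on $L^2(X)$ so that $T_g T_{g_0} = T_{g_0 g}$, which makes the reindexing in the coboundary step $A_N(T_{g_0}h) = \frac{1}{|\Phi_N|}\sum_{\sigma\in g_0\Phi_N}T_\sigma h$ line up with the left Følner condition $|g_0\Phi_N \triangle \Phi_N|/|\Phi_N|\to 0$.
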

\noindent
\textbf{Measure disintegration and ergodic decomposition:}
When $X$ is a compact metric space, the space 
$M(X)$ of Borel probability measures on 
$X$ can be endowed with a $\sigma$-algebra $\M$ 
such that the space 
$(M(X),\M)$ is a standard Borel space.
The following theorem about disintegrations of 
measures is very useful.
\begin{theorem}[Disintegration of measures, 
see {\cite[Chapter 2, Section 2.5]{Host-Kra}}]
\label{dis-meas}
Let $X$ be a compact metric space, $\B$ the 
Borel $\sigma$-algebra on $X$ and $\mu$ a probability measure
on $(X, \B)$.
Let also $\D$ be a 
sub-$\sigma$-algebra of $\B$. 
Then there is a $(\D,\mu)$-almost everywhere 
defined and measurable map 
$(X,\D) \to (M(X),\M)$, $x\mapsto \mu_x$ with the 
following properties:
\begin{itemize}
    \item For every $f\in L^1(X,\mu)$, the function 
    $x \mapsto \int_X f \d \mu_x$ is in 
    $L^1 (X,\D,\mu)$, and for all 
    $D\in \D$, we have $\int_D f \d \mu=\int_D 
    \big(\int_X f \d \mu_x\big) \d \mu(x).$ 
    In particular, this implies that 
    $\int_X f \d \mu_x=\E_{\mu} (f\:|\:\D)(x)$ for 
    $(\D,\mu)$-almost every $x\in X$.
    \item For $(\D,\mu)$-almost every $x\in X$, $\mu_x([x]_{\D})=1$, where 
    $[x]_{\D}=\cap_{x\in D\in \D} D.$
\end{itemize}
The map satisfying the above properties is unique modulo
$(\D,\mu)$-null sets, and is called the 
\textbf{disintegration}
of the measure $\mu$ over the sub-$\sigma$-algebra $\D$. In that case, we 
write
$\mu=\int_X \mu_x \d \mu(x)$.
\end{theorem}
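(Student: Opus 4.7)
The plan is to construct $\mu_x$ by applying the Riesz representation theorem pointwise to the functional $f\mapsto\E_\mu(f\:|\:\D)(x)$, exploiting that $X$ is compact metric and hence $C(X)$ is separable, so the almost-sure nature of conditional expectation can be contained within a single full-measure set.

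First, I would fix a countable $\Q$-linear subspace $\mathcal{V}\subset C(X)$ which is uniformly dense in $C(X)$ and contains the constant function $1$, and for each $f\in\mathcal{V}$ pick a fixed $\D$-measurable representative $\widetilde f$ of $\E_\mu(f\:|\:\D)$. The almost-everywhere identities encoding positivity, $\Q$-linearity, and $\widetilde 1=1$ form a countable list, so their exceptional null sets combine into one null set whose complement $X_0$ has full $\mu$-measure. For every $x\in X_0$, the map $f\mapsto\widetilde f(x)$ is a positive $\Q$-linear functional on $\mathcal{V}$ of norm $1$, which extends by continuity to a positive linear functional on $C(X)$ of norm $1$; the Riesz representation theorem then produces a unique $\mu_x\in M(X)$ with $\int f\d\mu_x=\widetilde f(x)$ for all $f\in\mathcal{V}$. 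Define $\mu_x$ arbitrarily outside $X_0$. Measurability of $x\mapsto\mu_x$ into $(M(X),\M)$ is automatic, since $\M$ is generated by the evaluation maps $\mu\mapsto\int f\d\mu$ for $f$ in a countable dense subset of $C(X)$, and their compositions with $x\mapsto\mu_x$ coincide with the $\D$-measurable maps $\widetilde f$. The first bullet of the theorem holds for $f\in\mathcal{V}$ by construction and extends to $f\in L^1(X,\mu)$ by approximating successively in the uniform and $L^1$ norms, using the $L^1$-contractivity of conditional expectation.

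The main obstacle is the second bullet, $\mu_x([x]_\D)=1$, because when $\D$ is not countably generated, the atom $[x]_\D$ can be an uncountable intersection and need not be a measurable set with nice behavior. The standard remedy uses that $(X,\B,\mu)$ is a standard Borel probability space, so $L^2(X,\D,\mu)$ is separable, and therefore $\D$ contains a countably generated sub-$\sigma$-algebra $\D'$ whose $\mu$-completion equals that of $\D$. I would pick a countable algebra $\mathcal{A}$ generating $\D'$; for each $A\in\mathcal{A}$, the identity $\mu_x(A)=\E_\mu(\1_A\:|\:\D)(x)=\1_A(x)$ holds off a null set, and intersecting over $\mathcal{A}$ yields a full-measure set $X_1\subset X_0$ on which $\mu_x$ is concentrated on $[x]_{\D'}$. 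For general $D\in\D$, choose $D'\in\D'$ with $\mu(D\triangle D')=0$; then $\1_D=\1_{D'}$ and $\mu_x(D)=\mu_x(D')$ off a null set, which upgrades concentration on $[x]_{\D'}$ to concentration on $[x]_\D$ after a further null-set modification. Uniqueness modulo $(\D,\mu)$-null sets is then immediate from the uniqueness clause in Riesz representation applied to the countable family $\mathcal{V}$.
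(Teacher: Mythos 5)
The paper does not prove this theorem; it cites it from Host--Kra's book, so there is no in-paper argument to compare against. Your Riesz-functional construction is the standard one: fix a countable $\Q$-linear dense subspace $\mathcal{V}\subset C(X)$ containing $1$, collect the countably many exceptional null sets for positivity, $\Q$-linearity and $\widetilde 1=1$ into one set, apply Riesz representation pointwise off that set, and deduce measurability from evaluations against a countable dense family. The first bullet, the measurability of $x\mapsto\mu_x$, the extension from $\mathcal{V}$ to $C(X)$ to $L^1$, and the uniqueness argument are all handled correctly.

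The gap is the final ``upgrade'' step in your treatment of the second bullet. You correctly establish $\mu_x([x]_{\D'})=1$ for $\mu$-a.e.\ $x$, where $\D'$ is a countably generated sub-$\sigma$-algebra of $\D$ with the same $\mu$-completion: the atom $[x]_{\D'}$ is the \emph{countable} intersection of the generators or their complements through $x$, each has $\mu_x$-measure $1$ off a null set, and the null sets combine. But the passage to $\mu_x([x]_\D)=1$ does not follow. For each fixed $D\in\D$ you indeed get $\mu_x(D)=\mu_x(D')$ off a null set, but that null set depends on $D$, and $\D$ is uncountable; the quantifiers ``for each $D$, for a.e.\ $x$'' cannot be swapped to ``for a.e.\ $x$, for all $D$'' by ``a further null-set modification.'' In fact the second bullet is false as literally written: take $X=[0,1]$ with Lebesgue measure and $\D$ the countable--cocountable $\sigma$-algebra. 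Any $\D$-measurable function is $\mu$-a.e.\ constant, so $\E_\mu(f\,|\,\D)=\int f\d\mu$ a.e., hence $\mu_x=\mu$ for a.e.\ $x$; yet $\{x\}\in\D$, so $[x]_\D=\{x\}$ and $\mu_x([x]_\D)=\mu(\{x\})=0$. What is true, and what your argument proves, is concentration on the atom $[x]_{\D'}$ of a countably generated version $\D'$ of $\D$. The clean fix is to stop your proof at that point (this is how the result is meant to be read; the paper's quoted statement elides the distinction between $[x]_\D$ and $[x]_{\D'}$, and in all the paper's applications $\D$ is the pullback of a Borel $\sigma$-algebra under a factor map, hence countably generated mod $\mu$).
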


In this paper, we will extensively make use of
\textit{disintegrations over (continuous) factor maps}. 
Let $\pi:\xmt\to\yns$ be a factor map between two $G$-systems.
Given a function $f \in L^1(X, \mu)$, the conditional
expectation $\E_\mu(f\:|\:Y)$ of $f$ with respect to the factor $Y$
is the function in $L^1(Y, \nu)$ defined by 
\begin{equation*}
    \int_B \E_\mu(f\:|\:Y)(y) \d \nu (y) = 
    \int_{\pi ^{-1}(B)} f(x) \d \mu (x)
\end{equation*}
for every Borel measurable set $B \subset Y$.
Then \cref{dis-meas} gives a disintegration $y\mapsto \mu_y$
defined on $Y$, which is unique up to
$\nu$-null measure sets, 
and satisfies the following:
\begin{itemize}
    \item for every $f \in L^1(X, \mu)$,
    for $\nu$-almost every $y\in Y$,
    \begin{equation}\label{eq_disint}
    \E_\mu(f\:|\:Y)(y)
    =\int_X f\d\mu_y,
    \end{equation}
    \item for $\nu$-almost every $y\in Y$, 
    $\mu_y(\pi^{-1}(\{y\}))=1$, and finally, 
    \item for any $g\in G$ and for $\nu$-almost every $y\in Y$,
    $(T_g)\mu_y=\mu_{S_gy}$.
\end{itemize}

Let $\xmt$ be a $G$-system. Consider the (unique) 
disintegration of $\mu$
with respect to $\I=\I(T)$ given by \cref{dis-meas}. 
This disintegration is 
called the \textit{ergodic decomposition of $\mu$}. 
Equivalently, we say that the 
disintegration $x\mapsto\mu_x$ is the ergodic decomposition of $\mu$, if 
for any $f:X\to\C$ measurable and bounded,
\begin{equation}\label{eq.1}
\int_X f\d\mu_x=\E_\mu(f\:|\:\I)(x)
\end{equation}
holds for $(\I,\mu)$-almost every $x\in X$.
\begin{theorem}
[Ergodic decomposition of $G$-systems, see {\cite[Theorem 3.22]{glasner}}]\label{erg-dec}
If $\xmt$ is a $G$-system as above, then for 
$(\I, \mu)$-almost every $x \in X$, the measure 
$\mu_x$ is $T$-invariant and the system $(X,\mu_x, T)$ 
is ergodic.
\end{theorem}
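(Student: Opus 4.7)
The plan is to establish the two assertions---$T$-invariance of $\mu_x$ and ergodicity of $(X,\mu_x,T)$---separately, in both cases by applying the defining identity \eqref{eq.1} of the ergodic decomposition, $\int_X f\d\mu_x=\E_\mu(f\:|\:\I)(x)$ valid for $(\I,\mu)$-a.e.\ $x$, to well-chosen test functions.

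For $T$-invariance, I would fix $g\in G$ and $f\in C(X)$ and exploit the fact that $T_g^{-1}\I=\I$ and that $\mu$ is $T$-invariant, which together give $\E_\mu(f\circ T_g\:|\:\I)=\E_\mu(f\:|\:\I)\circ T_g=\E_\mu(f\:|\:\I)$ $\mu$-a.e., the second equality holding because $\I$-measurable functions are pointwise $T_g$-invariant. Applying \eqref{eq.1} to both $f$ and $f\circ T_g$ yields $\int f\d(T_g\mu_x)=\int f\d\mu_x$ for $\mu$-a.e.\ $x$. Since $X$ is compact metric, $C(X)$ is separable, and since $G$ is countable, intersecting the countably many resulting full-measure sets produces a single full-measure set of $x$'s on which $T_g\mu_x=\mu_x$ for every $g\in G$.

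For ergodicity, the key observation is that if $h$ is any bounded $\I$-measurable function then $\E_\mu(h\:|\:\I)=h$, so \eqref{eq.1} gives $\int h\d\mu_x=h(x)$ for $\mu$-a.e.\ $x$. Applying the same identity to $h^2$ and combining gives the variance identity $\int(h-h(x))^2\d\mu_x=0$, forcing $h$ to be $\mu_x$-a.e.\ constantly equal to $h(x)$. I would then pick a countable family $\{h_n\}$ of bounded $\I$-measurable functions dense in $L^2(X,\I,\mu)$ (which exists because $L^2(X,\mu)$ is separable) and conclude that on a single full-measure set of $x$'s every $h_n$ is $\mu_x$-a.e.\ constant. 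A routine $L^2$-approximation---using $\int |h_{n_k}-h|^2 \d\mu_x\to 0$ along a subsequence for $\mu$-a.e.\ $x$---then promotes the conclusion to \emph{every} bounded $\I$-measurable function, which is equivalent to ergodicity of $(X,\mu_x,T)$.

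The main obstacle is this last promotion: transferring ``for each invariant $E$, $\mu_x(E)\in\{0,1\}$ for $\mu$-a.e.\ $x$'' to the uniform assertion ``for $\mu$-a.e.\ $x$, $\mu_x(E)\in\{0,1\}$ for every invariant $E$''. Since $\I$ need not be countably generated, one cannot argue set-by-set; separability of $L^2(X,\I,\mu)$ together with the $L^2$-approximation above is precisely what circumvents this. Once the uniformity is handled, both conclusions reduce to bookkeeping in the disintegration.
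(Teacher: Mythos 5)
Your $T$-invariance argument is correct: the identity $\E_\mu(f\circ T_g\mid\I)=\E_\mu(f\mid\I)$ follows from $T_g^{-1}\I=\I$, $T_g\mu=\mu$, and the pointwise $T_g$-invariance of $\I$-measurable functions, and a countable intersection over $g\in G$ and over a countable dense subset of $C(X)$ yields a single full-measure set. The ergodicity step, however, has a genuine gap at precisely the point you flagged, and the proposed resolution does not close it. Your variance computation shows that for each fixed bounded $\I$-measurable $h$ there is a set of full $\mu$-measure on which $h$ is $\mu_x$-a.e.\ constant. To promote this, you approximate a general $h$ by the countable dense family $(h_n)$ in $L^2(X,\I,\mu)$ and pass to a subsequence so that $\int|h_{n_k}-h|^2\,\d\mu_x\to0$ for $\mu$-a.e.\ $x$. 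But that subsequence, and hence the exceptional $\mu$-null set outside which this convergence holds, depends on $h$; since $\I$ is not assumed countably generated, these null sets cannot be combined into a single one. Separability of $L^2(X,\I,\mu)$ only controls things up to $\mu$-null modification, and the measures $\mu_x$ are typically mutually singular with $\mu$, so a $\mu$-null modification of $h$ changes $\int h\,\d\mu_x$; this is exactly why convergence in $L^2(\mu)$ does not automatically transfer to $L^2(\mu_x)$.

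The ingredient that repairs this is already built into the disintegration theorem you quote (\cref{dis-meas}): for $\mu$-a.e.\ $x$ one has $\mu_x([x]_\I)=1$, where $[x]_\I=\bigcap_{x\in D\in\I}D$. Every $\I$-measurable function $h$ is pointwise constant on $[x]_\I$: if $y\in[x]_\I$, then for any Borel $B\ni h(x)$ we have $x\in h^{-1}(B)\in\I$, hence $[x]_\I\subset h^{-1}(B)$ and $h(y)\in B$; taking $B=\{h(x)\}$ gives $h(y)=h(x)$. Therefore, on the single full-measure set of $x$ where $\mu_x([x]_\I)=1$ and $\mu_x$ is $T$-invariant, every $E\in\I$ satisfies $\mu_x(E)=\1_E(x)\in\{0,1\}$, which is ergodicity. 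This handles all invariant sets uniformly, with no countability or density argument needed. (For context: the paper cites this theorem from Glasner and does not include a proof, so there is no in-paper argument against which to compare your approach.)
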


\noindent
\textbf{Generic points and the support of a measure:}
In addition, we will need the notion of generic points:
\begin{defn}
Let $\xmt$ be a $G$-system and let $\Phi$ be a \Folner{} 
sequence. A point $a\in X$ is called 
\textit{generic for $\mu$ along $\Phi$} 
if for all $f\in C(X)$ we have 
$$ \lim_{N\to \infty} \frac{1}{|\Phi_N|} \sum_{g\in \Phi_N} f(T_g a)
= \int_{X} f \d \mu$$
or equivalently if 
$$ \lim_{N\to \infty} \frac{1}{| \Phi_N|} \sum_{g\in \Phi_N}
\delta_{T_g a}=\mu $$
where $\delta_x$ is the Dirac mass at $x\in X$ and the limit 
is in the weak$^\ast$ topology. If $a$ is generic 
for $\mu$ along $\Phi$, then we denote this by 
$a\in \gen(\mu,\Phi)$.
\end{defn}

Moreover, we will need the notion of the support of a measure. 
The \textit{support} of a Borel probability measure $\mu$ on a 
compact metric space $X$ is the smallest closed full-measure 
subset of $X$ and is denoted by $\supp(\mu)$.
We will need the following lemma, which says that generic 
points for a measure have dense orbit in the support of the measure. 
Its proof is quite standard, and we only include it for completeness.

\begin{lemma}\label{supp-gen lemma}
Let $\yns$ be a $G$-system and let $y,w\in Y$. If $y\in \gen(\nu,\Phi)$
for some \Folner{} sequence $\Phi$, and $w\in\supp(\nu)$,
then $S_{g_n}y\to w$, for some infinite sequence $(g_n)_{n\in\N}$ in $G$.
\end{lemma}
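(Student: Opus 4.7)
The plan is to exploit genericity to test continuous functions concentrated near $w$, using the fact that $w\in\supp(\nu)$ forces every neighborhood of $w$ to have positive $\nu$-measure.

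First I would fix a decreasing sequence $(U_n)_{n\in\N}$ of open neighborhoods of $w$ with $\mathrm{diam}(U_n)\to 0$; such a basis exists because $Y$ is metric. Since $w\in\supp(\nu)$, each $U_n$ satisfies $\nu(U_n)>0$. For each $n$ I would use Urysohn's lemma to produce a continuous function $f_n\colon Y\to[0,1]$ which equals $1$ on some smaller neighborhood $V_n\subset\overline{V_n}\subset U_n$ of $w$ and vanishes outside $U_n$. Since $w\in\supp(\nu)$ also gives $\nu(V_n)>0$, we get $\int_Y f_n\d\nu\ge \nu(V_n)>0$.

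Next, because $y\in\gen(\nu,\Phi)$, applying the definition of genericity to $f_n\in C(Y)$ yields
\[
\lim_{N\to\infty}\frac{1}{|\Phi_N|}\sum_{g\in\Phi_N}f_n(S_g y)=\int_Y f_n\d\nu>0.
\]
Since $0\le f_n\le \mathbf{1}_{U_n}$, this forces
\[
\liminf_{N\to\infty}\frac{|\{g\in\Phi_N:S_g y\in U_n\}|}{|\Phi_N|}\ge \int_Y f_n\d\nu>0,
\]
so the set $E_n:=\{g\in G:S_g y\in U_n\}$ is infinite for every $n$ (as $|\Phi_N|\to\infty$).

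Finally I would extract the required sequence diagonally: choose $g_1\in E_1$ arbitrarily, and inductively pick $g_n\in E_n\setminus\{g_1,\ldots,g_{n-1}\}$, which is possible because $E_n$ is infinite. Then $(g_n)$ is an infinite sequence in $G$, and since $S_{g_n}y\in U_n$ with $\mathrm{diam}(U_n)\to 0$, we obtain $S_{g_n}y\to w$. No real obstacle arises here; the only point requiring a touch of care is building continuous test functions with strictly positive $\nu$-integral, which is handled by combining Urysohn's lemma with the definition of the support.
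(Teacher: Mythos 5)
Your proposal is correct and takes essentially the same approach as the paper: build Urysohn functions supported in small neighborhoods of $w$, use $w\in\supp(\nu)$ to get positive integrals, invoke genericity to show $S_gy$ lands near $w$ for infinitely many $g$, and extract the sequence. The paper argues with balls $\Ball(w,\epsilon)$ and leaves the final diagonal extraction implicit, whereas you spell it out; these are cosmetic differences only.
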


\begin{proof}
Fix a compatible metric on $X$ and let $\Ball(w,\epsilon)$
be the open ball centered at $w$ with radius $\epsilon>0$ with 
respect to this metric. 
By Urysohn's lemma, for every $\epsilon>0$ there exists a continuous 
function $f:X\to[0,1]$ with $f=1$ on $\Ball(w,\epsilon/2)$
and $f=0$ outside $\Ball(w,\epsilon)$.
Since $w\in\supp(\nu)$, it follows that $\int_Y f\d\nu>0$.
Now, using that $y\in\gen(\nu,\Phi)$, we have that
$$\lim_{N\to\infty}\frac{1}{|\Phi_N|}\sum_{g\in\Phi_N} 
f(S_g y)
= \int_X f\d\nu > 0,$$
which implies that $S_g y\in\Ball(w,\epsilon)$ for infinitely many 
$g\in G$. The result then follows.
\end{proof}

We will also make use of the following result of Lindenstrauss:

\begin{proposition}[see {\cite[Theorem 1.2 and Proposition 1.4]{lindenstrauss_pet}}]\label{Lind}
Let $\xmt$ be a $G$-system and $\Phi$ be a \Folner{} sequence in $G$. Then there
is a subsequence $\Psi$ of $\Phi$ such that for all $f\in L^1(\mu)$, 
$$\lim_{N\to \infty} \frac{1}{|\Psi_N|} \sum_{g\in \Psi_N}
T_g f (x) = \E_{\mu}(f\: | \: \I) (x)$$
for $\mu$-almost every $x\in X$.
\end{proposition}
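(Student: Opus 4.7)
The plan is to follow Lindenstrauss's two-step strategy from \cite{lindenstrauss_pet}. First, I extract from $\Phi$ a subsequence $\Psi=(\Psi_N)_{N\in\N}$ that is \emph{tempered} in the sense of Shulman: there is a constant $C>0$ with
$$\Bigl|\bigcup_{k<N}\Psi_k^{-1}\Psi_N\Bigr|\le C|\Psi_N|$$
for all $N$. This can be arranged inductively: having picked $\Psi_1,\dots,\Psi_{N-1}$, set $F_N=\bigcup_{k<N}\Psi_k^{-1}$, which is finite, and use the \Folner{} property of $\Phi$ to choose $\Psi_N=\Phi_{k_N}$ with $k_N$ so large that $|F_N\Psi_{k_N}|\le 2|\Psi_{k_N}|$. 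Then temperedness holds with constant $C=2$.

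Second, I prove pointwise convergence along any tempered \Folner{} sequence $\Psi$. The heart of this step is a weak-type $(1,1)$ maximal inequality: for every $f\in L^1(\mu)$ and every $\lambda>0$,
$$\mu\Bigl\{x\in X\ :\ \sup_N \frac{1}{|\Psi_N|}\sum_{g\in\Psi_N}|T_g f(x)|>\lambda\Bigr\}\le \frac{C'\|f\|_1}{\lambda},$$
with $C'$ depending only on $C$. One proves this via a Vitali-type covering lemma for $G$ tailored to $\Psi$: given any finite family $\{g_i\Psi_{N_i}\}_i$ of translates of \Folner{} sets, the tempered condition allows one to extract a disjoint subfamily whose union contains a uniform fraction of $\bigcup_i g_i\Psi_{N_i}$. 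This combinatorial covering lemma is then transferred to the dynamical maximal inequality by an averaging argument in the spirit of Calder\'on's transference principle.

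Granting the maximal inequality, pointwise convergence in $L^1$ is a standard consequence. By \cref{met}, for every $f\in L^2(\mu)$ the ergodic averages along $\Psi$ converge in $L^2$ to $\E_\mu(f\,|\,\I)$; this yields pointwise convergence on a dense subclass of $L^1(\mu)$ (for instance on functions of the form $h_1+(h_2-T_g h_2)$ with $h_1$ being $\I$-measurable and $h_2\in L^\infty$), and the maximal inequality combines with Banach's principle to promote convergence to all of $L^1(\mu)$.

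The principal obstacle is the covering lemma. The absence of any commutative doubling structure on $G$ rules out a direct adaptation of the classical Vitali argument, and Lindenstrauss's use of the tempered condition, which tightly controls how much the "past" of the \Folner{} sequence can overlap with its current term, is precisely what enables a recursive disjointification argument to succeed in the non-commutative setting.
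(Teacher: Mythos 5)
The paper does not supply a proof of \cref{Lind}; it is imported wholesale by citation to Lindenstrauss's pointwise ergodic theorem for amenable groups (Theorem~1.2 together with Proposition~1.4 of the cited reference). Your proposal is a correct high-level reconstruction of precisely that two-step argument: first extract a tempered subsequence (your inductive choice of $\Psi_N$ so that $|F_N\Psi_{k_N}|\le 2|\Psi_{k_N}|$ is indeed how one proves every \Folner{} sequence admits a tempered one, i.e.\ Lindenstrauss's Proposition~1.4), then establish pointwise a.e.\ convergence along any tempered \Folner{} sequence via a covering lemma, a weak-type $(1,1)$ maximal inequality by transference, and a density-plus-Banach-principle upgrade (Lindenstrauss's Theorem~1.2). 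The temperedness construction and the density reduction (mean ergodic theorem for $L^2$ together with the dense subspace of invariant functions plus coboundaries $h-T_g h$) are both sound as sketched. What remains only gestured at is the genuinely hard core, namely the tempered Vitali-type covering lemma and its transfer to the dynamical maximal function; you rightly flag this as the main obstacle, and a full write-up would need to reproduce it, but as a blind reconstruction of a result the paper treats as a black box, your outline is faithful to the cited proof.
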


The next two lemmas follow easily from \cref{Lind} and \eqref{eq.1}.

\begin{lemma}\label{m_gen}
Let $\xmt$ be an ergodic $G$-system. 
Then for any \Folner{} sequence $\Phi$
there exists some subsequence $\Psi$ such 
that $\mu$-almost every 
$x\in X$ is in $\gen(\mu,\Psi)$.
\end{lemma}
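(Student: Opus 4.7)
The plan is to combine Lindenstrauss's pointwise theorem (\cref{Lind}) with the separability of $C(X)$ and a standard density argument. The only subtlety to handle is that the definition of $\gen(\mu,\Psi)$ demands convergence of the averages for \emph{every} continuous test function on a single $\mu$-full-measure set, so one must produce a single subsequence $\Psi$ that works simultaneously for all of $C(X)$.

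First, I apply \cref{Lind} to the \Folner{} sequence $\Phi$ to extract a subsequence $\Psi=(\Psi_N)_{N\in\N}$ of $\Phi$ such that for every $f\in L^1(\mu)$,
\[
\lim_{N\to\infty}\frac{1}{|\Psi_N|}\sum_{g\in\Psi_N} T_g f(x) = \E_\mu(f\:|\:\I)(x) \quad \text{for } \mu\text{-a.e. } x.
\]
The crucial point is that the same $\Psi$ works for every $f$; only the exceptional $\mu$-null set depends on $f$. Since $\xmt$ is ergodic, $\I$ is $\mu$-trivial, so $\E_\mu(f\:|\:\I)$ equals the constant $\int_X f\d\mu$ $\mu$-almost everywhere.

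Next, using that $C(X)$ is separable (as $X$ is a compact metric space), I fix a countable dense subset $\{f_k\}_{k\in\N}\subset C(X)$. For each $k$, the set of $x\in X$ where the $\Psi$-ergodic averages of $f_k$ converge to $\int_X f_k\d\mu$ has full $\mu$-measure, so the countable intersection $X'$ of these sets still has $\mu(X')=1$. It remains to verify that every $x\in X'$ lies in $\gen(\mu,\Psi)$. Given such an $x$, an arbitrary $f\in C(X)$ and $\epsilon>0$, pick $k$ with $\|f-f_k\|_\infty<\epsilon$; then the uniform bounds $|f(T_g x)-f_k(T_g x)|\le\epsilon$ and $|\int_X f\d\mu-\int_X f_k\d\mu|\le\epsilon$ together with a triangle inequality yield
\[
\limsup_{N\to\infty}\Big|\frac{1}{|\Psi_N|}\sum_{g\in\Psi_N} f(T_g x) - \int_X f\d\mu\Big|\le 2\epsilon,
\]
since the intermediate $f_k$-averages converge to $\int_X f_k\d\mu$ precisely because $x\in X'$. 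Letting $\epsilon\to 0$ gives $x\in\gen(\mu,\Psi)$.

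I expect no serious obstacle here: the whole argument hinges on \cref{Lind} producing \emph{one} subsequence $\Psi$ valid for all $L^1(\mu)$ functions simultaneously, which is exactly what allows the separability-based approximation step to succeed with a subsequence that is independent of the test function.
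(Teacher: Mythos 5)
Your proof is correct and follows exactly the route the paper has in mind: the paper states that Lemma \ref{m_gen} ``follows easily from \cref{Lind} and \eqref{eq.1},'' which is precisely the combination you use — Lindenstrauss's theorem to extract a single subsequence $\Psi$ valid for all $L^1$ functions at once, ergodicity to identify the limit with $\int_X f\,\mathrm{d}\mu$, and separability of $C(X)$ plus the triangle inequality to pass from a countable dense family to all continuous functions on a common full-measure set.
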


\begin{lemma}\label{mx_gen}
Let $\xmt$ be a $G$-system, let $\Phi$ be a \Folner{} sequence, and 
let 
$x\mapsto\mu_x$ be the ergodic decomposition of $\mu$. 
Then there exists some subsequence $\Psi$ of $\Phi$ such
that $\mu$-almost every $x\in X$ is in $\gen(\mu_x,\Psi)$.
\end{lemma}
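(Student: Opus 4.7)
The plan is to apply \cref{Lind} together with a diagonal argument based on the separability of $C(X)$, and then use the identity \eqref{eq.1} to identify the a.e.\ limit of ergodic averages with integration against the ergodic component.

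First I would fix a countable dense subset $\{f_k\}_{k\in\N}$ of $C(X)$, which exists because $X$ is a compact metric space. Applying \cref{Lind} to $f_1$ yields a subsequence $\Phi^{(1)}$ of $\Phi$ along which the ergodic averages of $T_g f_1$ converge $\mu$-a.e.\ to $\E_\mu(f_1\mid\I)$. Iterating, for each $k\geq 1$ I obtain a subsequence $\Phi^{(k+1)}$ of $\Phi^{(k)}$ along which the ergodic averages of $T_g f_{k+1}$ converge $\mu$-a.e.\ to $\E_\mu(f_{k+1}\mid\I)$. Taking the diagonal subsequence $\Psi$ with $\Psi_N := \Phi^{(N)}_N$ produces a single F\o{}lner subsequence of $\Phi$ along which, for every $k\in\N$,
\[
\frac{1}{|\Psi_N|}\sum_{g\in\Psi_N} T_g f_k (x) \xrightarrow[N\to\infty]{} \E_\mu(f_k\mid\I)(x)
\]
for $\mu$-almost every $x\in X$. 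Intersecting the countably many full-measure sets on which this holds gives a single full-measure set $X_0\subset X$ on which the convergence holds simultaneously for all $k$. On $X_0$ I can moreover assume, by discarding another null set, that the ergodic decomposition identity \eqref{eq.1} holds for every $f_k$, so that $\E_\mu(f_k\mid\I)(x)=\int_X f_k \d\mu_x$ for all $k$.

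Next I would upgrade the convergence from the countable dense family $\{f_k\}$ to every $f\in C(X)$ by a standard $3\epsilon$-approximation. Given $f\in C(X)$ and $\epsilon>0$, pick $f_k$ with $\|f-f_k\|_\infty<\epsilon$. Then for every $x\in X$,
\[
\left|\frac{1}{|\Psi_N|}\sum_{g\in\Psi_N} f(T_g x) - \frac{1}{|\Psi_N|}\sum_{g\in\Psi_N} f_k(T_g x)\right| \leq \epsilon,
\]
and also $|\int f\d\mu_x - \int f_k\d\mu_x|\leq \epsilon$ for every $x\in X_0$. Combined with the convergence of averages of $f_k$ along $\Psi$ on $X_0$, letting $\epsilon\to 0$ gives
\[
\lim_{N\to\infty}\frac{1}{|\Psi_N|}\sum_{g\in\Psi_N} f(T_g x) = \int_X f\d\mu_x
\]
for every $x\in X_0$ and every $f\in C(X)$. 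This is precisely the statement that $x\in\gen(\mu_x,\Psi)$, so the conclusion holds for $\mu$-a.e.\ $x\in X$.

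The only mildly delicate point is the diagonal construction — one needs to check that the resulting $\Psi$ is indeed a F\o{}lner subsequence of $\Phi$, which is automatic since each $\Phi^{(k)}$ is, and that the a.e.\ convergence survives the diagonal passage, which follows from the nested construction together with the fact that a subsequence of an a.e.-convergent sequence of averages still converges a.e.\ to the same limit. Everything else is a routine combination of \cref{Lind}, separability of $C(X)$, and the disintegration identity \eqref{eq.1}.
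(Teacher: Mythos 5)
Your argument is correct and uses exactly the two ingredients the paper points to (\cref{Lind} and \eqref{eq.1}). One small inefficiency worth noting: you apply \cref{Lind} to each $f_k$ separately and then run a diagonal extraction, but \cref{Lind} as stated is already uniform in $f$ --- it produces a single subsequence $\Psi$ of $\Phi$ along which the pointwise averages converge $\mu$-a.e.\ for every $f\in L^1(\mu)$ simultaneously. So you can apply it once, intersect the countably many full-measure exceptional sets coming from your dense family $\{f_k\}\subset C(X)$ and from the a.e.\ validity of \eqref{eq.1}, and then proceed directly to the $3\epsilon$-approximation; the diagonal passage is unnecessary. It does no harm, however, and everything you wrote is sound (in particular, passing to a further subsequence preserves a.e.\ convergence, and the diagonal of nested subsequences is again a subsequence of $\Phi$, hence F\o{}lner).
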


Finally, it will be useful to have the following generalization of 
\cite[Proposition 3.9]{Fur1}, whose proof is again the same as for actions 
of $(\N,+)$, but we include it for the convenience of the reader. 

\begin{lemma}\label{quasi_gen}
Let $G$ be an amenable group, let $\xmt$ be an ergodic $G$-system, and let $a\in X$
be a point such that $\mu$ is supported on $\overline{\Orb_T(a)}$.
Then there exists some \Folner{} 
sequence $\Psi$ such that $a\in\gen(\mu,\Psi)$.
\end{lemma}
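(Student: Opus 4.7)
The plan is to construct the Følner sequence $\Psi=(\Psi_N)_{N\in\N}$ by a diagonal argument that combines the Mean Ergodic Theorem with the density of $\Orb_T(a)$ in $\supp(\mu)$. Enumerate $G=\{g_1,g_2,\ldots\}$ and fix a countable dense subset $\{f_i\}_{i\in\N}\subset C(X)$; since $C(X)$ is separable in the uniform norm, it suffices to produce $\Psi_N$ so that, for all $j\leq N$,
$|g_j\Psi_N\triangle\Psi_N|/|\Psi_N|<1/N$, and for all $i\leq N$,
$\bigl|\tfrac{1}{|\Psi_N|}\sum_{g\in\Psi_N} f_i(T_g a)-\int_X f_i\, d\mu\bigr|<1/N$.

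For each $N$, I would first select a Følner set $F\subset G$ deep enough in an arbitrary Følner sequence so that the Følner condition above is met, and so that, by \cref{met} applied to each $f_i$ with $i\leq N$ together with ergodicity of $\xmt$, the $L^2$-norms $\bigl\|\tfrac{1}{|F|}\sum_{g\in F} T_g f_i-\int f_i\,d\mu\bigr\|_{L^2(\mu)}$ are as small as desired. By Chebyshev's inequality, the set $E\subseteq X$ on which all these pointwise approximations hold within $1/(2N)$ has $\mu$-measure arbitrarily close to $1$, so in particular $\mu(E\cap\supp(\mu))>0$. Each averaged function $x\mapsto\tfrac{1}{|F|}\sum_{g\in F} f_i(T_g x)$ is continuous on $X$, so the good set contains an open neighborhood $V$ of some point $x_0\in E\cap\supp(\mu)$ on which the approximations persist. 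Because $\supp(\mu)\subseteq\overline{\Orb_T(a)}$ and $V$ is an open neighborhood of $x_0$, the set $V$ must meet $\Orb_T(a)$, so I pick $h\in G$ with $T_h a\in V$.

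Setting $\Psi_N:=Fh$ finishes the step: the orbit averages at $a$ become
$$\frac{1}{|\Psi_N|}\sum_{g'\in\Psi_N} f_i(T_{g'}a)=\frac{1}{|F|}\sum_{g\in F} f_i\bigl(T_g(T_h a)\bigr),$$
which lies within $1/N$ of $\int f_i\,d\mu$ by the choice of $V$. The main technical obstacle is reconciling the approximation step, which yields a right-shifted set $Fh$, with the Følner requirement. This is resolved cleanly by the key observation that right multiplication preserves left Følner deficiencies:
$$\frac{|g_j Fh\triangle Fh|}{|Fh|}=\frac{|g_j F\triangle F|}{|F|},$$
so $Fh$ inherits the Følner quality of $F$. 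Iterating this construction over $N\in\N$ then produces a left Følner sequence $\Psi$ along which every $f_i$ averages correctly at $a$, hence, by density of $\{f_i\}$ in $C(X)$, one has $a\in\gen(\mu,\Psi)$.
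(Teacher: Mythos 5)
Your proof is correct, and it takes a genuinely different route from the paper's. The paper invokes \cref{m_gen} (which rests on Lindenstrauss's pointwise ergodic theorem, \cref{Lind}) to produce a single point $x_0\in\overline{\Orb_T(a)}$ that is generic for $\mu$ along some Følner sequence $\Phi$; it then picks, for each $n$, a group element $g_n$ with $T_{g_n}a$ close enough to $x_0$ so that the averages over $\Phi_{N_n}$ starting from $T_{g_n}a$ still land within $1/n$ of the target, and sets $\Psi_n=\Phi_{N_n}g_n$. Your proof replaces the pointwise ergodic theorem with the mean ergodic theorem (\cref{met}) plus Chebyshev: for each $N$ you produce a Følner set $F$ and an open set $E$ of large $\mu$-measure on which all $N$ orbit averages are within $1/(2N)$ of their limits, you find $x_0\in E\cap\supp(\mu)$, and then use $\supp(\mu)\subseteq\overline{\Orb_T(a)}$ to find $h\in G$ with $T_ha\in E$, finally setting $\Psi_N=Fh$. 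The structural step you both rely on is the same — the observation that right-translation preserves left Følner deficiency, $|g_jFh\triangle Fh|/|Fh|=|g_jF\triangle F|/|F|$ — but your argument is more elementary in the tools it uses: it avoids the pointwise ergodic theorem entirely and substitutes a soft $L^2$ + Chebyshev + continuity-of-finite-averages argument. The trade-off is that your construction is slightly more involved per step (choosing a fresh good set and base point for each $N$ rather than one generic point once and for all), but it demonstrates that the lemma needs less than the paper deploys.
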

\begin{proof}
By \cref{m_gen}, there exists some $x_0\in\overline{\Orb_T(a)}$ that is 
generic 
for $\mu$ along some \Folner{} sequence $\Phi$. Let $\F=
(f_k)_{k\in\N}$ 
be a dense subset of $(C(X),\|\cdot\|_\infty)$ and let
$(\Phi_{N_n})_{n\in \N}$ 
be a subsequence of $\Phi$ such that for every $n\in\N$ 
and 
for every $j=1,2,\dots,n$,
$$\bigg|\frac{1}{|\Phi_{N_n}|}\sum_{g\in\Phi_{N_n}}f_j(T_gx_0)
-\int_X f_j\d\mu\bigg|<\frac{1}{n}.$$
Since $x_0\in\overline{\Orb_T(a)}$, there exists some 
$(g_n)_{n\in\N}\subset G$ such that 
$T_{g_n}a\to x_0$, so that we may assume that the equation above holds 
if we substitute $x_0$ with $T_{g_n}a$. Consider the \Folner{} 
sequence $\Psi=(\Psi_n)$ given by $\Psi_n=\Phi_{N_n}g_n$. Note that this is still a left \Folner{} sequence.
It follows that for every $n\in\N$ and any $j=1,2,\dots,n$,
$$\bigg|\frac{1}{|\Psi_n|}\sum_{g\in\Psi_n}f_j(T_ga)
-\int_X f_j\d\mu\bigg| < \frac{1}{n}.$$
Since $\F$ is dense in $C(X)$ the conclusion follows as before.
\end{proof}
\noindent
\textbf{Kronecker factor and the Jacobs-de Leeuw-Glicksberg decomposition:}
Let $\xmt$ be a $G$-system. A function $f\in L^2(X)$ is called:
\begin{itemize}
    \item \textit{compact},
    if $\overline{\{T_gf:g\in G\}}$ is compact with respect to the strong 
    topology on $L^2(X)$. 
    \item \textit{weak-mixing}, if for any \Folner{} sequence $\Phi$, and 
    any $f'\in L^2(X)$,
    $$\lim_{N\to\infty} \frac{1}{|\Phi_N|}\sum_{g\in\Phi_N}
    |\langle T_gf,f'\rangle| = 0.$$
\end{itemize}
We define the \textit{compact component} of $L^2(X)$ as 
$\Hilb_{\textup{c}}(T)=\overline{\textup{span}\{f\in L^2(X): f \text{ is compact}\}}$,
and the \textit{weak-mixing component} of $L^2(X)$ as 
$\Hilb_\textup{wm}(T)=\{f\in L^2(X): f \text{ is weak-mixing}\}$. When no confusion 
may arise, we simply write $\Hilb_{\textup{c}}$ and $\Hilb_\textup{wm}$ respectively.
\par
In case that $G$ is an amenable group, 
the Jacobs-de Leeuw-Glicksberg decomposition theorem applies, stating
that these two components give a decomposition of $L^2(X)$.

\begin{theorem}[Jacobs-de Leeuw-Glicksberg decomposition,
see {\cite[Theorem 2.24]{DKE}}]\label{JdLG}
If $\xmt$ is a $G$-system, then 
$$L^2(X)=\Hilb_{\textup{c}}\oplus\Hilb_{\textup{wm}}.$$
\end{theorem}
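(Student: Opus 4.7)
\textbf{Proof plan for \cref{JdLG}.}
The plan is to work directly with the unitary representation $T\colon G\to \mathcal{U}(L^2(X))$ and prove in sequence that $\Hilb_{\textup{c}}$ and $\Hilb_{\textup{wm}}$ are closed $T$-invariant subspaces, that they are mutually orthogonal, and finally that together they exhaust $L^2(X)$.

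The first two steps are routine verifications. Both subspaces are closed (by definition for $\Hilb_{\textup{c}}$, and by a standard $3\epsilon$-argument for $\Hilb_{\textup{wm}}$) and $T$-invariant: for $\Hilb_{\textup{c}}$ because each $T_g$ is a surjective isometry, hence maps norm-precompact orbits to norm-precompact orbits; for $\Hilb_{\textup{wm}}$ because the weak-mixing condition is preserved under $T_{g_0}$ modulo a \Folner{} error coming from translating by $g_0$ inside the average.

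For orthogonality $\Hilb_{\textup{c}} \perp \Hilb_{\textup{wm}}$, I would proceed as follows: given a compact $f$ and a weak-mixing $f'$, the matrix coefficient $c\colon G\to\C$ defined by $c(g) := \langle T_g f, f' \rangle$ is Bohr almost periodic on $G$, since the norm-precompactness of $\{T_g f\}$ in $L^2(X)$ forces the $G$-translates of $c$ to form a precompact subset of $\ell^\infty(G)$. The modulus $|c|$ is then also almost periodic, and the weak-mixing hypothesis (applied along an appropriate \Folner{} sequence, possibly after inversion as in \cref{sac_preserved}) shows that the unique invariant mean of $|c|$ vanishes. Since a non-negative almost periodic function on an amenable group with vanishing invariant mean must be identically zero, we conclude $c\equiv 0$ on $G$; in particular $\langle f, f'\rangle = c(e_G) = 0$.

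The substantive step is proving $\Hilb_{\textup{c}} \oplus \Hilb_{\textup{wm}} = L^2(X)$. Given orthogonality it suffices to show $\Hilb_{\textup{c}}^\perp \subset \Hilb_{\textup{wm}}$. Assuming for contradiction that some $f \in \Hilb_{\textup{c}}^\perp$ fails to be weak-mixing, there exist $f'\in L^2(X)$ and a \Folner{} sequence $\Phi$ with $\liminf_{N\to\infty} \tfrac{1}{|\Phi_N|}\sum_{g\in\Phi_N} |\langle T_g f, f'\rangle|^2 > 0$. Rewriting this squared coefficient as a matrix coefficient of the diagonal action $T\times T$ on $f\otimes\overline{f}\in L^2(X\times X)$ and applying the mean ergodic theorem (\cref{met}) produces a non-zero $(T\times T)$-invariant function $F\in L^2(X\times X)$. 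Viewing $F$ as the integral kernel of a non-zero Hilbert-Schmidt operator $K$ on $L^2(X)$ that commutes with every $T_g$, the non-zero spectral subspaces of $K$ are finite-dimensional and $T$-invariant, and every vector therein is automatically compact. The main obstacle is to verify that these spectral subspaces intersect the cyclic $T$-invariant subspace generated by $f$ non-trivially, so that we genuinely obtain a non-zero compact vector whose projection onto the $T$-cyclic subspace of $f$ is non-zero; this would contradict $f \in \Hilb_{\textup{c}}^\perp$. This is the standard Koopman-von Neumann style construction, which in the amenable setting goes through with \cref{met} playing the role of Birkhoff's theorem.
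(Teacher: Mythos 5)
The paper does not actually prove \cref{JdLG}; it cites it to \cite[Theorem~2.24]{DKE}, so there is no internal argument to compare against. Your proposal is the standard Koopman--von Neumann route and is essentially correct, but you have underestimated how easily your flagged ``main obstacle'' dissolves. Once you set
$$K\phi \;=\; \lim_{N\to\infty}\frac{1}{|\Phi_N|}\sum_{g\in\Phi_N}\langle \phi, T_g f\rangle\, T_g f,$$
the Hilbert--Schmidt operator with kernel $F=\lim_N \tfrac{1}{|\Phi_N|}\sum_g T_gf\otimes\overline{T_gf}$, two things are immediate: (a) $K$ is positive and nonzero, since $\langle Kf',f'\rangle=\lim_N\tfrac{1}{|\Phi_N|}\sum_g|\langle f',T_gf\rangle|^2>0$ by the failure of weak-mixing; and (b) the range of $K$ lies in the cyclic subspace $H_f:=\overline{\operatorname{span}}\{T_gf:g\in G\}$, because each $K_N\phi$ is a finite linear combination of translates of $f$ and $K_N\to K$ in Hilbert--Schmidt norm. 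Since $\Hilb_{\textup{c}}$ is $T$-invariant and each $T_g$ is unitary, $\Hilb_{\textup{c}}^\perp$ is also $T$-invariant, so $f\in\Hilb_{\textup{c}}^\perp$ forces $H_f\subset\Hilb_{\textup{c}}^\perp$. But $K$ is a nonzero positive compact operator commuting with $T$, so it has a nonzero eigenvalue whose (finite-dimensional, $T$-invariant) eigenspace $V_\lambda$ consists of compact vectors, i.e.\ $V_\lambda\subset\Hilb_{\textup{c}}$; and $V_\lambda\subset\operatorname{range}(K)\subset H_f\subset\Hilb_{\textup{c}}^\perp$. Hence $V_\lambda=\{0\}$, a contradiction. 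So no further argument about ``intersecting the cyclic subspace non-trivially'' is needed. The only other spot requiring care, which you correctly flag, is the left/right \Folner{} bookkeeping in the orthogonality step: using a two-sided \Folner{} sequence $\Phi$ and the identity $|\langle T_gf,f'\rangle|=|\langle T_{g^{-1}}f',f\rangle|$, the weak-mixing of $f'$ along $\Phi^{-1}$ gives zero mean for $|c|$, after which the positivity argument for almost periodic functions closes it.
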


Now we will give a description of the factor of $\xmt$ 
corresponding to the subspace $\Hilb_{\textup{c}}$ of $L^2(X)$.
Note that if $f\in \Hilb_{\textup{c}}$, 
then for all $g \in G$, $T_g f\in 
\Hilb_{\textup{c}}$, so $\Hilb_{\textup{c}}
$ in invariant under the action of $T$ on 
$L^2(X)$. Let $\mathcal{A}$ be the smallest $\sigma$-algebra 
with respect to which all functions in $\Hilb_{\textup{c}}$ are 
measurable. Then $\mathcal{A}$ is a
$T$-invariant $\sigma$-algebra contained
in the Borel $\sigma$-algebra of $X$. Therefore, the system 
$(X,\mathcal{A},\mu,T)$ is a factor of the original system, with 
the factor map being the identity $\textup{id}: X \to X$. This factor 
is called the \textit{Kronecker factor} of $\xmt$.
\par
Our goal now is to give a nice algebraic description of the Kronecker 
factor when
the $G$-system $\xmt$ is ergodic, but first we need the following definition. 
\begin{defn}
Let $K$ be a compact group, $H$ be a closed subgroup of $K$, and $\alpha\colon G\to K$ be a group homomorphism. Consider the homogeneous space $Z=K/H$. Let also $m$ be the normalized Haar measure on $Z$, and $R=(R_g)_{g\in G}$, where for each $g\in G$, $R_g\colon Z\to Z$ is given by $R_g(z) = \alpha(g)z$. Then the $G$-system $\zmr$ is called {\em a rotation on the homogeneous space $Z$ by $\alpha$}.
\end{defn}

\begin{proposition}\cite[Theorem 1]{mackey}
\label{Kron_rot}
Let $\xmt$ be an ergodic $G$-system. Then its Kronecker factor is measurably isomorphic to a rotation on some homogeneous space $Z$ by some $\alpha$ with dense image.
\end{proposition}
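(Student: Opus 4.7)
The plan is to realize the Kronecker factor as a homogeneous space of a compact group obtained from the Koopman representation on the compact component. First I would reduce to the case that $(X,\mu,T)$ coincides with its own Kronecker factor, so that $L^2(X,\mu)=\Hilb_{\textup{c}}$. Equipping the unitary group $U(L^2(X))$ with the strong operator topology, set $K:=\overline{\{T_g:g\in G\}}$. Since every vector in $\Hilb_{\textup{c}}$ has precompact $G$-orbit, a diagonal argument on a countable dense subset of $L^2(X)$ shows that every sequence in $K$ has a convergent subsequence, so $K$ is a compact metrizable topological group. The map $\alpha\colon G\to K$, $g\mapsto T_g$, is then a group homomorphism with dense image by construction.

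Next I would pass to a topological model on which $K$ acts. Using that $\Hilb_{\textup{c}}\cap L^\infty(X)$ is a $*$-subalgebra (the product of two bounded compact functions is still compact, via $L^2$-continuity combined with uniform $L^\infty$-bounds), choose a countable, $T$-invariant $*$-subalgebra $\mathcal{A}_0$ whose $L^2$-closure is $\Hilb_{\textup{c}}$. Its $L^\infty$-closure $\mathcal{C}$ is a separable commutative unital $C^*$-algebra, and Gelfand duality produces a compact metric space $Z$ together with a measurable factor map $\pi\colon X\to Z$ intertwining $T$ with a continuous $G$-action $S$ on $Z$; setting $m:=\pi_*\mu$, the factor $(Z,m,S)$ is measurably isomorphic to the Kronecker factor of $(X,\mu,T)$. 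The Koopman $K$-action on $\mathcal{C}$ preserves the $L^\infty$-norm, products, and complex conjugation, hence by Gelfand duality descends to a continuous action $R$ of $K$ on $Z$ by measure-preserving homeomorphisms with $R_{\alpha(g)}=S_g$ for all $g\in G$.

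Finally I would invoke ergodicity. Since $\alpha(G)$ is dense in $K$ and $S$ is ergodic, so is $R$; because $K$ is compact and acts continuously, every $K$-orbit in $Z$ is closed and $K$-invariant. Pushing $m$ forward to the Hausdorff orbit space $Z/K$ and using $K$-ergodicity produces a $\{0,1\}$-valued Borel probability measure on the compact metric space $Z/K$, which must be a point mass; consequently $m$ is supported on a single $K$-orbit $Kz_0$. With $H:=\{k\in K:R_kz_0=z_0\}$, a closed subgroup of $K$, the orbit map gives a measurable isomorphism $Z\cong K/H$ intertwining $R$ with left translation, so the Kronecker factor is measurably isomorphic to the rotation on $K/H$ by $\alpha$. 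The main obstacle is the middle step: verifying that the Koopman $K$-action, initially defined via $L^2$-limits on $\Hilb_{\textup{c}}$, genuinely lifts to an action by $*$-automorphisms of the $L^\infty$-closure $\mathcal{C}$ and preserves $\mathcal{C}$, which is exactly what is needed for Gelfand duality to turn the unitary action into a topological one.
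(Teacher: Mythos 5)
The paper offers no proof of this proposition; it is cited directly to Mackey, so there is no internal argument to compare with. Your outline --- pass to a system with pure point spectrum, take the SOT-closure $K$ of the Koopman operators, build a topological model via Gelfand duality, and collapse that model to a single $K$-orbit by ergodicity --- is the standard Koopman-theoretic route and is a legitimate alternative to Mackey's original approach. The surrounding architecture is sound: compactness of $K$ (precompact orbits on a dense set plus joint SOT-continuity of multiplication on the unitary group), boundedness and multiplicativity of the SOT-limits on $L^\infty\cap L^2$, and the final identification $Z\cong K/H$ with invariant measure equal to Haar measure all go through once a continuous $K$-action on $Z$ is in hand.

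The obstacle you flag at the end is, however, a genuine gap rather than a routine verification, and as written the proof stops short of it. The group $K$ carries the strong operator topology from $L^2(X,\mu)$, so for $f\in\mathcal{C}$ the map $k\mapsto k\cdot f$ is a priori continuous only into $L^2$, not into $(\mathcal{C},\|\cdot\|_\infty)$. Gelfand duality converts a \emph{point-norm-continuous} homomorphism $K\to\mathrm{Aut}(\mathcal{C})$ into a continuous action $K\times Z\to Z$; with only $L^2$-continuity one gets at best a Borel action, and then ``every $K$-orbit is closed'' and ``$Z/K$ is Hausdorff'' are unavailable, so your final ergodicity argument collapses. Two standard repairs: (1) show that $k\mapsto k\cdot f$ is Borel as a map into $(\mathcal{C},\|\cdot\|_\infty)$ (the inclusion $\mathcal{C}\hookrightarrow L^2$ is a continuous injection of Polish spaces, so the two Borel structures on $\mathcal{C}$ agree) and then invoke Pettis's automatic-continuity theorem for Borel homomorphisms of Polish groups; or (2) use Peter--Weyl to write $\Hilb_{\textup{c}}$ as a sum of finite-dimensional $K$-invariant subspaces $V$, note that each such $V$ consists of bounded functions (for an orthonormal basis $\{e_i\}$ of $V$, the function $\sum_i|e_i|^2$ is $T$-invariant, hence constant equal to $\dim V$ by ergodicity, so $|e_i|\le\sqrt{\dim V}$ a.e.), observe that on each finite-dimensional $V$ the map $k\mapsto k\cdot f$ is automatically $\|\cdot\|_\infty$-continuous because all Hausdorff vector topologies on $V$ coincide, and generate $\mathcal{C}$ from these. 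One of these lemmas is needed before the Gelfand step can be invoked, and your proposal leaves it open.
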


From \cref{Kron_rot} we get that if $\mathcal{C}$ 
is the Borel $\sigma$-algebra on  
$Z$, then $\pi^{-1}(\mathcal{C})$ is equivalent to $\mathcal{A}$, 
i.e., they are 
equal modulo sets that have zero $\mu$ measure.
\cref{Kron_rot} allows us to identify the Kronecker factor with
a rotation on a homogeneous space, whenever $\xmt$ is ergodic.
\par
\noindent
\textbf{Characteristic factors for $G$-systems:} 
The notion of \textit{characteristic factors} 
will play a fundamental role later in one of our proofs.
Here we have the following theorem for the characteristic factors 
with respect to some double averages that will concern us.

\begin{theorem}\label{char}
Let $\xmt$ be an 
ergodic $G$-system, let $\zmr$ be its Kronecker factor and $\Phi$ be a 
\Folner{} sequence. Then for any $f_1,f_2 \in L^{\infty}(X)$, we have 
\begin{equation}\label{char_factor_eq}
\lim_{N\to \infty}\frac{1}{|\Phi_N|}\sum_{g\in \Phi_N}T_g f_1\otimes T_g 
f_2 
=
\lim_{N\to \infty}\frac{1}{|\Phi_N|}\sum_{g\in \Phi_N}T_g
\E_\mu(f_1\:|\:Z)\otimes T_g \E_\mu(f_2\:|\:Z)
\end{equation}
in $L^2(X\times X,\mu\times \mu)$.
\end{theorem}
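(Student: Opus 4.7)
The plan is to combine the Jacobs--de Leeuw--Glicksberg decomposition (\cref{JdLG}) with the van der Corput estimate for amenable group actions. For $i=1,2$, let $\tilde f_i := f_i - \E_\mu(f_i\:|\:Z)$. Recall from the discussion after \cref{JdLG} that the Kronecker factor is the factor of $\xmt$ associated to the sub-$\sigma$-algebra generated by $\Hilb_{\textup{c}}$, so $\E_\mu(f_i\:|\:Z)$ coincides with the orthogonal projection of $f_i$ onto $\Hilb_{\textup{c}}$; therefore $\tilde f_i\in\Hilb_{\textup{wm}}\cap L^\infty(X)$.

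Expanding the difference of the two sides of \eqref{char_factor_eq} bilinearly, one sees that it equals
\[\frac{1}{|\Phi_N|}\sum_{g\in\Phi_N}T_g\tilde f_1\otimes T_gf_2 \;+\; \frac{1}{|\Phi_N|}\sum_{g\in\Phi_N}T_g\E_\mu(f_1\:|\:Z)\otimes T_g\tilde f_2.\]
Hence it suffices to show that for any $h_1\in L^\infty(X)\cap\Hilb_{\textup{wm}}$ and any $h_2\in L^\infty(X)$, the averages $A_N := \frac{1}{|\Phi_N|}\sum_{g\in\Phi_N}T_gh_1\otimes T_gh_2$ converge to $0$ in $L^2(\mu\times\mu)$; the symmetric case (with the roles of the two factors swapped) is identical.

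To establish the vanishing, I would apply the van der Corput lemma for amenable group actions to the bounded sequence $x_g := T_gh_1\otimes T_gh_2$ in $L^2(\mu\times\mu)$ with respect to the left \Folner{} sequence $\Phi$. Because each $T_g$ acts isometrically, for every $h\in G$,
\[\langle x_g, x_{gh}\rangle = \langle T_gh_1,T_{gh}h_1\rangle\,\langle T_gh_2,T_{gh}h_2\rangle = \langle h_1,T_hh_1\rangle\,\langle h_2,T_hh_2\rangle,\]
which is independent of $g$. Van der Corput then bounds
\[\limsup_{N\to\infty}\|A_N\|_{L^2(\mu\times\mu)}^2 \;\leq\; \limsup_{M\to\infty}\frac{1}{|\Phi_M|}\sum_{h\in\Phi_M}\bigl|\langle h_1,T_hh_1\rangle\bigr|\,\bigl|\langle h_2,T_hh_2\rangle\bigr|.\]
Bounding $|\langle h_2,T_hh_2\rangle|$ by $\|h_2\|_2^2$ and using the weak-mixing property of $h_1$, which says exactly that $\tfrac{1}{|\Phi_M|}\sum_{h\in\Phi_M}|\langle h_1,T_hh_1\rangle|\to 0$, the right-hand side equals $0$, completing the argument.

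The main technical obstacle is verifying the van der Corput estimate in a possibly non-commutative amenable group: the substitution $k = g^{-1}g'$ in the expansion of $\|A_N\|^2_{L^2(\mu\times\mu)}$ turns the double sum into a sum over the counts $|\Phi_N\cap\Phi_Nk^{-1}|$, and one needs the (two-sided) \Folner{} property to pass from these weighted sums to a clean Cesàro average over $k\in\Phi_M$. Once this is in place the remainder of the argument is a direct transcription of the classical characteristic-factor argument for $\Z$-actions; the conclusion in \eqref{char_factor_eq} should be interpreted as saying that the $L^2$-difference of the two averages tends to $0$, so in particular one limit exists iff the other does.
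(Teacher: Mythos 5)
Your overall strategy---split each $f_i$ via Jacobs--de Leeuw--Glicksberg and reduce to showing that $A_N := \frac{1}{|\Phi_N|}\sum_{g\in\Phi_N}T_gh_1\otimes T_gh_2\to 0$ in $L^2(\mu\times\mu)$ when $h_1\in\Hilb_{\textup{wm}}$---is correct and matches the skeleton of the paper's proof; your two-term split is a slightly more economical variant of the paper's four-term split. The gap lies in the van der Corput step, and it is specific to the non-abelian setting. Since $T$ is a \emph{left} action of $G$ on $X$, the Koopman action $T_gf=f\circ T_g$ is a \emph{right} action on $L^2(X)$ (as the paper notes in \cref{prelims section}), so $T_{gh}=T_h\circ T_g$ as operators. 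Consequently
\begin{equation*}
\langle T_gh_1, T_{gh}h_1\rangle = \langle T_gh_1, T_h(T_gh_1)\rangle = \langle h_1, T_{ghg^{-1}}h_1\rangle,
\end{equation*}
which depends on $g$ through the conjugate $ghg^{-1}$; your claim that it equals $\langle h_1,T_hh_1\rangle$ holds only when $G$ is abelian. The quantity that is actually $g$-independent is $\langle x_g,x_{hg}\rangle$: since $T_{hg}=T_g\circ T_h$, one has $\langle T_gh_1,T_{hg}h_1\rangle=\langle h_1,T_hh_1\rangle$. Switching to this left shift also dissolves the worry you raise about two-sided \Folner{} sequences: writing $g'=kg$ (rather than $g'=gk$) in the expansion of $\|A_N\|^2_{L^2(\mu\times\mu)}$ produces the counts $|\Phi_N\cap k^{-1}\Phi_N|$, which are governed by the \emph{left} \Folner{} property of $\Phi$, so no two-sided assumption is needed. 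Even with these repairs, turning the resulting $N$-dependent weighted sum into a clean weak-mixing average still takes care. The paper bypasses all of this: once \cref{wmlemma} shows $h_1\otimes h_2\in\Hilb_{\textup{wm}}(T\times T)$, the mean ergodic theorem gives $A_N\to\E_{\mu\times\mu}(h_1\otimes h_2\mid\I(T\times T))$, and this vanishes because $T\times T$-invariant functions lie in $\Hilb_{\textup{c}}(T\times T)$, which by \cref{JdLG} is orthogonal to $\Hilb_{\textup{wm}}(T\times T)$. This also makes your final caveat unnecessary: both limits in \eqref{char_factor_eq} exist by the mean ergodic theorem applied to $T\times T$, as the paper observes.
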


\cref{char} says that the Kronecker factor is the characteristic factor for the averages in the left-hand side of \eqref{char_factor_eq}.
The proof of \cref{char} will follow easily from the next lemma.
\begin{lemma}\label{wmlemma}
Let $\xmt$ be a $G$-system. Then $$\Hilb_{\textup{wm}}(T)\otimes L^2(X)
\subset \Hilb_{\textup{wm}}(T\times T) \:\text{ and }\: 
L^2(X) \otimes \Hilb_{\textup{wm}}(T)
\subset \Hilb_{\textup{wm}}(T\times T).$$
\end{lemma}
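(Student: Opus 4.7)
The plan is to prove the first inclusion; the second follows by an identical argument exchanging the roles of the two factors. Fix $f\in\Hilb_{\textup{wm}}(T)$ and $h\in L^2(X)$. By definition I need to show that for every \Folner{} sequence $\Phi$ and every $F\in L^2(X\times X,\mu\times\mu)$,
\begin{equation*}
\lim_{N\to\infty}\frac{1}{|\Phi_N|}\sum_{g\in\Phi_N}|\langle (T\times T)_g(f\otimes h),F\rangle|=0.
\end{equation*}

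First I would reduce to the case of a pure tensor $F=f'\otimes h'$ with $f',h'\in L^2(X)$. For such $F$, the identity
\begin{equation*}
\langle (T\times T)_g(f\otimes h),f'\otimes h'\rangle=\langle T_gf,f'\rangle\cdot\langle T_gh,h'\rangle
\end{equation*}
combined with the crude bound $|\langle T_gh,h'\rangle|\leq \|h\|\cdot\|h'\|$ (Cauchy--Schwarz and the fact that $T_g$ is an isometry of $L^2(X)$) gives
\begin{equation*}
\frac{1}{|\Phi_N|}\sum_{g\in\Phi_N}|\langle T_gf,f'\rangle\cdot\langle T_gh,h'\rangle|\leq \|h\|\cdot\|h'\|\cdot\frac{1}{|\Phi_N|}\sum_{g\in\Phi_N}|\langle T_gf,f'\rangle|,
\end{equation*}
and the right-hand side tends to $0$ since $f\in\Hilb_{\textup{wm}}(T)$. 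By the triangle inequality, the same conclusion holds for any finite linear combination $F=\sum_{i=1}^k f'_i\otimes h'_i$.

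The last step is to pass from such finite sums to arbitrary $F\in L^2(X\times X)$. Since linear combinations of simple tensors are dense in $L^2(X\times X,\mu\times\mu)$, for any $\epsilon>0$ I can choose such a combination $F_\epsilon$ with $\|F-F_\epsilon\|_{L^2}<\epsilon$. Writing
\begin{equation*}
|\langle(T\times T)_g(f\otimes h),F\rangle|\leq |\langle(T\times T)_g(f\otimes h),F_\epsilon\rangle|+\|f\|\cdot\|h\|\cdot\epsilon,
\end{equation*}
averaging over $\Phi_N$, and letting first $N\to\infty$ (killing the first term by the previous step) and then $\epsilon\to 0$, yields the desired limit. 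There is no real obstacle here — the argument is entirely formal and parallels the classical $\Z$-action proof; the only subtlety is making sure that the density reduction is clean, which is why I single it out as the last step.
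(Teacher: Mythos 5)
Your proof is correct and follows the same underlying strategy as the paper: exploit the factorization $\langle (T_g\times T_g)(f\otimes h), f'\otimes h'\rangle = \langle T_gf,f'\rangle\langle T_gh,h'\rangle$ for pure tensors, kill it with the weak-mixing of $f$, then pass to finite sums by the triangle inequality and to general $F$ by density. (Implicit in both is the fact that $\Hilb_{\textup{wm}}(T\times T)$ is a closed subspace of $L^2(\mu\times\mu)$, by the JdLG decomposition, so that showing all pure tensors $f\otimes h$ with $f$ weak-mixing are weak-mixing for $T\times T$ suffices.)

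One small structural difference worth noting: the paper's proof works with the averages of $|\langle\cdot,F\rangle|^2$ rather than $|\langle\cdot,F\rangle|$, which forces an expansion of $\langle\cdot,F\rangle\langle F,\cdot\rangle$ with $F=F'+(F-F')$ substituted into one factor, and then a Cauchy--Schwarz step; this is somewhat heavier bookkeeping. Your version works with the first power directly, which matches the paper's definition of weak-mixing verbatim and avoids that expansion, so the three steps (pure tensor, finite sum, density) line up in the obvious order. Both are correct; yours is slightly more streamlined, since you don't need the observation (used implicitly by the paper) that, for uniformly bounded nonnegative sequences, the C\'esaro average of the squares tends to zero if and only if the C\'esaro average of the terms does.
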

\begin{proof}
We will only prove the first inclusion, as the second follows in 
an analogous way.
Let $\Phi$ be any \Folner{} sequence, and let 
$f_1\in\Hilb_{\textup{wm}}(T)$
and $f_2\in L^2(X)$. We may assume that $\|f_1\|_2,\|f_2\|_2\leq1$.
We want to show that
\begin{equation}\label{eq_ld}
\lim_{N\to\infty}\frac{1}{|\Phi_N|}\sum_{g\in\Phi_N}
\Big|\langle(T_g\times T_g)(f_1\otimes f_2),F\rangle_{L^2(\mu\times\mu)}\Big|^2 
= 0    
\end{equation}
for any $F\in L^2(X\times X,\mu\times\mu)$. 
\par
Let $F\in L^2(X\times X,\mu\times\mu)$ and $\epsilon>0$. We may assume that
$\|F\|_{L^2(\mu\times\mu)}=1$.
Now, since finite linear combinations of functions of the form
$f'_1\otimes f'_2$, where $f'_1,f'_2\in L^2(X)$, 
form a dense subset of $L^2(X\times X,\mu\times\mu)$, we can find 
$F' = \sum_{i=1}^k c_i (f_{1,i}'\otimes f_{2,i}')$ with 
$\|F'\|_{L^2(\mu\times\mu)}\leq1$ such that $\|F-F'\|_{L^2(\mu\times\mu)}<\epsilon/2$.
Then by the Cauchy-Schwarz inequality we have 
\begin{align*}
    & \big|\langle (T_g\times T_g)(f_1\otimes f_2), F\rangle_{L^2(\mu\times\mu)}\big|^2 
    = \langle (T_g\times T_g)(f_1\otimes f_2), F\rangle_{L^2(\mu\times\mu)} \langle F, (T_g\times T_g)(f_1\otimes f_2)\rangle_{L^2(\mu\times\mu)} \\
    & = \langle (T_g\times T_g)(f_1\otimes f_2), F'\rangle_{L^2(\mu\times\mu)}\langle F', (T_g\times T_g)(f_1\otimes f_2)\rangle_{L^2(\mu\times\mu)} \\
    & \hspace*{4.45cm} + \langle (T_g\times T_g)(f_1\otimes f_2), F'\rangle_{L^2(\mu\times\mu)}\langle F-F', (T_g\times T_g)(f_1\otimes f_2)\rangle_{L^2(\mu\times\mu)}\\
    & \hspace*{4.45cm} + \langle (T_g\times T_g)(f_1\otimes f_2), F-F'\rangle_{L^2(\mu\times\mu)} \langle F, (T_g\times T_g)(f_1\otimes f_2)\rangle_{L^2(\mu\times\mu)} \\
    & < \sum_{1\leq i,j\leq k} \overline{c_i}c_j
    \langle (T_g\times T_g)(f_1\otimes f_2), f_{1,i}'\otimes f_{2,i}'\rangle_{L^2(\mu\times\mu)} 
    \langle f_{1,j}'\otimes f_{2,j}', (T_g\times T_g)(f_1\otimes f_2)\rangle_{L^2(\mu\times\mu)}
    + \epsilon \\
    & \leq \sum_{1\leq i,j\leq k} \overline{c_i}c_j
    \|f_{2,i}'\|_2 \|f_{1,j}'\|_2\|f_{2,j}'\|_2|\langle T_gf_1,f_{1,i}'\rangle| + \epsilon.
\end{align*}
Therefore, using that $f_1$ is a weak-mixing function, we have that
\begin{multline*}
    \limsup_{N\to\infty}\frac{1}{|\Phi_N|}\sum_{g\in\Phi_N}
    \big|\langle(T_g\times T_g)(f_1\otimes f_2),F\rangle_{L^2(\mu\times\mu)}\big|^2 \\
    = \sum_{1\leq i,j\leq k} \overline{c_i}c_j
    \|f_{2,i}'\|_2 \|f_{1,j}'\|_2\|f_{2,j}'\|_2
    \limsup_{N\to\infty}
    \frac{1}{|\Phi_N|}\sum_{g\in\Phi_N} |\langle T_gf_1,f_{1,i}'\rangle|
    + \epsilon
    = \epsilon.
\end{multline*}
Since $\epsilon>0$ was arbitrary, then \eqref{eq_ld} follows.
The proof is complete.
\end{proof}

\begin{proof}[Proof of \cref{char}]
Let $\Phi$ be \Folner{} sequence, and let $f_1,f_2\in L^2(X)$.
Then we write
\begin{align}\label{av.split}
\frac{1}{|\Phi_N|}\sum_{g\in \Phi_N}T_g f_1\otimes T_g f_2
&=
\frac{1}{|\Phi_N|}\sum_{g\in \Phi_N}
T_g (f_1 - \mathbb{E}_{\mu}(f_1\:|\:Z) )\otimes T_g 
(f_2 - \mathbb{E}_{\mu}(f_2\:|\:Z) ) \nonumber \\
& \hspace*{0.04cm} +
\frac{1}{|\Phi_N|}\sum_{g\in \Phi_N}
T_g(f_1 - \mathbb{E}_{\mu}(f_1\:|\:Z) )\otimes T_g
\mathbb{E}_{\mu}(f_2\:|\:Z) 
\nonumber \\
& \hspace*{0.04cm} +
\frac{1}{|\Phi_N|}\sum_{g\in \Phi_N}
T_g \mathbb{E}_{\mu}(f_1\:|\:Z) \otimes T_g 
(f_2 - \mathbb{E}_{\mu}(f_2\:|\:Z)) \nonumber \\
& \hspace*{0.04cm} + 
\frac{1}{|\Phi_N|}\sum_{g\in \Phi_N}
T_g \mathbb{E}_{\mu}(f_1| Z) \otimes T_g \mathbb{E}_{\mu}(f_2\:|\:Z).
\end{align}
Note that the limits of all the terms above exist 
by the mean ergodic theorem (\cref{met})
applied to $T\times T$. By \cref{JdLG}, the functions $f_1-\E_\mu(f_1\:|\:Z),
f_2-\E_\mu(f_2\:|\:Z)$ are both weak-mixing. Then, 
by \cref{wmlemma}, the functions
$(f_1-\E_\mu(f_1\:|\:Z))\otimes(f_2-\E_\mu(f_2\:|\:Z)),
(f_1-\E_\mu(f_1\:|\:Z))\otimes \E_\mu(f_2\:|\:Z)$ and 
$\E_\mu(f_1\:|\:Z)\otimes(f_2-\E_\mu(f_2\:|\:Z))$ are weak-mixing 
with respect to $T\times T$. Hence, the limits of the first three terms in the 
right-hand side of \eqref{av.split} are $0$ in $L^2(X\times X,\mu\times\mu)$. 
Then the theorem follows.
\end{proof}

\noindent
\textbf{A correspondence principle:}
Finally, we need the following instance of Furstenberg's correspondence principle.

\begin{theorem}[Cf. {\cite[Theorem 2.8]{berg-f-m}}]\label{cp}
Let $G$ be an amenable group,
let $A\subset G$ and assume that there exists 
a left \Folner{} sequence $\Phi$ such that 
$d_\Phi(A)=\lim_{N\to\infty}\frac{|A\cap\Phi_N|}{|\Phi_N|}$
exists. Then there exist an ergodic $G$-system $(X,\mu,T)$, 
a clopen set $E\subset X$, a \Folner{} sequence $\Psi$, 
and a point $a\in \gen(\mu,\Psi)$ such that $\mu(E)\geq d_\Phi(A)$ 
and $A=\{h \in G : T_h a \in E\}$.
\end{theorem}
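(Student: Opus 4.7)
The strategy is Furstenberg's symbolic coding followed by an ergodic decomposition argument to upgrade the resulting invariant measure to an ergodic one while preserving genericity of the coding point.

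First, I would realize $A$ dynamically. Let $X := \{0,1\}^G$ carry the product topology, so that $X$ is a compact metrizable space, and let $T = (T_h)_{h \in G}$ be the continuous left shift $(T_h x)(g) := x(gh)$. Set $E := \{x \in X : x(e_G) = 1\}$, which is clopen since it depends only on the coordinate at $e_G$, and let $a := \1_A \in X$. A direct check gives $T_h a \in E$ if and only if $a(h) = 1$, i.e., if and only if $h \in A$; hence $A = \{h \in G : T_h a \in E\}$.

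Next, I would extract a $T$-invariant measure through empirical averages. Consider $\mu_N := \frac{1}{|\Phi_N|}\sum_{g \in \Phi_N}\delta_{T_g a}$ in the compact space $M(X)$, and pass to a weak-$\ast$ convergent subsequence $\mu_{N_k} \to \mu_0$; set $\Psi_0 := (\Phi_{N_k})_{k \in \N}$, so that $a \in \gen(\mu_0, \Psi_0)$ by construction. A routine computation using the left \Folner{} property gives $\bigl|\int T_h f \d \mu_N - \int f \d \mu_N\bigr| \to 0$ for every $f \in C(X)$ and every $h \in G$, so $\mu_0$ is $T$-invariant; and since $\1_E$ is continuous,
\[
\mu_0(E) \;=\; \lim_{k \to \infty} \frac{|A \cap \Phi_{N_k}|}{|\Phi_{N_k}|} \;=\; d_\Phi(A).
\]

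The main (and only genuinely subtle) step is then passing to an ergodic component without losing genericity of $a$. Applying \cref{erg-dec} to $\mu_0$, write $\mu_0 = \int_X (\mu_0)_x \d \mu_0(x)$; integrating $\1_E$ yields $d_\Phi(A) = \int_X (\mu_0)_x(E) \d \mu_0(x)$, so the set of $x$ with $(\mu_0)_x(E) \geq d_\Phi(A)$ has positive $\mu_0$-measure. Moreover, $\supp(\mu_0) \subset \overline{\Orb_T(a)}$ (as $\mu_0$ is a weak-$\ast$ limit of measures supported on that closed set), and hence $\supp((\mu_0)_x) \subset \overline{\Orb_T(a)}$ for $\mu_0$-a.e. $x$. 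Picking $x_0$ in the intersection of these sets and in the full-measure set where $(\mu_0)_{x_0}$ is ergodic, and setting $\mu := (\mu_0)_{x_0}$, gives an ergodic $G$-system $(X, \mu, T)$ with $\mu(E) \geq d_\Phi(A)$ and $\supp(\mu) \subset \overline{\Orb_T(a)}$. The point $a$ need no longer be generic for $\mu$ along $\Psi_0$, but this is precisely the situation addressed by \cref{quasi_gen}, which produces a (possibly different) \Folner{} sequence $\Psi$ with $a \in \gen(\mu, \Psi)$. Combined with the identity $A = \{h : T_h a \in E\}$ from the first step, this completes the proof.
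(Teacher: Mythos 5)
Your proof is correct and follows essentially the same route as the paper's: the symbolic coding on $\{0,1\}^G$ with the right-shift $(T_h x)(g) = x(gh)$, the empirical averages $\mu_N$ and their weak-$\ast$ limit, passage to an ergodic component by ergodic decomposition (using that the component supports are contained in $\overline{\Orb_T(a)}$ and that a positive-measure set of components assigns $E$ measure at least $d_\Phi(A)$), and finally \cref{quasi_gen} to recover genericity of $a$ along a suitable \Folner{} sequence. The only cosmetic difference is that you spell out the averaging argument for why some ergodic component satisfies $(\mu_0)_{x_0}(E)\geq d_\Phi(A)$, which the paper leaves implicit.
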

\begin{proof}
Consider the compact metric space 
$X:=\{0,1\}^G=\{x=(x_g)_{g\in G} : x_g \in \{0,1\} 
\: \forall \: g \in G\}$, equipped with 
the Borel $\sigma$-algebra.
We define the continuous action $T$ on $X$ by $T_h(x_g)_{g\in G}= 
(x_{gh})_{g\in G}$, for any $h\in G$ and $(x_g)_{g\in G} \in X$.
Now, consider the point $a=(\1_A (g))_{g\in G}\in X$ 
and the clopen set $E=\{x=(x_g)_{g\in G}\in X: x_{e_G}=1\}.$
By the choice of $a$, for $h\in G$ we have that
$ h\in A$ if and only if $T_h a \in E$, 
and therefore 
$A=\{ h\in G : T_h a \in E\}.$
Consider the sequence of Borel probability measures on $X$ defined by 
$$N\mapsto\mu_N=\frac{1}{|\Phi_N|}\sum_{h\in\Phi_N}\delta_{T_h a},$$
and let $\mu'$ be a weak$^{\ast}$ limit point of 
that sequence. 
Then $\mu ' (E) = d_{\Phi}(A)$, and $\mu '$ is 
$T$-invariant, but not necessarily ergodic.
Let $x \mapsto \mu_x '$ be the ergodic decomposition of $\mu '$.
Then $\mu'=\int_x \mu'_x \d \mu'(x)$, so there exists $x_0 \in X$ 
such that for the measure $\mu=\mu_{x_0} '$, we have that 
$(X,\mu,T)$ is ergodic and $\mu(E)\geq \d_{\Phi} (A)$. 
For all $N\in \N$, $\mu_N$ gives full measure to the orbit closure of $a$, 
and hence $\mu'$ also gives full measure to the orbit closure of $a$. 
Therefore, we may assume that $\mu$ is also supported
on the orbit closure of $a$ (as this is the case with $\mu'_x$ 
for $\mu'$-almost every $x\in X$). Then it follows by \cref{quasi_gen}, 
that $a\in\gen(\mu,\Psi)$ for some \Folner{} sequence $\Psi$.
\end{proof}

\section{Reduction of Theorem~\ref{mt1} to dynamical statements}
\label{reduction section}

In this section we translate our first main theorem, namely, 
\cref{mt1}, in a dynamical language. This will allow us 
to approach the problem through ergodic theoretic techniques.

\subsection{Dynamical reformulation via correspondence principle}
Usually in ergodic theory, correspondence principles serve 
as bridges between combinatorial and dynamical statements.
In our situation, we can use the correspondence principle (\cref{cp}) 
to show that Theorem ~\ref{mt1} follows from
Theorem \ref{dr1a} below, which is more dynamical in nature.

\begin{theorem}[First dynamical reformulation of \cref{mt1}]\label{dr1a}
Let $G$ be a square absolutely continuous group and
$\xmt$ be an ergodic $G$-system.
Let $a\in \gen(\mu,\Phi)$ 
for some \Folner{} sequence $\Phi$ and $E\subset G$ 
be clopen with $\mu(E)>0$. 
Then there exist an infinite sequence
$B=(b_n)_{n\in\N}\subset \{h\in G : T_h a\in E\}$ and some $t\in G$ 
such that
$$t \cdot B \ltrdot B \subset \{h\in G: T_h a\in E\}.$$
\end{theorem}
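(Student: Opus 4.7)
The plan is to build the sequence $B = (b_n)$ and the shift $t$ iteratively. Set $A := \{h \in G : T_h a \in E\}$. Given $b_1,\dots,b_n \in A$ satisfying $tb_ib_j \in A$ for $i < j \le n$, I would choose $b_{n+1}$ such that $b_{n+1} \in A$ and $tb_ib_{n+1} \in A$ for every $i \le n$. Using $T_{tb_ib_{n+1}}a = T_{tb_i}(T_{b_{n+1}}a)$, the second requirement amounts to $T_{b_{n+1}}a \in \bigcap_{i\le n}T_{tb_i}^{-1}E$, so altogether $T_{b_{n+1}}a$ must lie in the clopen set $E_n := E \cap \bigcap_{i\le n}T_{tb_i}^{-1}E$. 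Since $a \in \gen(\mu,\Phi)$, as soon as $\mu(E_n) > 0$ one can find many $b_{n+1} \in G$ meeting this constraint, and so the whole proof reduces to engineering $t$ and the $b_i$ so that $\mu(E_n) > 0$ at every stage.

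First I would pass to the Kronecker factor $\pi:X\to Z$, which by \cref{Kron_rot} has the form $Z=K/H$ with the $G$-action given by a homomorphism $\alpha:G\to K$ of dense image. The natural object encoding the pattern we seek is a three-fold self-joining $\sigma$ of $\mu$ supported on triples $(x,y,z)$ whose Kronecker images satisfy $\pi(y)=k\pi(x)$ and $\pi(z)=k^2\pi(x)$ for some $k\in K$, with fibre measures given by the disintegration of $\mu$ over $\pi$. Such triples --- the non-commutative analogues of $3$-term arithmetic progressions on $Z$ --- are the \Erdos{} progressions alluded to in the introduction. \cref{char} identifies $Z$ as the characteristic factor of the relevant double averages $\frac{1}{|\Phi_N|}\sum_{g\in\Phi_N}T_gf_1\otimes T_gf_2$, which is precisely what makes $\sigma$ the correct measure to study.

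The crux of the argument is to show that $(a,a,a)$ is generic for $\sigma$ along a \Folner{} sequence adapted to squaring: that is, the empirical averages of $(T_ga, T_ga, T_{g^2}a)$ over $g\in\Psi_N$ converge weakly to $\sigma$. This is the step where square absolute continuity of $G$ enters essentially: without it, the squared orbit $\{T_{g^2}a : g\in G\}$ could concentrate on a $\mu$-null subset of $X$, destroying the joining structure of $\sigma$ and so the existence of genuine \Erdos{} progressions starting from $a$. Square absolute continuity, together with an amenable-group extension of a criterion of Host and Kra (\cref{HK_generic}), upgrades the genericity of $a$ for $\mu$ along $\Phi$ to the desired genericity of $(a,a,a)$ for $\sigma$. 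I expect this to be the main obstacle: not only is it where the $G$-hypothesis is really used, but also the non-abelian structure of $Z=K/H$ forces the fibre analysis to proceed through the squaring map $k\mapsto k^2$ on $K$ rather than doubling on an abelian compact group as in \cite{kmrr2}.

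Once $(a,a,a)$ is known to be $\sigma$-generic, the inductive step becomes largely routine. At stage $n$, the $\sigma$-measure of $X\times E\times E_n$ is positive provided $\mu(E_n)>0$, which is maintained inductively using that $\E_\mu(\1_E\:|\:Z)>0$ on a positive-measure subset of $Z$ together with Fubini in the auxiliary $k\in K$ variable. Genericity of $(a,a,a)$ for $\sigma$ then produces an element $g\in G$ with $T_ga$ close to some point of $E$ and $T_{g^2}a$ close to some point of $E_n$, and the clopen-ness of both sets promotes approximate hitting to exact membership. Reading off $b_{n+1}$ and, in the very first step of the induction, the shift $t$ from the element $g$ produced in this way, completes the construction.
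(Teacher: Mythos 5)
Your high-level toolkit is the right one — pass to the Kronecker factor $Z = K/H$, study Erd\H{o}s progressions via a measure $\sigma$ built from the squaring map on $K$, use square absolute continuity, and extend Host--Kra — but the core of your argument has a genuine gap, and several of the roles you assign to the auxiliary ingredients are mislocated.

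The gap is the inductive maintenance of $\mu(E_n)>0$. You reduce the problem to ``engineering $t$ and the $b_i$ so that $\mu(E_n)>0$ at every stage'' and then assert this is ``largely routine'' via Fubini in the $k\in K$ variable. It is not: $E_{n}=E\cap\bigcap_{i\le n}T_{tb_i}^{-1}E$ is an intersection of $n+1$ sets, each of measure $\mu(E)$, and without a structural reason they could be almost disjoint after only a few steps. Moreover you do not specify how $b_{n+1}$ (or, in step one, $t$) is ``read off'' from a $g$ with $T_ga\in E$ and $T_{g^2}a\in E_n$; the condition actually required of $b_{n+1}$ is $T_{b_{n+1}}a\in E_n$, and neither $g$ nor $g^2$ obviously furnishes such an element. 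The paper avoids this entirely by \emph{not} trying to keep $\mu(E_n)$ positive. It first proves (\cref{dr3}, via \cref{sigma.thm}, \cref{lambda.supp}, \cref{lambda.gen}) that there exists a single Erd\H{o}s progression $(a,x_1,x_2)$ with $x_1\in E$, $x_2\in T_t^{-1}E$; this is where all the measure theory lives. Then \cref{l1} builds $B$ by a purely topological induction: the $b_i$ are drawn from the witnessing sequence $(g_n)$ with $(T_{g_n}\times T_{g_n})(a,x_1)\to(x_1,x_2)$, and the invariant maintained is $x_1\in\bigcap_{m\le n}T_{b_m}^{-1}F$ and $a\in\bigcap_{i<j\le n}T_{b_ib_j}^{-1}F$. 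The middle point $x_1$ acts as a fixed anchor certifying that these intersections of \emph{open} sets are nonempty at every stage --- no positivity of measure is needed in the induction. Your construction lacks any such anchor, which is precisely why the step you call routine is the hard part.

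Two smaller inaccuracies: (1) The paper's $\sigma$ lives on $X\times X$, $\sigma=\int_K \eta_{k\pi(a)}\times\eta_{k^2\pi(a)}\,\mathrm{d}m_K(k)$, and the genericity statement proved (\cref{lambda.gen}) is that for $\sigma$-a.e.\ $(x_1,x_2)$ the pair $(a,x_1)$ is generic for the ergodic component $\lambda_{(x_1,x_2)}$ of $\mu\times\mu$ --- not that $(a,a,a)$ is generic for a self-joining on $X^3$. (2) Square absolute continuity is used in \cref{abs_c} to prove $m_{K^2}\ll m_K$, which is what makes $\sigma$ well-defined and gives $\pi_2\sigma\ll\mu$ (\cref{s_props}); it does not ``upgrade genericity of $a$'' to genericity of a triple. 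The Host--Kra extension (\cref{HK_generic}) enters separately, to reduce to systems with a \emph{continuous} factor map to the Kronecker factor and to find the appropriate generic point there.
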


\begin{proof}[Proof that \cref{dr1a} implies \cref{mt1}]
Let $A\subset G$ have positive left upper Banach density, 
so that there exists some \Folner{} sequence $\Phi$ such that 
$d_\Phi(A)=\lim_{N\to\infty}\frac{|A\cap\Phi_N|}{|\Phi_N|}>0$, (where 
we have passed to a
subsequence). Then consider $(X,\mu,T)$, $E$, $\Psi$ and $a$, as 
ensured
by \cref{cp}, 
satisfying $\mu(E)\geq d_\Phi(A)>0$ and $\{h\in G: T_ha\in E\}=A$. 
It follows then by \cref{dr1a} that there exist an infinite sequence
$B=(b_n)_{n\in\N}\subset A$
and some $t\in G$ such that $t\cdot B\ltrdot B\subset A$.
\end{proof}

\subsection{\Erdos{} progressions}
The conclusion of Theorem \ref{dr1a} is still 
a rather combinatorial statement, so we need to reformulate it again 
into a dynamical statement. For this to be achieved, we will use the
notion of \Erdos{} progressions, as defined in \cite{kmrr2}, 
which in $\Z$ is a dynamical variant of $3$-term arithmetic progressions. In our case, \Erdos{} progressions are a dynamical variant of progressions of the form $(z,kz,k^2z)$, $k\in K$, $z\in Z$, which are the natural generalization of $3$-term arithmetic progressions in our setting.

\begin{defn}\label{Erdos progressions}
Given a topological $G$-system $(X,T)$, a point 
$(x_0,x_1,x_2)\in X^3$ is a \textit{$3$-term \Erdos{} progression}, 
if there exists an infinite sequence $(g_n)_{n\in\N}$ in $G$ 
such that
\begin{equation}\label{erdospr}
    (T_{g_n}\times T_{g_n})(x_0,x_1)\to (x_1,x_2) 
\end{equation}
\end{defn}

We refer to $3$-term \Erdos{} progressions simply as \Erdos{} progressions.
Through the notion of \Erdos{} progressions we are able to
reformulate Theorem \ref{dr1a} as follows: 

\begin{theorem}[Second dynamical reformulation]\label{dr2}
Let $G$ be a square absolutely continuous group, and let
$\xmt$ be an ergodic $G$-system and $a\in\gen(\mu,\Phi)$ 
for some \Folner{} sequence $\Phi$. If $E\subset X$ is a 
clopen set with $\mu(E)>0$, then there exist $t\in G$, $x_1\in E$ 
and $x_2\in {T_t}^{-1}E$ such that $(a,x_1,x_2)\in X^3$ forms 
an \Erdos{} progression.
\end{theorem}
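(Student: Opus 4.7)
The plan is to construct the \Erdos{} progression $(a,x_1,x_2)$ by analyzing weak-$\ast$ limits of the empirical measures on pairs $(T_g a, T_{g^2} a)$, using the Kronecker factor as the characteristic factor and exploiting square absolute continuity to control the behaviour along the squaring map. First I would identify the Kronecker factor of $\xmt$ with a rotation $\zmr$ on a homogeneous space $Z=K/H$ by some $\alpha\colon G\to K$ with dense image, as provided by \cref{Kron_rot}, and let $\pi\colon X\to Z$ denote the factor map. The relevance of this reduction is that if $(a,x_1,x_2)$ is an \Erdos{} progression witnessed by $g_n$, and $\alpha(g_n)\to k$ in $K$ along a subsequence (which exists by compactness of $K$), then at the level of $Z$ one has $\pi(x_1)=k\pi(a)$ and $\pi(x_2)=k^2\pi(a)$. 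Hence \Erdos{} progressions project to three-term progressions of the form $(z,kz,k^2z)$ on $Z$, and one should search for them by looking at the joint behaviour of $T_g$ and $T_{g^2}$.

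Next, consider the sequence of measures
$$\rho_N:=\frac{1}{|\Psi_N|}\sum_{g\in\Psi_N}\delta_{(T_g a,\,T_{g^2} a)}\in M(X\times X)$$
for the \Folner{} sequence $\Psi$ appearing in \cref{sac_def}, and let $\rho$ be a weak-$\ast$ limit point along a subsequence. The goal is to show $\rho(E\times T_t^{-1}E)>0$ for some $t\in G$. To achieve this, one pushes $\rho$ down to $Z\times Z$: using \cref{char}, correlations in the $T_g$-coordinate are controlled by the Kronecker factor, while the square absolute continuity assumption is precisely what lets one replace functions of $T_{g^2}a$ by their conditional expectations on $Z$, since small $\mu$-mass cannot leak into the $g^2$-averages. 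Combined with the amenable extension of a Host--Kra genericity result (\cref{HK_generic}), the push-forward of $\rho$ under $\pi\times\pi$ turns out to be expressible in terms of Haar averages over $K$ acting on $Z$, and a Fubini argument using that $\pi(E)\subset Z$ has positive Haar measure produces a $t\in G$ with $\rho(E\times T_t^{-1}E)>0$.

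Finally, pick $(x_1,x_2)\in\supp(\rho)\cap(E\times T_t^{-1}E)$ and extract $g_n$ with $(T_{g_n}a,\,T_{g_n^2}a)\to(x_1,x_2)$. To upgrade this to a genuine \Erdos{} progression one needs $T_{g_n}x_1\to x_2$ rather than merely $T_{g_n^2}a\to x_2$, and this is the most delicate step: passing to a further subsequence along which $\alpha(g_n)$ converges in $K$, the Kronecker-factor structure of $\rho$ together with a diagonal extraction forces the required convergence in $X$. The main obstacle throughout, and the reason for the square absolute continuity hypothesis, is that a priori the $T_{g^2}$-orbit of $a$ could be trapped in a $\mu$-null set, in which case the measures $\rho_N$ would fail to see the set $E$ in the second coordinate; square absolute continuity is precisely the uniform-integrability-type condition that rules this out. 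The noncommutativity of $G$ manifests itself in the Kronecker factor being only a homogeneous space $K/H$ rather than an abelian group, which forces one to work with the Haar measure on $K$ and its action on $Z$ rather than with abelian Fourier analysis, and complicates the final lift from a Kronecker three-term progression to an honest \Erdos{} progression in $X$.
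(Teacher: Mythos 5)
Your proposal takes a genuinely different route from the paper, and it contains a genuine gap at exactly the step you yourself flag as ``the most delicate.''

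\textbf{Where the approaches diverge.} The paper does not work with the empirical measures $\rho_N = \frac{1}{|\Psi_N|}\sum_{g\in\Psi_N}\delta_{(T_g a, T_{g^2} a)}$ at all. Instead, after first reducing (via Proposition 3.6) to the case where $\xmt$ admits a \emph{continuous} factor map $\pi$ to its Kronecker factor $\zmr=K/H$, it constructs a measure on $X\times X$ directly from the disintegration $z\mapsto\eta_z$ of $\mu$ over $\pi$:
$$\sigma := \int_K \eta_{k\pi(a)}\times\eta_{k^2\pi(a)}\,\mathrm{d}m_K(k).$$
Square absolute continuity enters once, to prove that the pushforward $m_{K^2}$ of $m_K$ under $k\mapsto k^2$ is absolutely continuous with respect to $m_K$ (Lemma 4.3); this is what makes $\sigma$ well-defined and makes $\pi_2\sigma\ll\mu$. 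The paper then builds a continuous ergodic decomposition $(x_1,x_2)\mapsto\lambda_{(x_1,x_2)}$ of $\mu\times\mu$ and shows that for $\sigma$-a.e.\ $(x_1,x_2)$ one has both $(x_1,x_2)\in\supp(\lambda_{(x_1,x_2)})$ and $(a,x_1)\in\gen(\lambda_{(x_1,x_2)},\Psi)$ for a suitable \Folner{} sequence $\Psi$. Once both hold at a point of $E\times T_t^{-1}E$ (which $\sigma$-positivity of that set guarantees), the \Erdos{} progression falls out of Lemma 2.10: a generic point has dense orbit in the support of its measure, so $(T_{g_n}\times T_{g_n})(a,x_1)\to(x_1,x_2)$ for some sequence $(g_n)$. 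Crucially, this gives the diagonal convergence \emph{directly}; at no point does the argument produce $T_{g_n^2}a\to x_2$ and then try to convert it.

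\textbf{The gap in your final step.} You extract $(g_n)$ with $T_{g_n}a\to x_1$ and $T_{g_n^2}a\to x_2$, and then assert that ``the Kronecker-factor structure of $\rho$ together with a diagonal extraction forces'' $T_{g_n}x_1\to x_2$. This does not follow. Writing $T_{g_n^2}a=T_{g_n}(T_{g_n}a)$, the implication ``$T_{g_n}a\to x_1$ and $T_{g_n}(T_{g_n}a)\to x_2$ $\Rightarrow$ $T_{g_n}x_1\to x_2$'' requires a form of equicontinuity for the family $(T_{g_n})$, which is false in general: $T_{g_n}$ is continuous for each fixed $n$, but the moduli of continuity degenerate as $n\to\infty$. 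Passing to a subsequence on which $\alpha(g_n)$ converges in $K$ only controls the Kronecker coordinate, and gives no information about the weak-mixing component of the fiber over $Z$. The paper's construction avoids this obstruction entirely by never passing through the squared orbit at the level of $X$: the squaring happens only inside $K$ (compact, so pointwise limits exist and are easy to control), and the lift back to $X$ is made via the disintegration $\eta_z$ and the $T\times T$-invariant measures $\lambda_{(x_1,x_2)}$. There is a second, related structural problem with your $\rho$: since $g\mapsto g^2$ is not a homomorphism when $G$ is non-abelian, $\rho$ is not $T\times T$-invariant, so there is no natural way to argue that a generic point has dense orbit in $\supp(\rho)$, which is exactly the kind of argument one would need to repair the final step. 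You would also need to address why a weak-$\ast$ limit of $\rho_N$ assigns positive mass to $E\times\bigcup_t T_t^{-1}E$; the paper gets the analogous fact for $\sigma$ from ergodicity plus $\pi_2\sigma\ll\mu$ (Theorem 4.7), but your $\rho$ could a priori concentrate on a null subset of $X$ in the second coordinate, and the hand-off to $\sigma$ is not supplied.

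In short, your high-level intuitions about the Kronecker factor, the role of square absolute continuity, and the non-abelian complications are all correct, but the proposed construction via empirical measures of squared orbits does not deliver a diagonal sequence $(T_{g_n}\times T_{g_n})$, and the paper's construction of $\sigma$ together with the genericity/support pair is precisely the device that circumvents this.
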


For the reduction of Theorem \ref{dr1a} to \cref{dr2}
we provide the following lemma.
\begin{lemma}\label{l1}
Let $G$ be a group. Let $(X,T)$ be a topological $G$-system, and 
let $E,F\subset X$ be open sets. Assume that there 
exists an \Erdos{} progression 
$(a,x_1,x_2)\in X^3$ with $x_1\in E$ and $x_2\in F$. Then 
there exists an
infinite sequence $B=(b_n)_{n\in\N}\subset\{g\in G: T_ga\in E\}$ 
such that $B \ltrdot B\subset\{g\in G: T_ga\in F\}$.
\end{lemma}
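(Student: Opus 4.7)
The setup makes this a direct inductive construction. First, I would unpack the definition of Erd\H{o}s progression: there is an infinite sequence $(g_n)_{n\in\N}$ in $G$ with $T_{g_n}a\to x_1$ and $T_{g_n}x_1\to x_2$. The goal is to extract (with distinct values) a subsequence $b_1,b_2,\dots$ of $(g_n)$ such that $T_{b_n}a\in E$ for every $n$ and $T_{b_ib_j}a\in F$ for all $i<j$. The key identity is $T_{b_ib_j}a=T_{b_i}(T_{b_j}a)$ (since $T$ is a left action), so the strategy is to make $T_{b_j}a$ so close to $x_1$ that each previously chosen $T_{b_i}$ maps it into $F$; this is where continuity of the $T_{b_i}$'s together with the openness of $F$ does the work.

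The inductive step: suppose $b_1,\dots,b_k$ have been chosen, pairwise distinct, with $T_{b_i}a\in E$, $T_{b_i}x_1\in F$, and $T_{b_ib_j}a\in F$ for all $i<j\leq k$. For each $i\leq k$, continuity of $T_{b_i}$ and openness of $F$ give an open neighborhood $U_i$ of $x_1$ with $T_{b_i}(U_i)\subset F$. Set $U=E\cap U_1\cap\dots\cap U_k$, an open neighborhood of $x_1$. Because $T_{g_n}a\to x_1\in U$ and $T_{g_n}x_1\to x_2\in F$, for all sufficiently large $n$ we have $T_{g_n}a\in U$ and $T_{g_n}x_1\in F$ simultaneously. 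Since $\{g_n:n\in\N\}$ is infinite, we may also require $g_n\notin\{b_1,\dots,b_k\}$, and we then set $b_{k+1}=g_n$. By construction $T_{b_{k+1}}a\in U\subset E$, $T_{b_{k+1}}x_1\in F$, and for every $i\leq k$,
\[
T_{b_ib_{k+1}}a=T_{b_i}(T_{b_{k+1}}a)\in T_{b_i}(U_i)\subset F,
\]
so the invariants persist.

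The base case is identical in spirit: choose $b_1=g_n$ for $n$ large enough that $T_{g_n}a\in E$ and $T_{g_n}x_1\in F$. The resulting sequence $B=(b_n)_{n\in\N}$ takes infinitely many values (so it is infinite in the sense of the paper), is contained in $\{g\in G:T_ga\in E\}$, and satisfies $B\ltrdot B\subset\{g\in G:T_ga\in F\}$. I do not expect any genuine obstacle here, only careful bookkeeping of the open sets $U_i$ and the distinctness requirement; the only subtlety worth flagging is the non-commutative order of composition, which is handled automatically once one writes $T_{b_ib_j}=T_{b_i}T_{b_j}$ with $i<j$ in the correct order.
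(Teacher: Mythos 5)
Your proof is correct and follows essentially the same inductive construction as the paper: at each step you pick the next $b_{k+1}$ from the tail of $(g_n)$ so that $T_{b_{k+1}}a$ lies in a shrinking open neighborhood of $x_1$ (in the paper's notation, $\bigcap_{m\leq k}T_{b_m}^{-1}F$, which is exactly your $U_1\cap\dots\cap U_k$) while $T_{b_{k+1}}x_1\in F$, using that $(T_{g_n}\times T_{g_n})(a,x_1)\to(x_1,x_2)$. The only cosmetic difference is that you phrase the openness of $T_{b_i}^{-1}F$ via an explicit continuity/neighborhood argument rather than directly manipulating preimages.
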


To see how Theorem \ref{dr1a} follows from \cref{dr2}
just take $F={T_t}^{-1}E$ in the above lemma.

\begin{proof}[Proof of \cref{l1}]
By assumption there exists 
an infinite 
sequence $(g_n)_{n\in\N}$ in $G$ such that 
$(T_{g_n}\times T_{g_n}) (a,x_1)\to(x_1,x_2)$. 
Since $T_{g_n}a\to x_1\in E$ and $E$ is open, we get that 
$T_{g_n}a\in E$ for $n$ sufficiently large, so we may assume 
without loss of generality that $(g_n)_{n\in\N}\subset\{g\in G: T_ga\in E\}$.
Therefore we will construct the sequence $B$ to be a subset of $(g_n)_{n\in\N}$.
\par
We construct the sequence $B=(b_n)_{n\in\N}$ inductively.
\begin{itemize}
    \item Since $T_{g_n}x_1\to x_2\in F$ and $F$ is open, we can pick
    $b_1\in(g_n)_{n\in\N}$ such that $T_{b_1}x_1\in F$. Then $(x_1,x_2)\in
    (T^{-1}_{b_1}F)\times F$ which is open.
    \item Since $(T_{g_n}\times T_{g_n})(a,x_1)\to (x_1,x_2)$, we pick
    $b_2\in(g_n)_{n\in\N}$ such that $(T_{b_2}\times T_{b_2})(a,x_1)\in
    (T^{-1}_{b_1}F)\times F$ and $b_2\neq b_1$ (this is possible 
    since there are infinitely many choices). 
    Then $$a\in T^{-1}_{{b_1}{b_2}}F \text{\quad and \quad} 
    x_1\in T^{-1}_{b_2}F.$$
    \item Induction step: Assume we have found $b_1, b_2, \dots,
    b_n\in(g_n)_{n\in\N}$ all distinct to each other such that
    \begin{equation}\label{ind.hyp}
        a\in\bigcap_{1\leq i<j\leq n}T^{-1}_{{b_i}{b_j}}F \text{\quad 
        and \quad} x_1\in\bigcap_{1\leq m \leq n}T^{-1}_{b_m}F.
    \end{equation}
    Since $(T_{g_n}\times T_{g_n})(a,x_1)\to(x_1,x_2)\in 
    \Big(\bigcap_{1\leq m \leq n}T^{-1}_{b_m}F\Big)\times F$ 
    and this set is open, we can pick $b_{n+1}\in(g_n)_{n\in\N}$ 
    such that $$(T_{b_{n+1}}\times T_{b_{n+1}})(a,x_1)\in 
    \bigg(\bigcap_{1\leq m \leq n}T^{-1}_{b_m}F\bigg)\times F$$ 
    and $b_{n+1}\not\in \{b_m: 1\leq m\leq n\}$ 
    (since there are infinitely many choices). 
    Combining this with the inductive hypothesis, we obtain that 
    $$a\in\bigcap_{1\leq i<j\leq n+1}T^{-1}_{{b_i}{b_j}}F
    \text{\quad and \quad} 
    x_1\in\bigcap_{1\leq m \leq n+1}T^{-1}_{b_m}F.$$
\end{itemize}
Taking $B=(b_n)_{n\in\N}$ we clearly have an infinite subset of $(g_n)_{n\in\N}$
and since
\eqref{ind.hyp} holds for any $n\in\N$ by construction, we get that 
$B\ltrdot B\subset \{g\in G: T_ga\in F\}$ as desired.
\end{proof}

\subsection{Continuous factor maps to the Kronecker factor}
On our way to show \cref{dr2}, it will be useful to have 
the extra assumption of the  $G$-system having a continuous factor map to its
Kronecker factor. The reason for that will become clear towards the 
proof of our main theorem. As we will see below, it is possible to 
make such an assumption. \par
We begin by generalizing a result of Host and Kra in 
\cite{HK09} from actions of $(\N,+)$ to actions of amenable groups.

\begin{lemma}\cite[Proposition 6.1 for group actions]{HK09}
\label{HK_generic}
Let $G$ be an amenable group,
let $\xmt$ be an ergodic $G$-system, $\zmr$ be its Kronecker factor and 
$\rho:\xmt\to\zmr$ be a factor map. If $a\in X$ is a transitive
point, then there exists a point $z\in Z$ and a \Folner{} 
sequence $\Psi$ such that
\begin{equation}\label{eq_ld1}
    \lim_{N\to\infty}\frac{1}{|\Psi_N|}
\sum_{g\in\Psi_N}f_1(T_ga)\cdot f_2(R_gz)
=\int_{X}f_1\cdot(f_2\circ\rho)\d\mu
\end{equation}
holds for any $f_1\in C(X)$ and $f_2\in C(Z)$.
\par
We remark that the result still holds if we replace $\zmr$ by any factor of 
$\xmt$ that is distal as a topological system.
\end{lemma}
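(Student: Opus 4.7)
The plan is to lift the statement to the product system $(X\times Z,\bar{\mu},T\times R)$, where $\bar{\mu}:=(\id\times\rho)_\ast\mu$ is the graph joining of $\mu$ along $\rho$. Since $\rho\circ T_g=R_g\circ\rho$ holds $\mu$-almost everywhere, the map $x\mapsto(x,\rho(x))$ is a measure-theoretic isomorphism between $\xmt$ and $(X\times Z,\bar{\mu},T\times R)$, so the latter is ergodic. Moreover, for any $f_1\in C(X)$ and $f_2\in C(Z)$, the product $f_1\otimes f_2$ is continuous on $X\times Z$ and
$$\int_{X\times Z}(f_1\otimes f_2)\d\bar{\mu}=\int_X f_1\cdot(f_2\circ\rho)\d\mu.$$
Thus \eqref{eq_ld1} reduces to finding $z\in Z$ and a \Folner{} sequence $\Psi$ such that $(a,z)\in\gen(\bar{\mu},\Psi)$ in $X\times Z$.

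First I would apply \cref{m_gen} to the ergodic system $(X\times Z,\bar{\mu},T\times R)$ to obtain a \Folner{} sequence $\Phi$ and a point $(a_0,z_0)\in X\times Z$ generic for $\bar{\mu}$ along $\Phi$; in particular, $\supp(\bar{\mu})\subset\overline{\Orb_{T\times R}(a_0,z_0)}$. Since $a$ is transitive in $(X,T)$, pick $g_n\in G$ with $T_{g_n}a\to a_0$; then, using compactness of $Z$, pass to a subsequence (still denoted $(g_n)$) so that $R_{g_n}^{-1}z_0\to z$ for some $z\in Z$.

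The key step is to verify that $(a_0,z_0)\in\overline{\Orb_{T\times R}(a,z)}$ for this particular $z$. By \cref{Kron_rot} one may realize $Z=K/H$ and equip $Z$ with the quotient of a bi-invariant metric on the compact group $K$; each $R_g$ then acts as an isometry of $(Z,d_Z)$. Consequently,
$$d_Z(R_{g_n}z,z_0)=d_Z\bigl(R_{g_n}z,R_{g_n}(R_{g_n}^{-1}z_0)\bigr)=d_Z(z,R_{g_n}^{-1}z_0)\longrightarrow 0,$$
so $(T_{g_n}a,R_{g_n}z)\to(a_0,z_0)$ in $X\times Z$. Therefore $\supp(\bar{\mu})\subset\overline{\Orb_{T\times R}(a_0,z_0)}\subset\overline{\Orb_{T\times R}(a,z)}$, i.e., $\bar{\mu}$ is supported on the orbit closure of $(a,z)$. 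Applying \cref{quasi_gen} to the ergodic system $(X\times Z,\bar{\mu},T\times R)$ with the point $(a,z)$ yields a \Folner{} sequence $\Psi$ with $(a,z)\in\gen(\bar{\mu},\Psi)$; testing this genericity against the continuous function $f_1\otimes f_2$ gives \eqref{eq_ld1}.

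The main obstacle is the isometric adjustment step, where one must choose $z$ so that $R_{g_n}z\to z_0$ while simultaneously retaining $T_{g_n}a\to a_0$; this is precisely where the equicontinuous homogeneous-space structure of the Kronecker factor is essential. For the extension in the remark, where $Z$ is only assumed to be a distal topological factor, the same strategy still applies via the Ellis semigroup: in a distal topological $G$-system every sequence $(R_{g_n})$ has a subsequential limit that is a homeomorphism of $Z$, so one can still solve $R_{g_n}z\to z_0$ after passing to a subsequence, and the rest of the argument goes through unchanged.
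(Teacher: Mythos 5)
Your proof is correct and reaches the same result by a more streamlined route that bypasses the paper's Gelfand-spectrum machinery. The paper constructs the Gelfand spectrum $W$ of the closed subalgebra of $L^\infty(\mu)$ generated by $C(X)$ and $C(Z)\circ\rho$, equips it with a measure $\nu$, and shows $W$ is a topological model in which both $X$ and $Z$ are continuous factors; it then aligns $a$ with $\pi_X(w_1)$ of a $\nu$-generic point $w_1$, uses the Ellis semigroup to solve backwards for the $Z$-coordinate, and builds the F\o{}lner sequence explicitly as $\Psi_N = \Phi_N h_N$. You instead work directly with the graph joining $\bar\mu$ on $X\times Z$ (which is nothing but the pushforward of the paper's $\nu$ under $\pi_X\times\pi_Z$, so the two measures are essentially the same), observe it is ergodic and supported on a compact metric space, and then compose the already-proved Lemmas \ref{m_gen}, \ref{supp-gen lemma} and \ref{quasi_gen}: find a $\bar\mu$-generic $(a_0,z_0)$, show $(a_0,z_0)\in\overline{\Orb_{T\times R}(a,z)}$ via transitivity of $a$ and the isometric (resp.\ distal/Ellis) structure of $Z$, conclude $\supp(\bar\mu)\subset\overline{\Orb_{T\times R}(a,z)}$, and invoke \cref{quasi_gen}. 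The core idea both times is the same: the equicontinuous or distal structure of $Z$ makes it possible to choose $z$ so that $R_{g_n}z$ follows $z_0$ while $T_{g_n}a\to a_0$. Your route is cleaner because it never leaves concrete topological systems and reuses the abstract genericity lemmas already established in the paper, whereas the paper effectively re-proves the content of \cref{quasi_gen} by hand inside the appendix; on the other hand, the paper's Gelfand model may be preferable if one later needs $W$ itself as a genuine topological $G$-system with additional structure. One small terminological correction: in the distal case, the subsequential limit $R_0$ in the Ellis semigroup is a bijection of $Z$ but need not be a homeomorphism (Ellis semigroup elements of a distal system are generally not continuous); bijectivity is exactly what the paper uses and is all your argument actually needs.
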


\begin{proof}
The proof of this lemma is given in the Appendix~\ref{appendix.a}.
\end{proof}

\begin{remark}\label{atrans}
Let $\xmt$ be an ergodic $G$-system, and let $a\in \gen(\mu,\Phi)$ for some
\Folner{} sequence $\Phi$. From \cref{supp-gen lemma} we have that every point in 
$\supp(\mu)$ belongs to the orbit closure of $a$, and therefore 
$\mu\big(\overline{\Orb_T(a)}\big)=1$. This implies that we can replace 
$X$ with $\overline{\Orb_T(a)}$ without affecting the ergodic theoretic 
properties of the system, and then the generic point $a$ is also transitive.
Therefore, whenever we have a generic point in a system, 
we may assume without loss 
of generality that it is also transitive.
\end{remark}

\begin{proposition}
Let $\xmt$ be an ergodic $G$-system, let $a\in \gen(\mu,\Phi)$ for 
some \Folner{} sequence $\Phi$, and $E\subset X$ be a set with $\mu(E)>0$. Then there exists an ergodic 
extension $(\widetilde{X},\widetilde{\mu},\widetilde{T})$ of $\xmt$, 
a \Folner{} sequence $\widetilde{\Phi}$ and a point 
$\widetilde{a}\in\gen(\widetilde{\mu},\widetilde{\Phi})$ such that:
\begin{enumerate}
    \item[(i)] There exists a continuous factor map
    $\widetilde{\pi}:\widetilde{X}\to
    X$ with $\widetilde{\pi}(\widetilde{a})=a$.
    \item[(ii)] $(\widetilde{X},\widetilde{\mu},\widetilde{T})$ 
    has continuous factor map
    to its Kronecker factor.
    \item[(iii)] The set $\widetilde{E} = \widetilde{\pi}^{-1}(E)$, has  $\widetilde{\mu}(\widetilde{E}) = \mu(E) >0$.
    \item[(iv)] If $(\widetilde{a},\widetilde{x}_1,\widetilde{x}_2)\in 
    {\widetilde{X}^3}$
    is an \Erdos{} progression, then $(a,x_1,x_2)\in X^3$ 
    is an \Erdos{} progression,
    where $x_i=\widetilde{\pi}(\widetilde{x}_i)$, for $i=1,2$.
\end{enumerate}
\end{proposition}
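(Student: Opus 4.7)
The plan is to realise $(\widetilde{X},\widetilde{\mu},\widetilde{T})$ as a joining of $\xmt$ with its Kronecker factor $\zmr$, living inside the topological product system $(X\times Z,\, T\times R)$. By \cref{atrans}, I may assume that $a$ is transitive in $X$. Applying \cref{Kron_rot}, the Kronecker factor of $\xmt$ is measurably isomorphic to a rotation $\zmr$ on a homogeneous space $Z=K/H$; let $\rho\colon X\to Z$ denote the corresponding measurable factor map.

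Next, I would invoke \cref{HK_generic} to obtain a point $z_0\in Z$ and a \Folner{} sequence $\widetilde{\Phi}$ such that the point $(a,z_0)$ is generic, along $\widetilde{\Phi}$ and with respect to the diagonal action $\widetilde{T}:=T\times R$, for the probability measure $\widetilde{\mu}$ on $X\times Z$ characterised by
\[
\int f_1(x)f_2(z)\d\widetilde{\mu}(x,z) \;=\; \int_X f_1(x)\,f_2(\rho(x))\d\mu(x)
\]
for all $f_1\in C(X)$ and $f_2\in C(Z)$; Stone--Weierstrass upgrades this to genericity against all of $C(X\times Z)$. Set $\widetilde{a}:=(a,z_0)$ and $\widetilde{X}:=\overline{\Orb_{\widetilde{T}}(\widetilde{a})}$ (on which $\widetilde{\mu}$ is supported, since $\widetilde{a}$ is $\widetilde{\mu}$-generic), and let $\widetilde{\pi}\colon\widetilde{X}\to X$ denote the continuous, $G$-equivariant projection onto the first coordinate. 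Then $\widetilde{\pi}(\widetilde{a})=a$, $\widetilde{\pi}$ is surjective (since $a$ is transitive in $X$), and $\widetilde{\pi}_{\ast}\widetilde{\mu}=\mu$, which yields (i) and (iii) at once (the latter because $\widetilde{\mu}(\widetilde{\pi}^{-1}(E))=\mu(E)$). For (iv), applying $\widetilde{\pi}\times\widetilde{\pi}$ to the convergence \eqref{erdospr} witnessing an \Erdos{} progression in $\widetilde{X}$ yields an \Erdos{} progression in $X$, by continuity and $G$-equivariance of $\widetilde{\pi}$.

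The substantive point is (ii), together with the ergodicity of $\widetilde{\mu}$. The key observation is that the almost-everywhere defined map $\phi\colon X\to\widetilde{X}$, $\phi(x):=(x,\rho(x))$, is a measure-preserving, $G$-equivariant, measurable isomorphism from $\xmt$ onto $(\widetilde{X},\widetilde{\mu},\widetilde{T})$ (with measurable inverse $\widetilde{\pi}$), because $\widetilde{\mu}=\phi_{\ast}\mu$ by construction and $\rho$ intertwines $T$ and $R$. Ergodicity of $(\widetilde{X},\widetilde{\mu},\widetilde{T})$ then follows immediately from that of $\xmt$; moreover, the Kronecker factor is an invariant of measurable isomorphism, so the Kronecker factor of $(\widetilde{X},\widetilde{\mu},\widetilde{T})$ is measurably isomorphic to $\zmr$, and the continuous projection $q\colon\widetilde{X}\to Z$ onto the second coordinate realises this factor map as a \emph{continuous} one, since $q\circ\phi=\rho$ almost everywhere.

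The main obstacle is precisely the invocation of \cref{HK_generic}: without the existence of a point $z_0\in Z$ making $(a,z_0)$ generic for the joining $\widetilde{\mu}$ along some \Folner{} sequence, I could not exhibit the extension as the orbit closure of a distinguished generic point lifting $a$, and the construction would be useless for the dynamical applications in \cref{dr2}. Everything beyond this step is mechanical: the $G$-equivariance and continuity of $\widetilde{\pi}$ handle (i), (iii) and (iv), while the measurable isomorphism $\phi$ handles ergodicity and the identification of the Kronecker factor needed for (ii).
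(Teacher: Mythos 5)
Your proposal is correct and follows essentially the same route as the paper: form the graph joining $\widetilde{\mu}=\phi_*\mu$ with $\phi(x)=(x,\rho(x))$ inside $X\times Z$, use \cref{HK_generic} to produce $z_0$ and a \Folner{} sequence making $(a,z_0)$ generic for $\widetilde{\mu}$, take coordinate projections as the two continuous factor maps, and use the fact that $\phi$ is a measurable isomorphism to transfer ergodicity and the Kronecker factor. The only cosmetic difference is that you shrink $\widetilde{X}$ to $\overline{\Orb_{\widetilde{T}}(a,z_0)}$ while the paper keeps $\widetilde{X}=X\times Z$; since $\widetilde{\mu}$ is supported on that orbit closure anyway, this changes nothing.
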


\begin{proof}
The proofs of (i) and (ii) are identical to those 
in the case $G$ is the semigroup $(\N,+)$, and they
can be found in \cite[Proposition 3.20]{kmrr1}. Therefore, here we only 
provide a sketch of the proof.

Let $\zmr$ be the Kronecker factor of $\xmt$, 
and let $\pi:X\to Z$ be a factor
map. Define $\widetilde{X}=X\times Z$ and $\widetilde{T}=T\times R$, 
consider the map
$\rho:X\to\widetilde{X}$, given by $\rho(x)=(x,\pi(x))$, and then define
$\widetilde{\mu}=\rho\mu$. Then the map
$\rho: X \to \widetilde{X}$ is an isomorphism of the $G$-systems
$(\widetilde{X},\widetilde{\mu},\widetilde{T})$ and $\xmt$, and therefore, 
since $\xmt$ is ergodic, we get that 
$(\widetilde{X},\widetilde{\mu},\widetilde{T})$ is also ergodic.
In addition, the projection on the first coordinate
$\widetilde{\pi}:\widetilde{X} \to X$ is a continuous
factor map of the systems.

By \cref{atrans}, we may assume that the point $a$ is transitive. 
Then we can use
\cref{HK_generic} to find a point $z\in Z$ and \Folner{} sequence 
$\widetilde{\Phi}$ such that \eqref{eq_ld1} holds for all $f_1\in C(X)$
and $f_2 \in C(Z)$. Using the definition of the measure 
$\widetilde{\mu}$, it is
not too difficult to see that for any continuous 
function $F\in C(\widetilde{X})$,
$$ \lim_{N\to \infty} \frac{1}{|\widetilde{\Phi}_N|}
\sum_{g\in \widetilde{\Phi}_N}
F(T_g \times R_g)(a,z) = \int_{\widetilde{X}} F \d \widetilde{\mu},$$
which means that the point $\widetilde{a}= (a,z)$ is in 
$\gen(\widetilde{\mu},\widetilde{\Phi})$. In addition, 
$\widetilde{\pi}(\widetilde{a})=a$.

Now, as the systems $\xmt$ and 
$(\widetilde{X},\widetilde{\mu},\widetilde{T})$ are
isomorphic, their Kronecker factors are also isomorphic, so we may assume that 
$\zmr$ is the Kronecker factor of 
$(\widetilde{X},\widetilde{\mu},\widetilde{T})$. Then 
the projection on the second coordinate 
$p: \widetilde{X} \to Z$ is a continuous factor map from 
$(\widetilde{X},\widetilde{\mu},\widetilde{T})$ to its Kronecker factor.
\par 
Note that (iii) is immediate from the definition of factors and factor maps. Hence, it remains to prove (iv). By assumption, 
$(\widetilde{T}_{g_n}\times\widetilde{T}_{g_n})
(\widetilde{a},\widetilde{x}_1)
\to(\widetilde{x}_1,\widetilde{x}_2)$ for some $(g_n)_{n\in\N}$
in $G$. We now notice that
$$T_{g_n}(a)=T_{g_n}(\widetilde{\pi}(\widetilde{a}))
=
\widetilde{\pi}(\widetilde{T}_{g_n})
\to\widetilde{\pi}(\widetilde{x}_1)=x_1,$$ 
since $\widetilde{\pi}$ is a continuous factor map. 
Similarly, we get $T_{g_n}(x_1)\to x_2$ and the result follows.
\end{proof}

This proposition allows us to reduce \cref{dr2}
to the case of ergodic $G$-systems with continuous factor maps to
their Kronecker factor as desired. 
Evidently, \cref{dr2} follows from the following:

\begin{theorem}[Reduction to $G$-systems with continuous factor 
maps to the Kronecker factor]\label{dr3}
Let $G$ be a square absolutely continuous group, 
let $\xmt$ be an ergodic $G$-system admitting a continuous 
factor map to its Kronecker factor, and
let $a\in \gen(\mu,\Phi)$ for some \Folner{} sequence
$\Phi$. If $E\subset X$ is clopen and $\mu(E)>0$, 
then there exist $t\in G$,
$x_1\in E$ and $x_2\in T^{-1}_tE$ such that 
$(a,x_1,x_2)\in X^3$ forms an \Erdos{} progression.
\end{theorem}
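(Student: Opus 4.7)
My approach follows the ergodic-theoretic strategy of \cite{kmrr2}, adapted to the amenable group setting, with square absolute continuity replacing the abelian structure used there. Let $\pi\colon X\to Z$ be the given continuous factor map to the Kronecker factor $(Z,m,R)$, which by \cref{Kron_rot} is a rotation $R_g z=\alpha(g)z$ on $Z=K/H$ with $\alpha(G)$ dense in $K$. Set $z_0=\pi(a)$ and disintegrate $\mu=\int_Z\mu_z\d m(z)$. By \cref{atrans} I may take $a$ to be transitive, and then \cref{HK_generic} provides a \Folner{} sequence along which the orbit of $a$ and the orbit of $z_0$ jointly equidistribute in the sense of \eqref{eq_ld1}.

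The main construction is a self-joining-style measure on $X\times X$,
$$\nu=\int_K \mu_{kz_0}\otimes\mu_{k^2 z_0}\d m_K(k),$$
where $m_K$ denotes the Haar measure on $K$. I plan to realize $\nu$ as a subsequential weak-$*$ limit of
$$\nu_N=\frac{1}{|\Psi_N|}\sum_{g\in\Psi_N}\delta_{T_g a}\otimes\delta_{T_{g^2}a},$$
where $\Psi$ is the second \Folner{} sequence provided by the definition of square absolute continuity (passing, if necessary, to subsequences so that $a$ remains generic for $\mu$ along the relevant sequence, via \cref{Lind} and a diagonal extraction). Verifying this identity uses \cref{char} to reduce the $g$-average to the Kronecker factor, together with an extension that controls the $g^2$-dependence. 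Square absolute continuity enters precisely here, ensuring both that the characteristic factor reduction survives the substitution $g\mapsto g^2$ and that the second marginal of $\nu$ does not concentrate on $\mu$-null sets.

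Granted this identification, positivity on the target set is a short computation. Using the equivariance $(T_g)_*\mu_z=\mu_{\alpha(g)z}$, one has
$$\nu(E\times T_t^{-1}E)=\int_K \mu_{kz_0}(E)\,\mu_{\alpha(t)k^2 z_0}(E)\d m_K(k).$$
Ces\`aro averaging this over $t$ in a \Folner{} sequence of $G$ and invoking the ergodicity of the Kronecker rotation on $Z$ (which gives equidistribution of $\alpha(t)$ in $K$), the average converges to $\mu(E)\cdot\int_K \mu_{kz_0}(E)\d m_K(k)=\mu(E)^2>0$, so $\nu(E\times T_t^{-1}E)>0$ for some $t\in G$ and $\supp(\nu)$ meets $E\times T_t^{-1}E$.

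The principal obstacle is to upgrade a point $(x_1,x_2)\in\supp(\nu)\cap(E\times T_t^{-1}E)$ into an actual \Erdos{} progression. The construction of $\nu$ supplies only a sequence $g_n\in G$ with $T_{g_n}a\to x_1$ and $T_{g_n^2}a\to x_2$, whereas an \Erdos{} progression requires $T_{g_n}x_1\to x_2$; in a non-commutative $G$, the identity $T_{g_n^2}a=T_{g_n}(T_{g_n}a)$ does not bridge this gap, since the family $(T_{g_n})_n$ need not be equicontinuous at $x_1$. I plan to handle this by exploiting the equicontinuity of the Kronecker action: on $Z$ one has $R_{g_n}(kz_0)=\alpha(g_n)^2 z_0\to k^2 z_0=\pi(x_2)$ uniformly in $n$, and this uniform convergence can be lifted back to $X$ via the continuous factor map $\pi$ by a diagonal extraction inside the fibers $(\mu_z)_{z\in Z}$. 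Square absolute continuity is invoked once more to guarantee that the lift genuinely approaches $x_2$ in the fiber $\pi^{-1}(k^2 z_0)$ rather than landing in a $\mu_{k^2 z_0}$-null region. Once the \Erdos{} progression is produced, \cref{l1} supplies the combinatorial configuration asserted by \cref{dr3}.
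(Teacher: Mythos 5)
Your construction of the measure $\nu$ coincides exactly with the measure $\sigma$ in the paper's proof, and your use of square absolute continuity to ensure $\nu$ is well-defined and that its second marginal avoids $\mu$-null sets matches the role played there by \cref{abs_c} and \cref{s_props}. Your positivity argument via Ces\`aro averaging over $t$ differs from the paper's (which instead uses $T$-invariance of $\bigcup_t T_t^{-1}E$, ergodicity of $\mu$, and absolute continuity of $\pi_2\sigma$), but both routes are plausible.

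The genuine gap is precisely the one you flag yourself, and your proposed fix does not close it. Realizing $\nu$ as a weak-$*$ limit of $\frac{1}{|\Psi_N|}\sum_{g\in\Psi_N}\delta_{(T_g a,\, T_{g^2} a)}$ at best supplies a sequence $(g_n)$ with $T_{g_n}a\to x_1$ and $T_{g_n^2}a\to x_2$, while an \Erdos{} progression requires $T_{g_n}x_1\to x_2$. Your Kronecker-level computation is correct (using left-invariance of $d_Z$, $d_Z(\alpha(g_n)^2 z_0,\alpha(g_n)z_1)=d_Z(\alpha(g_n)z_0,z_1)\to 0$, hence $R_{g_n}\pi(x_1)\to\pi(x_2)$), but this only pins down the image in $Z$. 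Passing to a subsequence, $T_{g_n}x_1$ converges to some $x_2'$ with $\pi(x_2')=\pi(x_2)$, yet a priori $x_2'\ne x_2$ and $x_2'$ need not lie in $T_t^{-1}E$. The proposed ``diagonal extraction inside the fibers'' is not a concrete argument: square absolute continuity gives no mechanism to force the lifted limit to land at the chosen $x_2$ within the fiber $\pi^{-1}(\pi(x_2))$. (A secondary unfinished step: the asserted weak-$*$ convergence $\nu_N\to\nu$ itself relies on a ``$g\mapsto g^2$'' extension of \cref{char} that you do not construct; the paper never realizes $\sigma$ as such a limit and so does not need this.)

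The paper circumvents this difficulty via a different mechanism, which is its central technical idea and is absent from your proposal: the continuous ergodic decomposition $\lambda_{(x_1,x_2)}=\int_K\eta_{k\pi(x_1)}\times\eta_{k\pi(x_2)}\,\d m_K(k)$ of $\mu\times\mu$ with respect to $T\times T$. Theorems~\ref{lambda.supp} and~\ref{lambda.gen} show that for $\sigma$-a.e.\ $(x_1,x_2)$, simultaneously $(x_1,x_2)\in\supp(\lambda_{(x_1,x_2)})$ and $(a,x_1)\in\gen(\lambda_{(x_1,x_2)},\Psi)$ for a single \Folner{} sequence $\Psi$; applying \cref{supp-gen lemma} to the product system $(X\times X,\lambda_{(x_1,x_2)},T\times T)$ then directly produces $(g_n)$ with $(T_{g_n}\times T_{g_n})(a,x_1)\to(x_1,x_2)$, which is the \Erdos{} progression with $x_2$ itself as the limit. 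This genericity-plus-support argument replaces the need for any fiberwise lifting and is what your proposal is missing.
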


The proof of \cref{dr3} will be given in the next section.

\section{Measures on \Erdos{} progressions and the proof of \cref{dr3}}
\label{measures section}

In this section we prove \cref{dr3}, and consequently, Theorem 
\ref{mt1}.

\subsection{Measures on \Erdos{} progressions}
In what follows we fix a square absolutely continuous group $G$. 
We also fix an ergodic $G$-system $\xmt$. 
In addition, as per the assumptions of \cref{dr3}, we assume 
that $\xmt$ admits
a continuous factor map to its Kronecker factor. The Kronecker 
factor of $\xmt$ is denoted
by $\zmr$ and $\pi:X\to Z$ stands for the continuous factor map. 
According to \cref{Kron_rot},
$Z=K/H$ where $K$ is a compact group and $H$ is a closed subgroup of $K$. 
We denote by $p$ the natural projection $p: K \to K/H$, 
$p(k)=kH $. We also fix a bi-invariant metric $d_K$ on $K$, 
that is a metric on $K$ compatible 
with the topology on $K$ such that for all $u,v,w \in K$, 
$d_K(uv,uw)=d_K(v,w)=d_K(vu, wu)$.

Moreover, $m$ is the (left) Haar measure on $Z$, which is 
given as the push forward of the (left) Haar measure 
$m_K$ in $K$ by the natural projection $K\to Z=K/H$. We remark that since $K$ is compact,
it is unimodular, so $m_K$ is two-sided invariant. 
Finally, the action $R=(R_g)_{g\in G}$ is given by $R_g(z) = \alpha(g)z$, 
where $\alpha:G\to K$ is a group homomorphism with dense image.
Also, $\pi_i:X\times X\to X$ denotes the projection to the $i$-th coordinate,
for $i=1,2$. Moreover, 
$z\mapsto \eta_z$ is a fixed disintegration of $\mu$ 
over the continuous factor map $\pi$.

\begin{defn}
Consider the squaring map $s_K : K \to K, s_K(k)=k^2$, on $K$. We define the 
Borel probability measure $m_{K^2}$ on $K$ as the push-forward of 
the Haar measure $m_K$ under the map $s_K$, i.e., the measure on $K$, given by
$m_{K^2}(A) = m_K(s_K ^{-1}(A))$, for each Borel $A\subset K$.
\end{defn}

We will prove the following lemma, which is a key ingredient that will allow us 
to define the measures in order to study \Erdos{} progressions. 

\begin{lemma}\label{abs_c}
The measure $m_{K^2}$ is absolutely continuous with respect to $m_K$.
\end{lemma}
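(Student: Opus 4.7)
The strategy is to transfer the square absolute continuity of $G$ to $K$ through the dense homomorphism $\alpha$. The crucial ingredient is that because $\alpha(G)$ is dense in the compact group $K$, any Borel probability measure on $K$ invariant under the action $R_g(k)=\alpha(g)k$ must coincide with $m_K$, so the topological $G$-system $(K,R)$ is uniquely ergodic with $m_K$ its unique invariant measure. In particular, for every continuous $F\in C(K)$ and for the two \Folner{} sequences $\Phi$ and $\Psi$ supplied by \cref{sac_def}, the averages
\begin{equation*}
\frac{1}{|\Phi_N|}\sum_{g\in\Phi_N}F(\alpha(g))\quad\text{and}\quad\frac{1}{|\Psi_N|}\sum_{g\in\Psi_N}F(\alpha(g))
\end{equation*}
both converge to $\int_K F\d m_K$ as $N\to\infty$.

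Given a Borel set $A\subset K$ with $m_K(A)=0$ and a fixed $\epsilon>0$, I will show $m_{K^2}(A)\leq\epsilon$. Let $\delta>0$ be the constant supplied by \cref{sac_def} for this $\epsilon$. By outer regularity of $m_K$, I will choose an open set $V\supset A$ with $m_K(V)<\delta$, and approximate $\1_V$ from below by an increasing sequence of continuous functions $f_n\colon K\to[0,1]$; for example, $f_n(k):=\min\bigl(1,n\cdot d_K(k,K\setminus V)\bigr)$ increases pointwise to $\1_V$.

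Defining $u_n\colon G\to[0,1]$ by $u_n(g):=f_n(\alpha(g))$, unique ergodicity gives that the $\Phi$-average of $u_n$ converges to $\int_K f_n\d m_K\leq m_K(V)<\delta$, so the defining property of square absolute continuity yields $\limsup_N\tfrac{1}{|\Psi_N|}\sum_{g\in\Psi_N}u_n(s_G(g))<\epsilon$. Because $\alpha$ is a homomorphism, $u_n(s_G(g))=f_n(\alpha(g)^2)=(f_n\circ s_K)(\alpha(g))$, and since $f_n\circ s_K\in C(K)$, unique ergodicity applied to the continuous function $f_n\circ s_K$ gives that this same $\Psi$-average converges to $\int_K f_n\circ s_K\d m_K=\int_K f_n\d m_{K^2}$ by the definition of $m_{K^2}$ as the push-forward. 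Hence $\int_K f_n\d m_{K^2}\leq\epsilon$ for every $n$, and the monotone convergence theorem yields $m_{K^2}(V)\leq\epsilon$, whence $m_{K^2}(A)\leq\epsilon$. As $\epsilon$ was arbitrary, $m_{K^2}(A)=0$.

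The conceptual obstacle is bridging the abstract condition in \cref{sac_def} (stated for arbitrary $[0,1]$-valued functions on $G$) with the desired concrete statement about the measures $m_K$ and $m_{K^2}$ on $K$; this is precisely what the density of $\alpha(G)$ together with unique ergodicity accomplishes, by realising both measures as weak-$\ast$ limits of \Folner{} averages under $\alpha$. The approximation of $\1_V$ from below by continuous $f_n$ is the standard device needed to convert the continuous-function convergence furnished by unique ergodicity into a measure-theoretic bound on the open set $V$, which in turn controls $m_{K^2}(A)$ through outer regularity.
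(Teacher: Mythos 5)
Your proposal is correct and follows essentially the same route as the paper: both arguments transfer the \cref{sac_def} condition to $K$ via the homomorphism $\alpha$, use the weak-$\ast$ convergence of the Birkhoff averages of $\delta_{\alpha(g)}$ to $m_K$ (which you phrase as unique ergodicity of $(K,R)$ and the paper proves directly in its \cref{weak_ast}), exploit $\alpha(g^2)=\alpha(g)^2$ and the push-forward definition of $m_{K^2}$, and bound via outer regularity and continuous approximation. The only cosmetic difference is that the paper reduces to compact sets by inner regularity and uses a single Urysohn function between $C$ and $O$, whereas you approximate $\1_V$ from below by an increasing sequence of continuous functions and finish with monotone convergence.
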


Before we prove \cref{abs_c}, let us state and prove an auxiliary lemma that 
will be used throughout this section:

\begin{lemma}\label{weak_ast}
Let $\Psi= (\Psi_{N})_{N\in \N}$ be any \Folner{} sequence in $G$. Then 
the sequence of measures $(\nu_N)_{N\in \N}$ defined as 
\begin{equation*}
\nu_N := \frac{1}{|\Psi_N|} \sum_{g\in \Psi_N}
\delta_{\alpha(g)}
\end{equation*}
converges in the weak$^{\ast}$ topology to the Haar measure $m_K$ on $K$.
\end{lemma}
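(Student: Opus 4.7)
The plan is to use the compactness of $M(K)$ in the weak$^\ast$ topology to reduce matters to showing that every subsequential weak$^\ast$ limit of $(\nu_N)_{N\in\N}$ equals $m_K$. So let $\nu\in M(K)$ be an arbitrary such limit point. We will argue that $\nu$ is invariant under left multiplication by every element $h\in K$, and then invoke the uniqueness of the Haar measure on the compact group $K$ to conclude $\nu=m_K$. Once this is done, the weak$^\ast$ compactness and metrizability of $M(K)$ imply that the sequence itself converges to $m_K$.

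To establish left invariance of $\nu$, we will first show invariance under the dense subgroup $\alpha(G)$, and then extend this to all of $K$ by a uniform continuity argument. Fix $g_0 \in G$ and denote by $L_{h}:K\to K$ left multiplication by $h\in K$. Performing the change of variables $g\mapsto g_0 g$ gives $(L_{\alpha(g_0)})_\ast \nu_N = \frac{1}{|\Psi_N|}\sum_{g\in g_0\Psi_N}\delta_{\alpha(g)}$. Hence for any $f\in C(K)$,
$$\left|\int f\d(L_{\alpha(g_0)})_\ast \nu_N - \int f\d\nu_N\right| \leq \|f\|_\infty \cdot \frac{|g_0 \Psi_N \triangle \Psi_N|}{|\Psi_N|},$$
which vanishes as $N\to\infty$ by the left \Folner{} property of $\Psi$. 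Passing to the weak$^\ast$ limit yields $(L_{\alpha(g_0)})_\ast\nu = \nu$ for every $g_0\in G$.

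To upgrade to full $K$-invariance, fix $h\in K$ and pick $g_n\in G$ with $\alpha(g_n)\to h$, which exists because $\alpha(G)$ is dense in $K$. Since $d_K$ is bi-invariant and $K$ is compact, every $f\in C(K)$ is uniformly continuous, and $d_K(\alpha(g_n)k, hk) = d_K(\alpha(g_n),h)\to 0$ uniformly in $k\in K$. Therefore $f\circ L_{\alpha(g_n)} \to f\circ L_h$ uniformly on $K$, which gives $\int f\d (L_h)_\ast \nu = \lim_n \int f\d(L_{\alpha(g_n)})_\ast \nu = \int f\d\nu$. Thus $\nu$ is left $K$-invariant, and by uniqueness of the Haar measure on $K$ we obtain $\nu=m_K$. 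The only subtle step in this outline is the passage from $\alpha(G)$-invariance to $K$-invariance of $\nu$, but this is handled cleanly by uniform continuity on the compact group $K$; no deeper ergodic-theoretic input is required.
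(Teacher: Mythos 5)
Your proof is correct and takes essentially the same route as the paper's: reduce to showing every weak$^\ast$ subsequential limit equals $m_K$, establish invariance of such a limit under $\alpha(G)$ via the left \Folner{} property, extend to full $K$-invariance using density of $\alpha(G)$ and uniform continuity (via the bi-invariant metric), and conclude by uniqueness of the Haar measure. The only cosmetic difference is that you cleanly separate the $\alpha(G)$-invariance step from the density/uniform-continuity step, whereas the paper interleaves them by first approximating an arbitrary $k\in K$ by some $\alpha(g_0)$ and then computing directly with $\alpha(g_0)$; the substance is identical.
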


\begin{proof}
The space of Borel probability measures on $K$ is weak$^{\ast}$ compact and 
metrizable, so in order to prove the result, it suffices to prove that 
if $(\nu_{N_j})_{j\in \N}$ is a convergent subsequence of $(\nu_N)_{N\in \N}$,
then it converges to the Haar measure $m_K$. 

Let $(\nu_{N_j})_{j\in \N}$ be a subsequence of $(\nu_N)_{N\in \N}$
which converges in the weak$^{\ast}$ topology to a 
measure $\nu$ on $K$. To prove 
that $\nu$ is the Haar measure on $K$, it suffices to prove that for 
all continuous functions $h$ on $K$ and all $k\in K$, we have that 
$\int_K h(k y) \d \nu(y) = \int_K h(y) \d \nu(y)$.

Let $d_K$ be a translation invariant metric on
$K$. Let also $h: K \to \C$ be continuous, $k \in K$ and
$\epsilon >0$. Since $K$ is compact, we have that $h$ is uniformly 
continuous, so there is $\delta >0$ such that if $d_K(y_1, y_2)< \delta$, then 
$| h(y_1) - h(y_2)| < \epsilon$. Recall that $(\alpha(g))_{g\in G}$ 
is dense in $K$, so there is $g_0$ such that $d_K(k, \alpha(g_0)) < \delta$.
We then have that for all $y \in K$, $d_K(ky, \alpha(g_0) y) < \delta$, so 
$|h(ky)-h(\alpha(g_0)y)| < \epsilon$, and we obtain that 
\begin{align*}
\bigg| \int_K h(y) \d  \nu(y) -  \int_K h(ky) \d  \nu(y) \bigg| 
& \leq 
\bigg| \int_K h(y) \d  \nu(y) -  \int_K h(\alpha (g_0)y) \d  \nu(y) \bigg|  \\
& \hspace*{0.04cm} + \bigg| \int_K h(\alpha (g_0)y) \d  \nu(y) -  
\int_K h(ky) \d  \nu(y) \bigg|
\\
& \leq \bigg| \int_K h(y) \d  \nu(y) -  \int_K h(\alpha (g_0)y) \d  \nu(y) 
\bigg| + \epsilon.
\end{align*}

From the continuity of $h$ and the definition of $ \nu$ we have that 
\begin{align*}
\int_K h(\alpha(g_0)y) \d  \nu (y) 
&=
\lim_{j\to \infty} \frac{1}{|\Psi_{N_j}|}
\sum_{g\in \Psi_{N_j}} h(\alpha(g_0) \alpha(g))
=
\lim_{j\to \infty} \frac{1}{|\Psi_{N_j}|}
\sum_{g\in \Psi_{N_j}} h(\alpha(g_0 g))  \\
&=
\lim_{j \to \infty} \frac{1}{|\Psi_{N_j}|}
\sum_{g\in g_0 \Psi_{N_j}} h(\alpha(g)) 
=
\lim_{j \to \infty} \frac{1}{|\Psi_{N_j}|}
\sum_{g\in \Psi_{N_j}} h(\alpha(g)) \\
&=
\int_K h(y) \d  \nu (y) ,
\end{align*}
so combining with the previous we get that 
$\big| \int_K h(y) \d  \nu(y) -  \int_K h(ky) \d  \nu(y) \big| 
\leq \epsilon$, and since $\epsilon$ was arbitrary, we obtain that 
$ \int_K h(y) \d  \nu(y) =  \int_K h(ky) \d  \nu(y) $, 
which proves that 
$ \nu = m_K$, and concludes the proof. 
\end{proof}

\begin{proof}[Proof of \cref{abs_c}]
As $K$ is compact and metrizable, the measures $m_K, m_{K^2}$ are regular. In
particular, for each Borel $A$ we have 
\begin{equation*}
m_K(A) = \sup_{\substack{C\subset A \\ C \: \text{compact}}} m_K(C) 
=
\inf_{\substack{O \supset A \\ O \: \text{open}}} m_K(O)
\:\: \text{ and } \:\:
m_{K^2}(A) = \sup_{\substack{C\subset A \\ C \: \text{compact}}} m_{K^2}(C)
=
\inf_{\substack{O \supset A \\ O \: \text{open}}} m_{K^2}(O).
\end{equation*}
Therefore, to prove that $m_{K^2}$ is absolutely continuous with respect 
to $m_K$, it suffices to prove that for each compact set $C \subset K$,
if $m_K (C) =0$, then $m_{K^2}(C)=0$. 

Let $C \subset K$ be a non-empty (for otherwise the result is trivial)
compact with $m_K(C) =0$ and let $\epsilon >0$. As $G$ is square 
absolutely continuous, we know that there are two
\Folner{} sequences $\Phi$ and $\Psi$ in $G$ and a $\delta >0$ such that for any 
$u:G\to [0,1]$ satisfying 
$\limsup_{N\to\infty}\frac{1}{|\Phi_N|}\sum_{g\in\Phi_N} u(g) < \delta,$
we have that
$\limsup_{N\to\infty}\frac{1}{|\Psi_N|}\sum_{g\in\Psi_N} u(g^2) < \epsilon.$
Since 
$$0=m_K(C)= \inf_{\substack{O \supset C \\ O \: \text{open}}} m_K(O),$$ we 
can pick an open set $O \supset C$ with $m_K(O) < \delta$.
By Urysohn's lemma, we know that there is a continuous function 
$f : K \to [0,1]$ such that $f=1$ on $C$ and $f=0$ outside $O$.
By \cref{weak_ast} we then have that 
\begin{equation*}
    \delta > m_K(O) \geq \int_K f(k) \d m_K(k) =
    \lim_{N\to \infty} \frac{1}{|\Phi_N|} \sum_{g\in \Phi_N} f(\alpha(g)),
\end{equation*}
and by the choice of $\delta$ we get that 
\begin{equation*}
\limsup_{N\to \infty} \frac{1}{|\Psi_N|} \sum_{g\in \Psi_N} f(\alpha(g^2)) 
<\epsilon.
\end{equation*}
From the definition of $m_{K^2}$ and the continuity of $k \mapsto f(k^2)$
we then obtain
\begin{equation*}
\int_K f(k) \d m_{K^2}(k) = \int_K f(k^2) \d m_K (k) =
\lim_{N\to \infty} \frac{1}{|\Psi_N|} \sum_{g\in \Psi_N} f(\alpha(g)^2) =
\lim_{N\to \infty} \frac{1}{|\Psi_N|} \sum_{g\in \Psi_N} f(\alpha(g^2))
< \epsilon,
\end{equation*}
where for the third equality we use that $\alpha$ is a group homomorphism.
So after all we have 
$m_{K^2}(C) \leq \int_K f(k) \d m_{K^2}(k) < \epsilon$, and as $\epsilon$ 
was arbitrary, we have that $m_{K^2}(C) =0$. This concludes the proof.
\end{proof}

Now, we define a measure $\sigma$ on $X\times X$, and we 
want $\sigma$ to be defined as a natural 
measure to study \Erdos{} progressions. 
It is not hard to see that \Erdos{} progressions on $K/H$ 
are exactly the triplets of the form
$(z,kz,k^2z)$ for some $k\in K$ and $z\in K/H$. Therefore, 
following the definition 
given in \cite{kmrr2}, we will define these measures as 
the natural measures on points 
$(x_1,x_2)\in X\times X$ to find \Erdos{} progressions on 
$K/H$ starting at $\pi(a)$, namely
$(\pi(a),k\pi(a),k^2\pi(a))$, for $k\in K$. 

\begin{defn}
We define the measure $\sigma$ on $X \times X$, given by 
 \begin{equation}\label{meas-s}
    \sigma := \int_{K}\eta_{k\pi(a)}
    \times\eta_{k^2\pi(a)}\d m_K(k).
\end{equation}
\end{defn}

Let us comment on why the measure $\sigma$ is well-defined. Let 
$k_0 \in K$ be such that $\pi(a)=k_0 H$. 
Since $\eta$ is $m$-almost everywhere defined on $Z$ and Borel
measurable, we can consider
a Borel measurable set $Z' \subset Z$ with $m(Z')=1$ such that 
$\eta_z$ is defined for all 
$z\in Z'$. Since $p$ is a Borel measurable map and $pm_K =m$, 
the set $K' = p^{-1}(Z')$ is 
Borel measurable and has $m_K(K')=1$.
Then also $m_K(K' k_0 ^{-1})=1$, and then it is not difficult to check that 
the map $K \to M(X)$, $k \mapsto \eta_{k \pi(a)}$ is defined on 
$K' k_0 ^{-1}$.
On the other hand, from \cref{abs_c}, we have that $m_{K^2}$ 
is absolutely continuous with 
respect to $m_K$, so $1=m_{K^2}(K' k_0 ^{-1})=
m_K(s_K^{-1} (K' k_0 ^{-1}))$, and again
it is not too difficult to check that 
the map $K \to M(X)$, $k \mapsto \eta_{k^2 \pi(a)}$ is defined on 
$s_K^{-1} (K' k_0 ^{-1})$. 
So, after all, the map 
$K \to M(X\times X)$, 
$k \mapsto \eta_{k \pi(a)} \times \eta_{k^2 \pi(a)} $ is defined 
on $K' k_0 ^{-1} \cap s_K^{-1} (K' k_0 ^{-1})$ and 
$m_K( K' k_0 ^{-1} \cap s_K^{-1} (K' k_0 ^{-1}) )=1 $, i.e.,
$k \mapsto \eta_{k \pi(a)} \times \eta_{k^2 \pi(a)}$ is 
$m_K$-almost everywhere defined. Also, since all the maps involved
in the definition of 
$k \mapsto \eta_{k \pi(a)} \times \eta_{k^2 \pi(a)}$ are Borel 
measurable, we have that 
$k \mapsto \eta_{k \pi(a)} \times \eta_{k^2 \pi(a)}$ is also 
Borel measurable. 
Therefore, $\sigma$ is indeed well-defined.

Using the invariance of $m_K$ we can express $\sigma$ as
\begin{equation}\label{alternative sigma_s}
    \sigma 
    = \int_K \eta_{kk_0H}\times\eta_{k^2k_0H} \d m_K(k)
    = \int_K \eta_{kH}\times \eta_{kk_0^{-1}kH} \d m_K(k)
\end{equation}

\begin{proposition}\label{s_props}
The measure $\sigma$ has the following properties:
\begin{enumerate}
    \item[(i)] $\pi_1\sigma=\mu$.
    \item[(ii)] $\pi_2 \sigma$ is absolutely continuous with
    respect to $\mu$.
\end{enumerate}
\end{proposition}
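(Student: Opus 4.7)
The plan is to verify (i) directly via the disintegration identity combined with the invariance of Haar measure, and then prove (ii) by a null-set chase through the squaring map, where the key input will be \cref{abs_c}.

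For (i), I would compute
\begin{equation*}
\pi_1\sigma = \int_K \eta_{k\pi(a)}\d m_K(k).
\end{equation*}
Using the right-invariance of $m_K$ and the fact that $\pi(a)=k_0H$ for some $k_0\in K$, the substitution $k\mapsto kk_0^{-1}$ yields $\pi_1\sigma = \int_K \eta_{kH}\d m_K(k)$. Since $pm_K = m$ and $z\mapsto \eta_z$ is the disintegration of $\mu$ over $\pi$, a change of variables through $p$ gives $\int_K \eta_{kH}\d m_K(k) = \int_Z \eta_z \d m(z) = \mu$. This is an essentially routine computation.

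For (ii), the main content lies in exploiting square absolute continuity through \cref{abs_c}. I would project on the second coordinate to get $\pi_2\sigma = \int_K \eta_{k^2\pi(a)}\d m_K(k)$, and then recognize via the definition of $m_{K^2}$ as the pushforward of $m_K$ under $s_K$ that
\begin{equation*}
\pi_2\sigma = \int_K \eta_{k\pi(a)}\d m_{K^2}(k).
\end{equation*}
To test absolute continuity, let $A\subset X$ be Borel with $\mu(A)=0$. From (i), or directly from the disintegration $\mu=\int_Z\eta_z\d m(z)$, the set $N:=\{z\in Z:\eta_z(A)>0\}$ has $m(N)=0$. Pulling back through $p$, the set $\widetilde{N}:=p^{-1}(N)\subset K$ satisfies $m_K(\widetilde{N})=0$, and by bi-invariance $m_K(\widetilde{N}k_0^{-1})=0$ as well.

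The payoff step is then: $\eta_{k^2\pi(a)}(A)>0$ exactly when $k^2\in \widetilde{N}k_0^{-1}$, i.e.\ when $k\in s_K^{-1}(\widetilde{N}k_0^{-1})$. By the definition of $m_{K^2}$,
\begin{equation*}
m_K\bigl(s_K^{-1}(\widetilde{N}k_0^{-1})\bigr)=m_{K^2}(\widetilde{N}k_0^{-1}),
\end{equation*}
and \cref{abs_c} combined with $m_K(\widetilde{N}k_0^{-1})=0$ gives $m_{K^2}(\widetilde{N}k_0^{-1})=0$. Hence $\eta_{k^2\pi(a)}(A)=0$ for $m_K$-a.e. $k$, so $\pi_2\sigma(A)=0$. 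The main obstacle is really just conceptual bookkeeping: one must carefully track that the well-definedness arguments for $\sigma$ apply verbatim here (in particular that the map $k\mapsto \eta_{k^2\pi(a)}(A)$ is defined and measurable $m_K$-a.e.), so that the null-set manipulations are legal. Once this is in place, \cref{abs_c} does all the real work.
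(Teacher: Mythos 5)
Your proof is correct and follows essentially the same route as the paper's: part (i) via the invariance substitution $k\mapsto kk_0^{-1}$ plus $pm_K=m$, and part (ii) via pushing the null set of the disintegration through $p$, translating by $k_0^{-1}$, and invoking \cref{abs_c} together with the definition of $m_{K^2}$ as the pushforward of $m_K$ under $s_K$. The only cosmetic difference is that you track the null set $N=\{z:\eta_z(A)>0\}$ where the paper tracks the complementary full-measure set $Z'=\{z:\eta_z(A)=0\}$; these are interchangeable, and your rewriting of $\pi_2\sigma$ as $\int_K \eta_{k\pi(a)}\d m_{K^2}(k)$ is a nice touch but not needed for the argument to close.
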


\begin{proof}[Proof of \cref{s_props}]
(i) Using \eqref{alternative sigma_s}, we have that
$$\pi_1\sigma = \int_K \eta_{kH} \d m_K(k) = \int_Z \eta_z \d m(z) = \mu.$$
(ii) From the definition of $\sigma$ we have that 
$\pi_2 \sigma = \int_{K} \eta_{k^2\pi(a)} \d m_K(k).$
Fix $k_0$ such that $\pi(a)=k_0 H$.

Let $A\subset X$ with $\mu(A)=0$. Then $\mu(A)=\int_Z \eta_z(A) \d m(z)$, 
so we get that there is a set $Z'\subset Z$ with $m(Z')=1$ such that 
for all $z\in Z'$, $\eta_z(A)=0$. As $pm_K =m$, we have $m_K(p^{-1}(Z'))=1$ and 
then also $m_K ((p^{-1}Z') k_0 ^{-1} )=1 $. Finally, using \cref{abs_c}
we get that $m_{K^2}((p^{-1}Z') k_0 ^{-1} )=
m_K (s_K ^{-1}((p^{-1}Z') k_0 ^{-1} ))=1 $. 
For each $k \in s_K ^{-1}((p^{-1}Z') k_0 ^{-1} )$ we have that 
$k^2 \pi(a) \in Z'$, so $\eta_{k^2 \pi(a)}(A)=0$, and therefore
$\pi_2 \sigma(A)=0$.
\end{proof}

\begin{theorem}\label{sigma.thm}
For any set $E\subset X$ with $\mu(E)>0$, we have that 
\begin{equation*}
    \sigma \bigg( E\times \bigcup_{t\in G}T^{-1}_tE \bigg) >0.
\end{equation*}
\end{theorem}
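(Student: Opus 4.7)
The plan is to deduce this positivity directly from the two marginal properties of $\sigma$ established in \cref{s_props}, combined with the ergodicity of $\xmt$. The key observation is that the set $U := \bigcup_{t\in G} T_t^{-1} E$ is $T$-invariant: for any $s\in G$, $T_s^{-1} U = \bigcup_{t\in G} T_{ts}^{-1}E = U$, since $ts$ ranges over all of $G$ as $t$ does. Since $E\subset U$ (take $t=e_G$) and $\mu(E)>0$, ergodicity of $\xmt$ forces $\mu(U)=1$, so $\mu(U^c)=0$.

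Next I would use the two parts of \cref{s_props}. By part (i), $\pi_1\sigma=\mu$, hence
\[
\sigma(E\times X) = \pi_1\sigma(E) = \mu(E) > 0.
\]
By part (ii), $\pi_2\sigma$ is absolutely continuous with respect to $\mu$, so from $\mu(U^c)=0$ we conclude $\pi_2\sigma(U^c)=0$, which in turn gives
\[
\sigma(E\times U^c) \leq \sigma(X\times U^c) = \pi_2\sigma(U^c) = 0.
\]

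Combining these two facts via the disjoint decomposition $E\times X = (E\times U)\sqcup (E\times U^c)$, I would conclude
\[
\sigma(E\times U) = \sigma(E\times X) - \sigma(E\times U^c) = \mu(E) - 0 = \mu(E) > 0,
\]
which is the desired inequality. There is essentially no obstacle at this stage, since the entire substance of the argument has already been absorbed into \cref{s_props}(ii), whose proof relied crucially on the square absolute continuity hypothesis through \cref{abs_c}. The present theorem is a clean consequence of combining those inputs with the basic observation that a saturated orbit of a positive-measure set has full measure in an ergodic system.
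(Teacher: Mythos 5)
Your proof is correct and follows essentially the same route as the paper: both arguments use $\pi_1\sigma=\mu$ to get $\sigma(E\times X)=\mu(E)>0$, invoke ergodicity to show $\bigcup_{t\in G}T_t^{-1}E$ has full $\mu$-measure, and then use absolute continuity of $\pi_2\sigma$ with respect to $\mu$ to transfer this to $\sigma$. The only cosmetic difference is that you subtract the measure of $E\times U^c$ while the paper intersects with the full-measure set $X\times U$; these are the same argument.
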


\begin{proof}
Let $E\subset X$ with $\mu(E)>0$ and recall that we want to show that the 
set $E\times \bigcup_{t\in G}T^{-1}_tE$ has positive measure 
with respect to $\sigma$. We begin by expressing this set as 
$$E\times \bigcup_{t\in G}T^{-1}_tE 
= (E\times X)\cap\bigg(X\times\bigcup_{t\in G}T^{-1}_tE\bigg).$$
By \cref{s_props} (i), we have that $\sigma(E\times X)=\mu(E)>0$.
Therefore, it is enough to show that 
\begin{equation}
    \sigma\bigg(X\times\bigcup_{t\in G}T^{-1}_tE\bigg)=1.
\end{equation}
Notice that the set $\bigcup_{t\in G} T_t^{-1}E$ is clearly $T$-invariant
and since $\mu$ is ergodic and $\mu(E)>0$, it follows that
$\mu\big(\bigcup_{t\in G} T_t^{-1}E\big)=1$.
By \cref{s_props} (ii),
$\pi_2 \sigma$ is absolutely continuous with respect to $\mu$, so
\begin{equation*}
    1 = \pi_2 \sigma\bigg(\bigcup_{t\in G} T_t^{-1}E\bigg) = 
    \sigma\bigg(X \times \bigcup_{t\in G} T_t^{-1}E\bigg),
\end{equation*}
which concludes the proof.
\end{proof}

\subsection{A continuous ergodic decomposition}
In this subsection we will define measures $\lambda_{(x_1,x_2)}$, 
for $(x_1,x_2)\in X\times X$, in a way
that $(x_1,x_2)\mapsto \lambda_{(x_1,x_2)}$ will be 
a continuous ergodic decomposition of $\mu\times\mu$, i.e.,
$(x_1,x_2)\mapsto \lambda_{(x_1,x_2)}$ will be both a continuous map
and an ergodic decomposition of $\mu \times \mu$.
We follow the definition given in \cite[Eq. (3.10)]{kmrr1}
and \cite[Eq. (3.1)]{kmrr2}.

\begin{defn}
For $(x_1,x_2)\in X\times X$ we define the measures 
$\lambda_{(x_1,x_2)}$ on $X\times X$ by
\begin{equation}\label{def.l}
    \lambda_{(x_1,x_2)}= \int_K \eta_{k\pi(x_1)}
    \times\eta_{k\pi(x_2)} \d m_K(k).
\end{equation}
\end{defn}

Given $x_1,x_2\in X$, we let $k_1,k_2\in K$ be such that $\pi(x_i)=k_iH$, for $i=1,2$.
Then, using the invariance of $m_K$, we can write
\begin{equation}\label{alternative lambda}
    \lambda_{(x_1,x_2)}
    = \int_K \eta_{kk_1H}\times\eta_{kk_2H}\d m_K(k)
    = \int_K \eta_{kH}\times\eta_{kk_1^{-1}k_2H} \d m_K(k).
\end{equation}

\begin{theorem}\label{c.e.d}
The map $(x_1,x_2)\mapsto\lambda_{(x_1,x_2)}$ is a continuous ergodic
decomposition of $\mu\times\mu$ in the following sense:
\begin{itemize}
    \item[(i)]It is a continuous map.
    \item[(ii)]It satisfies  
    $\int_{X \times X} \lambda_{(x_1,x_2)} \d (\mu \times \mu)(x_1,x_2)
    =\mu\times\mu$.
    \item[(iii)]The $G$-system $(X\times X, \lambda_{(x_1,x_2)}, 
    T\times T)$
    is ergodic for $\mu\times\mu$-almost every $(x_1,x_2)\in X\times X$.
\end{itemize}
In addition, for any $x_1,x_2\in X$ we have that 
\begin{equation}\label{l.inv}
\lambda_{(x_1,x_2)}=\lambda_{(T_gx_1,T_gx_2)}
\end{equation}
for any $g\in G$.
\end{theorem}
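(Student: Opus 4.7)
The plan is to establish the four assertions in a convenient order, with (iii) being the main technical step. For the invariance \eqref{l.inv}, using $\pi\circ T_g=R_g\circ\pi$ and the fact that $R_g$ is left-multiplication by $\alpha(g)$, plugging into \eqref{def.l} and applying the change of variable $k\mapsto k\alpha(g)^{-1}$ in the integral $\int_K\cdot\,\d m_K$ recovers $\lambda_{(x_1,x_2)}$, using right invariance of $m_K$. For (ii), testing against a product $f_1\otimes f_2$ with $f_i\in C(X)$ and applying Fubini reduces the computation to $\int_K\bigl[\int_Z\eta_{kz}(f_1)\d m(z)\bigr]\bigl[\int_Z\eta_{kz}(f_2)\d m(z)\bigr]\d m_K(k)$; each inner integral equals $\mu(f_i)$ by $K$-invariance of $m=\pi\mu$, yielding $\mu(f_1)\mu(f_2)=(\mu\times\mu)(f_1\otimes f_2)$, and density of tensor products in $C(X\times X)$ concludes.

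For continuity (i), it again suffices to handle product functions $f=f_1\otimes f_2$ with $f_i\in C(X)$. Setting $F_i(z)=\int f_i\d\eta_z\in L^2(Z,m)$ and lifting to $H$-right-invariant $\tilde F_i\in L^2(K,m_K)$ via $\tilde F_i(k)=F_i(kH)$, formula \eqref{alternative lambda} rewrites, after the change of variable $k\mapsto kk_1$, as
\[
\int f\d\lambda_{(x_1,x_2)}=\int_K\tilde F_1(k)\tilde F_2(kw)\d m_K(k),
\]
where $w=k_1^{-1}k_2$ for any lifts $k_i\in K$ of $\pi(x_i)$. The right-hand side is continuous in $w\in K$ by strong continuity of the right regular representation of $K$ on $L^2(K,m_K)$, and a short check using $H$-right-invariance of $\tilde F_i$ together with right invariance of $m_K$ shows the value is independent of the choice of lifts. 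Finally, since $p:K\to Z$ is a continuous surjection of compact metric spaces, convergent sequences in $Z$ admit convergent lifts in $K$, which transfers continuity back to $X\times X$.

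The crux is (iii). The plan is to identify $(x_1,x_2)\mapsto\lambda_{(x_1,x_2)}$ with the ergodic decomposition of $\mu\times\mu$ relative to $\I(T\times T)$; ergodicity almost surely then follows from \cref{erg-dec}. For $f_i\in L^\infty(X)$, the mean ergodic theorem (\cref{met}) applied to $T\times T$, combined with \cref{char}, gives
\begin{equation*}
\E_{\mu\times\mu}(f_1\otimes f_2\:|\:\I(T\times T))=\lim_{N\to\infty}\frac{1}{|\Phi_N|}\sum_{g\in\Phi_N}T_g\E_\mu(f_1\:|\:Z)\otimes T_g\E_\mu(f_2\:|\:Z)
\end{equation*}
in $L^2(\mu\times\mu)$. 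Since $\E_\mu(f_i\:|\:Z)=F_i\circ\pi$ with $F_i(z)=\int f_i\d\eta_z$, the right-hand side is the $(\pi\times\pi)$-pullback of the $(R\times R)$-ergodic average of $F_1\otimes F_2$ on $(Z\times Z,m\times m)$. By density of $\alpha(G)$ in $K$ (\cref{Kron_rot}), the $(R\times R)$-ergodic components of $m\times m$ agree with those of the diagonal $K$-action on $Z\times Z$, namely the orbit measures $\theta_{(z_1,z_2)}=\int_K\delta_{(kz_1,kz_2)}\d m_K(k)$, and a further application of \cref{Lind} upgrades convergence to a.e. The limit therefore equals $\int F_1\otimes F_2\d\theta_{(\pi(x_1),\pi(x_2))}$, which a direct unpacking of \eqref{def.l} identifies with $\int f_1\otimes f_2\d\lambda_{(x_1,x_2)}$. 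Linearity and a density argument (leveraging (ii) to control the $L^1$-norm of $f\mapsto\int f\d\lambda_{(x_1,x_2)}$) extend the identity to all $f\in L^1(\mu\times\mu)$, establishing the desired identification. The main obstacle is synchronizing the a.e.\ convergences on $X\times X$ and $Z\times Z$ using $(\pi\times\pi)_\ast(\mu\times\mu)=m\times m$, and verifying the explicit form of the $R\times R$-ergodic components on $Z\times Z$.
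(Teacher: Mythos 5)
Your proposal is correct and shares the same computational backbone as the paper (the characteristic factor result \cref{char}, and the identification of the averaged conditional expectations via an explicit $K$-average), but the organization of (iii) takes a genuinely different route. The paper proves ergodicity of $\lambda_{(x_1,x_2)}$ directly: it uses (ii) to pass, along a diagonal sub-\Folner{} sequence, from $L^2(\mu\times\mu)$-convergence of the $T_g\times T_g$-averages to $L^2(\lambda_{(x_1,x_2)})$-convergence for a.e.\ $(x_1,x_2)$, explicitly computes the limit on $Z\times Z$ via \cref{weak_ast}, and concludes ergodicity from the fact that the ergodic averages of a dense countable family converge to constants. You instead verify the disintegration identity $\int f\,\d\lambda_{(x_1,x_2)}=\E_{\mu\times\mu}(f\mid\I(T\times T))(x_1,x_2)$ a.e., identify $\lambda$ with the ergodic decomposition of $\mu\times\mu$ via the uniqueness in \cref{dis-meas}, and then invoke the abstract theorem \cref{erg-dec}. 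This is a clean alternative that avoids the paper's subsequence-extraction step (its Step~1), at the cost of leaning on uniqueness of the disintegration. Two remarks. First, for (i) and (ii) the paper simply cites \cite[Proposition 3.11]{kmrr1}; your self-contained arguments are sound, with the well-definedness check (independence of the lifts $k_1,k_2$, via $H$-right-invariance of the $\tilde F_i$ and right-invariance of $m_K$) and the existence of convergent lifts being exactly the points that must not be skipped. Second, the assertion that ``the $(R\times R)$-ergodic components of $m\times m$ agree with those of the diagonal $K$-action, namely the orbit measures $\theta_{(z_1,z_2)}$'' is correct but is precisely what the paper proves quantitatively in its Steps 2--3 via \cref{weak_ast}; in a full write-up this claim would need the same $L^2(Z\times Z)$ computation, so the apparent simplification here is more expository than substantive.
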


\begin{proof}
For the proof of (i) and (ii) we refer to 
\cite[Proposition 3.11]{kmrr1}, as the proof there can be directly 
adapted to our case.
We will now prove (iii). It is not too difficult to see that for all 
$(x_1, x_2) \in X\times X$ and for all
$g \in G$, $(T_g \times T_g) \lambda_{(x_1, x_2)} = 
\lambda_{(x_1, x_2)}$, i.e., $\lambda_{(x_1, x_2)}$ is 
$T\times T$-invariant. Therefore, to prove (iii),
it suffices to prove that there is some 
\Folner{} sequence $\Psi$ in $G$ such that for
$(\mu\times\mu)$-almost every
$(x_1,x_2)\in X\times X$ and all bounded and measurable 
functions $F$ on $X\times X$,
$$\lim_{N\to\infty}\frac{1}{|\Psi_N|}\sum_{g\in\Psi_N}(T_g\times T_g)F
= \int_{X\times X}F\d\lambda_{(x_1,x_2)},$$
in $L^2(X\times X,\lambda_{(x_1,x_2)})$. 

Now, since $X$ is a compact metric space,
there is a countable family 
of continuous functions $(f_k)_{k\in \N}$ which is dense 
in $L^p(\nu)$ for all $p \in [1, + \infty)$ and all 
Borel probability measures $\nu$ on $X$. Then, it is not too 
difficult to see that the set consisting of finite linear 
combinations of functions the form 
$(f_{j_1} \otimes f_{j_2})_{j_1, j_2 \in \N}$ is dense 
in $L^2(\rho)$ for all Borel probability measures $\rho$ on 
$X\times X$. 
Hence, using an approximation argument, it suffices to prove that 
there is a \Folner{} $\Psi$ in $G$
and a set $W\subset X \times X$ 
with $(\mu \times \mu)(W) =1 $ such that 
for all $(x_1, x_2) \in W$ and for all $j_1, j_2 \in \N$
\begin{equation*}
\lim_{N\to\infty}\frac{1}{|\Psi_N|}\sum_{g\in\Psi_N}(T_g\times T_g)
(f_{j_1} \otimes f_{j_2})
= \int_{X\times X} f_{j_1} \otimes f_{j_2} \d\lambda_{(x_1,x_2)},
\end{equation*}
in $L^2(X\times X,\lambda_{(x_1,x_2)})$.

\textbf{Step 1.} Let $\Phi$ be any \Folner{} sequence in $G$.
Then, using \cref{char}, we get that for each $j_1, j_2 \in \N$,
\begin{equation*}
\lim_{N\to\infty}\frac{1}{|\Phi_N|}\sum_{g\in\Phi_N}
(T_{g} \times T_{g}) (f_{j_1}\otimes f_{j_2})
=
\lim_{N\to\infty}\frac{1}{|\Phi_N|}\sum_{g\in\Phi_N}
(T_g \times T_g) (\E_{\mu}(f_{j_1}\: | \: Z)\otimes  
\E_{\mu}(f_{j_2}\: | \: Z))
\end{equation*}
in $L^2(\mu \times \mu)$. Combining this with (ii) yields
\begin{align*}
    & \lim_{N\to \infty}
    \int_{X\times X}
    \int_{X\times X}
    \bigg|\frac{1}{|\Phi_N|} \sum_{g\in \Phi_N} 
    (T_g \times T_g)(f_{j_1}\otimes f_{j_2})(y_1, y_2) \\
    &\:\:\:\:\:\:
    - 
    \frac{1}{|\Phi_N|} \sum_{g\in \Phi_N} 
    (T_g \times T_g)(\E_{\mu}(f_{j_1}\: | \: Z)\otimes  
    \E_{\mu}(f_{j_2}\: | \: Z))(y_1, y_2)\bigg| ^2
    \d \lambda_{(x_1, x_2)}(y_1, y_2)
    \d (\mu \times \mu)(x_1, x_2) 
    = 0.
\end{align*}
Then for each $j_1,j_2\in\N$ we can find a sub-\Folner{} sequence 
$\widetilde{\Phi}$ of $\Phi$, depending on $j_1,j_2$,
such that 
for $(\mu \times \mu)$-almost every $(x_1, x_2)\in X \times X$, we have 
\begin{align*}
    & \lim_{N\to\infty}\int_{X\times X}
    \bigg|\frac{1}{|\widetilde{\Phi}_N|} 
    \sum_{g\in\widetilde{\Phi}_N}
    (T_g \times T_g)(f_{j_1}\otimes f_{j_2}) \\
    & \hspace*{5.7cm}- \frac{1}{|\widetilde{\Phi}_N|} 
    \sum_{g\in\widetilde{\Phi}_N}
    (T_g \times T_g)(\E_{\mu}(f_{j_1}\: | \: Z)\otimes  
    \E_{\mu}(f_{j_2}\: | \: Z))\bigg|^2 \d\lambda_{(x_1,x_2)} 
    = 0,
\end{align*}
and since the limits of both averages above exist by \cref{met},
we have that, for $(\mu\times\mu)$-almost every $(x_1,x_2)\in X\times X$,
\begin{align*}
    \lim_{N\to\infty}\frac{1}{|\widetilde{\Phi}_N|} 
    \sum_{g\in\widetilde{\Phi}_N}
    (T_g \times T_g)(f_{j_1}\otimes f_{j_2})
    = 
    \lim_{N\to\infty}\frac{1}{|\widetilde{\Phi}_N|} 
    \sum_{g\in\widetilde{\Phi}_N}
    (T_g \times T_g)(\E_{\mu}(f_{j_1}\: | \: Z)\otimes  
    \E_{\mu}(f_{j_2}\: | \: Z))
\end{align*}
in $L^2(X\times X,\lambda_{(x_1,x_2)})$.
Since the family $(f_{j_1} \otimes f_{j_2})_{j_1, j_2 \in \N}$
is countable, then using a diagonal argument one can find a 
\Folner{} sequence 
$\Psi$ and a set $W_1 \subset X \times X$ with
$(\mu \times \mu)(W_1) =1$ such that 
for all $(x_1, x_2)\in W_1$ and $j_1, j_2 \in \N$, we have that 
\begin{equation}\label{eq_ld_ch}
\lim_{N\to\infty}\frac{1}{|\Psi_N|} \sum_{g\in \Psi_N} 
(T_g \times T_g)(f_{j_1}\otimes f_{j_2}) 
=
\lim_{N\to\infty}\frac{1}{|\Psi_N|} \sum_{g\in \Psi_N}
(T_g \times T_g)(\E_{\mu}(f_{j_1}\: | \: Z)\otimes  
    \E_{\mu}(f_{j_2}\: | \: Z))
\end{equation}
in $L^2(X\times X,\lambda_{(x_1,x_2)})$.

\textbf{Step 2.} 
Consider the sequence of probability measures 
on $K$ defined by $\nu_N := \frac{1}{|\Psi_N|} \sum_{g\in \Psi_N}
\delta_{\alpha(g)}$. From \cref{weak_ast} we know that 
$\nu_N \to m_K$ as $N \to \infty$ in the weak$^{\ast}$ topology.  

Let $\phi_1, \phi_2 : Z \to \C$ be continuous.
For each $z_1, z_2 \in Z$, consider the function 
$\phi_{z_1, z_2} : K \to \C$ defined by 
$\phi_{z_1, z_2}(k)= \phi_1(k z_1) \phi_2(k z_2)$. Then, 
$\phi_{z_1, z_2}$ is continuous, so we have that 
\begin{align*}
\int_K \phi_1(k z_1) \phi_2(k z_2) \d m_K(k) &=
\int_K \phi_{z_1, z_2}(k) \d m_K(k) 
=
\lim_{N\to \infty} \frac{1}{|\Psi_N|} \sum_{g\in \Psi_N} 
\phi_{z_1, z_2}(\alpha(g))\\
&=
\lim_{N\to \infty} \frac{1}{|\Psi_N|} \sum_{g\in \Psi_N} 
\phi_1(\alpha(g) z_1) \phi_2(\alpha(g) z_2)\\
&=
\lim_{N\to \infty} \frac{1}{|\Psi_N|} \sum_{g\in \Psi_N} 
(R_g \times R_g)(\phi_1\otimes \phi_2)( z_1, z_2).
\end{align*}
Since the previous holds for all $z_1, z_2 \in Z$, using the dominated 
convergence theorem, we get that 
\begin{equation*}
\lim_{N \to \infty} \int_{Z \times Z} 
\bigg| \frac{1}{|\Psi_N|} \sum_{g\in \Psi_N} 
(R_g \times R_g)(\phi_1\otimes \phi_2)(z_1, z_2)
-
\int_K \phi_1(k z_1) \phi_2(k z_2) \d m_K(k) \bigg|^2 \d (m\times m)
(z_1, z_2)=0,
\end{equation*}
i.e., the sequence 
$\frac{1}{|\Psi_N|} \sum_{g\in \Psi_N} 
(R_g \times R_g)(\phi_1\otimes \phi_2)$ converges in 
$L^2(Z \times Z, m\times m )$ as $N\to \infty$ to the function 
$(z_1, z_2) \mapsto \int_K \phi_1(k z_1) \phi_2(k z_2) \d m_K(k)$.

\textbf{Step 3.} 
Given two bounded measurable maps $h_1, h_2 : Z \to \C$,
approximating them in $L^2(Z,m)$ by two continuous functions $\phi_1, \phi_2$
and using Step 2,
one can prove that
$\frac{1}{|\Psi_N|}\sum_{g\in \Psi_N} 
(R_g \times R_g)(h_1\otimes h_2)$ converges in 
$L^2(Z \times Z, m\times m )$ as $N\to \infty$ to the function 
$(z_1, z_2) \mapsto \int_K h_1(k z_1) h_2(k z_2) \d m_K(k)$.
Since $\pi: \xmt \to (Z, m, R)$ is a factor map, it is not too difficult then to 
see that 
\begin{equation}\label{eq_ltwo}
\frac{1}{|\Psi_N|} \sum_{g\in \Psi_N} 
(T_g \times T_g) (h_1 \circ \pi \otimes h_2 \circ \pi) 
\to 
\Big[(x_1, x_2) \mapsto \int_K h_1(k \pi(x_1)) h_2(k \pi(x_2)) \d m_K(k)\Big]  
\end{equation}
as $N\to \infty$ in
$L^2(X \times X, \mu \times \mu )$.

For each $j \in \N$, $\E_{\mu}(f_j\:  | \: Z)$ can be viewed 
either as a function 
on $Z$ or as a function on $X$ measurable with respect to $\pi^{-1}(Z)$. In 
this proof, we always view $\E_{\mu}(f_j \: |\: Z)$ as a function on $X$ 
measurable with respect to $\pi^{-1}(Z)$. For each $j \in \N$, let 
$\psi_j $ be $\E_{\mu}(f_j \: | \: Z)$ when viewed as a function on
$Z$, so we have
that for $\mu$-almost every $x \in X$, 
$\psi_j \circ \pi (x) = \E_{\mu}(f_j \: | \: Z)(x)$. 
Then for each $j_1, j_2 \in \N$ and $(x_1, x_2) \in X\times X$, we have
\begin{align}\label{eq_disint_1}
    \int_{X \times X} f_{j_1} (y_1)
    f_{j_2}(y_2) \d \lambda_{(x_1, x_2)} (y_1, y_2)
    &= 
    \int_K 
    \int_{X \times X} f_{j_1} (y_1) f_{j_2}(y_2) 
    \d (\eta_{k\pi(x_1)}
    \times\eta_{k\pi(x_2)})(y_1,y_2) \d m_K(k) \nonumber \\
    &= \int_K 
    \int_{X}  f_{j_1} (y_1)
    \d \eta_{k\pi(x_1)} (y_1)
    \int_{X} f_{j_2}(y_2) 
    \d \eta_{k\pi(x_2)} (y_2)
    \d m_K(k) \nonumber \\
    & = \int_K 
    \psi_{j_1}(k\pi(x_1))
    \psi_{j_1}(k\pi(x_1))
    \d m_K(k) 
\end{align}
where the last equality follows using \eqref{eq_disint}. After all, combining 
\eqref{eq_ltwo} and \eqref{eq_disint_1} we get that for each 
$j_1, j_2 \in \N$, 
\begin{equation*}
\frac{1}{|\Psi_N|} \sum_{g\in \Psi_N} 
(T_g \times T_g) (\E_{\mu}(f_{j_1} \: | \: Z) \otimes 
\E_{\mu}(f_{j_2} \: | \: Z)) 
\to 
\Big[(x_1, x_2) \mapsto 
\int_{X \times X} f_{j_1} \otimes 
f_{j_2}\d \lambda_{(x_1, x_2)} \Big] 
\end{equation*}
as $N\to \infty$ in
$L^2(X \times X, \mu \times \mu )$. Now, since the family 
$f_{j_1}\otimes f_{j_2}$ is countable, using (ii) and a diagonal argument as in Step 1,
one can find a sub-\Folner{} sequence of $\Psi$, which by abuse of notation we again denote by $\Psi$,
and a set $W_2 \subset X \times X$ with $(\mu \times \mu)(W_2) =1$ such that 
for all $(x_1, x_2)\in W_2$ and $j_1, j_2 \in \N$,
\begin{equation}\label{eq_ld_ch_2}
\lim_{N\to\infty}\frac{1}{|\Psi_N|} \sum_{g\in \Psi_N} 
(T_g \times T_g)(\E_{\mu}(f_{j_1}\: | \: Z)\otimes  
\E_{\mu}(f_{j_2}\: | \: Z)) 
=
\int_{X \times X} f_{j_1} \otimes 
f_{j_2}\d \lambda_{(x_1, x_2)}
\end{equation}
in $L^2(X\times X, \lambda_{(x_1, x_2)})$.

Let $W = W_1 \cap W_2$. Then $(\mu \times \mu)(W) =1$,
and combining \eqref{eq_ld_ch} and \eqref{eq_ld_ch_2},
we get that 
for all $(x_1, x_2) \in W$ and all $j_1, j_2 \in \N$,
$$\lim_{N\to\infty}\frac{1}{|\Psi_N|}\sum_{g\in\Psi_N}
(T_g\times T_g)(f_{j_1} \otimes f_{j_2})
= \int_{X\times X} f_{j_1} \otimes f_{j_2} \d\lambda_{(x_1,x_2)}$$
in $L^2(X\times X,\lambda_{(x_1,x_2)})$, which was to be proved.

To conclude the proof we are left with showing \eqref{l.inv}. To this 
end, using 
the invariance of $m_K$, for any $g\in G$, we have that
\begin{align*}
    \lambda_{(T_gx_1,T_gx_2)}
    = \int_Z  \eta_{k\alpha(g)\pi(x_1)}\times \eta_{k\alpha(g)\pi(x_2)}\d m_K(k)
    & = \int_Z \eta_{k\pi(x_1)}\times \eta_{k\pi(x_2)}\d m_K(k\alpha(g)^{-1}) \\
    & = \int_Z \eta_{k\pi(x_1)}\times \eta_{k\pi(x_2)}\d m_K(k)
    = \lambda_{(x_1,x_2)}. 
\end{align*}
The proof of the theorem is now complete.
\end{proof}

\begin{theorem}\label{lambda.supp}
We have that 
$$\sigma(\{(x_1,x_2)\in X\times X:(x_1,x_2)\in\supp(\lambda_{(x_1,x_2)})\})=1.$$
\end{theorem}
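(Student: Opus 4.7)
My overall strategy is to show, equivalently, that for every pair of open sets $U, V$ in a countable basis of the topology of $X$, the set
$$A_{U,V} := \{(x_1, x_2) \in U \times V : \lambda_{(x_1,x_2)}(U \times V) = 0\}$$
is $\sigma$-null. Since $\{(x_1,x_2) : (x_1,x_2) \notin \supp(\lambda_{(x_1,x_2)})\}$ is the countable union of such $A_{U,V}$, this will suffice.

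First I would unpack both $\sigma$ and $\lambda_{(x_1,x_2)}$ in coordinates on $K$. For $\sigma$-a.e.\ $(x_1, x_2)$, we have $\pi(x_1) = k\pi(a)$ and $\pi(x_2) = k^2\pi(a)$ for some $k \in K$, so $\lambda_{(x_1, x_2)}$ depends only on this $k$, and a change of variables using translation invariance of $m_K$ gives
$$\lambda_{(x_1,x_2)} = \int_K \eta_{u\pi(a)} \times \eta_{uk\pi(a)} \d m_K(u).$$
Setting $\phi(u) = \eta_{u\pi(a)}(U)$ and $\psi(u) = \eta_{u\pi(a)}(V)$, both in $L^2(K, m_K)$ since they are bounded by $1$, we can write
$$f(k) := \lambda_{(x_1,x_2)}(U \times V) = \int_K \phi(u) \psi(uk) \d m_K(u), \qquad \sigma(U \times V) = \int_K \phi(k) \psi(k^2) \d m_K(k).$$

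Next I would observe two things about $f$. First, by $L^2$-continuity of translation on the compact group $K$, the function $f$ is continuous on $K$; indeed $|f(k)-f(k')| \le \|\phi\|_2 \|\psi(\cdot k) - \psi(\cdot k')\|_2$, which vanishes as $k' \to k$. Second, Fubini and translation invariance yield $\int_K f \d m_K = \mu(U) \mu(V)$. So $f$ is a non-negative continuous function with mean $\mu(U)\mu(V)$, and $\{f > 0\}$ is an open subset of $K$ of positive $m_K$-measure whenever $\mu(U)\mu(V) > 0$.

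The target identity $\sigma(A_{U,V}) = 0$ amounts to $\int_B \phi(k) \psi(k^2) \d m_K(k) = 0$, where $B := \{f = 0\}$. Since $\int_B f(k) \d m_K(k) = 0$ trivially, applying Fubini to the inner integral defining $f$ extracts the information that for $m_K$-a.e.\ $u$ with $\phi(u) > 0$, the set $B \cap u\cdot\supp_{m_K}(\psi)$ is $m_K$-null. The final step is to combine this horizontal null-set information with the absolute continuity of $m_{K^2}$ with respect to $m_K$ provided by \cref{abs_c} (which encodes the square absolute continuity of $G$) to conclude that the set $\{k \in B : \phi(k) > 0,\ \psi(k^2) > 0\}$ is $m_K$-null, giving $\sigma(A_{U,V}) = 0$.

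The hard part is this last transfer step: passing from a horizontal null-set statement (about $B$ meeting left-translates of $\supp_{m_K}(\psi)$) to the ``diagonal'' one involving both $k$ and $k^2$. This is precisely where square absolute continuity is essential, as it controls the joint distribution of $(k, k^2)$ under $m_K$ and thereby lets one transfer measure-zero information between horizontal and diagonal traces. Absent this hypothesis the argument breaks down, which is consistent with the central role of square absolute continuity in the whole paper.
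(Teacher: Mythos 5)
Your setup is correct through the Fubini step: the reduction to countably many sets $A_{U,V}$, the identity $\lambda_{(x_1,x_2)}(U\times V)=f(k)=\int_K\phi(u)\psi(uk)\d m_K(u)$ for $\sigma$-a.e.\ $(x_1,x_2)$ with $\pi(x_1)=k\pi(a)$ and $\pi(x_2)=k^2\pi(a)$, the continuity of $f$, and the bound $\sigma(A_{U,V})\le\int_B\phi(k)\psi(k^2)\d m_K(k)$ with $B=\{f=0\}$ are all fine, as is the Fubini deduction that for $m_K$-a.e.\ $u$ with $\phi(u)>0$ the set $B\cap u^{-1}\{\psi>0\}$ is $m_K$-null.

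The gap is exactly the ``hard part'' you flag, and I do not believe it can be closed the way you suggest. Writing $\Phi=\{\phi>0\}$, $\Psi=\{\psi>0\}$, your Fubini output is precisely that the set $F=\{(u,k):u\in\Phi,\ k\in B,\ uk\in\Psi\}$ is $(m_K\times m_K)$-null. What you need is that $E=\{k:k\in\Phi,\ k\in B,\ k^2\in\Psi\}$ is $m_K$-null, but $E$ is the pullback of $F$ along the diagonal $k\mapsto(k,k)$, and that diagonal has $(m_K\times m_K)$-measure zero (for nonatomic $m_K$), so the product-null-set statement places no constraint on $m_K(E)$. The absolute continuity $m_{K^2}\ll m_K$ does not help here: it controls the law of $k^2$ alone, not the law of the pair $(k,k^2)$ relative to $m_K\times m_K$; that pair is supported on a curve in $K\times K$ of $(m_K\times m_K)$-measure zero, so no density argument starting from a product-null statement can see it. Equivalently: for each fixed $k\in B$, $f(k)=0$ says the set $\Phi\cap k^{-1}\Psi$ is $m_K$-null, but a single point $k$ can lie in a null set, so nothing prevents $\phi(k)\psi(k^2)>0$ on a positive-measure portion of $B$.

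The paper avoids this entirely by replacing the Fubini argument with a Lebesgue-density-point argument. Proposition \ref{L} produces a full-measure set $L\subset X$ of points $x$ (with $\pi(x)=wH$) for which $\{k:\eta_{kH}(U)>0\}$ has relative density $\to1$ in shrinking balls around $w$, for every neighborhood $U\ni x$. Then $\sigma(L\times L)=1$ is deduced from $\pi_1\sigma=\mu$ and $\pi_2\sigma\ll\mu$ (this is the only place where $m_{K^2}\ll m_K$, hence square absolute continuity, is used), and $L\times L\subset S$ is shown by a pigeonhole: given $(x_1,x_2)\in L\times L$ and neighborhoods $U_1,U_2$, the two density-$\ge3/4$ sets near $k_1$ and near $k_2$ can be brought over the same ball $\Ball_K(k_1,\delta)$ by right-translating by $k_2^{-1}k_1$ (using bi-invariance of $d_K$ and $m_K$), so they overlap on a set of positive measure, forcing $\lambda_{(x_1,x_2)}(U_1\times U_2)>0$. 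This pointwise density information is exactly the ingredient that your null-set/Fubini argument is missing, and it does not follow from $m_{K^2}\ll m_K$; it requires the Lusin--Borel--Cantelli construction in Proposition \ref{L}. If you want to salvage your approach you would need to import something like that density lemma, at which point you are essentially reconstructing the paper's proof.
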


\begin{theorem}\label{lambda.gen}
There exists some \Folner{} sequence $\Psi$ such that 
$$\sigma(\{(x_1,x_2)\in X\times
X:(a,x_1)\in\gen(\lambda_{(x_1,x_2)},\Psi)\})=1.$$
\end{theorem}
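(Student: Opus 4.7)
My plan is to proceed in three main stages: first reduce the statement to a slicewise problem in $x_1$; then handle the ``Kronecker part'' of the ergodic averages pointwise via equidistribution of $\alpha(G)$ in $K$; and finally control the remaining ``error terms'' in $L^2(\mu)$ by a spectral analysis combined with the genericity of $a$ for $\mu$.

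\textbf{Reduction.} A direct computation using the right-invariance of $m_K$ under the substitution $k''=k'k$ shows that for $\sigma$-a.e.\ $(x_1,x_2)$---where one has $\pi(x_1)=k\pi(a)$ and $\pi(x_2)=k^2\pi(a)$ for some $k\in K$---the equality $\lambda_{(x_1,x_2)}=\lambda_{(a,x_1)}$ holds. Since the event $(a,x_1)\in\gen(\lambda_{(a,x_1)},\Psi)$ depends only on $x_1$, and $\pi_1\sigma=\mu$ by \cref{s_props}(i), it suffices to produce a \Folner{} sequence $\Psi$ with $(a,x_1)\in\gen(\lambda_{(a,x_1)},\Psi)$ for $\mu$-almost every $x_1\in X$. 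Fixing a countable dense family $(f_k)_{k\in\N}\subset C(X)$, it is then enough to verify the defining limit for test functions of the form $F=f_j\otimes f_k$, using a density and diagonal-subsequence argument.

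\textbf{Main term.} For $F=f_1\otimes f_2$, write $f_i=\psi_i\circ\pi+h_i$ with $\psi_i\circ\pi=\E_\mu(f_i\:|\:Z)$ and $h_i\in\Hilb_{\textup{wm}}(T)$. The average $|\Psi_N|^{-1}\sum_g f_1(T_ga)f_2(T_gx_1)$ splits as $A_N(x_1)+B_N^{(1)}(x_1)+B_N^{(2)}(x_1)+D_N(x_1)$, where $A_N$ involves only the Kronecker parts and the remaining terms each involve at least one $h_i$. Using $(\psi_i\circ\pi)(T_gy)=\psi_i(\alpha(g)\pi(y))$ and applying \cref{weak_ast} to the continuous function $k\mapsto\psi_1(k\pi(a))\psi_2(k\pi(x_1))$ on $K$ yields pointwise, for every $x_1\in X$ and every \Folner{} sequence,
$$A_N(x_1)\longrightarrow\int_K\psi_1(k\pi(a))\psi_2(k\pi(x_1))\d m_K(k)=\int F\d\lambda_{(a,x_1)}.$$

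\textbf{Error terms.} For $B_N^{(1)}(x_1)=|\Psi_N|^{-1}\sum_g\psi_1(T_ga)h_2(T_gx_1)$, expanding $\|B_N^{(1)}\|_{L^2(\mu)}^2$ and using the $T$-invariance of $\mu$ gives
$$\|B_N^{(1)}\|_{L^2(\mu)}^2=\frac{1}{|\Psi_N|^2}\sum_{g,g'\in\Psi_N}\psi_1(T_ga)\overline{\psi_1(T_{g'}a)}\langle h_2,T_{g^{-1}g'}h_2\rangle.$$
Decomposing $\psi_1\in\Hilb_{\textup{c}}(T)$ via the Peter--Weyl decomposition of $L^2(Z,m)$ into $K$-isotypic components reduces matters to the case where $\psi_1$ lies in a single finite-dimensional isotypic component, so that $T_g\psi_1$ is expressible through matrix coefficients of an irreducible representation of $K$ evaluated at $\alpha(g)$. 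Combined with the genericity of $a$ along $\Psi$ and the fact that $h_2\in\Hilb_{\textup{wm}}(T)$ has continuous spectral measure, one obtains $\|B_N^{(1)}\|_{L^2(\mu)}\to 0$. The term $B_N^{(2)}$ is handled symmetrically; for $D_N$, the same scheme combined with \cref{wmlemma} (which gives $h_1\otimes h_2\in\Hilb_{\textup{wm}}(T\times T)$) yields the analogous conclusion. A subsequence argument upgrades each $L^2(\mu)$-convergence to $\mu$-a.e.\ convergence, and a diagonal over the countable family $(f_k)$ produces the single \Folner{} sequence $\Psi$.

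\textbf{Main obstacle.} The principal difficulty is controlling the error terms along the specific orbit $(T_ga)_{g\in G}$: the $L^2(\mu\times\mu)$-bounds supplied by the characteristic factor theorem (\cref{char}) are insufficient, because the slice $\{a\}\times X$ has $(\mu\times\mu)$-measure zero. Circumventing this requires combining the Peter--Weyl decomposition of $L^2(Z,m)$ with the genericity of $a$, so that the quadratic expressions above can be evaluated as Ces\`aro averages in $G$ that vanish by the weak-mixing property of $h_i$.
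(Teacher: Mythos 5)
Your reduction step coincides with the paper's (use $\lambda_{(x_1,x_2)}=\lambda_{(a,x_1)}$ $\sigma$-a.e., push forward by $\pi_1\sigma=\mu$, test against a countable dense family, diagonalize), but the core of your argument has a gap that I do not see how to close. After decomposing $f_i=\psi_i\circ\pi+h_i$ with $\psi_i\circ\pi=\E_\mu(f_i\:|\:Z)$, you treat $\psi_1(T_ga)=\psi_1(\alpha(g)\pi(a))$ as a well-defined numerical sequence and apply \cref{weak_ast} to obtain pointwise convergence of the ``main term'' $A_N$. But \cref{weak_ast} is a weak$^\ast$ statement that applies only to \emph{continuous} test functions on $K$; the conditional expectation $\psi_i$ is merely an element of $L^2(Z,m)$. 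The standing hypothesis in Section~\ref{measures section} is that $\pi:X\to Z$ is continuous, \emph{not} that $\E_\mu(f_i\:|\:Z)$ is, and indeed it generally is not. The orbit $\{T_ga:g\in G\}$ is countable, hence $\mu$-null, so changing the representative of $\psi_1$ on a null set changes every term $\psi_1(T_ga)$ of your averages arbitrarily --- the quantity $A_N(x_1)$ is not even well-defined. Peter--Weyl does express $\psi_1$ as an $L^2$-sum of continuous isotypic components, and for a finite partial sum $\psi_1^{\le M}$ the pointwise application of \cref{weak_ast} is fine, but the tail $\psi_1-\psi_1^{\le M}$ is small only in $L^2(Z,m)$; controlling $\frac{1}{|\Psi_N|}\sum_g|(\psi_1-\psi_1^{\le M})(T_ga)|$ would require integrating a non-continuous function against the empirical measures of the orbit of $a$, which is precisely what genericity along $\Psi$ does \emph{not} give. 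The same evaluation problem contaminates the coefficients $\psi_1(T_ga)$ in your error terms $B_N^{(i)}$ (although the $L^2(\mu)$-smallness you want there is in fact attainable by a van der Corput argument, exploiting that $\langle T_gh_2,T_{g'}h_2\rangle$ depends only on $g'^{-1}g$ by unitarity, rather than the Peter--Weyl/spectral-measure sketch you give).

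The paper circumvents the measure-zero-slice obstacle --- which you correctly identify --- by a genuinely different mechanism that never evaluates $\E_\mu(f\:|\:Z)$ on the orbit of $a$. It introduces the \emph{continuous} disintegration $s\mapsto\nu_s:=\delta_s\times\mu$ of $\mu\times\mu$ over the first coordinate, applies \cref{mx_gen} to find a \Folner{} sequence $\Phi$ for which $(\mu\times\mu)$-a.e.\ $(x_0,x_1)$ is generic for $\lambda_{(x_0,x_1)}$ (an argument that lives entirely in $L^2(\mu\times\mu)$ and needs no pointwise evaluation of conditional expectations), fixes a point $b\in\supp(\mu)$ for which $\nu_b$-a.e.\ $(x_0,x_1)$ inherits this genericity, and then transfers from $\nu_b$ to $\nu_a$ via a Borel--Cantelli argument built on the weak$^\ast$-continuity of $s\mapsto\nu_s$, the $T\times T$-invariance $\lambda_{(T_gx_0,T_gx_1)}=\lambda_{(x_0,x_1)}$ from \eqref{l.inv}, and a shifted \Folner{} sequence $\Psi_k=\Phi_{N(k)}h_k$ with $T_{h_k}a\to b$. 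That continuity-plus-shifting structure is the key idea missing from your proposal.
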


We first deal with \cref{lambda.supp}. Let us first
state and prove some results that will be useful in order to prove
\cref{lambda.supp}.

Let $\F(X)$ be the family of the closed, nonempty 
subsets of the compact metric space $(X,d_X)$.
We endow this family with the Hausdorff metric $\Dist$, defined by 
$$\Dist(A,B) = \max\bigg\{\sup_{x\in A}d_X(x,B),\sup_{y\in B}d_X(y,A)\bigg\},$$
for any $A,B\in\F(X)$.

We will need the following two lemmas, the proofs of which are omitted, 
as they can be found in \cite{kmrr2}:

\begin{lemma}\label{lem_supp}{\cite[Lemma 3.8]{kmrr2}}
Let $W$ be a compact metric space, $M(W)$ the space of Borel probability
measures on $W$ endowed with the weak$^{\ast}$ topology, and $\F(W)$ the 
space of closed, non-empty subsets of $W$ with the Hausdorff metric.
Then
\begin{itemize}
    \item The map $\nu \mapsto \supp(\nu)$ from $M(W)$ to 
    $\F(W)$ is Borel measurable. 
    \item If $x \mapsto \rho_x$ is a measurable map from $W$ to 
    $M(W)$, then $\{x \in X: x\in \supp(\rho_x) \}$ is a Borel set. 
\end{itemize}
\end{lemma}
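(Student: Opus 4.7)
The plan is to prove the two claims separately using properties of the weak$^\ast$ topology. For the first claim, recall that the Hausdorff metric on $\F(W)$ for compact $W$ generates the Vietoris topology, which has a subbase consisting of sets $\langle U \rangle := \{F\in\F(W) : F \cap U \neq \emptyset\}$ and $[U] := \{F\in\F(W) : F \subset U\}$ as $U$ ranges over open subsets of $W$. Since $W$ is second countable, it suffices to verify Borel measurability of the preimages for a countable family of such subbasic sets. For $\langle U \rangle$, one has $\supp(\nu) \cap U \neq \emptyset$ if and only if $\nu(U) > 0$; by the Portmanteau theorem, $\nu \mapsto \nu(U)$ is lower semi-continuous for open $U$, so this preimage is open. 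For $[U]$, I would use the fact that every open subset of a compact metric space is $\sigma$-compact: write $U = \bigcup_n \mathrm{int}(K_n)$ with $K_n$ an increasing sequence of compacts contained in $U$. Since $\supp(\nu)$ is compact, $\supp(\nu) \subset U$ if and only if $\supp(\nu) \subset K_n$ for some $n$, if and only if $\nu(W \setminus K_n) = 0$ for some $n$ (using that any open set of $\nu$-measure zero is disjoint from $\supp(\nu)$). Each set $\{\nu : \nu(W \setminus K_n) = 0\}$ is closed because $\nu \mapsto \nu(W \setminus K_n)$ is lower semi-continuous, so the preimage of $[U]$ is an $F_\sigma$ set.

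For the second claim, I would write
\[
\{x \in W : x \in \supp(\rho_x)\} = \bigcap_{n=1}^\infty \{x \in W : \rho_x(\Ball(x, 1/n)) > 0\},
\]
so that it suffices to show that for each $n$, the map $x \mapsto \rho_x(\Ball(x, 1/n))$ is Borel. The natural approach is to prove that the two-variable map $\Theta_n \colon W \times M(W) \to [0,1]$, $\Theta_n(x, \nu) := \nu(\Ball(x, 1/n))$, is Borel; composing with the Borel-measurable assignment $x \mapsto (x, \rho_x) \colon W \to W \times M(W)$ then gives the result. To establish Borelness of $\Theta_n$, approximate the indicator of the open set $\{(x,y) \in W \times W : d(x,y) < 1/n\}$ from below by the jointly continuous cut-offs $h_k(x, y) := \min\bigl(1,\, k \cdot \max(0, 1/n - d(x, y))\bigr)$. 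For each fixed $k$, I claim that $(x, \nu) \mapsto \int_W h_k(x, y)\, d\nu(y)$ is jointly continuous on $W \times M(W)$; this follows by splitting the difference at $(x_0,\nu_0)$ and using uniform continuity of $h_k$ on the compact product $W \times W$ to control the $x$-variable, together with the definition of the weak$^\ast$ topology applied to the continuous function $h_k(x_0, \cdot)$ to control the $\nu$-variable. By monotone convergence, $\Theta_n$ is the pointwise increasing limit of these continuous functions, hence lower semi-continuous, hence Borel.

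The genuine obstacle lies in the second part, where the integrand $\1_{\Ball(x,1/n)}$ depends on the parameter $x$ both through the set being measured and through the measure $\rho_x$. This joint dependence prevents one from invoking a one-variable Borel measurability criterion, and the lower semi-continuous approximation scheme above is precisely what is needed to resolve it. Every other step reduces to a direct application of the Portmanteau theorem and the description of the Vietoris topology on $\F(W)$.
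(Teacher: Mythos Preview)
Your argument is correct. The paper does not give its own proof of this lemma; it explicitly omits the proof and refers the reader to \cite[Lemma~3.8]{kmrr2}. Your route via the Vietoris subbase for the first bullet and the lower semi-continuous approximation $\Theta_n(x,\nu)=\sup_k\int_W h_k(x,y)\,d\nu(y)$ for the second bullet is a standard and clean way to handle both claims; one small remark is that you do not actually need to isolate a \emph{countable} subbase, since the collection of subsets of $\F(W)$ whose $\supp$-preimage is Borel is already a $\sigma$-algebra containing all $\langle U\rangle$ and $[U]$, and hence (using second countability of $\F(W)$) the full Borel $\sigma$-algebra.
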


\begin{lemma}\label{lem_supp_2}{\cite[Lemma 3.9]{kmrr2}}
The disintegration $z \mapsto \eta_z$ satisfies that the subset \\
$\{ x \in X : x\in \supp(\eta_x)\}$ of $X$ is Borel measurable and 
$$\mu(\{ x \in X : x\in \supp(\eta_x)\}) =1.$$
\end{lemma}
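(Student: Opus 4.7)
The plan is to combine a standard second-countability argument with the defining properties of the disintegration. I read $\eta_x$ as $\eta_{\pi(x)}$, as must be intended, since the fixed disintegration $z \mapsto \eta_z$ is indexed by $Z$, not $X$. The measurability half is then immediate: by definition $z \mapsto \eta_z$ from $Z$ to $M(X)$ is Borel measurable, $\pi$ is continuous, hence $x \mapsto \eta_{\pi(x)}$ is Borel measurable, and the second bullet of \cref{lem_supp} produces a Borel set $\{x \in X : x \in \supp(\eta_{\pi(x)})\}$.

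For the full-measure claim, I would fix a countable basis $(U_n)_{n\in\N}$ for the topology of $X$ and, for each $n$, set
$$C_n := \{z \in Z : \eta_z(U_n) = 0\} \quad \text{and} \quad A_n := U_n \cap \pi^{-1}(C_n),$$
both of which are Borel because $z \mapsto \eta_z(U_n)$ is Borel measurable. The key step is to show $\mu(A_n) = 0$ for every $n$. Using $\mu = \int_Z \eta_z \d m(z)$ I write $\mu(A_n) = \int_Z \eta_z(A_n) \d m(z)$ and bound the integrand pointwise. If $z \in C_n$, then $\eta_z(A_n) \leq \eta_z(U_n) = 0$. If $z \notin C_n$, then for $m$-almost every such $z$ the measure $\eta_z$ is concentrated on the fiber $\pi^{-1}(\{z\})$, which is disjoint from $A_n \subset \pi^{-1}(C_n)$, so again $\eta_z(A_n) = 0$.

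Setting $A := \bigcup_{n\in\N} A_n$ gives $\mu(A) = 0$, and it remains to verify $\{x : x \notin \supp(\eta_{\pi(x)})\} \subset A$: if $x \notin \supp(\eta_{\pi(x)})$, pick an open $V$ containing $x$ with $\eta_{\pi(x)}(V) = 0$, then choose $n$ with $x \in U_n \subset V$ via the basis property, and monotonicity yields $\eta_{\pi(x)}(U_n) = 0$, so $x \in A_n$. Thus $\mu(\{x : x \in \supp(\eta_{\pi(x)})\}) = 1$. The argument uses only second countability and the defining properties of disintegration, so I expect no real obstacle; the only mild subtleties are the implicit identification $\eta_x = \eta_{\pi(x)}$ and making sure the $m$-null set coming from "$\eta_z$ is concentrated on $\pi^{-1}(\{z\})$" is absorbed into the countable union of null sets, which is harmless.
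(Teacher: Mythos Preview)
Your proof is correct. The paper does not supply its own argument for this lemma, instead citing \cite[Lemma 3.9]{kmrr2} and omitting the proof; your countable-basis argument combined with the fiber-concentration property of the disintegration is the standard route and is exactly what one expects to find in the cited reference.
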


Using those, we can now prove the following proposition, which 
is a variant of \cite[Proposition 3.10]{kmrr2}.

\begin{proposition}\label{L}
There exists a sequence $\delta_j\to0$ such that for 
$\mu$-almost every $x\in X$ there exists 
$w\in K$ with $\pi(x) = w H$ such that for any open neighborhood $U$ of $x$, we have 
\begin{equation}\label{L eq}
    \lim_{j\to\infty}\frac{m_K(\{k\in K\colon \eta_{kH}(U)>0\}\cap \Ball_K(w,\delta_j))}
    {m_K(\Ball_K(w,\delta_j))}
    = 1,
\end{equation}
where $\Ball_K(w,\delta_j)$ denotes the ball centered at $w\in K$ 
and with radius $\delta_j$ in $K$ with respect to the fixed metric $d_K$.
\end{proposition}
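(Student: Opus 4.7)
The plan is to adapt the argument of \cite[Proposition 3.10]{kmrr2} to the non-abelian setting, by transferring everything to the ambient compact group $K$ and invoking a Lebesgue-differentiation statement for $(K,d_K,m_K)$. First, by \cref{lem_supp_2} applied to the disintegration over $\pi$, the set $X_1=\{x\in X\colon x\in\supp(\eta_{\pi(x)})\}$ has full $\mu$-measure, and for each $x\in X_1$ every open neighborhood $U$ of $x$ satisfies $\eta_{\pi(x)}(U)>0$. Fix a countable basis $\{U_n\}_{n\in\N}$ of the topology of $X$ and define
$$A_n:=\{k\in K\colon \eta_{kH}(U_n)>0\},$$
which is a Borel subset of $K$ by \cref{lem_supp} together with the Borel measurability of $k\mapsto\eta_{kH}$. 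The key observation is that if $x\in X_1$ and $x\in U_n$, then any $w\in K$ with $\pi(x)=wH$ satisfies $w\in A_n$; moreover, whenever $U_n\subset U$ we have $A_n\subset\{k\in K\colon\eta_{kH}(U)>0\}$ by monotonicity, so a density-$1$ statement at $w$ for $A_n$ immediately implies one for $\{k\colon\eta_{kH}(U)>0\}$.

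Next, apply a Lebesgue differentiation theorem on the compact metric measure space $(K,d_K,m_K)$: bi-invariance of $d_K$ forces $m_K(B_K(k,r))$ to depend only on $r$, which in the doubling regime produces a Vitali covering property, and more generally one can use a refining sequence of finite Borel partitions of $K$ by translates of a shrinking neighborhood base at the identity and invoke the martingale convergence theorem to control ball averages. Either way, one obtains a sequence $\delta_j\to 0$ and a set $K_0\subset K$ with $m_K(K_0)=1$ such that for every $n\in\N$ and every $k\in K_0\cap A_n$,
$$\lim_{j\to\infty}\frac{m_K(A_n\cap\Ball_K(k,\delta_j))}{m_K(\Ball_K(k,\delta_j))}=1.$$
This is the single place where the choice of $\delta_j$ is made, and since the family $\{A_n\}_{n\in\N}$ is countable, the sequence can be taken independent of $n$ after a diagonal argument.

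Finally, push $K_0$ back down to $X$: since $pm_K=m$ we have $m(p(K_0))=1$, and since $\pi\mu=m$ the set $X_0:=X_1\cap \pi^{-1}(p(K_0))$ has $\mu(X_0)=1$. For each $x\in X_0$, choose a lift $w\in K_0$ with $\pi(x)=wH$. Given any open neighborhood $U$ of $x$, pick $n$ with $x\in U_n\subset U$; by the support property, $\eta_{wH}(U_n)>0$, so $w\in A_n\cap K_0$, and the density statement above applied to $A_n$ yields
$$\lim_{j\to\infty}\frac{m_K(\{k\colon\eta_{kH}(U)>0\}\cap\Ball_K(w,\delta_j))}{m_K(\Ball_K(w,\delta_j))}\geq\lim_{j\to\infty}\frac{m_K(A_n\cap\Ball_K(w,\delta_j))}{m_K(\Ball_K(w,\delta_j))}=1,$$
which is exactly \eqref{L eq}. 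The main obstacle in this plan is the middle step: establishing a usable form of the Lebesgue differentiation theorem on $(K,d_K,m_K)$ for arbitrary compact metric groups. Bi-invariance of $d_K$ gives strong symmetry of the measure of balls, but one still has to make sure that the ball averages can be compared to conditional expectations over refining partitions, which is the classical geometric-measure-theoretic part of the argument.
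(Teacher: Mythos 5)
Your proposal takes a genuinely different route from the paper, and the difference exposes a real gap in the middle step.

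You reduce the claim, via a countable basis $\{U_n\}$, to a density-point statement at $m_K$-a.e.\ $w$ for each of the countably many fixed Borel sets $A_n=\{k\colon\eta_{kH}(U_n)>0\}$, and then you invoke a Lebesgue differentiation theorem on the metric measure space $(K,d_K,m_K)$. The surrounding steps (full measure of $X_1$ by \cref{lem_supp_2}, measurability of the $A_n$, lifting along $p$ and $\pi$, and the monotonicity $A_n\subset\{k\colon\eta_{kH}(U)>0\}$ when $U_n\subset U$) are all fine. But the Lebesgue differentiation theorem for arbitrary Borel sets is precisely what is not available here, and you correctly flag it as the ``main obstacle'' without resolving it. Bi-invariance of $d_K$ does make $m_K(\Ball_K(k,r))$ depend only on $r$, but this by itself does not give a doubling condition; a general compact metric group $K$ with a compatible bi-invariant metric need not be doubling, and the Vitali covering argument then breaks down. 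Your fallback suggestion --- refining partitions by translates of a shrinking neighborhood base and applying martingale convergence --- also does not go through in this generality: translates of a fixed neighborhood of $e_K$ tile $K$ only when that neighborhood is a coset of a subgroup (as in the profinite case), and even where a refining chain of partitions exists, passing from partition-cell averages to ball averages requires exactly the maximal-function/covering estimate that the doubling hypothesis is meant to supply. Since $K$ is whatever compact group arises from the Kronecker factor, we have no control over its local geometry, and this step cannot be left as an appeal to ``a usable form of the Lebesgue differentiation theorem.''

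The paper avoids this problem entirely by never differentiating a fixed Borel set. Instead, it applies Lusin's theorem to the support map $F(k)=\supp(\eta_{kH})$, obtaining closed sets $K_j$ with $m_K(K_j)>1-2^{-j}$ on which $F$ is uniformly continuous, and it is this uniform continuity that produces the scales $\delta_j$. The density estimate needed is only for the set $K_j$ at its own matched scale $\delta_j$, and it is obtained by a Chebyshev/averaging argument: the function $w\mapsto\chi_j(w)$ (essentially the density of $K_j$ in $\Ball_K(w,\delta_j)$, smoothed by a Urysohn function $f_j$) has integral $m_K(K_j)$, so outside a set of measure $\lesssim j/2^j$ it exceeds $1-1/j$. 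A Borel--Cantelli argument then upgrades this to an a.e.\ statement. Because the ``bad set'' $K\setminus K_j$ has measure $2^{-j}$, summably small relative to the allowed error $1/j$, no covering lemma or differentiation theorem is needed at all. So the two approaches are structurally different: yours needs a differentiation theorem for arbitrary Borel sets on $(K,d_K,m_K)$, which is not established and likely false in this generality; the paper's needs only a Chebyshev bound for the specially chosen sets $K_j$ at scales $\delta_j$ correlated with them. To repair your proof you would either have to prove a Lebesgue differentiation theorem under the hypotheses at hand (which I do not think is possible without additional assumptions on $K$), or abandon the countable-basis reduction and follow the Lusin-plus-Chebyshev route.
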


\begin{proof}
Let $F: K\to \F(X)$ given by $F(k) = \supp(\eta_{kH})$. The natural
projection $p:K\to Z$
is continuous, hence Borel measurable, 
the map $z\mapsto \eta_z$ is Borel measurable, 
as $(\eta_z)_{z\in Z}$ is a 
disintegration and by \cref{lem_supp}, the map
$\nu\mapsto\supp(\nu)$ is also Borel measurable, thus, 
we obtain that their composition $F$ is also Borel measurable.
By Lusin's theorem \cite[Theorem 12.8]{Ali-Bor},
for any $j\in\N$ there exists a closed $K_j\subset K$ with 
\begin{equation}\label{K_j measure}
    m_K(K_j)>1-2^{-j}
\end{equation}
such that $F|_{K_j}$ is continuous. For $j\in\N$, using the fact that
$K$, and so $K_j$, is compact, we obtain that $F|_{K_j}$
is uniformly continuous. Therefore, 
for any $j\in\N$, there exists some $\delta_j>0$ such that for any 
$k_1,k_2\in K_j$ we have
$$d_K(k_1,k_2)\leq \delta_j \Longrightarrow \Dist(F(k_1),F(k_2))
<\frac{1}{j}.$$
\par
Fix $j \in \N$. Then by the invariance of $m_K$, there is 
$c_j>0$ such that 
for all $k \in K$, $m_K(\Ball_K(k, \delta_j)) = c_j$.
The regularity of the measure $m_K$ implies that
there is a compact set $C_j \subset \Ball_K(e_K, \delta_j)$ such that 
$m_K(C_j) \geq c_j - \frac{c_j}{2^j}$.
Now, by Urysohn's lemma, there is a continuous function 
$f_j: K \to [0, 1]$ such that 
$f_j = 1$ on $C_j$ and $f_j =0 $ outside $\Ball_K(e_K, \delta_j)$. 

Consider the set 
$$W_j = \bigg\{k \in K_j : \int_{K_j} f_j (w k^{-1}) \d m_K(w) 
\geq \bigg(1-\frac{1}{j} \bigg) \int_K f_j (w ) \d m_K(w)\bigg\}.$$
Note that 
$$\int_K f_j (w) \d m_K(w) \geq m_K(C_j) 
\geq c_j - \frac{c_j}{2^j} >0.$$
Then we can consider the function 
\begin{equation*}
\chi_j : K \to [0,1], \: \:  \chi_j (k) 
=
\frac{\int_{K_j} f_j (w k^{-1}) \d m_K(w)}{\int_K f_j (w) \d m_K(w)}
\end{equation*}
and moreover, we let 
$$A_j =\bigg \{ k \in K : \chi_j (k) \geq 1 - \frac{1}{j} \bigg\}.$$
Then we see that 
\begin{equation}\label{W_j prop}
    W_j = K_j\cap A_j.
\end{equation}
We will show that the set $W_j$ is closed.
Using the dominated convergence theorem and the fact that $f_j$ is continuous, 
one can show that if $(k_{\ell})_{\ell \in \N} $
is a sequence in $K$ and $k_{\ell} \to k$, then 
$\int_{K_j} f_j (w k_{\ell}^{-1}) \d m_K(w) 
\to \int_{K_j} f_j (w k^{-1}) \d m_K(w)$, and this proves the continuity of $\chi_j$. 
As a result, the set $A_j$ is a closed subset of $K$, and since $K_j$ is also closed, it follows 
that $W_j$ is closed.
\par
Now, using Fubini's theorem and the invariance of $m_K$ we deduce that 
\begin{align*}
    \int_K \chi_j(k) \d m_K(k) 
    & = \frac{1}{\int_K f_j (w) \d m_K(w)}\int_{K}\int_K 
    f_j (w k^{-1}) \1_{K_j}(w) \d m_K(k) \d m_K(w) \notag \\
    & = \frac{1}{\int_K f_j (w) \d m_K(w)}
    \int_{K}  \1_{K_j}(w) \int_K 
    f_j (k)  \d m_K(k) \d m_K(w) \notag \\
    & = m_K(K_j) > 1-\frac{1}{2^j} \notag
\end{align*}
and then we have that 
$$1-\frac{1}{2^j}< \int_{A_j} \chi_j(k) \d m_K(k) + \int_{A_j^\text{c}} \chi_j(k) \d m_K(k)
\leq m_K(A_j) + \bigg(1-\frac{1}{j}\bigg)m_K(A_j^\text{c})
= 1 - \frac{1}{j} + \frac{m_K(A_j)}{j},$$
which gives that
\begin{equation}\label{A_j measure}
    m_K(A_j)>1-\frac{j}{2^j}.
\end{equation}
Combining \eqref{K_j measure}, \eqref{W_j prop} and \eqref{A_j measure}, we obtain that
$$\sum_{j\in\N} m_K(K\setminus W_j)
= \sum_{j\in\N} m_K(K\setminus(K_j\cap A_j))
< \sum_{j\in\N} \frac{1+j}{2^j}
< \infty.$$
\par
Let $W = \bigcup_{J\in\N}\bigcap_{j\geq J} W_j$. It follows by the 
Borel-Cantelli lemma, using the last equation above, that $m_K(W) = 1$.
Now let $L:=\{x\in X\colon x\in\supp(\eta_{\pi(x)})\}\cap \pi^{-1}(p(W))$.
Observe that $p(W)= \bigcup_{J\in\N} p\big(\bigcap_{j\geq J} W_j\big)$.
For each $J \in \N$, $\bigcap_{j\geq J} W_j$ is a closed subset of $K$, 
thus it is compact and since $p$ is continuous, we get that
$p\big(\bigcap_{j\geq J} W_j\big)$ is also compact, thus it is Borel measurable.
As a result, we get that $p(W)$ is indeed a Borel subset of $Z$. In addition, 
$p^{-1}(p(W))\supset W$ and since $m_K(W)=1$ we have that 
$m_K(p^{-1}(p(W))=1$. Therefore, $m(p(W))=1$ and hence 
$\mu(\pi^{-1}(p(W)))=1$.
Then, in view of \cref{lem_supp_2}, it follows that 
$L$ is a Borel subset of $X$ and $\mu(L)=1$. 

We now show that elements of $L$ 
satisfy \eqref{L eq} and this will conclude the proof.
Let $x\in L= \{x\in X\colon x\in\supp(\eta_{\pi(x)})\}
\cap \pi^{-1}(p(W))$. 
Then $x\in \pi^{-1}(p(W))$ so there is $w\in W$ such that 
$\pi(x)=p(w)=wH$. 
Let $U$ be an open neighborhood of $x$. 
Then we have that there exists $J\in\N$ such that for any 
$j\geq J$, $w\in W_j$ and 
$\Ball(x,\tfrac{1}{j})\subset U$. 
\par
We now claim that 
$$\Ball_K(w,\delta_j)\cap K_j\subset \{k\in K\colon \eta_{kH}(U) >0\}.$$
To prove this, we let $w'\in\Ball_K(w,\delta_j)\cap K_j$. 
Then $d_K(w',w)<\delta_j$ and so, 
$\Dist(F(w'),F(w)) < \tfrac{1}{j}$. Notice now that 
$F(w) = \supp(\eta_{wH}) = \supp(\eta_{\pi(x)})$, and so, $x\in F(w)$,
as $x\in L$.
Then, by the definition of the Hausdorff metric,
there exists $x'\in F(w')$ with $d_X(x,x')<\tfrac{1}{j}$, 
and so, $x'\in U$, which, combined
with the fact that $x'\in F (w')$, yields 
$U\cap F(w')\neq\emptyset$. It follows that 
$\eta_{w'H}(U)>0$.
\par
It follows from the above claim that 
\begin{equation}\label{no_more_troubles_please}
\frac{m_K(\{k\in K\colon \eta_{kH}(U)>0\}\cap
\Ball_K(w,\delta_j))}{m_K(\Ball_K(w,\delta_j))}
\geq 
\frac{m_K (\Ball_K(w,\delta_j)\cap K_j)}{m_K(\Ball_K(w,\delta_j))}
=
\frac{\int_K \1_{\Ball_K(w,\delta_j)}(u) \1_{K_j}(u) \d m_K(u)}
{m_K(\Ball_K(e_K,\delta_j))} 
\end{equation}

The denominator in the right-most term in
\eqref{no_more_troubles_please} is 
smaller or equal to 
$\frac{2^j}{2^j -1} m_K(C_j)$, which then is smaller or equal to
$\frac{2^j}{2^j -1} \int_K f_j(u) \d m_K(u)$,
and therefore the expression in 
\eqref{no_more_troubles_please}
is greater or equal to 
\begin{equation}\label{no_more_troubles_please_1}
\frac{\int_K \1_{\Ball_K(w,\delta_j)}(u) \1_{K_j}(u) \d m_K(u)}
{\int_K f_j(u) \d m_K(u)}  \bigg(1-\frac{1}{2^j}\bigg).  
\end{equation}
Observe that for all $u \in K$,
$\1_{\Ball_K(w,\delta_j)}(uw)= \1_{\Ball_K(e_K,\delta_j)}(u)$, 
so we have that 
\begin{align*}
\int_K \1_{\Ball_K(w,\delta_j)}(u) \1_{K_j}(u) \d m_K(u) 
&= 
\int_K \1_{\Ball_K(w,\delta_j)}(uw) \1_{K_j}(uw) \d m_K(u) \\
&=
\int_K \1_{\Ball_K(e_K,\delta_j)}(u) \1_{K_j}(uw) \d m_K(u)
\geq 
\int_K f_j(u) \1_{K_j}(uw) \d m_K(u) \\
& = 
\int_K f_j(uw^{-1}) \1_{K_j}(u) \d m_K(u)
=
\int_{K_j} f_j(uw^{-1}) \d m_K(u).
\end{align*}
Combining the last equation with \eqref{no_more_troubles_please} 
and \eqref{no_more_troubles_please_1}, we get that 
\begin{align*}
\frac{m_K(\{k\in K\colon \eta_{kH}(U)>0\}\cap\Ball_K(w,\delta_j))}
{m_K(\Ball_K(w,\delta_j))}
&\geq    
\frac{\int_{K_j} f_j(uw^{-1}) \d m_K(u)}{\int_K f_j(u) \d m_K(u)} 
\bigg(1-\frac{1}{2^j}\bigg) \\
&\geq 
\bigg(1- \frac{1}{j}\bigg) 
\bigg(1-\frac{1}{2^j}\bigg),
\end{align*}
where the least inequality is due to the fact that 
$w \in W_j$. Then, taking the limit as $j \to \infty$
we obtain that 
$$\lim_{j\to\infty}\frac{m_K(\{k\in K\colon 
\eta_{kH}(U)>0\}\cap \Ball_K(w,\delta_j))}
    {m_K(\Ball_K(w,\delta_j))}
    = 1$$
and this concludes the proof.
\end{proof}

We are now ready to prove \cref{lambda.supp}.

\begin{proof}[Proof of \cref{lambda.supp}]
Let $S=\{(x_1,x_2)\in X\times X\colon 
(x_1,x_2)\in\supp(\lambda_{(x_1,x_2)})\}$.
By \cref{lem_supp}, $S$ is a Borel subset of 
$X\times X$.
Consider a sequence $\delta_j\to0$ such that 
\cref{L} is satisfied, and let 
$L\subset X$ be the set of $x\in X$ that satisfy \eqref{L eq}. 
By \cref{L}, $\mu(L)=1$.
Following the argument in \cite[Proposition 3.11]{kmrr2} 
we will show that $\sigma(L\times L)=1$ and $L\times L\subset S$. 
Consequently, we will have that $\sigma(S)=1$, 
concluding the proof.
\par
We start by showing that $\sigma(L\times L)=1$. 
We write $L\times L = (L\times X)\cap(X\times
L)$ and so it is enough to show that both sets in this 
intersection have full measure $\sigma$.
By \cref{s_props} (i), we have that 
$\sigma(L\times X)=\pi_1\sigma(L)=\mu(L)=1$.
By \cref{s_props} (ii), the measure $\pi_2\sigma$
is absolutely continuous with respect to $\mu$, and since $\mu(L)=1$,
it follows that $\sigma(X\times L)=\pi_2\sigma(L)=1$.
\par
To conclude the proof, we show that $L\times L\subset S$.
Let $(x_1,x_2)\in L\times L$.  
To show that $(x_1,x_2)\in S$, it is enough to show that
for all open neighborhoods $U_1, U_2$ of $x_1, x_2$
we have 
$\lambda_{(x_1,x_2)}(U_1\times U_2)>0$.

Let $U_1,U_2$ be open neighborhoods of
$x_1,x_2$ respectively. 
By writing $\lambda_{(x_1,x_2)}= \int_K 
\eta_{kH}\times\eta_{kk_1^{-1} k_2 H} \d m_K(k)$, 
where $k_1,k_2\in K$ are such that $\pi(x_1)=k_1H$, $\pi(x_2)=k_2H$,
we see that it suffices to show that the set 
$W=W(k_1,k_2):=\{k\in K: \eta_{kH}(U_1)>0 \text{ and } 
\eta_{kk_1^{-1}k_2H }(U_2)>0\}$
has positive measure $m_K$, for some choice of 
the $k_1,k_2$ as above.
By \cref{L}, we can choose the elements $k_1, k_2 \in K$ such that
\begin{equation}\label{eq_ball_1}
\frac{m_K(\{k\in K:\eta_{kH}(U_1)>0\}\cap\Ball_K(k_1,\delta))}
{m_K(\Ball_K(k_1,\delta))}
\geq \frac{3}{4}
\end{equation}
and 
\begin{equation}\label{eq_ball_2}
\frac{m_K(\{k\in K:\eta_{kH}(U_2)>0\}\cap\Ball_K(k_2,\delta))}
{m_K(\Ball_K(k_2,\delta))}
\geq \frac{3}{4},
\end{equation}
for some $\delta>0$.
Now using \eqref{eq_ball_2} along with the bi-invariance 
of both $d_K$ and $m_K$, we have that
\begin{align}\label{eq_ball_2'}
& \frac{m_K(\{k \in K:\eta_{kk_1^{-1}k_2 H}
(U_2)>0\}\cap\Ball_K(k_1,\delta))}{m_K(\Ball_K(k_1,\delta))}
\notag \\
& \hspace*{4.5cm} 
=
\frac{m_K(\{k \in K:\eta_{kH}(U_2)>0\}\cdot 
k_2^{-1}k_1\cap\Ball_K(k_2,\delta)\cdot k_2^{-1}k_1)}
{m_K(\Ball_K(k_2,\delta)\cdot k_2^{-1}k_1)} \notag \\
& \hspace*{4.5cm} 
= \frac{m_K(\{k \in K:\eta_{kH}(U_2)>0\}\cap\Ball_K(k_2,\delta))}
{m_K(\Ball_K(k_2,\delta))} \notag \\
& \hspace*{4.5cm} 
\geq \frac{3}{4}.
\end{align}
Combining \eqref{eq_ball_1} and \eqref{eq_ball_2'} yields 
$\frac{m_K(W)}{m_K(\Ball_K (k_1, \delta))} \geq\tfrac{1}{2}$. 
This implies that $m_K(W) >0$ and concludes the proof.
\end{proof}

It remains to show \cref{lambda.gen}. 
To this end, we need the following lemma,
which is the analog of \cite[Lemma 3.7]{kmrr2} in our setting.

\begin{lemma}\label{s.l}
For $\sigma$-almost every $(x_1,x_2)\in X\times X$,
we have $\lambda_{(a,x_1)}=\lambda_{(x_1,x_2)}$.
\end{lemma}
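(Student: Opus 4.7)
The plan is to prove this by unfolding the definitions of $\sigma$ and $\lambda_{(\cdot,\cdot)}$ and then applying a single change of variables in the compact group $K$, using the bi-invariance of the Haar measure $m_K$.

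First, I would fix an element $k_0 \in K$ with $\pi(a) = k_0 H$, so that, as in \eqref{alternative sigma_s},
\[
\sigma = \int_K \eta_{k k_0 H} \times \eta_{k^2 k_0 H} \d m_K(k).
\]
Since $\eta_z$ is supported on $\pi^{-1}(z)$ for $m$-almost every $z \in Z$, for $m_K$-almost every $k \in K$ the product measure $\eta_{k k_0 H} \times \eta_{k^2 k_0 H}$ concentrates on pairs $(x_1, x_2)$ satisfying $\pi(x_1) = k k_0 H$ and $\pi(x_2) = k^2 k_0 H$. Hence, by Fubini, it suffices to prove the following pointwise claim: for $m_K$-almost every $k \in K$ and every pair $(x_1, x_2)$ with $\pi(x_1) = k k_0 H$ and $\pi(x_2) = k^2 k_0 H$, the identity $\lambda_{(a, x_1)} = \lambda_{(x_1, x_2)}$ holds.

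Next, for such a pair $(x_1, x_2)$, I would directly compute both sides using the definition \eqref{def.l}:
\[
\lambda_{(a, x_1)} = \int_K \eta_{u k_0 H} \times \eta_{u k k_0 H} \d m_K(u), \qquad \lambda_{(x_1, x_2)} = \int_K \eta_{u k k_0 H} \times \eta_{u k^2 k_0 H} \d m_K(u).
\]
Applying the change of variable $u \mapsto u k^{-1}$ to the second integral, which is legitimate by the right-invariance of $m_K$, yields
\[
\lambda_{(x_1, x_2)} = \int_K \eta_{u k_0 H} \times \eta_{u k k_0 H} \d m_K(u) = \lambda_{(a, x_1)},
\]
which proves the pointwise claim and hence the lemma.

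There is no substantive obstacle here; the argument is essentially a single application of Haar invariance. The only care required is in the almost-everywhere bookkeeping: one must verify that the $m_K$-null set on which $k \mapsto \eta_{k k_0 H} \times \eta_{k^2 k_0 H}$ fails to be defined produces only a $\sigma$-null set of problematic $(x_1, x_2)$. This is handled exactly as in the justification of the well-definedness of $\sigma$ given immediately after its definition, using \cref{abs_c} to control the $m_{K^2}$-measure of the relevant exceptional sets through the squaring map $s_K$.
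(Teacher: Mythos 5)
Your proof is correct and essentially identical to the paper's: both arguments observe that for $\sigma$-a.e.\ $(x_1,x_2)$ the projections to $Z$ satisfy $\pi(x_1)=k\pi(a)$ and $\pi(x_2)=k\pi(x_1)$ for some $k\in K$ (your $k$ is the paper's $w$), and then conclude by the single right-translation change of variable $u\mapsto uk^{-1}$ justified by right-invariance of $m_K$. The only cosmetic difference is that you parametrize the a.e.\ statement through the defining integral of $\sigma$ over $K$ rather than stating it directly in terms of a $w\in K$; the substance and the bookkeeping via \cref{abs_c} are the same.
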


\begin{proof}
By the definition of $\sigma$ and
the fact that $z\mapsto\eta_z$ is 
a disintegration, it follows that for $\sigma$-almost 
every $(x_1,x_2)$, we have
$\pi(x_1)=w\pi(a)$ and $\pi(x_2)=w\pi(x_1)$, for some $w\in K$.
For any such $(x_1,x_2)$, using the right invariance of $m_K$, we have 
\begin{align*}
    \lambda_{(x_1,x_2)} 
    & = \int_{X\times X} \eta_{k\pi(x_1)}\times\eta_{k\pi(x_2)}\d m_K(k)
    = \int_{X\times X} \eta_{kw\pi(a)}\times\eta_{kw\pi(x_1)}\d m_K(k) \\
    & = \int_{X\times X} \eta_{k\pi(a)}\times\eta_{k\pi(x_1)}\d m_K(kw^{-1})
    = \int_{X\times X} \eta_{k\pi(a)}\times\eta_{k\pi(x_1)}\d m_K(k) 
    = \lambda_{(a,x_1)}.  
\end{align*}
This concludes the proof.
\end{proof}

We are now ready to prove \cref{lambda.gen}.

\begin{proof}[Proof of \cref{lambda.gen}]
Consider the measure $\nu_a := \delta_a \times \mu$, where $\delta_a$ 
denotes the Dirac mass at $a$. \\
\textbf{Claim.}
There exists some \Folner{} sequence $\Psi$ such that 
$$\nu_a(\{(x_0,x_1)\in X\times X:
(x_0,x_1)\in\gen(\lambda_{(x_0,x_1)},\Psi)\})=1.$$

From the definition of $\nu_a$, it is clear that for $\nu_a$-almost 
every $(x_0, x_1) \in X \times X$, $x_0=a$. Therefore, assuming the Claim
we have that 
$$\nu_a(\{(a,x_1)\in X\times X:
(a,x_1)\in\gen(\lambda_{(a,x_1)},\Psi)\})=1,$$
which, by the definition of $\nu_a$ implies that 
$$\mu(\{x_1\in X: (a,x_1)\in\gen(\lambda_{(a,x_1)},\Psi)\})=1.$$
Then, by \cref{s_props} (i), we have that $\pi_1 \sigma = \mu$, 
and therefore 
$$\sigma(\{(x_1,x_2)\in X\times X:
(a,x_1)\in\gen(\lambda_{(a,x_1)},\Psi)\})=1.$$ 
From \cref{s.l}, we know that $\lambda_{(a,x_1)} = \lambda_{(x_1,x_2)}$
for $\sigma$-almost every $(x_1,x_2)$, and consequently, 
$$\sigma(\{(x_1,x_2)\in X\times
X:(a,x_1)\in\gen(\lambda_{(x_1,x_2)},\Psi)\})=1,$$
which was to be proved. Now, to finish the proof of 
\cref{lambda.gen}, it only remains to prove the Claim.

\begin{proof}[Proof of Claim]
In this proof we follow the argument used in the proof of 
\cite[Theorem 7.6]{kmrr1}.
Apply \cref{mx_gen} for the ergodic decomposition
$(x_0,x_1)\mapsto\lambda_{(x_0,x_1)}$, to obtain a 
\Folner{} sequence $\Phi$ such that
\begin{equation}\label{gen_mm}
    (\mu\times\mu)(\{(x_0,x_1)\in X\times X:
    (x_0,x_1)\in\gen(\lambda_{(x_0,x_1)},\Phi)\})=1.
\end{equation}

Consider the map $X \to M(X)$, $s \mapsto \nu_s =\delta_s \times \mu$, where 
$\delta_s $ denotes the Dirac mass at $s$.
It is quite straightforward to see that 
$s\mapsto\nu_s$ is a continuous disintegration of $\mu\times\mu$, 
and moreover, satisfies 
\begin{equation}\label{nu_prop}
    (T_g\times T_g)\nu_s = \nu_{T_gs}
\end{equation}
for any $s\in X$.
By \eqref{gen_mm} and the fact that $s\mapsto\nu_s$ is a disintegration 
of $\mu\times\mu$, it follows that for $\mu$-almost every $s\in X$,
$\nu_s$-almost every $(x_0,x_1)\in X\times X$ is in
$\gen(\lambda_{(x_0,x_1)},\Phi)$. Fix $b\in\supp(\mu)$. Then 
\begin{equation}\label{gen1}
    \nu_b\text{-almost every } (x_0,x_1) \text{ is in } 
    \gen(\lambda_{(x_0,x_1)},\Phi).
\end{equation}
By \cref{supp-gen lemma}, 
there exists some sequence $(g_n)_{n\in\N}$ in $G$ such that
$T_{g_n}a\to b$, and now by continuity of the disintegration 
$s\mapsto\nu_s$, combined with \eqref{nu_prop}, it follows that
\begin{equation}\label{nu_conv}
    (T_{g_n}\times T_{g_n})\nu_a\to\nu_b.
\end{equation}
Now, let $\F=(F_k)_{k\in\N}$ be a dense subset of $C(X\times X)$
and for $k,N\in\N$, consider the sets 
$$A_{k,N}=\bigg\{(x_0,x_1)\in X\times X: 
\max_{1\leq j\leq k}\bigg|\frac{1}{|\Phi_N|}\sum_{g\in\Phi_N}
F_j((T_g\times T_g)(x_0,x_1)) - 
\int_{X\times X}F_j\d\lambda_{(x_0,x_1)}\bigg|\leq\frac{1}{k}\bigg\}.$$
Using \eqref{gen1} and the monotone convergence theorem, 
it follows that for any $k\in\N$ there exists some $N(k)\in\N$, 
such that
\begin{equation}\label{nu_ineq}
    \nu_b(A_{k,N(k)})\geq 1-2^{-k}.
\end{equation}
For $k,N\in\N$, we define 
$$B_{k,N}=\bigg\{(x_0,x_1)\in X\times X: 
d_{X\times X}((x_0,x_1),A_{k,N})<\frac{1}{k}\bigg\},$$
where $d_{X\times X}$ is the metric on $X\times X$. 
Then for $k$ sufficiently large, we have
\begin{equation}\label{AB}
    \max_{1\leq j\leq k}
    \bigg|\frac{1}{|\Phi_{N(k)}|}\sum_{g\in\Phi_{N(k)}}F_j((T_g\times
    T_g)(x_0,x_1))-\int_{X\times X}F_j\d\lambda_{(x_0,x_1)}\bigg|
    \leq \frac{2}{k},
\end{equation}
for any $(x_0,x_1)\in B_{k,N(k)}$.
The sets $A_{k,N}$ are open, while the sets $B_{k,N}$ are closed 
subsets of $X\times X$, and also $A_{k,N(k)}\subset B_{k,N(k)}$, 
so by Urysohn's lemma, we can find, 
for all $k\in\N$, continuous functions $f_k:X\times X\to[0,1]$
such that 
$$f_k|_{A_{k,N(k)}}=1 \text{\quad and\quad} 
f_k|_{(X\times X)\setminus B_{k,N(k)}}=0.$$
By \eqref{gen1}, for each $k\in\N$, there exists $n(k)\in\N$ 
such that
$$\bigg|\int_{X\times X}(T_{g_{n(k)}}\times T_{g_{n(k)}})f_k\d\nu_a
- \int_{X\times X}f_k\d\nu_b\bigg| \leq 2^{-k}.$$
Let $(h_k)_{k\in\N}$ be the subsequence of $(g_n)_{n\in\N}$
defined by $h_k=g_{n(k)}$, for $k\in\N$. Then, by the equation above, we have
\begin{align*}
    \nu_a\big((T_{h_k}\times T_{h_k})^{-1}B_{k,N(k)}\big) &\geq
    \int_{X\times X}(T_{h_k}\times T_{h_k})f_k\d\nu_a 
    \geq \int_{X\times X}f_k\d\nu_b - 2^{-k} \\
    &\geq \nu_b(A_{k,N(k)}) - 2^{-k} 
    \geq 1-2^{-k+1} \text{\quad (by \eqref{nu_ineq})},
\end{align*}
for any $k\in\N$.
Therefore, it holds that
$$\sum_{k\geq1}\nu_a\big((T_{h_k}\times T_{h_k})^{-1}B_{k,N(k)}\big)
=\infty,$$
and then, by the Borel-Cantelli lemma, it follows that 
$\nu_a$-almost every $(x_0,x_1)\in\supp(\nu_a)$ belong to all, 
but finitely many, sets
$(T_{h_k}\times T_{h_k})^{-1}B_{k,N(k)}$. Then by $\eqref{AB}$,
it follows that for $\nu_a$-almost every $(x_0,x_1)\in X\times X$
and $k$ sufficiently large, we have
$$\max_{1\leq j\leq k}\bigg|\frac{1}{|\Psi_k|}\sum_{g\in\Psi_k}
F_j((T_g\times T_g)(x_0,x_1))-
\int_{X\times X} F_j\d\lambda_{(T_{h_k}\times T_{h_k})(x_0,x_1)}\bigg|
\leq \frac{2}{k},$$
where $\Psi$ is the \Folner{} sequence defined by 
$\Psi_k = \Phi_{N(k)}h_k$, for $k\in\N$.
Using \eqref{l.inv}, the above equation becomes 
$$\max_{1\leq j\leq k}\bigg|\frac{1}{|\Psi_k|}\sum_{g\in\Psi_k}
F_j((T_g\times T_g)(x_0,x_1))-
\int_{X\times X} F_j\d\lambda_{(x_0,x_1)}\bigg| 
\leq \frac{2}{k}.$$
Sending $k\to\infty$, we have shown that
$$\lim_{k\to\infty}\frac{1}{|\Psi_k|}\sum_{g\in\Psi_k}
F((T_g\times T_g)(x_0,x_1))=
\int_{X\times X} F\d\lambda_{(x_0,x_1)}$$ 
holds for $\nu_a$-almost every $(x_0,x_1)\in X\times X$ and 
for any $F\in\F$. An approximation argument concludes the proof 
of the Claim.
\renewcommand\qedsymbol{$\triangle$}
\end{proof}
The proof of the theorem is complete.
\end{proof}

\subsection{The proof of \cref{dr3}}

We are now ready to prove \cref{dr3}.

Let $G$ be a square absolutely continuous group, 
let $\xmt$ be an ergodic $G$-system admitting a continuous 
factor map to its Kronecker factor, and
let $a\in \gen(\mu,\Phi)$ for some \Folner{} sequence
$\Phi$. 
Let also $E$ be a clopen subset of $X$ with $\mu(E)>0$.  
Consider the measure $\sigma$ given in \eqref{meas-s}.
By Theorems \ref{lambda.supp} and \ref{lambda.gen}, we have that 
there is a \Folner{} sequence $\Psi$ such that 
\begin{equation}\label{eq_1_1}
\sigma(\{ (x_1, x_2) : 
(a,x_1)\in\gen(\lambda_{(x_1,x_2)},\Psi)
\text{ and }
(x_1,x_2)\in\supp(\lambda_{(x_1,x_2)})\})
= 1.
\end{equation}
On the other hand, by 
\cref{sigma.thm} we have that 
\begin{equation}\label{eq_1_2}
    \sigma \bigg( E \times \bigcup_{t\in G} T_t ^{-1} E\bigg) >0.
\end{equation}
Combining \eqref{eq_1_1} and \eqref{eq_1_2} we get that there exists 
$(x_1, x_2) \in X \times X$ such that for the 
$T \times T$-invariant measure $\lambda := \lambda_{(x_1, x_2)}$ we 
have that $(a,x_1)\in\gen(\lambda, \Psi)$, 
$(x_1,x_2)\in\supp(\lambda)$ and also 
$(x_1, x_2) \in  E \times \big( \bigcup_{t\in G} T_t ^{-1} E\big)$.
Hence, there is $t\in G$ such that 
$x_1 \in E$ and $x_2 \in T_t ^{-1} E$. 
Finally, applying 
\cref{supp-gen lemma} for 
$Y=X\times X$, $S=T\times T$, $y=(a,x_1)$ 
and $w=(x_1,x_2)$, $\nu = \lambda$ and for the \Folner{} 
sequence $\Psi$ we get that there is an infinite sequence 
$(g_n)_{n\in N}$ in $G$ such that 
$(T_{g_n} \times T_{g_n})(a,x_1) \to (x_1,x_2)$ in $X\times X$.
Therefore, $(a,x_1,x_2)\in X^3$ forms an \Erdos{} progression, 
and this concludes the proof of \cref{dr3}.

\section{Proof of the corollaries of \cref{mt1}}\label{corollaries proof section}

We start by showing \cref{tf fg nil are square absolutely continuous},
and then we will prove Corollaries \ref{tf fg nil cor} and \ref{fg nil cor}.
We split the proof of \cref{tf fg nil are square absolutely continuous}
into the following three lemmas:

\begin{lemma}\label{tf fg nil lemma 1}
Let $G$ be an amenable group, let $M>0$ and let $\phi:G\to G$ be a map such
that for every $g\in G$, $|\phi^{-1}(\{g\})|\leq M$.
Suppose that there exist two \Folner{} sequences $\Phi$ and $\Psi$ in $G$ 
and some $\eta>0$ such that for any $N\in\N$, we have that
$\phi(\Psi_N) \subset \Phi_N$ and 
$$\frac{|\phi(\Psi_N)|}{|\Phi_N|}\geq\eta.$$
Then $G$ is $\phi$-absolutely continuous.
\end{lemma}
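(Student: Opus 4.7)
The plan is to reduce the conclusion to a pointwise comparison of the two averages: I will show that for every $u\colon G\to[0,1]$ and every $N\in\N$,
\[
\frac{1}{|\Psi_N|}\sum_{g\in\Psi_N} u(\phi(g))
\;\leq\;
\frac{M}{\eta}\cdot\frac{1}{|\Phi_N|}\sum_{h\in\Phi_N} u(h),
\]
from which $\phi$-absolute continuity is immediate on taking $\limsup$ and choosing $\delta=\eta\epsilon/M$ in \cref{sac_def}. Note that the Følner property of $\Phi$ and $\Psi$ is not actually needed for this step; it appears only implicitly, in that $\Phi$ and $\Psi$ must serve as Følner sequences in order to witness $\phi$-absolute continuity.

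First, I would rewrite the left-hand sum by grouping terms according to the fibres of $\phi$:
\[
\sum_{g\in\Psi_N} u(\phi(g))
= \sum_{h\in\phi(\Psi_N)} u(h)\cdot\bigl|\phi^{-1}(\{h\})\cap\Psi_N\bigr|.
\]
The bounded-fibre hypothesis yields $|\phi^{-1}(\{h\})\cap\Psi_N|\leq M$ for every $h$, and the inclusion $\phi(\Psi_N)\subset\Phi_N$ combined with $u\geq 0$ lets me extend the outer sum from $\phi(\Psi_N)$ to all of $\Phi_N$, giving
\[
\sum_{g\in\Psi_N} u(\phi(g))\;\leq\; M\sum_{h\in\Phi_N} u(h).
\]

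Next, I relate $|\Psi_N|$ and $|\Phi_N|$ in order to pass from totals to averages. Since the restriction $\phi|_{\Psi_N}\colon\Psi_N\to\phi(\Psi_N)$ is surjective, one has $|\Psi_N|\geq|\phi(\Psi_N)|\geq \eta|\Phi_N|$, hence $|\Phi_N|\leq|\Psi_N|/\eta$. Dividing the displayed inequality by $|\Psi_N|$ and substituting this bound yields exactly the desired comparison of averages, and the lemma follows. I do not anticipate any real obstacle here: the hypotheses are tailored to make this counting estimate work, and the only subtlety is being careful that $u$ must be allowed to be nonnegative (which it is, since $u\colon G\to[0,1]$) in order to extend the summation over $\phi(\Psi_N)$ up to $\Phi_N$.
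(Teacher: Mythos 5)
Your proof is correct and is essentially the same argument as the paper's: both establish the bound
\[
\limsup_{N\to\infty}\frac{1}{|\Psi_N|}\sum_{g\in\Psi_N}u(\phi(g))
\leq \frac{M}{\eta}\limsup_{N\to\infty}\frac{1}{|\Phi_N|}\sum_{g\in\Phi_N}u(g)
\]
via the fibre-counting estimate $\sum_{g\in\Psi_N}u(\phi(g))\leq M\sum_{h\in\Phi_N}u(h)$ together with $|\Psi_N|\geq|\phi(\Psi_N)|\geq\eta|\Phi_N|$, and then take $\delta=\eta\epsilon/M$. The only difference is organizational: you prove the pointwise inequality between the $N$-th averages for every $N$ and then take $\limsup$ directly, whereas the paper first passes to a subsequence realizing the left-hand $\limsup$ before applying the same pointwise bound along that subsequence. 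Your version is marginally more streamlined, since the subsequence step is not needed once the pointwise inequality holds for all $N$.
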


\begin{lemma}\label{tf fg nil lemma 2}
Let $G$ be a torsion-free finitely generated nilpotent group. 
Then the squaring map $s_G$ on $G$ is injective.
\end{lemma}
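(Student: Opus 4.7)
The plan is to prove injectivity by induction on the nilpotency class $c$ of $G$, reducing at each step via the center. The base case $c=1$ is immediate: in a torsion-free abelian group, $g^2=h^2$ gives $(gh^{-1})^2=e_G$, which by torsion-freeness forces $gh^{-1}=e_G$, so $g=h$.

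For the inductive step, I would let $Z=Z(G)$ denote the center of $G$ and consider the quotient $G/Z$, which is finitely generated nilpotent of class $c-1$. Assuming $g^2=h^2$ in $G$, passing to $G/Z$ yields $(gZ)^2=(hZ)^2$; applying the inductive hypothesis (once torsion-freeness of $G/Z$ is confirmed) gives $gZ=hZ$, i.e., $h=gz$ for some $z\in Z$. Expanding $h^2=(gz)^2=g^2z^2$ (using centrality of $z$) and cancelling $g^2$ yields $z^2=e_G$, and torsion-freeness of $G$ then forces $z=e_G$, so $g=h$.

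The crucial ingredient, and the main obstacle, is verifying that $G/Z(G)$ is itself torsion-free so that the inductive hypothesis applies. This is a classical theorem in the theory of nilpotent groups: for a torsion-free nilpotent group, $Z(G)$ is an \emph{isolated} subgroup (that is, $g^n\in Z(G)$ for some $n\geq 1$ implies $g\in Z(G)$), equivalently each factor of the upper central series is torsion-free. I would invoke this fact from a standard reference such as \cite{Ka-Me-G}. As an alternative route that avoids this appeal, one could argue directly in the Mal'cev coordinate system already introduced in the paper: choosing the basis $(a_1,\dots,a_s)$ compatibly with the lower central series, the multiplication formulas arising from Hall--Petresco-type expansions force the squaring map to have the triangular polynomial form $t_i(g^2)=2t_i(g)+Q_i(t_1(g),\dots,t_{i-1}(g))$ for some polynomials $Q_i$; injectivity then follows by induction on $i$, starting from $t_1(g^2)=2t_1(g)$ and using that $\Z$ is torsion-free to solve $2t_i(g)=2t_i(h)$ at each stage.
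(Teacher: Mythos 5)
Your primary argument is correct but follows a genuinely different route from the paper; the "alternative route" you sketch at the end is in fact the paper's proof. The paper's proof is a one-liner: it cites formula \eqref{Mal'cev mult 1}, namely $t_i(x^2) = 2t_i(x) + p_i(t_1(x),\dots,t_{i-1}(x))$, together with the injectivity of the coordinate map $x\mapsto(t_1(x),\dots,t_s(x))$ (which holds precisely because $G$ is torsion-free), and concludes by the triangular structure exactly as you describe. This is the natural thing to do here since the Mal'cev machinery was already set up for \cref{tf fg nil lemma 3}. Your primary approach---induction on the nilpotency class via the center, using that $h=gz$ with $z$ central implies $h^2=g^2z^2$ and hence $z^2=e_G$---is an equally valid, more purely group-theoretic argument. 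Its one nontrivial ingredient is the classical fact that $Z(G)$ is isolated in a torsion-free nilpotent group, so that $G/Z(G)$ is again torsion-free (and finitely generated nilpotent of strictly smaller class); this is a standard theorem and your citation to a reference such as \cite{Ka-Me-G} is appropriate. In short: your route trades the coordinate computation for an appeal to the isolated-center theorem, while the paper's route reuses infrastructure already present. Both are sound.
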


\begin{lemma}\label{tf fg nil lemma 3}
Let $G$ be a torsion-free finitely generated nilpotent group. Then there exists some 
\Folner{} sequence $\Psi$ in $G$ and some $\eta>0$ such that for any $N\in\N$,
we have that $s_G(\Psi_N)\subset\Psi_{N+1}$ and
\begin{equation}\label{folner equation 1}
    \frac{|s_G(\Psi_N)|}{|\Psi_{N+1}|}=\eta.
\end{equation}
\end{lemma}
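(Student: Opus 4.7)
The plan is to construct $\Psi = (\Psi_N)_{N \in \N}$ as a sequence of geometrically-scaling ``boxes'' in Mal'cev coordinates. Because \cref{tf fg nil lemma 2} guarantees that $s_G$ is injective, I will have $|s_G(\Psi_N)| = |\Psi_N|$, so the required identity $|s_G(\Psi_N)|/|\Psi_{N+1}| = \eta$ reduces to the statement that the sizes $|\Psi_N|$ lie in geometric progression with ratio $\eta$, which will be automatic from the construction.

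For the setup, I will fix a Mal'cev basis $(a_1,\dots,a_s)$ of $G$ adapted to the lower central series, assign each $a_i$ its weight $w_i$ with $a_i \in G_{w_i-1}\setminus G_{w_i}$, and use the resulting coordinates $(t_1,\dots,t_s)$ to identify $G$ with $\Z^s$. From the polynomial calculus of Mal'cev coordinates (see \cite[Chapter 17.2]{Ka-Me-G}), the squaring map takes the form $t_i(x^2) = P_i(t_1(x),\dots,t_s(x))$ where $P_i$ is a polynomial of weighted degree at most $w_i$ (with $t_j$ given weight $w_j$). In particular, there is a constant $C_0 > 0$ such that $|P_i(t)| \leq C_0 M^{w_i}$ whenever $|t_j| \leq M^{w_j}$ for every $j$. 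Likewise, left multiplication has the form $t_i(gx) = t_i(g) + t_i(x) + R_i(t(g), t(x))$, with $R_i$ a polynomial of weighted degree $\leq w_i$ that vanishes when either of its vector arguments is zero.

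Then I will fix an integer $L$ large enough that $L^{w_i} > C_0$ for all $i$, and define
\[
\Psi_N := \{a_1^{t_1} \cdots a_s^{t_s} \in G : -L^{Nw_j} \leq t_j < L^{Nw_j} \text{ for each } 1 \leq j \leq s\}.
\]
Then $|\Psi_N| = \prod_j 2L^{Nw_j} = 2^s L^{NW}$ with $W := \sum_j w_j$, giving the constant ratio $|\Psi_N|/|\Psi_{N+1}| = L^{-W}$, which will serve as $\eta$. For the inclusion, $x \in \Psi_N$ gives $|t_i(x^2)| = |P_i(t(x))| \leq C_0 L^{Nw_i} < L^{(N+1)w_i}$, so $x^2 \in \Psi_{N+1}$. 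Combined with the injectivity of $s_G$ from \cref{tf fg nil lemma 2}, this yields $|s_G(\Psi_N)|/|\Psi_{N+1}| = \eta$.

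The final and main step is verifying the \Folner{} property, where I expect most of the work to lie. Fix $g \in G$. Since $R_i(0, \cdot) \equiv 0$, every monomial of $R_i$ carries at least one factor $t_j(g)$; combined with the weighted-degree bound $w_i$ on $R_i$ and the fact that $\min_j w_j = 1$, this forces $R_i(t(g), \cdot)$ to have weighted $t$-degree at most $w_i - 1$. Hence $|R_i(t(g), t(x))| \leq C(g) L^{N(w_i-1)}$ for $x \in \Psi_N$, so left multiplication by $g$ perturbs the $i$-th coordinate by a quantity of order $L^{N(w_i-1)}$, dwarfed by the side length $L^{Nw_i}$. A standard near-boundary estimate then yields $|g\Psi_N \triangle \Psi_N|/|\Psi_N| = O(L^{-N}) \to 0$ as $N \to \infty$. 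The main obstacle, as anticipated, is precisely this \Folner{} verification, which hinges on carefully invoking the weighted polynomial identities governing multiplication in $G$.
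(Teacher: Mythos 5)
Your proof is correct and follows the same core strategy as the paper: build $\Psi_N$ as geometrically scaling boxes in Mal'cev coordinates, choose the per-coordinate exponents so that squaring carries one box into the next, and then appeal to injectivity of $s_G$ (\cref{tf fg nil lemma 2}) to get a constant cardinality ratio. The implementation differs. The paper works with an arbitrary Mal'cev coordinate system, reads off the expansion $t_i(x^2) = 2t_i(x) + p_i(t_1(x),\dots,t_{i-1}(x))$, and then \emph{computes} the exponents $d_i$ and per-coordinate bases $c_i$ recursively from the monomials of the $p_i$; this is self-contained and uses nothing beyond the existence of that polynomial expansion. You instead fix a Mal'cev basis adapted to the lower central series, take the standard central-series weights $w_i$ as the exponents, and use a single base $L$ made large enough to absorb the constant $C_0$. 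This yields a cleaner final formula $\eta = L^{-\sum_j w_j}$, but it leans on the weighted-degree bound for the Mal'cev multiplication and power polynomials relative to the lower-central-series grading (each monomial of $P_i$ has $\sum_j e_j w_j \le w_i$), a standard fact in the theory but one that the cited reference \cite[Ch. 17.2]{Ka-Me-G} may not state in exactly that form; you should supply a precise citation for it. The paper's recursive construction sidesteps that input at the cost of less transparent constants. Both write-ups defer the Følner verification, and your sketch via the left-multiplication polynomials $R_i$ and the resulting $O(L^{-N})$ boundary-layer bound is the right way to fill in the paper's ``not hard to check.''
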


It is clear that \cref{tf fg nil are square absolutely continuous}
follows immediately from Lemmas \ref{tf fg nil lemma 1}, \ref{tf fg nil lemma 2}
and \ref{tf fg nil lemma 3}, where \cref{tf fg nil lemma 1} is applied for $\phi=s_G$,
$\Psi$ the \Folner{} guaranteed by \cref{tf fg nil lemma 3} and $\Phi$ the \Folner{}
given by $\Phi_N=\Psi_{N+1}$.

\begin{proof}[Proof of \cref{tf fg nil lemma 1}]
Let $G, M, \phi, \Phi, \Psi$ and $\eta$ be as in the assumptions of \cref{tf fg nil lemma 1}. 
We will prove something stronger than we require, namely
that for any $u: G \to [0,1]$ we have
\begin{equation*}
    \limsup_{N\to \infty} \frac{1}{|\Psi_N|} \sum_{g\in \Psi_N} u(\phi(g)) 
    \leq 
    \frac{M}{\eta} \limsup_{N\to \infty} \frac{1}{|\Phi_N|} \sum_{g\in \Phi_N} u(g),
\end{equation*}
and then for any $\epsilon >0$, taking $\delta = \eta\epsilon/M>0$ yields that 
$G$ is $\phi$-absolutely continuous.
Let $u: G \to [0,1]$, and let $(\Psi_{N_k})_{k\in \N}$ satisfy
$$\lim_{k\to \infty} \frac{1}{|\Psi_{N_k}|} \sum_{g \in \Psi_{N_k}} u(\phi(g))
=\limsup_{N\to \infty} \frac{1}{|\Psi_N|} \sum_{g\in \Psi_N} u(\phi(g)).$$ 
For each $k \in \N$, using the assumption on $\Phi$ and $\Psi$, 
we have that
$\phi(\Psi_{N_k})\subset \Phi_{N_k} $
and $\frac{|\phi(\Psi_{N_k})|}{|\Phi_{N_k}|} \geq \eta$. 
Then for every $k \in \N$ we have 
\begin{equation*}
\frac{1}{|\Psi_{N_k}|} \sum_{g \in \Psi_{N_k}} u(\phi(g)) \leq
\frac{M}{|\phi(\Psi_{N_k})|} \sum_{g \in \phi(\Psi_{N_k})} u(g) \leq
\frac{M}{\eta} \frac{1}{|\Phi_{N_k}|} \sum_{g \in \Phi_{N_k}} u(g),
\end{equation*}
so letting $k \to \infty$ we obtain that 
\begin{equation*}
\lim_{k\to \infty}
\frac{1}{|\Psi_{N_k}|} \sum_{g \in \Psi_{N_k}} u(\phi(g))
\leq 
\frac{M}{\eta} \limsup_{k\to \infty} 
\frac{1}{|\Phi_{N_k}|} \sum_{g \in \Phi_{N_k}} u(g)
\leq 
\frac{M}{\eta} \limsup_{N \to \infty} 
\frac{1}{|\Phi_{N}|} \sum_{g \in \Phi_{N}} u(g),
\end{equation*}
which concludes the proof.
\end{proof}

Let $G$ be a torsion-free finitely generated nilpotent group and fix a
Mal'cev coordinate system $(t_1,\dots,t_s)$. By \cite[Theorem 17.2.5]{Ka-Me-G}, 
we have that for any $1\leq i\leq s$, 
there are polynomials $p_i$
in $i-1$ variables, 
with rational coefficients, satisfying 
$p_i(\Z^{i-1})\subset\Z$
such that for any $x\in G$, we have
\begin{equation}\label{Mal'cev mult 1}
    t_i(x^2) = 2t_i(x) +  p_i(t_1(x), \dots, t_{i-1}(x)).
\end{equation}
In the previous, $p_1$ is a polynomial in $0$ variables, 
which means that it is a constant polynomial. 

\begin{proof}[Proof of \cref{tf fg nil lemma 2}]
The proof becomes obvious by using \eqref{Mal'cev mult 1}
and the injectivity of the coordinate 
map $G\to \Z^{s}$, $x \mapsto (t_1(x), \dots, t_s(x))$, which follows from the fact that $G$ is torsion-free.
\end{proof}

\begin{proof}[Proof of \cref{tf fg nil lemma 3}]
Let $G$ be a torsion-free finitely generated nilpotent group and fix 
a Mal'cev coordinate system $(t_1,\dots,t_s)$. We identify the group $G$ with $\Z^s$
as implied by the Mal'cev coordinate system. Let $(p_i)_{1\leq i\leq s}$ be 
the sequence of polynomials satisfying \eqref{Mal'cev mult 1} and for each $i$ we denote
the number of terms of $p_i$ by $\gamma_i$. Now we will define
three integer-valued sequences $(b_i)_{1\leq i\leq s}, (c_i)_{1\leq i\leq s}$ 
and $(d_i)_{1\leq i\leq s}$ that will help us to construct the \Folner{} sequence $\Psi$
with the required properties. We start with the latter one and we define it recursively as follows:
Let $d_1=1$. Now let $1<i\leq s$ and suppose that $d_j$ has been defined for all $1\leq j<i$.
Given a monomial $m(x_1,\dots,x_{i-1}) = x_1^{e_1}\dots x_{i-1}^{e_{i-1}}$,
with $e_j\geq 0$ for any $1\leq j<i$,
we let $d(m) = \sum_{1\leq j<i} e_jd_j$,
and then we also let 
$d(p_i) = \max\{d(m) \colon \text{the monomial}~m~\text{appears in}~p_i\}$.
Then we define $d_i = \max\{d(p_i),1\}$.
The other two sequences are defined as follows: Let $b_1=0$. Now we fix $1<i\leq s$.
We denote by $m_i$ the monomial $m$ appearing in $p_i$ with maximal coefficient in absolute value 
such that $d(p_i)=d(m)$. Then we define $b_i$ to be ceiling of the absolute value 
of the coefficient of $m_i$.
Finally, for any $1\leq i\leq s$, we let $c_i = \gamma_ib_i+2$.
We also note that $d_i=d(m_i)$ for each $1\leq i\leq s$.
\par
Let us now define the \Folner{} sequence $\Psi$.
By the definition of $(b_i)_{1\leq i\leq s}$ and $(d_i)_{1\leq i\leq s}$, we have that
for any $1\leq i\leq s$, if $|x_j|\leq M^{d_j}$ for every $1\leq j<i$, for some $M>0$,
then $|m_i(x_1,\dots,x_{i-1})|\leq b_iM^{d_i}$, and then,
$|p_i(x_1,\dots,x_{i-1})|\leq \gamma_ib_iM^{d_i}$. It follows that for any $1\leq i\leq s$,
the following implication holds:
\begin{equation}\label{p_i ineq}
    |x_j|\leq c_j^{Nd_j} \quad\forall\:1\leq j<i 
    \Longrightarrow
    |p_i(x_1,\dots,x_{i-1})|\leq \gamma_ib_ic_i^{Nd_i}.
\end{equation}
Given $M>0$, we use the notation $[M]:=(-M,M]\cap\Z$.
Now, for any $N\in\N$, we define
$$\Psi_N
= 
[c_1^{(N-1)d_1}]\times[c_2^{(N-1)d_2}]
\times\dots\times[c_s^{(N-1)d_s}].$$
It is not hard to check that $\Psi=(\Psi_N)_{N\in\N}$ is \Folner{} sequence in $G$.
We show that $s_G(\Psi_N)\subset\Psi_{N+1}$ 
for every $N\in\N$. Let $N\in\N$ and then
$$s_G(\Psi_N) = \big\{(2t_i(x) + p_i((t_j(x))_{1\leq j<i}))_{1\leq i\leq s} 
\colon t_i(x)\in[c_i^{(N-1)d_i}]
\:\:\:\forall\: 1\leq i\leq s\big\}.$$
Let $x^2=(t_1(x^2),\dots,t_s(x^2))\in s_G (\Psi_N)$.
We claim that for each $1\leq i\leq s$, we have that 
\begin{equation}\label{folner inclusion}
    t_i(x^2)\in[c_i^{Nd_i}].
\end{equation}
Let $1\leq i\leq s$. For any $1\leq j<i$, we have that 
$t_j(x)\in[c_j^{(N-1)d_j}]$, and then we have that
\begin{align*}
    & t_i(x^2) 
    = 2t_i(x) + p_i((t_j(x)\colon j<i)) \\
    & \in (p_i((t_j(x))_{1\leq j<i}) - 2c_i^{(N-1)d_i},
    p_i((t_j(x))_{1\leq j<i}) + 2c_i^{(N-1)d_i}]
    \cap(2\Z + p_i((t_j(x))_{1\leq j<i})) \\
    & \subset [c_i^{Nd_i}],
\end{align*}
where the last one follows from 
$|p_i((t_j(x))_{1\leq j<i})| \leq \gamma_ib_ic_i^{(N-1)d_i}$, by 
\eqref{p_i ineq}, and from the definition of $c_i$ along with that $d_i\geq1$.
This shows \eqref{folner inclusion} and hence that 
${s_G}(\Psi_N)\subset\Psi_{N+1}$.
\par
It remains to prove \eqref{folner equation 1} and we show it for the \Folner{} 
sequence $\Psi$ and with $\eta:=\prod_{1\leq i\leq s} c_i^{-d_i}>0$.
In other words, we prove that for any $N\in\N$ we have
\begin{equation}\label{folner equation 2}
    \frac{|{s_G}(\Psi_N)|}{|\Psi_{N+1}|} 
    = \prod_{1\leq i\leq s} c_i^{-d_i}.
\end{equation}
Let $N\in\N$.
By \cref{tf fg nil lemma 2}, $s_G$ is injective, and since $\Psi_N$ is finite,
we have that
$$|s_G(\Psi_N)|
= |\Psi_N|
= 2^s\bigg(\prod_{1\leq i\leq s}c_i^{d_i}\bigg)^{N-1}.$$
Moreover, we clearly have 
$$|\Psi_{N+1}|
= 2^s\bigg(\prod_{1\leq i\leq s}c_i^{d_i}\bigg)^N,$$
and so \eqref{folner equation 2} follows from comparing the last two equations.
This concludes the proof.
\end{proof}

Having established \cref{tf fg nil are square absolutely continuous},
we are ready to prove \cref{tf fg nil cor}.
Before we prove it, let us make some remarks. 

Let $s\in \N$ and $1\leq i\leq s$. We say that a non-empty set 
$L \subset \Z^s$ is a {\em line} in the $i$-th coordinate if there is a
non-empty 
interval $I$ in $\Z$, i.e., 
a set of the form $\{m, \dots, m+r \}$ for some $m,r\in \Z$ 
with $r \geq 0$, and some integers $x_j$, 
$j\in \{1, \dots, s\}\setminus \{i\}$ such that 
$$L = \{x_1\} \times \dots \times \{x_{i-1}\} \times I \times 
\{x_{i+1}\} \times \dots \times \{x_s\}.$$ 
We refer to the cardinality of the interval 
$I$ as the {\em{length}} of the line $L$.

Let $G$ be a torsion-free finitely generated nilpotent group and 
$(t_1, \dots, t_s)$ a Mal'cev coordinate system. For each 
$1\leq i\leq s$, let 
$e_i$ be the element with coordinates $(e^{(i)}_1, \dots, e^{(i)}_s)$, where 
$e^{(i)}_i= 1 $ and $e^{(i)}_j=0$ for $j \neq i$.
Every 
$g\in G$ has a unique representation as 
$(t_1(g), \dots, t_s(g))\in \Z^s$, and this defines a bijective map
from $G$ to $\Z^s$. From now on, we identify each $g\in G$ with its 
coordinates $(t_1(g), \dots, t_s(g))\in \Z^s$, and every set
$E\subset G$ with the corresponding set of coordinates 
in $\Z^s$. 
We freely pass from viewing a set $E\subset G$ as a subset of $\Z^s$ 
and vice versa, without stating it, as it will be clear from the context.

If $E$ is a finite subset of $G$ and $1\leq i\leq s$, 
then $E$ can be written as a 
finite disjoint union of lines in the $i$-th coordinate,
and this can be done in many ways. 
We want to write $E$ as a union of lines which is going to be 
maximal in some sense that is going to be useful for us in our proof 
of \cref{tf fg nil cor}.

More precisely, if $E$ is a finite subset of $G$, 
and $1\leq i\leq s$,
then we can always write it as a disjoint union $E=\bigsqcup_{j=1}^{\ell} L^{(j)}$
such that the sets $L^{(j)}$ are lines in the $i$-th coordinate, and for each
$j$, the line $L^{(j)}$ is 
maximal within $E$,
meaning that for each $1\leq j\leq s$, 
$e_i L^{(j)}$ is not 
a subset
of $E$. Note that although there always exists such a choice of lines,
it may not be unique, but uniqueness is not necessary for our purposes. 
We are now ready to prove \cref{tf fg nil cor}.

\begin{proof}[Proof of \cref{tf fg nil cor}]
The first part of \cref{tf fg nil cor} follows immediately by combining 
Theorems \ref{mt1} and \ref{tf fg nil are square absolutely continuous}.
It remains to prove that if $G$ is a torsion-free finitely generated nilpotent 
group, then given a Mal'cev coordinate system $(t_1,\dots, t_s)$ on $G$, 
we can choose $B$ so that for any finite set 
$C\subset \Z$ and any $1\leq i\leq s$, the set 
$\{ b \in B : t_i(b) \in C\}$ is finite. 

Let $G$ be a torsion-free finitely generated nilpotent group, let 
$A$ have positive left 
upper Banach density, and $\Psi=(\Psi_N)_{N\in\N}$
a left \Folner{} sequence such that 
$d_{\Psi}(A)>0$, where the previous density exists.
In addition, let $(t_1, \dots, t_s)$ be any Mal'cev coordinate system 
on $G$.\\
\textbf{Claim.} There is a \Folner{} sequence 
$\Psi '$ such that 
$d_{\Psi '}(A)>0$ and
for
any finite set 
$C\subset \Z$ and any $1\leq i\leq s$, the set 
$\{N\in \N : t_i (\Psi '_N) \cap C \neq \emptyset \} $
is finite.

Assume that we have proved the Claim. Then we can consider the set 
$A ':= \bigcup_{N\in \N} A \cap \Psi '_N $.
Then we have that 
$d_{\Psi '} (A) >0$, so by \cref{tf fg nil cor}, 
there is an infinite sequence
$B\subset A '\subset A$ and some $t\in G$ such that
$t \cdot B \ltrdot B \subset A ' \subset A$. 
Let $C\subset \Z$ be finite and $1\leq i\leq s$. 
For each $N\in \N$, $\Psi '_N$ is finite and 
$\{N \in \N: t_i(\Psi_N') \cap C \neq \emptyset\}$ is also finite, 
so since $B\subset\bigcup_{N\in \N} \Psi '_N$ is infinite,
one easily sees that $\{ b \in B : t_i(b) \in C\}$ is finite. 
So it remains to prove the Claim.

\begin{proof}[Proof of Claim]
\renewcommand\qedsymbol{$\triangle$}
We know that 
$d_{\Psi}(A) = \lim_{N\to \infty} \frac{|A\cap \Psi_N|}{|\Psi_N|}>0$. 
For each $1\leq i\leq s$ and $N \in \N$, let 
$$\delta_{i,N} = \frac{|\Psi_N \triangle (e_i \Psi_N)|}{|\Psi_N|}.$$
Since $\Psi$ is a \Folner{} sequence, for all $i$, $\delta_{i,N} \to 0$ 
as $N\to \infty$. Then we can choose a sequence 
$Q_N$ of natural numbers such that $Q_N \to \infty$ as 
$N\to\infty$ and for 
all $i$, $Q_N \delta_{i,N} \to 0$ as $N \to \infty$. 

\underline{Step $1$}:
For each $N\in \N$, $\Psi_N$ is a finite subset of $G$, so 
we can write it as a disjoint union of lines in the $1$st coordinate,
which are maximal within $\Psi_N$. 
Let $\Psi_N ^{(1)}$ be the union of those lines whose 
length is greater that $Q_N$, and $m_N$ be the number 
of those lines whose length is less than or 
equal to $Q_N$. Then 
\begin{equation*}
    Q_N \delta_{1,N} = Q_N \frac{|\Psi_N \triangle (e_1 \Psi_N)|}{|\Psi_N|}
    \geq 
    \frac{Q_N m_N}{|\Psi_N|}
    \geq 
    \frac{|\Psi_N| - |\Psi_N ^{(1)}|}{|\Psi_N|}.
\end{equation*}
Therefore, $\frac{|\Psi_N ^{(1)}|}{|\Psi_N|} \to 1$ as 
$N\to \infty$, from which one gets that 
$\Psi^{(1)}=(\Psi_N ^{(1)})_{N\in \N}$ is a 
left \Folner{} sequence in $G$. 
In addition, it is not difficult to see that 
the density $d_{\Psi^{(1)}}(A)$ exists and 
$d_{\Psi^{(1)}}(A) = d_{\Psi}(A)$.

Recall that for each $N\in \N$, $\Psi_N ^{(1)}$ is a disjoint union of 
some lines in the $1$-st coordinate $L^{(1, N)}, \dots, L^{(\ell_N, N)}$ 
whose length is greater than $Q_N$.
Let $N_1=1$ and set $\widetilde{\Psi}_1 ^{(1)} = \Psi_1 ^{(1)}$.
Since $\Psi_1 ^{(1)}$ is finite, the projection $P_1$ of 
$\Psi_1 ^{(1)}$ in the first coordinate is also finite. 
For each $N \in \N$, let 
$$\Psi_N ^{(1,2)} = \{g\in \Psi_N ^{(1)} : t_1(g) \notin P_1 \}=
\bigsqcup_{j=1} ^{\ell_N} 
\{g\in L^{(j, N)} : t_1(g) \notin P_1\}. $$
Then $\Psi_N ^{(1,2)} \subset \Psi_N ^{(1)}$ and 
\begin{align*}
    \frac{|\Psi_N ^{(1,2)}|}{|\Psi_N ^{(1)}|}
    &=
    \frac{\sum_{j=1}^{\ell_N} | \{g\in L^{(j, N)} : t_1(g) \notin P_1\} |}
    {\sum_{j=1}^{\ell_N} | L^{(j, N)}|}
    \geq 
    \frac{\sum_{j=1}^{\ell_N} (| L^{(j, N)}| - | P_1 | )}
    {\sum_{j=1}^{\ell_N} | L ^{(j, N)}|}  \\
    & = 1 - \frac{\sum_{j=1}^{\ell_N} | P_1 | }
    {\sum_{j=1}^{\ell_N} | L^{(j, N)}| }
    \geq 
    1 - \frac{ \ell_N | P_1 | }
    {\ell_N Q_N} = 
    1- \frac{|P_1|}{Q_N}.
\end{align*}
Since $Q_N \to \infty$ as $N \to \infty$, we have that 
$\frac{|P_1|}{Q_N} \to 0$ as $N \to \infty$, so 
$\frac{|\Psi_N ^{(1,2)}|}{|\Psi_N ^{(1)}|} \to 1$ as 
$N \to \infty$. Hence, we can pick $N_2 \in N, N_2 > N_1$ 
such that 
$\frac{|\Psi_{N_2} ^{(1,2)}|}{|\Psi_{N_2} ^{(1)}|}>\frac{1}{2}$.
Set $\widetilde{\Psi}^{(1)}_2 = \Psi_{N_2} ^{(1,2)}. $

Now since $\widetilde{\Psi}^{(1)}_2$ is finite, 
the projection $P_2$ of 
$\widetilde{\Psi}_2 ^{(1)}$ in the first coordinate is also finite. 
For each $N \in \N$, let 
$$\Psi_N ^{(1,3)} = \{g\in \Psi_N ^{(1)} : t_1(g) \notin P_1\cup P_2 \}=
\bigsqcup_{j=1}^{\ell_N } 
\{g\in L^{(j, N)} : t_1(g) \notin P_1 \cup P_2\}. $$
Then $\Psi_N ^{(1,3)} \subset \Psi_N ^{(1)}$ and as before, we have that
\begin{equation*}
    \frac{|\Psi_N ^{(1,3)}|}{|\Psi_N ^{(1)}|}
    \geq 
    1- \frac{|P_1|+|P_2|}{Q_N}.
\end{equation*}
Again, since $Q_N \to \infty$ as $N \to \infty$, we have that 
$\frac{|P_1| + |P_2|}{Q_N} \to 0$ as $N \to \infty$, so 
$\frac{|\Psi_N ^{(1,3)}|}{|\Psi_N ^{(1)}|} \to 1$ as 
$N \to \infty$. Hence, we can pick $N_3 \in N, N_3 > N_2$ 
such that 
$\frac{|\Psi_{N_3} ^{(1,3)}|}{|\Psi_{N_3} ^{(1)}|}>\frac{2}{3}$.
Set $\widetilde{\Psi}^{(1)}_3 = \Psi_{N_3} ^{(1,3)}. $

Continuing inductively, we find a strictly increasing sequence 
of natural numbers
$(N_k)_{k\in \N}$ such that for all $k\in \N$, 
$\widetilde{\Psi}^{(1)}_k \subset {\Psi}^{(1)}_{N_k} \subset 
{\Psi}_{N_k}$
and 
$\frac{|\widetilde{\Psi}^{(1)}_k |}{|\Psi_{N_k} ^{(1)}|}> 
1 - \frac{1}{k}$. 
Then $\frac{|\widetilde{\Psi}^{(1)}_k |}{|\Psi_{N_k} ^{(1)}|}
\to 1 $ as $k \to \infty$,
from which one gets that 
$\widetilde{\Psi}^{(1)}=(\widetilde{\Psi}_k ^{(1)})_{k\in \N}$ is a 
left \Folner{} sequence in $G$. 
In addition, it is not difficult to see that 
the density $d_{\widetilde{\Psi}^{(1)}}(A)$ exists and 
$d_{\widetilde{\Psi}^{(1)}}(A) = d_{({\Psi}^{(1)}_{N_k})_{k\in \N}}(A)
=d_{\Psi^{(1)}}(A)= d_{\Psi}(A)$.
In addition, from the construction of $\widetilde{\Psi}^{(1)}$, 
we have that for
any finite set 
$C\subset \Z$, the set 
$\{k\in \N : t_1 (\widetilde{\Psi}^{(1)}_k) \cap C \neq \emptyset \} $
is finite.

\underline{Step $2$}: Repeat Step 1 with 
$\widetilde{\Psi}^{(1)}$ in place of $\Psi$, which we write as a disjoint union of lines 
in the $2$nd coordinate that are maximal within 
$\widetilde{\Psi}_N ^{(1)}$,
to obtain a
strictly increasing sequence of natural numbers
$(N_k)_{k\in \N}$ and 
a left \Folner{} sequence 
$\widetilde{\Psi}^{(2)}$
such that for all $k\in \N$, 
$\widetilde{\Psi}^{(2)}_k \subset \widetilde{\Psi}^{(1)}_{N_k}$,
$\frac{|\widetilde{\Psi}^{(2)}_k |}{|\widetilde{\Psi}^{(1)}_{N_k}|}
\to 1 $ as $k \to \infty$ and such that for 
any finite set 
$C\subset \Z$, the set 
$\{k\in \N : t_2 (\widetilde{\Psi}^{(2)}_k) \cap C \neq \emptyset \} $
is finite. Then we will also have that the density
$d_{\widetilde{\Psi}^{(2)}}(A)$ exists and 
$d_{\widetilde{\Psi}^{(2)}}(A) = 
d_{\widetilde{\Psi}^{(1)}}(A) = d_{\Psi}(A)$.

Recall that 
$\widetilde{\Psi}^{(1)}$ has the property that 
for
any finite set 
$C\subset \Z$, the set 
$\{N\in \N : t_1 (\widetilde{\Psi}^{(1)}_N) \cap C \neq \emptyset \} $
is finite. As  
$\widetilde{\Psi}^{(2)}_k \subset \widetilde{\Psi}^{(1)}_{N_k}$
for all $k\in \N$, we get that for 
any finite set 
$C\subset \Z$, the set 
$\{k\in \N : t_1 (\widetilde{\Psi}^{(2)}_k) \cap C \neq \emptyset \} $
is finite. Hence, after all, for any finite set 
$C\subset \Z$ and any $i \in \{1, 2\}$, the set 
$\{k\in \N : t_i (\widetilde{\Psi}^{(2)}_k) \cap C \neq \emptyset \} $
is finite. 

Repeating the same procedure, after $s$ steps, we find a 
left \Folner{} sequence  
$\widetilde{\Psi}^{(s)}$ such that 
the density
$d_{\widetilde{\Psi}^{(s)}}(A)$ exists, 
$d_{\widetilde{\Psi}^{(s)}}(A) =  d_{\Psi}(A) > 0 $
and 
for 
any finite set 
$C\subset \Z$ and any $i \in \{1, \dots, s\}$, the set 
$\{N\in \N : t_i (\widetilde{\Psi}^{(s)}_N) \cap C \neq \emptyset \} $
is finite. Taking
$\Psi ' := \widetilde{\Psi}^{(s)}$, we see that 
$\Psi '$ satisfies the Claim, thus concludes its proof.
\end{proof}
Since the Claim is established, the proof of the corollary is complete.
\end{proof}

Now, we have to show \cref{fg nil cor}, but this is not hard using the fact
that finitely generated nilpotent groups are virtually torsion-free.
Let us first show the following simple lemma:

\begin{lemma}\label{restriction of Folner}
Let $G$ be an amenable group, $H$ be a subgroup of $G$ with 
$[G:H]=r<\infty$ and $\Phi$ be a \Folner{} sequence in $G$.
Then the following hold:
\begin{itemize}
    \item[(i)] $d_\Phi(H) = \frac{1}{r}$.
    \item[(ii)] Letting $\Psi_N=\Phi_N\cap H$ for each $N\in\N$,
    the sequence $\Psi = (\Psi_N)_{N\in\N}$ is a \Folner{} sequence in $H$.
\end{itemize}
\end{lemma}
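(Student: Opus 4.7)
The plan is to exploit the left coset decomposition of $G$ with respect to $H$ together with the left \Folner{} property of $\Phi$. Fix coset representatives $e_G = g_1, g_2, \dots, g_r$ so that $G = \bigsqcup_{i=1}^r g_i H$, and write $\Phi_N = \bigsqcup_{i=1}^r (\Phi_N \cap g_i H)$. The bijection $x \mapsto g_i^{-1} x$ yields the identity $|\Phi_N \cap g_i H| = |g_i^{-1}\Phi_N \cap H|$, which reduces the whole problem to comparing the left translates $g_i^{-1}\Phi_N$ with $\Phi_N$ on the subset $H$, and this is exactly what the \Folner{} condition controls.

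For (i), I would use that, since $\Phi$ is a left \Folner{} sequence, for each $i = 1, \dots, r$,
\[
\bigl| |g_i^{-1}\Phi_N \cap H| - |\Phi_N \cap H| \bigr| \leq |g_i^{-1}\Phi_N \triangle \Phi_N| = o(|\Phi_N|).
\]
Summing over $i$ and using the coset decomposition of $\Phi_N$ yields $|\Phi_N| = r \cdot |\Phi_N \cap H| + o(|\Phi_N|)$; dividing by $|\Phi_N|$ shows that the limit $\lim_{N \to \infty} |\Phi_N \cap H|/|\Phi_N|$ exists (so the density, not merely the upper density, is well-defined) and equals $1/r$.

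For (ii), since $h \in H$ we have $hH = H$, hence $h\Psi_N = h\Phi_N \cap H$, and therefore
\[
h\Psi_N \triangle \Psi_N = (h\Phi_N \cap H) \triangle (\Phi_N \cap H) \subset h\Phi_N \triangle \Phi_N.
\]
Dividing both sides by $|\Psi_N|$ and inserting the factor $|\Phi_N|/|\Psi_N|$, which tends to $r$ by part (i), gives
\[
\frac{|h\Psi_N \triangle \Psi_N|}{|\Psi_N|} \leq \frac{|h\Phi_N \triangle \Phi_N|}{|\Phi_N|} \cdot \frac{|\Phi_N|}{|\Psi_N|} \longrightarrow 0,
\]
since the first factor vanishes by the \Folner{} property of $\Phi$. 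Thus $\Psi$ is a left \Folner{} sequence in $H$.

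The argument is routine and I do not foresee any genuine obstacle; the only subtlety worth flagging is the direction of the \Folner{} condition, but the left \Folner{} property is assumed to hold at every group element, in particular at the inverses $g_i^{-1}$, which is precisely what the coset reduction requires.
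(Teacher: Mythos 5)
Your proof is correct and essentially matches the paper's: for (i) you supply the standard coset-translation argument that the paper dismisses as ``quite easy to check,'' and for (ii) you run the paper's argument in contrapositive form, bounding $|h\Psi_N\triangle\Psi_N|$ directly via $h\Psi_N\triangle\Psi_N\subset h\Phi_N\triangle\Phi_N$ instead of deriving a contradiction from $|h\Psi_N\cap\Psi_N|$ as the paper does. Both hinge on the same ingredient, the ratio $|\Phi_N|/|\Psi_N|\to r$ supplied by part (i), and your direct version is arguably cleaner; the only small point worth noting explicitly is that $\Psi_N\neq\emptyset$ for all large $N$, which follows from (i) and justifies dividing by $|\Psi_N|$.
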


In the following proof and onwards, we use the symbol
$\sqcup$ to denote the disjoint union.

\begin{proof}
(i) This is quite easy to check. \\
(ii) Let $h\in H$ and suppose for sake of contradiction that 
$$\ell = \liminf_{N\to\infty}\frac{|h\Psi_N\cap\Psi_N|}{|\Psi_N|} < 1.$$
For any $N\in\N$, we have that 
$$h\Phi_N\cap\Phi_N
= (h\Psi_N\cap\Psi_N)\sqcup((h\Phi_N\cap\Phi_N)\setminus H),$$
hence,
$$\frac{|h\Phi_N\cap\Phi_N|}{|\Phi_N|} 
= \frac{|h\Psi_N\cap\Psi_N|}{|\Phi_N|} + \frac{|(h\Phi_N\cap\Phi_N)\setminus H|}{|\Phi_N|}
\leq \frac{|h\Psi_N\cap\Psi_N|}{|\Psi_N|}\cdot\frac{|\Psi_N|}{|\Phi_N|}
+ \frac{|\Phi_N\setminus H|}{|\Phi_N|}.$$
Using (i), it follows that
$$\lim_{N\to\infty}\frac{|h\Phi_N\cap\Phi_N|}{|\Phi_N|}
\leq \frac{\ell}{r} + 1 - \frac{1}{r}
<1,$$
which contradicts the fact that $\Phi$ is a \Folner{} sequence in $G$.
Therefore, $\Psi$ is indeed a \Folner{} sequence in $H$
and the proof of the lemma is complete.
\end{proof}

\begin{proof}[Proof of \cref{fg nil cor}]
Let $G$ be a finitely generated virtually nilpotent group, $\Phi=(\Phi_N)_{N\in\N}$ 
be a \Folner{} sequence in $G$ and $A$ be a subset of $G$ 
with $\overline{d}_\Phi(A)>0$. By passing to a subsequence for which the limit
exists, we may assume that $d_\Phi(A)>0$.
Let $G'$ be a nilpotent finite-index subgroup of $G$. Since $G$ is 
finitely generated, by writing it as a disjoint union of finitely many left
cosets of $G'$, it is easy to see that $G'$ is also finitely generated,
and it is also nilpotent. By \cite[Theorem 17.2.2]{Ka-Me-G}, 
there exists a normal subgroup $H$ of $G'$ with finite index, 
which is torsion-free. Hence, $H$ is a torsion-free
finitely generated nilpotent group, which has finite index in $G$.
By writing $G$ as finite disjoint union of left cosets of $H$, 
we can see that there exists some $g\in G$ such that
$\overline{d}_\Phi(g^{-1}A\cap H) = \overline{d}_\Phi(A\cap gH) > 0$.
Again, by passing to a subsequence, we may assume that the limit exists,
so $d_\Phi(g^{-1}A\cap H) = d_\Phi(A\cap gH) > 0$.
We let $\Psi_N = \Phi_N\cap H$ for each $N\in\N$ and
by \cref{restriction of Folner} (ii), $\Psi=(\Psi_N)_{N\in\N}$ is a \Folner{}
sequence on $H$.
Hence for every $N\in\N$ we have that
$$\frac{|g^{-1}A\cap \Psi_N|}{|\Psi_N|}
= \frac{|g^{-1}A\cap \Phi_N\cap H|}{|\Phi_N|}\cdot
\frac{|\Phi_N|}{|\Psi_N|},$$
thus, 
$$d_\Psi(g^{-1}A\cap H) = \frac{d_\Phi(g^{-1}A\cap H)}{d_\Phi(H)}>0,$$
using \cref{restriction of Folner} (i).
Therefore, recalling that $H$ is torsion-free finitely generated nilpotent
group, \cref{tf fg nil cor} yields an infinite sequence
$B\subset g^{-1}A\cap H\subset g^{-1}A$ and some $t_0\in H$
such that $B\ltrdot B \subset t_0^{-1}g^{-1}A\cap H \subset t^{-1}A$,
where we have set $t=gt_0\in G$. This concludes the proof.
\end{proof}

Having established the corollaries concerning finitely generated 
nilpotent groups, we move on to showing the results corresponding to 
abelian groups, namely \cref{some abelian groups are sac}
and \cref{abelian groups with finite-index doubles}. Obviously, the latter
is an immediate consequence of the former and of \cref{mt1}, so it suffices to
show \cref{some abelian groups are sac}.

\begin{proof}[Proof of \cref{some abelian groups are sac}]
Let $(G,+)$ be an abelian group such that $2G$ has finite index 
in $G$, let $r=[G:2G]$, and consider a collection $\beta_{i}, 1\leq i \leq r$ 
such that 
$\beta_1 = e_G \:(=$ the identity element in $G)$ and 
$G=\bigsqcup_{i=1}^{r} \beta_i + 2G$.
Let $s$ denote the doubling map 
$s:G\to 2G, g\mapsto 2g$.
From the first isomorphism theorem for groups we know that the map 
$\widetilde{s}: \faktor{G}{ker(s)} \to 2G, \widetilde{s} (g+ ker(s))=2g$
is an isomorphism. Since $2G$ has finite index in $G$ and $G$
is infinite, we know that $2G$ is also infinite so the quotient
$\faktor{G}{ker(s)}$ is infinite. Let $(g_n)_{n\in \N}$ be a sequence
in $G$ such that $\faktor{G}{ker(s)}=\{g_n + ker(s) : n\in \N\}$
and such that for any $n \neq m, g_n + ker(s) \neq g_m + ker(s)$.

Consider a \Folner{} sequence $F=(F_N)_{N\in \N}$ in 
$\faktor{G}{ker(s)}$, and for each $N$, take $x_{N,1}, \dots, x_{N, 
\ell(N)}$ from the sequence $(g_n)_{n\in \N}$ such that 
$F_N=\{ x_{N,1}+ ker(s), \dots, x_{N,\ell(N)}+ ker(s)\} $ and
such that $x_{N,i}+ ker(s) \neq x_{N,j}+ ker(s)$ whenever $i \neq j$.
For each $N$, let 
$\widetilde{\Phi}_N= \widetilde{s}(F_N)=\{2x_{N,1}, \dots, 2x_{N,\ell(N)}\}$. 
Since $\widetilde{s}_G$ is an isomorphism, we have that $\widetilde{\Phi}= 
(\widetilde{\Phi}_N)_{N\in \N}$ is a \Folner{} in $2G$ and 
$|\widetilde{\Phi}_N|=|F_N|=\ell(N)$.

For each $N$ we define $\Phi_N:= \bigsqcup_{i=1}^{r} \beta_i + \widetilde{\Phi}_N$
and we observe that $|\Phi_N|=r|\widetilde{\Phi}_N|$.

\underline{Claim $1$}: $\Phi=(\Phi_N)_{N\in \N}$ is a \Folner{} in $G$.

\begin{proof}[Proof of Claim $1$]
Let $y\in G$ and $\epsilon>0$.
Then there exist $1\leq i_0\leq r$ and $h\in G$ such that $y=\beta_{i_0}+2h$. Then 
\begin{equation}\label{folner eq1}
    (y+\Phi_N)\cap\Phi_N
    = \bigg(\bigsqcup_{i=1}^r (\beta_{i_0}+\beta_i+2h +\widetilde{\Phi}_N))\bigg)
    \cap\bigg(\bigsqcup_{j=1}^r (\beta_j+ \widetilde{\Phi}_N)\bigg).
\end{equation}
Since the cosets $\beta_j+2G$ are disjoint, it follows 
that for each $1\leq i\leq r$,
there exists a unique $1\leq j(i)\leq r$ such that
$\beta_{i_0} + \beta_i \in \beta_{j(i)} + 2G$, so there is
$h_i \in G$ such that $\beta_{i_0} + \beta_i = \beta_{j(i)} + 2 h_i$.
In addition, if we assume that for $i_1 \neq i_2$ we have 
$j(i_1) = j(i_2)$, then we have that $\beta_{i_1}- \beta_{i_2} = 
2h_{i_1} - 2h_{i_2} \in 2G$, so $\beta_{i_1} + 2G = \beta_{i_2} + 2G$, 
which is a contradiction. Therefore, the map
$j: \{1, \dots, r\} \to \{1, \dots, r\}, i \mapsto j(i)$ is a bijection,
and then \eqref{folner eq1} becomes
\begin{equation*}
    (y+\Phi_N)\cap\Phi_N
    = \bigsqcup_{i=1}^r\big((\beta_{j(i)}+2h_i+2h+\widetilde{\Phi}_N))
    \cap(\beta_{j(i)}+\widetilde{\Phi}_N)\big)
    = \bigsqcup_{i=1}^r \big(\beta_{j(i)} + ((2h_i+2h+\widetilde{\Phi}_N)
    \cap \widetilde{\Phi}_N)),
\end{equation*}
thus, we have that
\begin{equation}\label{folner eq2}
    |(y+\Phi_N)\cap\Phi_N|
    = \sum_{i=1}^r |(2h_i+2h+\widetilde{\Phi}_N)
    \cap \widetilde{\Phi}_N)|.
\end{equation}
Now since $\widetilde{\Phi}$ is a \Folner{} sequence in $2G$, for $N$ sufficiently 
large we have that for every $1\leq i\leq r$,
$|(2h_i+2h+\widetilde{\Phi}_N)\triangle\widetilde{\Phi}_N|\leq
\epsilon|\widetilde{\Phi}_N|$,
and then we have that
\begin{equation}\label{folner eq3}
\frac{|(2h_i+2h+\widetilde{\Phi}_N) \cap \widetilde{\Phi}_N)|}{|\widetilde{\Phi}_N|}
\geq 
1 - \frac{|(2h_i+2h+\widetilde{\Phi}_N)\triangle\widetilde{\Phi}_N|}
{|\widetilde{\Phi}_N|}
\geq 1 - \epsilon.
\end{equation}

Then, combining \eqref{folner eq2} and \eqref{folner eq3} we get 
that for $N$ sufficiently large
\begin{equation*}
    \frac{|(y+\Phi_N)\cap\Phi_N|}{|\Phi_N|} = 
    \frac{\sum_{i=1}^r |(2h_i+2h+\widetilde{\Phi}_N)
    \cap \widetilde{\Phi}_N)|}{r|\widetilde{\Phi}_N|}\geq 
    \frac{\sum_{i=1}^r (1-\epsilon)|\widetilde{\Phi}_N|}{r|\widetilde{\Phi}_N|}
    = 1-\epsilon.
\end{equation*}
Since $\epsilon>0$ was arbitrary, it follows that 
$\lim_{N\to\infty}\frac{|(y+\Phi_N)\cap\Phi_N|}{|\Phi_N|}=1$.
Thus, $\Phi$ is a \Folner{} sequence in $G$, and the proof of the claim
is complete.
\renewcommand\qedsymbol{$\triangle$}
\end{proof}

Assume that $ker(s)$ is infinite, and let $(h_n)_{n\in \N}$ be an 
enumeration of $ker(s)$ (the case when $ker(s)$ is finite is easier 
and can be treated with a similar argument).
Let also $(E_k)_{k\in \N}$ be a \Folner{} sequence in $ker(s)$
(in case $ker(s)$ is finite, one can take 
$E_k = ker(s)$ for all $k\in \N$). Fix $N \in \N$. Then for any $n,m$ with  
$1\leq n,m \leq N$ and any $i,j\in \{1, \dots, \ell(N)\}$, if 
$g_n+x_{N,i}-x_{N,j}+z_m \in ker(s)$, then
$$\lim_{k\to \infty} \frac{|(g_n+x_{N,i}-x_{N,j}+z_m+E_k) \triangle E_k|}{|E_k|}=0.$$ 
We can then pick $k_N$ large enough such that for all $n,m$ with 
$1\leq n,m \leq N$ and all $i,j\in \{1, \dots, \ell(N)\}$, if 
$g_n+x_{N,i}-x_{N,j}+z_m \in ker(s)$, then

\begin{equation}\label{useful}
\frac{|(g_n+x_{N,i}-x_{N,j}+z_m+H_N) \triangle H_N|}
{|H_N|}<\frac{1}{N},
\end{equation}
where we have set $H_N := E_{k_N}$.
For every $N\in \N$, let $\Psi_N=\bigsqcup_{i=1}^{\ell(N)}
x_{N,i} + H_N$.

\underline{Claim $2$}: $\Psi=(\Psi_N)_{N \in \N}$ is a \Folner{} in 
$G$.

\begin{proof}[Proof of Claim $2$]
Let $y\in G$, and take $n_y, m_y \in \N$ such that 
$y=g_{n_y} + z_{m_y}$. Let $\epsilon >0$. Since $F$ is a \Folner{} 
in $G$, there is $N_0\in \N$, $N_0 > n_y, m_y$, such that for every $N \geq N_0$, 
\begin{equation*}
    \frac{|(g_{n_y} + ker(s) +F_N) \triangle F_N |}{|F_N|} < \frac{\epsilon}{2}
\end{equation*}
and $\frac{1}{N}< \frac{\epsilon}{2}$. Let $N\geq N_0$. For each $i\in \{1, \dots, \ell(N)\}$, if there is $j\in \{1, \dots, \ell(N)\}$ such that 
$g_{n_y} + x_{N,i} \in x_{N,j} + ker(s)$, then this $j$ is unique by the choice of $(x_{N,i})_{i\in \{1, \dots, \ell(N)\}}$. Let 
$$A_N= \{ i \in \{1, \dots, \ell(N)\} : g_{n_y} + x_{N,i} \in x_{N,j} + ker(s) \text{ for some } j\in \{1, \dots, \ell(N)\}\},$$
and for each $i\in A_N$ denote the corresponding unique $j$ by $j(i)$. Let also
$$B_N:= \{ i \in \{1, \dots, \ell(N)\} : g_{n_y} + x_{N,i} \not \in x_{N,j} + ker(s) \text{ for any } j\in \{1, \dots, \ell(N)\}\}$$
and 
$$C_N:= \{ j \in \{1, \dots, \ell(N)\} : g_{n_y} + x_{N,i} \not \in x_{N,j} + ker(s) \text{ for any } i\in \{1, \dots, \ell(N)\}\},$$
and observe that $\{ g_{n_y} + x_{N,i} : i \in B_N\} = \{a\in G : a + ker(s) \in (g_{n_y} + ker(s) +F_N) \setminus F_N \}  $, 
$\{ x_{N,j} : j \in C_N\} = \{a\in G : a + ker(s) \in F_N \setminus (g_{n_y} + ker(s) +F_N)  \}  $, $|B_N|=|(g_{n_y} + ker(s) +F_N) \setminus F_N|$ and 
$|C_N|= |F_N \setminus (g_{n_y} + ker(s) +F_N)|$.
We then obtain that
\begin{align*}
    (y+\Psi_N) \triangle \Psi_N &= 
    \Big( \bigsqcup_{i=1}^{\ell(N)} g_{n_y} + x_{N,i} + z_{m_y} +H_N \Big) \triangle 
    \Big( \bigsqcup_{i=1}^{\ell(N)} x_{N,i} + H_N \Big)\\
     &\subset \Big( \bigcup_{i\in B_N} g_{n_y} + x_{N,i} + z_{m_y} +H_N \Big)
     \cup \Big( \bigcup_{i \in C_N} x_{N,i} + H_N \Big)
     \cup \\
     &\:\:\:\:\:\: \cup \Big( \bigcup_{i \in A_N} (g_{n_y} + x_{N,i} + z_{m_y} +H_N) \triangle (x_{N,j(i)}  +H_N)\Big).
\end{align*}
Therefore we have that 
\begin{align*}
    &|(y+\Psi_N) \triangle \Psi_N| \leq |B_N||H_N| + |C_N||H_N| + \sum_{i\in A_N} 
    |(g_{n_y} + x_{N,i} + z_{m_y} +H_N) \triangle (x_{N,j(i)}  +H_N)|\\
    &= |(g_{n_y} + ker(s) +F_N) \triangle F_N || |H_N| + \sum_{i\in A_N} 
    |(g_{n_y} + x_{N,i} + z_{m_y} +H_N) \triangle (x_{N,j(i)}  +H_N)|.
\end{align*}

Now for each $i\in A_N$, we have that $g_{n_y} + x_{N,i}  \in x_{N,j(i)} + ker(s)$, so also
$g_{n_y} + x_{N,i} + z_{m_y} \in x_{N,j(i)} + ker(s)$, and hence there is $w_y \in ker(s)$ such that 
$g_{n_y} + x_{N,i} + z_{m_y} = x_{N,j(i)} + w_y$. This implies 
$|(g_{n_y} + x_{N,i} + z_{m_y} +H_N) \triangle (x_{N,j(i)}  +H_N)| =
|x_{N,j(i)} + w_y +H_N) \triangle (x_{N,j(i)}  +H_N)| = \\|(w_y +H_N) \triangle H_N|.$ 
Since $w_y = g_{n_y} + x_{N,i} - x_{N,j(i)} + z_{m_y}  $, using \eqref{useful} and combining with the fact that 
$N> n_y, m_y$ we get that 
$\frac{|(w_y + H_N ) \triangle H_N|}{|H_N|}\leq \frac{1}{N}$.

Recall that $|\Psi_N|=\ell(N)|H_N|=|F_N||H_N|$ and $|A_N|\leq \ell(N)=|F_N|$, so  after all we have 
\begin{align*}
    \frac{|(y+ \Psi_N)\triangle \Psi_N|}{|\Psi_N|} &\leq 
    \frac{|(g_{n_y} + ker(s) +F_N) \triangle F_N | |H_N| + \sum_{i\in A_N} \frac{|H_N|}{N}}{|F_N| |H_N|} \\
    &\leq  \frac{|(g_{n_y} + ker(s) +F_N) \triangle F_N |}{|F_N|} +
    \frac{1}{N} <\epsilon.
\end{align*}
Since $\epsilon>0$ and $y\in G$ were arbitrary
this concludes the proof of the claim.
\renewcommand\qedsymbol{$\triangle$}
\end{proof}

We will now prove that $\Phi, \Psi$ satisfy \cref{sac_def} for the map $s$ and therefore $G$ 
is \textit{square absolutely continuous}.
Observe that $2\Psi_N = \{2g : g \in \Psi_N\} = 
\{2x_{N,i} : i \in \{1, \dots, \ell(N)\}\}= \widetilde{\Phi}_N $ and if 
$u:G \to [0,1]$, then for each $N\in \N$ we have 
$\sum_{g\in \Psi_N} u(2g) = \sum_{g\in \bigsqcup_{i=1}^{\ell(N)}
x_{N,i} + H_N } u(2g) = 
\sum_{i=1}^{\ell(N)}  |H_N| u(2x_{N,i})= |H_N| 
\sum_{g\in \widetilde{\Phi}_N} u(g) $. Therefore, for each $N\in \N$ we have 
\begin{align*}
    \frac{1}{|\Psi_N|} \sum_{g\in \Psi_N} u(2g) =
    \frac{1}{|\widetilde{\Phi}_N|} \sum_{g\in \widetilde{\Phi}_N} u(g)
    =\frac{r}{|\Phi_N|} \sum_{g\in \widetilde{\Phi}_N} u(g) 
    \leq \frac{r}{|\Phi_N|} \sum_{g\in \Phi_N} u(g).
\end{align*}
Therefore, given $\epsilon>0$ we can take $\delta=\frac{\epsilon}{r}$ and then
for any $u:G \to [0,1]$, if 
$$\limsup_{N\to \infty}  \frac{1}{|\Phi_N|} \sum_{g\in \Phi_N} u(g)<\delta = 
\frac{\epsilon}{r},$$
then 
$$\limsup_{N\to \infty}  \frac{1}{|\Psi_N|} \sum_{g\in \Psi_N} u(2g)< \epsilon.$$
This concludes the proof of \cref{some abelian groups are sac}.
\end{proof}

\section{Counterexamples on product sets}\label{counterexamples section}

To construct the counterexamples we introduce some convenient notation. 
We denote by $H_3$ the $3\times 3$ discrete Heisenberg group, that is
the group of $3\times 3$ upper triangular matrices with $1$ in the diagonal 
and integer entries. Using the obvious Mal'cev coordinate system in this group
we identify $H_3$ with $\Z^3$ by writing elements of $H_3$ as $a=(a_1,a_2,a_3)$
where the group operation is given by $ab = (a_1+b_1,a_2+b_2,a_3+b_3+a_1b_2)$.
For $N\in\N$, we denote $[N]:=[1,N]$, where all intervals are considered in $\Z$,
and $[N]':=[1,N]\cap(2\Z+1)$.
All the counterexamples below are constructed on the group $G=H_3$.
We may assume that any infinite sequence $B\subset H_3$ arising 
from \cref{tf fg nil cor} considered below is infinite an all coordinates.

\begin{example}\label{counterexample 1}
We construct a set $A\subset H_3$ and a \Folner{} sequence $\Phi$
with $\overline{d}_\Phi(A)>0$ such that there is no infinite sequence 
$B=(b(n))_{n\in\N}\subset A$ satisfying $B\ltrdot B\subset At^{-1}$ 
for some $t\in G$.
\par
Consider the \Folner{} sequence $\Phi=(\Phi_N)_{N\in\N}$ with 
$\Phi_N = (2^N + [N]) \times [N] \times [N^2]$.
It is not hard to see that $\Phi$ is a left \Folner{} sequence, 
but not a right one.
Let $\Phi_N' = (2^N + [N]')\times[N]'\times[N^2]'$ and consider the set 
$A = \bigcup_{N\in\N} \Phi_N'$. Clearly, $\overline{d}_\Phi(A)>0$. Suppose, 
for sake of 
contradiction, that there exist an infinite sequence 
$B = (b(n))_{n\in\N}\subset A$ 
and some $t\in H_3$ such that 
$\{b(i)b(j)\colon i<j\}\subset At^{-1}$. We denote $t^{-1}=(t_1,t_2,t_3)$.
We observe that $B\cap \Phi_N'\neq \emptyset$ for infinitely many $N\in\N$.
Let $b=b(i)=(b_1,b_2,b_3)$, for some $i\in\N$, such that $b\in\Phi_N'$ 
for some $N$ large 
(compared to the $t_i$'s). Then we can find 
some $j>i$ such that the element $c=b(j)=(c_1,c_2,c_3)$
belongs in some $\Phi_M'$ for some $M$ much larger than $N$, 
and then $bc\in At^{-1}$.
It follows that $bc\in \Phi_Q't^{-1}$, for some $Q\in\N$, where 
$$\Phi_Q't^{-1} = (2^Q+[Q]'+t_1)\times([Q]'+t_2)\times([Q^2]'+t_3+t_2(2^Q+[Q]')).$$
Then $2^Q - Q +t_1 \leq (bc)_1 \leq 2^Q + Q +t_1 $
and on the other hand, $(bc)_1 = b_1 + c_1 \in 2^N + 2^M + [N+M]'$, so  
$2^N + 2^M - N - M \leq (bc)_1 \leq 2^N + 2^M + N+M $. Combining those with 
the fact that $M$ is assumed to be sufficiently large (and much larger than
$N$) we obtain that $Q=M$.
We now want to show that $t_2\neq 0$. 
By the fact that $bc\in \Phi_Q't^{-1}$,
we have that 
$$bc \in (2\Z+1+t_1)\times (2\Z+1+t_2)\times (2\Z+1+t_3+t_2).$$
On the other hand, multiplying $b$ and $c$,
and using that $b_i$ and $c_i$ are odd for all $i$, gives that
$$bc = (b_1+c_1, b_2+c_2, b_3+c_3+b_1c_2) \in 2\Z \times 2\Z \times (2\Z+1).$$
It follows that all the $t_i$'s are odd, and in particular, $t_2\neq 0$.

Now, since $bc \in\Phi_Q't^{-1}$ and 
$t_2\neq 0$, we have that
$$(bc)_3 \gg 2^Q = 2^M.$$
On the other hand, for $M$ sufficiently large, we have that 
$$(bc)_3  = b_3+c_3+b_1c_2 \leq N^2 + M^2 + (2^N+N)M \ll M^2,$$
which yields a contradiction.
\end{example}

\begin{example}\label{counterexample 2}
We construct a set $A\subset H_3$ and a \Folner{} sequence $\Phi$ with $\overline{d}_\Phi(A)=1$
such that there is no infinite sequence $B=(b(n))_{n\in\N}\subset G$ satisfying $B\rtrdot B\subset t^{-1}A$ for some $t\in G$.
\par
Consider the same \Folner{} sequence $\Phi$ as in \cref{counterexample 1}. 
We observe that $\Phi_N\cap\Phi_M=\emptyset$ for any $N\neq M$,
and in particular, the projections of any two such sets in the first coordinate are
disjoint subsets of $\Z$.
We define the set $A = \bigcup_{N\in\N}\Phi_N$ and clearly we have $\overline{d}_\Phi(A)=1$.
Suppose, for sake of contradiction, that there exist an infinite sequence
$B=(b(n))_{n\in\N}\subset H_3$ and some $t\in H_3$ such that 
$\{b(i)b(j)\colon i>j\}\subset t^{-1}A$. We denote $t=(t_1,t_2,t_3)$ and 
$b(1)=b=(b_1,b_2,b_3)$. We may assume 
without loss of generality that $b_2\neq0$. We let $B'=(b(n))_{n\geq2}$.
Moreover, we denote $b^{-1}=y=(y_1,y_2,y_3)$ and then
we have that $B'\subset t^{-1}Ay$. It follows that
$B'\cap t^{-1}\Phi_Ny \neq \emptyset$ for infinitely many $N\in\N$.
Fix $c=b(i)$ for some $i>1$ such that $c\in B'\cap t^{-1}\Phi_My$ for some large $M\in\N$.
Then $cb\in t^{-1}A$, which implies that $cb$ belongs in exactly one set of the form $t^{-1}\Phi_N$.
We have that 
$$(cb)_1 = c_1+b_1 \in 2^M+[M]+t_1+y_1+b_1 = 2^M+[M]+t_1,$$
hence $cb\in t^{-1}\Phi_M$. Then we have that 
$(cb)_3 \in [M^2]+t_1[M]+t_3$, which imples that $(cb)_3\ll M^2$, for $M$ sufficiently large.
On the other hand, multiplying $c$ and $b$ gives that
$$(cb)_3 = c_3 + b_3 + c_1b_2 \gg c_1 \in 2^M + [M] + t_1 +y_1,$$
which implies that $(cb)_3 \gg 2^M = 2^M$, for $M$ sufficiently large, where the implied constant
is again absolute. This yields a contradiction.
\end{example}

\begin{example}\label{counterexample 3}
Consider the same $\Phi$ and $A\subset H_3$ as in \cref{counterexample 2}. Then we show that
there is no infinite sequence $B\subset H_3$ satisfying $B\rtrdot B\subset At^{-1}$.
\par
To see why, suppose, for sake of contradiction, that there exists such a 
sequence $B$ and,
as we did in \cref{right shift}, consider the sequence $B' = t^{-1}Bt$. Then
we have that $B'\rtrdot B'\subset t^{-1}A$, which cannot hold for this particular set $A$
as we saw in \cref{counterexample 2}. This yields a contradiction.
\end{example}

\appendix

\section{A result of Host and Kra for amenable groups}\label{appendix.a}

The purpose of this appendix is to prove \cref{HK_generic}.
The proof follows the ideas in the proof of 
\cite[Proof of Proposition 6.1]{HK09}, adapted in our setting.
We state the following classical result
(see for example \cite[Example 11.13 (a)]{rudin_functional}),
which we will need in the proof of \cref{HK_generic}:

\begin{lemma}
\label{lmf}
Let $X$ be a compact metric space. Then the only linear multiplicative functionals 
on the algebra $C(X)$ are the point evaluations,
i.e., $\ev_x(f)=f(x)$, for $x\in X$.
\end{lemma}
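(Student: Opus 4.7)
The plan is to prove that any nonzero linear multiplicative functional $\phi\colon C(X)\to\C$ coincides with evaluation at some point of $X$, by producing such a point as a common element of the zero sets of the family $\{f-\phi(f)\cdot\1 : f\in C(X)\}$ via a compactness argument.

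First I would normalize: from $\phi(\1)=\phi(\1)^2$ we get $\phi(\1)\in\{0,1\}$, and $\phi(\1)=0$ would force $\phi(f)=\phi(f\cdot\1)=\phi(f)\phi(\1)=0$ for all $f$, contradicting $\phi\neq 0$, so $\phi(\1)=1$. The key algebraic input is then the observation that $\phi(f)\in f(X)$ for every $f\in C(X)$: if $\phi(f)\notin f(X)$, the function $f-\phi(f)\cdot\1$ is nowhere vanishing on $X$ and hence invertible in $C(X)$; applying $\phi$ to the identity $(f-\phi(f)\cdot\1)\cdot(f-\phi(f)\cdot\1)^{-1}=\1$ then yields $0=1$, a contradiction. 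Applied to a real-valued $u\in C(X)$ this shows $\phi(u)\in u(X)\subset\R$; writing $f=u+iv$ with $u,v$ real and using $\C$-linearity gives $\overline{\phi(f)}=\phi(\bar f)$, so $\phi$ respects complex conjugation.

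Next I would run a finite-intersection argument on the closed sets $K_f:=\{x\in X : f(x)=\phi(f)\}\subset X$, each nonempty by the previous step. Given $f_1,\dots,f_n\in C(X)$, consider $g:=\sum_{i=1}^n (f_i-\phi(f_i)\cdot\1)\,\overline{(f_i-\phi(f_i)\cdot\1)}\in C(X)$. Multiplicativity together with $\phi$ commuting with conjugation yields $\phi(g)=\sum_i|\phi(f_i-\phi(f_i)\cdot\1)|^2=0$. On the other hand, if $\bigcap_{i=1}^n K_{f_i}$ were empty, then $g>0$ pointwise on $X$, so by compactness $g\geq\epsilon$ on $X$ for some $\epsilon>0$, which combined with $\phi(g)\in g(X)$ gives $\phi(g)\geq\epsilon>0$, a contradiction. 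Thus $\{K_f\}_{f\in C(X)}$ has the finite intersection property, and compactness of $X$ supplies some $x\in\bigcap_{f\in C(X)} K_f$; by construction $\phi(f)=f(x)$ for all $f$, i.e., $\phi=\ev_x$.

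The only step that requires genuine care is the algebraic observation $\phi(f)\in f(X)$, since this is the one place where multiplicativity interacts nontrivially with the structure of $C(X)$ via invertibility of nowhere-vanishing functions; once this (and the resulting compatibility of $\phi$ with conjugation) is in place, the finite intersection argument is routine and the rest is driven purely by compactness of $X$.
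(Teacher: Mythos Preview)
Your proof is correct. The paper does not actually give a proof of this lemma; it simply cites it as a classical result from \cite[Example 11.13 (a)]{rudin_functional}, so there is no in-paper argument to compare against.

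For what it is worth, your argument is a clean self-contained version of the standard proof: the crucial step $\phi(f)\in f(X)$ (which is exactly the statement that $\phi(f)$ lies in the spectrum of $f$) is handled correctly via invertibility of nowhere-vanishing functions, the deduction that $\phi$ is a $\ast$-homomorphism from reality of $\phi$ on real-valued functions is fine, and the finite-intersection/compactness argument using $g=\sum_i |f_i-\phi(f_i)\cdot\1|^2$ is the usual way to pin down the point. One could shorten the last part slightly by noting that $\ker\phi$ is a maximal ideal of $C(X)$ and invoking the description of maximal ideals of $C(X)$ for compact $X$, but your direct route avoids that extra citation and is entirely adequate.
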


For convenience, we restate the lemma we want to prove:

\begin{named}{\cref{HK_generic}}{}\cite[Proposition 6.1 for
group actions]{HK09}
Let $G$ be an amenable group,
let $\xmt$ be an ergodic $G$-system, 
$\zmr$ be its Kronecker factor and 
$\rho:\xmt\to\zmr$ be a factor map. If $a\in X$ is a transitive
point, then there exists a point $z\in Z$ and a \Folner{} 
sequence $\Psi$ such that
\begin{equation}\label{eq_ld1_1}
    \lim_{N\to\infty}\frac{1}{|\Psi_N|}
\sum_{g\in\Psi_N}f_1(T_ga)\cdot f_2(R_gz)
=\int_{X}f_1\cdot(f_2\circ\rho)\d\mu
\end{equation}
holds for any $f_1\in C(X)$ and $f_2\in C(Z)$.
\par
We remark that the result still holds if we 
replace $\zmr$ by any factor of 
$\xmt$ that is distal as a topological system.
\end{named}

\begin{proof}As in \cite{HK09}, we split 
the proof into two parts.
\par
\underline{Construction of a common extension}. 
Let $\Aa\subset \{f:X\to\C\colon f~\text{is measurable and bounded}\}$ 
be the closed (in norm) subalgebra that is spanned by $C(X)$ and 
$\{f\circ\rho:f\in C(Z)\}$. This is a unital commutative separable algebra, 
which contains the constants and is invariant under both complex conjugation and $T$.
Consider the Gelfand spectrum of $\Aa$, which is defined as 
$$W = \{\chi:\Aa\to\C\colon \chi~\text{is linear and multiplicative}\}.$$

Note that $W$ is compact and metrizable, since $\Aa$ is separable. 
By Gelfand's theorem, there exists an isometric isomorphism $F:C(W)\to\Aa$, 
satisfying $F^{-1}(f)(\chi)= \chi(f)$ for all 
$f\in \Aa$ and all $\chi \in W$. Hence for all $\widetilde{f}\in C(W)$ and all 
$\chi\in W$, we have $\widetilde{f}(\chi) = \chi(F(\widetilde{f}))$.
For $g\in G$ and $\chi \in W$, we define $S_g(\chi): A \to \C$, 
$S_g(\chi)(f) = \chi(f\circ T_g)$. Then it is not too difficult to see that 
for each $g\in G$, $S_g: W \to W$ is a homeomorphism, and we also
let $S=(S_g)_{g\in G}$.
Then for every $\chi\in W$, we have that
$$
\chi(F(\widetilde{f} \circ S_g))= 
\widetilde{f}(S_g(\chi)) = 
S_g(\chi)(F(\widetilde{f}))=
\chi(F(\widetilde{f})\circ T_g)
$$
for $\widetilde{f}\in C(W)$ and any $g\in G$.
In particular, for every $x\in X$, by considering the evaluation functional
$\ev_x\in W$, it follows that
$$F(\widetilde{f}\circ S_g)(x) 
= \ev_x(F(\widetilde{f}\circ S_g))
= \ev_x(F(\widetilde{f})\circ T_g)
= (F(\widetilde{f})\circ T_g)(x)$$
holds for any $\widetilde{f}\in C(W)$ and any $g\in G$.
Thus, we have that
\begin{equation}\label{appb_eq.3}
    F(\widetilde{f}\circ S_g)=F(\widetilde{f})\circ T_g
\end{equation}
for every $\widetilde{f}\in C(W)$ and for every $g\in G$.

Now, we consider the embedding $F^{-1}|_{C(X)}:C(X)\hookrightarrow C(W)$. Given $w\in W$,
$\ev_w\circ F^{-1}|_{C(X)}$ is a linear multiplicative functional on $C(X)$, and by \cref{lmf}, 
there exists a unique $x\in X$ such that $\ev_w\circ F^{-1}|_{C(X)} = \ev_x$.
Thus, we can define $\pi_X:W\to X$ by $\pi_X(w) = x$ if and only if 
$\ev_w\circ F^{-1}|_{C(X)} = \ev_x$.
The last equation is equivalent to that for any $f\in C(X)$ and $w\in W$,
$$f\circ\pi_X(w) = \ev_{\pi_X(w)}(f) = \ev_w\circ F^{-1}|_{C(X)}(f)= F^{-1}(f)(w).$$
Hence $\pi_X$ is the unique map from $W$ to $X$ satisfying
\begin{equation}\label{appb_eq.4}
    f\circ\pi_X=F^{-1}(f)
\end{equation}
for any $f\in C(X)$.
We claim that $\pi_X$ is continuous and surjective.
\par
To show continuity, we let $(w_n)_{n\in\N}$ in $W$ such that
$w_n\to w\in W$. Then for any $f\in C(W)$, we have $\ev_{w_n}(f)\to \ev_w(f)$.
Hence, for any $f\in C(X)$, we have
$\ev_{\pi_X(w_n)}(f) \to \ev_{\pi_X(w)}(f)$, that is, 
$f(\pi_X(w_n)) \to f(\pi_X(w))$. Since $X$ is compact, $\pi_X(w_n)$ has 
a convergent subsequence, which by abuse of notation we denote by $\pi_X(w_n)$.
Suppose for sake of contradiction that $\pi_X(w_n)\to y\neq \pi_X(w)$.
Then by Urysohn's lemma, we can find some $f\in C(X)$ and some disjoint
open neighborhoods $U_1\ni \pi_X(w), U_2\ni y$, such that $f|_{U_1}=1$ and $f|_{U_2}=0$.
It follows that $\pi_X(w_n)\in U_2$ for large $n$, hence 
$f(\pi_X(w_n)) = 0$ for large $n$, while $f(\pi_X(w))=1$, but this contradicts 
the fact that $f(\pi_X(w_n))\to f(\pi_X(w))$. This shows that every convergent subsequence of
$\pi_X(w_n)$ converges to $\pi_X(w)$, and since $X$ is compact,
it follows that $\pi_X(w_n)\to \pi_X(w)$, showing the continuity of $\pi_X$.
\par
To show that $\pi_X$ is surjective, let $x\in X$ and consider the linear multiplicative functional
$\ev_x$ on $C(X)$. Since $F^{-1}|_{C(X)}$ is an embedding, it follows that
$F^{-1}|_{C(X)}\circ\ev_x$ is a linear multiplicative functional on $C(W)$. Then,
by \cref{lmf}, there exists some $w\in W$ such that $F^{-1}|_{C(X)}\circ\ev_x=\ev_w$.
Hence, $\pi_X(w)=x$, showing that $\pi_X$ is surjective.
\par
Moreover, for any $g\in G$ and any $f\in C(X)$, let
$\widetilde{f}=F^{-1}(f)\in C(W)$, and then
$F(\widetilde{f}) = f\in C(X)$ and by \eqref{appb_eq.3}, 
$F(\widetilde{f}\circ S_g) = f\circ T_g\in C(X)$. 
Then, using \eqref{appb_eq.3} and \eqref{appb_eq.4}, we have that
$$f\circ T_g \circ \pi_X 
= F(\widetilde{f}\circ S_g)\circ\pi_X
= \widetilde{f}\circ S_g
= F(\widetilde{f})\circ\pi_X\circ S_g
= f\circ \pi_X \circ S_g$$
for any $g\in G$ and any $f\in C(X)$.
It follows by Urysohn's lemma that
\begin{equation}\label{appb_eq.5}
    T_g\circ\pi_X=\pi_X\circ S_g
\end{equation}
for any $g\in G$. Therefore, we have proved that $W$ is an extension of $X$ 
with $\pi_X$ being a continuous topological factor map. 
\par
Similarly, by considering the
embedding $F^{-1}|_{C(Z)\circ\rho}: C(Z)\circ\rho \hookrightarrow C(W)$, 
there exists a unique surjective continuous map
$\pi_Z:W\to Z$ such that
\begin{equation}\label{appb_eq.6}
    f\circ\pi_Z=F^{-1}(f\circ\rho)
\end{equation}
for any $f\in C(Z)$, and
\begin{equation}\label{appb_eq.7}
    R_g\circ\pi_Z=\pi_Z\circ S_g
\end{equation}
for any $g\in G$. Hence, $W$ is also an extension of $Z$ with 
$\pi_Z$ being a continuous topological factor map.
\par
Now we will find a measure on $W$, with which $W$ will become a measurable
extension of $X$ and $Z$. Since $f\mapsto \int f\d\mu$ is a positive linear
functional on $\Aa$, there exists a unique probability measure $\nu$ on $W$
such that
$$\int_X f\d\mu = \int_W F^{-1}(f)\d\nu$$
for any $f\in\Aa$. By \eqref{appb_eq.3}, we have that $\nu$ is $S$-invariant, 
by \eqref{appb_eq.4}, we have $\pi_X\nu=\mu$ and by \eqref{appb_eq.6},
we have $\pi_Z\nu=m$. Consequently, $\pi_X$ and $\pi_Z$ are factor maps.
\par
The last thing in this first step is to show that $\pi_X$ is actually a
measurable isomorphism between $W$ and $X$ and thus, that the measure $\nu$
is ergodic. First, we want to extend \eqref{appb_eq.4} in $C(W)\simeq\Aa$.
For $f\in\Aa$, it holds $\int_W |F^{-1}(f)|^2\d\nu = \int_X |f|^2\d\mu$,
and $F^{-1}$ is an isometry from $\Aa$ (with the $L^2(X,\mu)$ norm) into $L^2(W,\nu)$.
Combining the facts that $C(X)$ is dense in $\Aa$ (with respect to the 
$L^2(X,\mu)$ norm) and that \eqref{appb_eq.4} holds for all $f\in C(X)$, we 
obtain that \eqref{appb_eq.4} holds for all $f\in\Aa$, $\nu$-almost always.
Then consider the map $H:L^2(X,\mu)\to L^2(W,\nu)$, such that 
$f\mapsto f\circ\pi_X$. Then $H(L^2(\mu))$ is closed in $L^2(W,\nu)$,
since the map is an isometry, and notice that it contains $F^{-1}(\Aa)=C(W)$.
Thus, $H(L^2(X,\mu))=L^2(W,\nu)$, showing that $\pi_X$ is a measurable 
isomorphism, and consequently, that $(W,\nu,S)$ is ergodic.
Finally, for any $f\in C(Z)$, using \eqref{appb_eq.4} and  
\eqref{appb_eq.6}, we see that
$f\circ\pi_Z=F^{-1}(f\circ\rho)=f\circ\rho\circ\pi_X$ holds 
$\nu$-almost always, and so, $\pi_Z=\rho\circ\pi_X$.
\par
\underline{Construction of the \Folner{} sequence}.
Since $(W,\nu,S)$ is ergodic, it follows that there exists $w_1\in\gen(\nu,\Phi)$
for some \Folner{} sequence $\Phi$.
\par
Set $x_1=\pi_X(w_1)$. Transitivity of $a$ implies that there exists a 
sequence $(h_N)_{N\in\N}\subset G$ such that 
\begin{equation}\label{ptw1}
    \lim_{N\to\infty}\sup_{g\in\Phi_N}d_X(T_gx_1,T_{gh_N}a)=0,
\end{equation}
where $d_X$ is the metric on the space $X$. Now set $z_1=\pi_Z(w_1)$. Let 
$E(Z,R)$ be the Ellis semigroup of $(Z,R)$, that is, the closure of $R$ as an
element of $Z^Z$, where this space is equipped with the pointwise convergence
topology. Let $R_0\in\overline{(R_{h_N})}_{N\in\N}\subset E(Z,R)$. 
By \cref{Kron_rot} $R$ is a rotation, which implies that is a bijection
from $Z$ to itself. In case that $\zmr$ is any distal system
(and not necessarily the Kronecker factor), then we also 
have that $R$ is a bijection
(see by \cite[Chapter 5]{auslander}). Therefore, there exists $z_0\in Z$ 
such that $R_0(z_0)=z_1$. Then there exists a subsequence of 
$(h_N)_{N\in\N}$, which, by abuse of notation, we denote as 
$(h_N)_{N\in\N}$, such that $\lim_{N\to\infty}R_{h_N}z_0=z_1$.
Therefore, there exists a further subsequence, which once again we denote
in the same way, for which it holds that
\begin{equation}\label{ptw2}     
\lim_{N\to\infty}\sup_{g\in\Phi_N}d_Z(R_gz_1,R_{gh_N}z_0)=0,
\end{equation}
where $d_Z$ is the metric on the space $Z$.
\par
Let $f_1\in C(X)$, $f_2\in C(Z)$. By \eqref{ptw1} and 
\eqref{ptw2}, we have that 
$$\lim_{N\to\infty}\sup_{g\in\Phi_N}|f_1(T_gx_1)-f_1(T_{gh_N}a)|=0
\text{\quad and \quad} 
\lim_{N\to\infty}\sup_{g\in\Phi_N}|f_2(R_gz_1)-f_2(R_{gh_N}z_0)|=0.$$
We define the \Folner{} sequence $\Psi=(\Psi_N)_{N\in\N}$,
by $\Psi_N = \Phi_Nh_N$ for any $N\in\N$.
It is easy to check that since $\Phi$ is a left \Folner{} sequence, 
then $\Psi$ is also a left \Folner{} sequence.
It follows from the above 
equations that 
\begin{align}\label{ptw3}
    & \lim_{N\to\infty}\bigg|\frac{1}{|\Psi_N|}\sum_{g\in\Psi_N}f_1(T_ga)f_2(R_gz_0)
    - \frac{1}{|\Phi_N|}\sum_{g\in\Phi_N}f_1(T_gx_1)f_2(R_gz_1)\bigg| \notag \\
    & = \lim_{N\to\infty}\bigg|\frac{1}{|\Phi_N|}\sum_{g\in\Phi_N}
    \big(f_1(T_{gh_N}a)f_2(R_{gh_N}z_0)-f_1(T_gx_1)f_2(R_gz_1)\big)\bigg| \notag \\
    & \leq \lim_{N\to\infty}\sup_{g\in\Phi_N}
    |f_1(T_{gh_N}a)f_2(R_{gh_N}z_0)-f_1(T_gx_1)f_2(R_gz_1)| \notag \\
    & \leq \lim_{N\to\infty}\sup_{g\in\Phi_N}
    \big(|f_1(T_{gh_N}a)f_2(R_{gh_N}z_0)-f_1(T_{gh_N}a)f_2(R_gz_1)| \notag \\
    &\hspace*{2.5cm}+|f_1(T_{gh_N}a)f_2(R_gz_1)-f_1(T_gx_1)f_2(R_gz_1)|\big) \notag \\
    & \leq \lim_{N\to\infty}\big(
    \|f_1\|_\infty\sup_{g\in\Phi_N}|f_2(R_{gh_N}z_0)-f_2(R_gz_1)| + 
    \|f_2\|_\infty\sup_{g\in\Phi_N}|f_1(T_{gh_N}a)-f_1(T_gx_1)|\big) \notag \\
    & = 0. 
\end{align}
Moreover, recalling that $w_1\in\gen(\nu,\Phi)$ and observing that 
$f_1\circ\pi_X\in C(W)$ and $f_2\circ\pi_Z\in C(W)$, we have that 
\begin{align}\label{ptw4}
    \lim_{N\to\infty}\frac{1}{|\Phi_N|} & \sum_{g\in\Phi_N}
    f_1(T_gx_1)f_2(R_gz_1)  =
    \lim_{N\to\infty}\frac{1}{|\Phi_N|}\sum_{g\in\Phi_N}
    f_1(T_g(\pi_X(w_1)))f_2(R_g(\pi_Z(w_1))) \notag \\
    & = \lim_{N\to\infty}\frac{1}{|\Phi_N|}\sum_{g\in\Phi_N}
    f_1(\pi_X(S_gw_1))f_2(\pi_Z(S_gw_1)) \text{\quad 
    (by \eqref{appb_eq.5},\eqref{appb_eq.7})} \notag \\
    & = \int_W (f_1\circ\pi_X)(f_2\circ\pi_Z)\d\nu \notag \\
    & = \int_W (f_1\circ\pi_X)(f_2\circ\rho\circ\pi_X)\d\nu
    \hspace*{2.23cm}\text{ (since } \pi_Z=\rho\circ\pi_X) \notag \\
    & = \int_W f_1\cdot(f_2\circ\rho)\d\mu
    \hspace*{3.87cm}\text{\quad (since } \pi_X\nu=\mu). 
\end{align}
Combining \eqref{ptw3} and \eqref{ptw4} yields the desired result.
The proof is complete.
\end{proof}

\bibliographystyle{abbrv}
\bibliography{refs}

\begin{thebibliography}{10}

\bibitem{ackelsberg2024counterexamples}
E.~Ackelsberg.
\newblock Counterexamples to generalizations of the {E}rd{\H{o}}s ${B}+{B}+t$ problem.
\newblock Preprint 2024, arXiv:2404.17383.

\bibitem{Ali-Bor}
C.~D. Aliprantis and K.~C. Border.
\newblock {\em Infinite dimensional analysis}.
\newblock Springer, Berlin, third edition, 2006.
\newblock A hitchhiker's guide.

\bibitem{auslander}
J.~Auslander.
\newblock {\em Minimal flows and their extensions}.
\newblock North Holland Publishing Co, Amsterdam, 1988.

\bibitem{berg-f-m}
V.~Bergelson and A.~Ferr\'e{}~Moragues.
\newblock An ergodic correspondence principle, invariant means and applications.
\newblock {\em Israel Journal of Mathematics}, \textbf{245}(2):921--962, 2021.

\bibitem{erdos1973}
P.~Erd{\H{o}}s.
\newblock Problems and results on combinatorial number theory.
\newblock In {\em A survey of combinatorial theory}, pages 117--138. Elsevier, 1973.

\bibitem{Fur1}
H.~Furstenberg.
\newblock {\em Recurrence in ergodic theory and combinatorial number theory}.
\newblock Princeton University Press, Princeton, N.J., 1981.
\newblock M. B. Porter Lectures.

\bibitem{glasner}
E.~Glasner.
\newblock {\em Ergodic theory via joinings}, volume \textbf{101} of {\em Mathematical Surveys and Monographs}.
\newblock American Mathematical Society, Providence, RI, 2003.

\bibitem{Gromov}
M.~Gromov.
\newblock Groups of polynomial growth and expanding maps.
\newblock {\em Institut des Hautes \'Etudes Scientifiques. Publications Math\'ematiques}, \textbf{53}:53--73, 1981.

\bibitem{HK09}
B.~Host and B.~Kra.
\newblock Uniformity seminorms on $\ell^{\infty}$ and applications.
\newblock {\em Journal d'{A}nalyse {M}ath{\'e}matique}, \textbf{108}(1):219--276, 2009.

\bibitem{Host-Kra}
B.~Host and B.~Kra.
\newblock {\em Nilpotent structures in ergodic theory}, volume \textbf{236} of {\em Mathematical Surveys and Monographs}.
\newblock American Mathematical Society, Providence, RI, 2018.

\bibitem{Ka-Me-G}
M.~I. Kargapolov and Y.~I. Merzlyakov.
\newblock {\em Fundamentals of the theory of groups}, volume \textbf{62} of {\em Graduate Texts in Mathematics}.
\newblock Springer-Verlag, New York-Berlin, 1979.

\bibitem{DKE}
D.~Kerr and H.~Li.
\newblock {\em Ergodic theory}.
\newblock Springer Monographs in Mathematics. Springer, Cham, 2016.
\newblock Independence and dichotomies.

\bibitem{kmrr_survey}
B.~Kra, J.~Moreira, F.~K. Richter, and D.~Robertson.
\newblock Problems on infinite sumset configurations in the integers and beyond.
\newblock Preprint 2023, arXiv:2311.06197.

\bibitem{kmrr1}
B.~Kra, J.~Moreira, F.~K. Richter, and D.~Robertson.
\newblock Infinite sumsets in sets with positive density.
\newblock {\em Journal of the American Mathematical Society}, \textbf{37}(3):637--682, 2024.

\bibitem{kmrr2}
B.~Kra, J.~Moreira, F.~K. Richter, and D.~Robertson.
\newblock A proof of {E}rd{\H{o}}s's {$B+B+t$} conjecture.
\newblock {\em Communications of the American Mathematical Society}, \textbf{4}:480--494, 2024.

\bibitem{lindenstrauss_pet}
E.~Lindenstrauss.
\newblock Pointwise theorems for amenable groups.
\newblock {\em Electronic Research Announcements of the American Mathematical Society}, \textbf{5}(12):82--90, 1999.

\bibitem{mackey}
G.~W. Mackey.
\newblock Ergodic transformation groups with a pure point spectrum.
\newblock {\em Illinois Journal of Mathematics}, \textbf{8}(4):593--600, 1964.

\bibitem{mrr19}
J.~Moreira, F.~Richter, and D.~Robertson.
\newblock A proof of a sumset conjecture of {E}rd{\H{o}}s.
\newblock {\em Annals of Mathematics}, \textbf{189}(2):605--652, 2019.

\bibitem{rudin_functional}
W.~Rudin.
\newblock {\em Functional analysis}.
\newblock International Series in Pure and Applied Mathematics. McGraw-Hill, Inc., New York, second edition, 1991.

\end{thebibliography}

\bigskip
\bigskip
\bigskip

\footnotesize
\noindent
Dimitrios Charamaras\\
\textsc{École Polytechnique Fédérale de Lausanne (EPFL)} \par\nopagebreak
\noindent
\href{mailto:dimitrios.charamaras@epfl.ch}
{\texttt{dimitrios.charamaras@epfl.ch}}

\bigskip 

\footnotesize
\noindent
Andreas Mountakis\\
\textsc{University of Crete} \par\nopagebreak
\noindent
\href{mailto:a.mountakis@uoc.gr}
{\texttt{a.mountakis@uoc.gr}}

\end{document}